\renewcommand\marginpar[1]{}
\let\oldbigwedge\bigwedge
\renewcommand{\bigwedge}{\textstyle \oldbigwedge}
\newtheorem{thm}{Theorem}[section]
\newtheorem*{thm*}{Theorem}
\newtheorem{conj}{Conjecture}
\newtheorem*{conj*}{Conjecture}
\newtheorem{cor}[thm]{Corollary}
\newtheorem{prop}[thm]{Proposition}
\newtheorem{lemma}[thm]{Lemma}
\theoremstyle{definition}
\newtheorem{defin}[thm]{Definition}
\newtheorem{rem}[thm]{Remark}
\numberwithin{equation}{section}
\DeclareSymbolFont{symbolsC}{U}{pxsyc}{m}{n}
\DeclareMathSymbol{\nsubset}{\mathrel}{symbolsC}{"31}
\newcommand{\N}{\mathbb{N}}
\newcommand{\Z}{\mathbb{Z}}
\newcommand{\Q}{\mathbb{Q}}
\newcommand{\C}{\mathbb{C}}
\renewcommand{\P}{\mathbb{P}}
\renewcommand{\to}{\rightarrow}
\newcommand{\tto}{\dashrightarrow}
\newcommand{\onto}{\twoheadrightarrow}
\newcommand{\dual}{\vee}
\newcommand{\lag}{\mathbb{LG}}
\newcommand{\zero}{\underline{0}}
\newcommand{\tras}[1]{#1^T}
\newcommand{\sh}[1]{\mathcal{#1}}
\newcommand{\idl}[1]{\mathcal{I}_{#1}}
\newcommand{\norm}[2]{\sh{N}_{#1/#2}}
\newcommand{\shhom}{\mathcal{H}om}
\newcommand{\shext}{\mathcal{E}xt}
\renewcommand{\O}{\mathcal{O}}
\renewcommand{\phi}{\varphi}
\newcommand{\tl}{\widetilde}
\newcommand{\card}{\sharp \,}
\newcommand{\res}[1]{\, \text{\rule[-1ex]{0.1ex}{2.8ex}}_{\, #1}}
\newcommand{\short}[3]{
\Diag
0 \aTo(1,0) & #1 \aTo(1,0) & #2 \aTo(1,0) & #3 \aTo(1,0) & 0
\\
\endDiag}
\newcommand{\map}[4]{
\Diag
#1 & \rTo & #2
\\
#3 & \rMapsto & #4
\\
\endDiag}
\newcounter{step}
\newenvironment{steps}
{\list{}{\setlength{\leftmargin}{0pt} \usecounter{step} }}
{\endlist}
\DeclareMathOperator{\rk}{rk}
\DeclareMathOperator{\codim}{codim}
\DeclareMathOperator{\Hom}{Hom}
\DeclareMathOperator{\Ext}{Ext}
\DeclareMathOperator{\Ann}{Ann}
\DeclareMathOperator{\Sym}{Sym}
\DeclareMathOperator{\vol}{vol}
\DeclareMathOperator{\Pic}{Pic}
\DeclareMathOperator{\Alb}{Alb}
\DeclareMathOperator{\Bit}{Bit}
\DeclareMathOperator{\Hilb}{Hilb}
\DeclareMathOperator{\Gr}{Gr}
\DeclareMathOperator{\Bl}{Bl}
\DeclareMathOperator{\Spec}{Spec}
\DeclareMathOperator{\Supp}{Supp}
\DeclareMathOperator{\Coh}{Coh}
\title{The Chow ring of double EPW sextics}
\author{Andrea Ferretti}
\subjclass[2000]{14C15, 14J35, 53C26}
\begin{document}

\begin{abstract}
A conjecture of Beauville and Voisin states that for an irreducible symplectic variety $X$, any polynomial relation between classes of divisors and the Chern classes of $X$ which holds in cohomology already holds in the Chow groups. We verify the conjecture for a very general double EPW sextic.
\end{abstract}

\maketitle

\tableofcontents

\thispagestyle{empty}

\section{Introduction}

A difficult problem in algebraic geometry is to characterize the kernel (and the image) of the cycle map
\begin{equation*}
c \colon CH^{*}(X) \to H^{*}(X, \Z)
\end{equation*}
for a smooth projective variety $X$ over $\C$. When $X$ is an \emph{irreducible symplectic variety} there is a general conjecture, due to Beauville, which states the following:
\begin{conj}[Beauville] \label{injectivity conjecture}
Let $X$ be an irreducible symplectic variety, and let $DCH(X) \subset CH^{*}_{\Q}(X)$ be the \emph{subalgebra} generated by the divisors. Then the cycle map
\begin{equation*}
c \colon CH^{*}(X)_{\Q} \to H^{*}(X, \Q)
\end{equation*}
is injective when restricted to $DCH(X)$.
\end{conj}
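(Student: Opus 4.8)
\emph{The aim is to verify Conjecture~\ref{injectivity conjecture} for $X$ a very general double EPW sextic.} The plan is as follows.

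\textbf{Reduction to zero- and one-cycles.} For $A$ very general the fourfold $X = X_A$ has $\Pic(X) = \Z h$, where $h = f^{*}\O_{\P^5}(1)$ comes from the double cover $f\colon X_A \to Y_A \subset \P^5$ and $\int_X h^4 = 2\deg Y_A = 12$; since $X$ is holomorphic symplectic, $T_X \cong \Omega^1_X$, so all odd Chern classes vanish and $DCH(X)$ is generated as a $\Q$-algebra by $h$, $c_2(X)$ and $c_4(X)$. (For Picard number one the divisor-only statement is vacuous, so the Chern classes are essential: I prove the strengthening stated in the abstract.) A short computation with the Fujiki relation and the Beauville--Bogomolov form then shows that in codimension $2$ the classes $h^2$ and $c_2(X)$ are already linearly independent in $H^4(X,\Q)$, so the cycle map is injective there for free; that in codimension $3$ the classes $h^3$ and $h\cdot c_2(X)$ are cohomologically \emph{proportional} — each is the Poincar\'e dual of a multiple of $q(h,-)$ — so that injectivity becomes the genuine one-cycle identity $h\cdot c_2(X) = \kappa\, h^3$ in $CH^3(X)_{\Q}$ for the cohomologically forced constant $\kappa$; and that in codimension $4$ the relevant classes all lie on the line $\Q h^4 \subset H^8(X,\Q)$, with $h^2 c_2(X) = \kappa\, h^4$ already a consequence of the codimension-$3$ identity, so that it only remains to establish $c_2(X)^2 = \mu\, h^4$ and $c_4(X) = \nu\, h^4$ in $CH_0(X)_{\Q}$ for the constants $\mu,\nu$ prescribed by the deformation-invariant numbers $\int_X c_2^2$ and $\int_X c_4 = \chi_{\mathrm{top}}(X) = 324$.

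\textbf{Constant-cycle geometry.} Following the Beauville--Voisin principle I will produce a class $\mathfrak{o}_X \in CH_0(X)$ represented by a point lying on a surface all of whose points are rationally equivalent. Using O'Grady's and Iliev--Markushevich's description of the geometry of $Y_A$ and $X_A$ — the singular surface $Y_A[2] \subset Y_A$ and its reduced preimage $\Sigma := f^{-1}(Y_A[2])_{\mathrm{red}}$, the family of lines contained in $Y_A$ and the surface it sweeps, and the uniruled and dual-variety loci of $Y_A$ — I will single out a surface $\Sigma \subset X$ that is dominated by a variety fibred in rational curves over a curve, hence a constant-cycle surface, and take $\mathfrak{o}_X$ to be its point-class. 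I will then identify $h^4$ with a positive multiple of $\mathfrak{o}_X$ by pushing the zero-cycle $f^{*}(H^4_{Y_A})$ onto such a surface, throughout using the projection formulas $f^{*}f_{*} = 1 + \iota^{*}$ and $f_{*}f^{*} = 2$ (with $\iota$ the covering involution) to move zero-cycle computations between $X_A$, $Y_A$, their linear sections and suitable resolutions.

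\textbf{The Chern-class identities, and the main obstacle.} The heart of the argument is the three identities $h\cdot c_2(X) = \kappa\, h^3$, $c_2(X)^2 = \mu\, h^4$ and $c_4(X) = \nu\, h^4$. The difficulty — absent in Voisin's treatment of Hilbert squares of K3 surfaces — is that a very general $X_A$ is not a Hilbert scheme and carries no obvious incidence variety forcing relations among its cycles, while the single available divisor $h$ cuts out surfaces of general type with enormous groups of zero-cycles, so the naive restriction argument for $h\cdot c_2(X)$ collapses. I plan to circumvent this by: (i) describing $c_2(X)$ through the double cover — pulling back the computable second Chern class of the singular Calabi--Yau fourfold $Y_A$ and correcting the discrepancy along the branch surface $\Sigma$ — and thereby reducing $h\cdot c_2(X)$ to a one-cycle supported on $\Sigma$ and on intersections of $\Sigma$-type surfaces, where the constant-cycle property makes the comparison with $\kappa\, h^3$ explicit; (ii) representing $c_4(X)$ as the vanishing locus of a general section of $T_X$ and, by a Bloch--Srinivas-type argument applied to the correspondences restricted to these constant-cycle subvarieties, showing that this locus is rationally equivalent to $\nu\, h^4$; and (iii) deducing $c_2(X)^2 = \mu\, h^4$ from the cohomological Fujiki relations for $c_2$ once its Chow-theoretic products with $h$ and with the constant-cycle surfaces have been pinned down in (i). The constants $\kappa,\mu,\nu$ are exactly those forced by deformation-invariance from Voisin's $S^{[2]}$ computation, so the content is purely that these equalities hold in $CH_0$ and $CH^3$ — not merely in cohomology — for the very general, hence non-Hilbert-scheme, double EPW sextic; making steps (i)--(iii) unconditional is where the bulk of the work lies, and is the principal obstacle I anticipate.
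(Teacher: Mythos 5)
Your overall architecture --- reduce to the finitely many monomials in $h$, $c_2(X)$, $c_4(X)$, single out a distinguished $0$-cycle supported on a surface with trivial $CH_0$, and control $c_2(X)$ through the double cover $f\colon X\to Y\subset\P^5$ --- is the same as the paper's, and your accounting of which cohomological relations must be lifted is correct. But the two steps you flag as the heart of the matter each contain a genuine gap. First, the constant-cycle surface. The candidates you name will not work: the reduced branch surface $Z_A=f^{-1}(Y_A[2])$ satisfies $2K_{Z_A}=\O_{Z_A}(6)$ and is of general type (Proposition \ref{canonical Y2} and its corollary), so by Mumford its $CH_0$ is huge; and a surface dominated by a family of rational curves over a curve $C$ has $CH_0^{hom}$ surjecting onto (a quotient of) $J(C)$, so it is not constant-cycle unless $C$ is rational, which you do not establish for the surface swept by the lines in $Y_A$. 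The paper's way around this is its main new content and is entirely absent from your plan: degenerate $X_A$ to $S^{[2]}$ for $S$ a $10$-nodal quartic arising from a Reye congruence, so that the branch surface degenerates to $\Bit(S)$, which is birationally an \emph{Enriques} surface (hence $CH_0=\Z$ by Bloch--Kas--Lieberman); this produces a component $\Sigma_{10}'\subset\Sigma_{10}$ whose generic member $B$ has $Y_B[2]$ birationally Enriques; then an incidence-variety/tangent-space argument (Proposition \ref{B intersects A}) shows every $A$ admits such a $B$ with $\dim(A\cap B)\geq 9$, forcing $Y_B[2]\subset Y_A$; finally the pencil of Lagrangians through $A\cap B$ gives $[Y_A[2]]=[Y_B[2]]$ in $CH^2(Y_A)$ (Proposition \ref{YA2 equiv YB2}), which is what lets one transport cycles supported on $Z_A$ to cycles supported on the Enriques-type surface.

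Second, your step (ii) is impossible as stated: for an irreducible symplectic variety $H^0(X,T_X)\cong H^0(X,\Omega^1_X)=0$, so $c_4(X)$ is not the zero locus of any section of $T_X$. The paper instead extracts $c_4(X)$ (and the degree-$6$ identity $c_2(X)\cdot h=5h^3$) from the sheaf-theoretic structure of $df$: Thom--Porteous gives $3Z=15h^2-c_2(X)$ in $CH^2(X)$, and Grothendieck--Riemann--Roch applied to the cokernel $\sh{Q}$ of $\tras{df}$ --- a sheaf supported on the non-reduced scheme $2Z$, sitting in the extension \eqref{Q as extension} --- combined with the Chow-theoretic identity $2K_Z=\O_Z(6)$ yields both $c_2(X)\cdot h=5h^3$ and an expression of $c_4(X)$ as a combination of $h^4$, $c_2(X)\cdot h^2$, $c_2(X)^2$. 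Note also that the degree-$6$ identity is a statement about $1$-cycles, so knowing that some surface has trivial $CH_0$ cannot by itself settle it; the essential input is the computation of $K_Z$ in $CH^1(Z)$, which your plan does not supply. Your step (i) correctly anticipates the Thom--Porteous relation in spirit, but without these further ingredients steps (i)--(iii) do not close.
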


We refer to the original article \cite{Beauville3} for the motivation of the conjecture and its link with the conjectures of Bloch and Beilinson; we just remark that the conjecture was known to hold when $X$ is a $K3$ surface from \cite{Beauville-Voisin}. Conjecture \ref{injectivity conjecture} explicitly means the following: any polynomial relation
\begin{equation*}
P(D_1, \dots, D_k) = 0
\end{equation*}
in the fundamental classes of divisors which holds in $H^{*}(X)$ already holds inside $CH^{*}_{\Q}(X)$.

This has been extended by Claire Voisin in \cite{Voisin3} in the following form:
\begin{conj}[Voisin] \label{injectivity conjecture extended}
Let $X$ be an irreducible symplectic variety. Any polynomial relation
\begin{equation*}
P(D_1, \dots, D_k, c_i(X)) = 0
\end{equation*}
in the fundamental classes of divisors \emph{and in the Chern classes of $X$} which holds in $H^{*}(X)$ already holds in $CH^{*}_{\Q}(X)$.
\end{conj}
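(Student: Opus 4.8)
The plan is to establish Conjecture~\ref{injectivity conjecture extended} for a very general double EPW sextic $X=X_A$ (the case announced in the abstract), by adapting Voisin's study of the Chow rings of $S^{[2]}$ and $S^{[3]}$ to the geometry of the natural two-to-one map $f\colon X_A\to Y_A\subset\P^5$ and of its anti-symplectic covering involution $\iota$. The first step is a reduction: for a very general Lagrangian $A$ the fourfold $X_A$ is a smooth hyper-K\"ahler variety of $K3^{[2]}$-deformation type with $\Pic(X_A)=\Z h$, $h=f^{*}\O_{\P^5}(1)$, and $c_1(X_A)=c_3(X_A)=0$ since the symplectic form gives $T_{X_A}\cong\Omega^1_{X_A}$; hence $DCH(X_A)$ is generated by $h$, $c_2(X_A)$ and $c_4(X_A)$. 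Since $H^{*}(X_A,\Q)$ together with its Chern classes is a deformation invariant, the conjecture for $X_A$ reduces to a short list of cycle identities: nothing in codimension $2$ (for very general $A$ the classes $h^2,c_2(X_A)$ are already linearly independent in $H^4$), one relation $h\cdot c_2(X_A)=\lambda\,h^3$ in codimension $3$, and in codimension $4$ the assertion that the four zero-cycles $h^4$, $h^2c_2(X_A)$, $c_2(X_A)^2$, $c_4(X_A)$ span a one-dimensional subspace of $CH_0(X_A)_{\Q}$ --- equivalently, that each equals its degree times a single canonical class $\mathfrak{o}_X$, with $h^4=12\,\mathfrak{o}_X$ and $c_4(X_A)=\chi_{\mathrm{top}}(X_A)\,\mathfrak{o}_X=324\,\mathfrak{o}_X$.

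Next I would produce $\mathfrak{o}_X$ and dispose of the classes built only from $h$. Take $\mathfrak{o}_X$ to be the class of a point on a rational curve of $X_A$ --- such curves exist, for instance among the components of $f^{-1}$ of conics (or lines) lying on $Y_A$ --- and, since these curves move in families, the class does not depend on the choices. The identities $h^4=12\,\mathfrak{o}_X$ and $h\cdot c_2(X_A)=\lambda\,h^3$ are then obtained by representing the left-hand classes by cycles supported on the uniruled subvarieties of $X_A$ swept out by such curves, exactly as in the Bogomolov--Mumford argument for $K3$ surfaces and Voisin's argument for $S^{[2]}$; making this work needs $X_A$ to carry sufficiently many rational curves and uniruled divisors, a first point where the EPW geometry must be used.

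For $c_2(X_A)$ and $c_4(X_A)$ the crucial input is the cyclic-cover structure. Resolving $Y_A$ along its singular surface $Y_A[2]$, or working directly with the conductor of $f$, one should obtain $c_2(X_A)=a\,h^2+b\,[R]$ in $CH^2(X_A)_{\Q}$, where $R=f^{-1}(Y_A[2])\cong Y_A[2]$ is the ramification surface and $a,b$ are explicit, and one should express the zero-cycle part of $c_4(X_A)$ in terms of $h^4$ and of cycles supported on $R$. The remaining codimension-four identities then reduce to showing that the zero-cycles $h^2\cdot[R]$, $[R]^2$ and that part of $c_4(X_A)$ lie in $\Q\,\mathfrak{o}_X$ --- equivalently, that $R$ is a \emph{constant-cycle surface} in $X_A$ --- the precise multiples being forced by the cohomological intersection numbers (a Gauss--Bonnet computation along the stratification $Y_A\supset Y_A[2]\supset Y_A[3]$ yields $c_4(X_A)=324\,\mathfrak{o}_X$). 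The involution $\iota$ already does part of the work: $R=X_A^{\iota}$ is Lagrangian, so every holomorphic $4$-form vanishes on it, and $\iota$-equivariance forces the class of a point of $R$ into $\Q\,\mathfrak{o}_X$ plus the deepest graded piece of $CH_0(X_A)$.

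The main obstacle is precisely this last step. The surface $R\cong Y_A[2]$ is of general type, hence carries no covering family of rational curves, so the claim that its degree-zero zero-cycles become rationally trivial in $X_A$ is a Bloch-type assertion with no off-the-shelf proof. Overcoming it must exploit structure peculiar to EPW sextics --- the involution $\iota$, the duality relating $X_A$ to the dual double EPW sextic $X_{A^{\perp}}$, and the incidence correspondences in $X_A\times X_{A^{\perp}}$ coming from the conic geometry of $Y_A$ --- in order to transport an arbitrary degree-zero zero-cycle on $R$ onto the constant-cycle locus swept out by the rational curves above. Once that is secured and the constants $a$, $b$, $\lambda$ and the $c_4$-multiple are pinned down, assembling the codimension-three and codimension-four relations, and hence the conjecture for $X_A$, is routine.
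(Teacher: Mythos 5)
Your reduction is sound and your Thom--Porteous-style expression $c_2(X)=a\,h^2+b\,[R]$ is exactly Lemma \ref{thom-porteous} of the paper, but the proposal has a genuine gap at precisely the point you flag yourself: you never actually prove that the degree-zero $0$-cycles supported on the ramification surface $R\cong Y_A[2]$ die in $CH_0(X_A)_{\Q}$. You correctly observe that $R$ is of general type, so no covering family of rational curves is available, and you then say the step ``must exploit structure peculiar to EPW sextics'' --- the involution, the dual sextic $X_{A^{\perp}}$, incidence correspondences --- without supplying an argument. None of those tools closes the gap: $\iota$-equivariance only constrains the class of a point of $R$ modulo the deepest graded piece of $CH_0$, which is exactly where the problem lives, and no correspondence with $X_{A^{\perp}}$ is shown to transport cycles onto a constant-cycle locus. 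Since every codimension-four identity except $h^4=12\,\mathfrak{o}_X$ runs through $R$, the proof is not complete.

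The paper's resolution is structurally different and is worth noting: it never shows that $Y_A[2]$ itself is a constant-cycle surface. Instead it produces a \emph{second} Lagrangian $B\in\Sigma_{10}'$ with $\dim(A\cap B)\geq 9$ (Proposition \ref{B intersects A}, an incidence-variety dimension count), so that $Y_B[2]\subset Y_A$; it shows via a degeneration of double EPW sextics to $S^{[2]}$ for a $10$-nodal Reye-congruence quartic $S$ (Sections \ref{enriques example}--\ref{deformation argument}, Corollary \ref{enriques component}) that $Y_B[2]$ is birational to an Enriques surface, hence has $CH^2\cong\Z$ by Bloch--Kas--Lieberman; and it proves $[Y_A[2]]=[Y_B[2]]$ in $CH^2(Y_A)$ via the pencil of everywhere-tangent EPW sextics (Proposition \ref{YA2 equiv YB2}). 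The distinguished class $\overline{\theta}$ is then the point class on the Enriques-like surface, not on a rational curve, and the $0$-cycles $h^2\cdot[Y_A[2]]$, $[Y_A[2]]^2$, etc.\ become multiples of $\overline{\theta}$ because they are representable by cycles supported on $Y_B[2]$. To repair your proposal you would need to supply an analogue of this entire chain (or a genuinely new argument for the Bloch-type statement you isolate); the remaining constants and the $c_4$ identity also require the Grothendieck--Riemann--Roch bookkeeping along $Z$ together with $2K_Z=\O_Z(6)$, which your ``Gauss--Bonnet along the stratification'' only sketches.
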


In the same paper Claire Voisin proves
\begin{thm*}[Voisin]
Conjecture \ref{injectivity conjecture extended} holds true when
\begin{itemize}
\item
$X = S^{[n]}$, for some $K3$ $S$, and $n \leq 2 b_2(S)_{tr} + 4$, where $b_2(S)_{tr}$ is the rank of the transcendental part of $H^2(S)$, that is, the orthogonal of the Néron-Severi lattice, or
\item
$X$ is the Fano variety of lines on a cubic fourfold $Y \subset \P^5$.
\end{itemize}
\end{thm*}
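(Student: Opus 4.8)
\medskip
\noindent\emph{Sketch of an approach.} Both statements concern irreducible symplectic varieties whose cohomology is very close to being generated in degree $2$, and I would attack them through the same two-part strategy: first pin down a distinguished degree-one zero-cycle together with a multiplicativity (``absorption'') property for it, then bootstrap from zero-cycles to the whole subalgebra generated by divisors and Chern classes.

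\medskip
\noindent\textbf{Step 1: a canonical zero-cycle.} For $X = S^{[n]}$ I would take $\mathfrak{o}_X \in CH_0(X)$ to be the class of a length-$n$ subscheme of $S$ supported on a rational curve, and record --- as a direct consequence of the Beauville--Voisin theorem for $S$ --- that the image of $\Pic(S)^{\otimes 2} \to CH^2(S)$ and the class $c_2(S)$ all lie in $\Z\,\mathfrak{o}_S$. For $X = F$, the Fano variety of lines on a cubic fourfold $Y \subset \P^5$, I would use the universal line $P \subset F \times Y$ with its projections $p \colon P \to F$ (a $\P^1$-bundle) and $q \colon P \to Y$, together with the near-triviality of $CH^*(Y)$ in low codimension (cubic fourfolds are unirational, so $CH_0(Y) = \Z$, and in codimension $\le 2$ the Chow groups of $Y$ are, modulo a homologically trivial part that one handles by spreading, generated by powers of $h_Y = c_1(\O_Y(1))$): this produces a distinguished $\mathfrak{o}_F$, realised as the class of a point on a constant-cycle Lagrangian surface of $F$. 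For the special cubic fourfolds with $F \cong S^{[2]}$ one may alternatively import $\mathfrak{o}_F$ from the $K3$ and then spread it over the moduli of cubics, which is also how I would expect to propagate relations later on.

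\medskip
\noindent\textbf{Step 2: absorption.} The heart of the matter is to show that every monomial in divisor classes $D_i \in CH^1(X)$ and Chern classes $c_j(X)$ of codimension $\dim X$ equals its degree times $\mathfrak{o}_X$ in $CH_0(X)$, and to establish the companion statements one and two codimensions lower --- that such monomials in $CH_1(X)$ reduce to a fixed curve class, and that the degree-$\le 2$ part of $DCH(X)$ together with $c_2(X)$ reproduces the Beauville--Bogomolov form. For $S^{[n]}$ the engine is the explicit description of $CH^*(S^{[n]})$ through the nested incidence schemes $S^{[n-1,n]} \subset S^{[n-1]} \times S^{[n]}$ (equivalently, Nakajima-type correspondences): one lifts the $K3$ relations of Step 1 to $S^{[n]}$ and propagates them through these correspondences, arguing inductively on $n$ and using that $NS(S^{[n]})$ is generated by $NS(S)$ and the half-diagonal class $\delta$ and that the Chern classes of $S^{[n]}$ are expressible via the incidence schemes. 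For $F$ one transports the control of $CH^*(Y)$ through $[P]$ and its self-composites, using that the divisors on $F$ --- and $c_2(F)$ --- are pushed forward from codimension-two cycles on $Y$ (the integral Hodge conjecture being known for $Y$ in degree $4$).

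\medskip
\noindent\textbf{Step 3: globalisation and the numerical bound.} Finally I would upgrade absorption to the full injectivity statement. Naively one argues that a homologically trivial class $\alpha \in DCH^k(X)$ pairs to $0$ in $CH_0(X)$ against every complementary monomial and tries to conclude $\alpha = 0$; but this only closes up if the intersection pairing on $DCH(X)$ is already known to be non-degenerate --- essentially what is to be proved --- so the real work is to show directly that such an $\alpha$ vanishes, by exhibiting it as the image, under the incidence correspondences of Step 2, of a homologically trivial class living on a variety ($S^m$, resp.\ $Y$) whose Chow groups in the relevant degrees contain no such class. This is exactly where the cohomology of $X$ must be simple enough: for $K3^{[2]}$-type fourfolds, in particular $F$, $H^*(X)$ is generated by $H^2$, there are only the codimension-$2$ and codimension-$3$ relations to treat, and the difficulty is concentrated in handling $c_2(F)$ and $c_4(F)$ rather than merely powers of the Plücker class; for $S^{[n]}$ the correspondence machine of Step 2 keeps control only while $2n$ stays in the range governed by the transcendental part of $H^2(S)$, and the hypothesis $n \le 2\,b_2(S)_{tr} + 4$ is precisely this range. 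I expect this last step --- making the globalisation unconditional for $S^{[n]}$, i.e.\ the dimension and degree bookkeeping behind the stated inequality --- to be the main obstacle; Steps 1 and 2 for pure divisor monomials should be comparatively soft.
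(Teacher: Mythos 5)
This statement is not proved in the paper at all: it is quoted verbatim from Voisin's article \cite{Voisin3} as background motivation, so there is no internal proof to measure your proposal against. Judged instead against Voisin's published argument, your outline has the right architecture --- a canonical degree-one zero-cycle coming from the Beauville--Voisin class $\mathfrak{o}_S$ (resp.\ from a distinguished surface of lines on the Fano variety $F$), propagation of relations through the nested incidence schemes $S^{[n-1,n]}$ (resp.\ the universal line $P \subset F \times Y$), and a final globalisation step in which the hypothesis $n \le 2 b_2(S)_{tr} + 4$ appears. These are indeed the three moving parts of Voisin's proof.

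But as written this is a plan, not a proof, and the gap sits exactly where you place it: Step 3. Saying that a homologically trivial class $\alpha \in DCH^k(X)$ ``should'' be exhibited as the image of a homologically trivial class on $S^m$ or $Y$ is the entire content of the theorem; nothing in Steps 1--2 forces this, and you give no mechanism for producing such a lift or for extracting the precise inequality $n \le 2 b_2(S)_{tr} + 4$ (in Voisin's argument this bound comes from a concrete cohomological counting statement on powers of $S$ involving the transcendental lattice, not from a soft ``range of control'' of the correspondences). Two smaller inaccuracies: for the Fano variety, the distinguished surface is a \emph{rational} surface of lines (e.g.\ the Fano surface of a sufficiently singular hyperplane section), and the control of $c_2(F)$ proceeds through an explicit Chow-theoretic identity relating $c_2(F)$, the incidence correspondence and the polarization --- not through the integral Hodge conjecture for $Y$; and for $S^{[n]}$ the reduction to $S^m$ requires the explicit de Cataldo--Migliorini-type description of $CH^*(S^{[n]})$, which you allude to but do not actually deploy. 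So the proposal correctly identifies the skeleton of the known proof but leaves its load-bearing step unproved.
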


As far as the author knows, no other cases of the conjecture have been verified so far. The aim of this paper is to prove the following
\begin{thm} \label{main}
Let $X$ be a double EPW sextic, $f \colon X \to Y$ its associated double covering. Let
\begin{equation*}
h = f^{*} \O_Y(1)
\end{equation*}
be the natural polarization. Then every polynomial relation between $h$ and the Chern classes of $X$ which holds in $H^{*}(X, \Q)$ already holds in $CH^{*}(X)_{\Q}$.

In particular if $X$ is very general, Conjecture \ref{injectivity conjecture extended} holds for $X$.
\end{thm}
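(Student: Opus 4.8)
The plan is to reduce the statement to a single proportionality among $0$-cycles and then to establish that proportionality using the structure of the double cover $f\colon X\to Y$.

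\emph{Reduction.} Since the symplectic form gives an isomorphism $T_X\cong\Omega^1_X$, the odd Chern classes $c_1(X)$ and $c_3(X)$ vanish, so the subalgebra $R\subset CH^{*}(X)_{\Q}$ in question is generated by $h$, $c_2(X)$ and $c_4(X)$. I would then check that in $H^{*}(X,\Q)$ the only relations among these classes occur in degree $8$: in degree $2$, $h\neq 0$; in degree $4$, $h^2$ and $c_2(X)$ are independent, since by the Fujiki relations $\int_X h^2\alpha\beta$ is quadratic while $\int_X c_2(X)\alpha\beta$ is linear in the Beauville--Bogomolov pairings and they cannot be proportional; in degree $6$, after dividing by $h$, the classes $h^3$ and $h\,c_2(X)$ remain independent for the same reason. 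Hence the theorem is equivalent to the assertion that the four $0$-cycles $h^4$, $h^2c_2(X)$, $c_2(X)^2$ and $c_4(X)$ span a line in $CH^4(X)_{\Q}=CH_0(X)_{\Q}$; the ratios are then automatically the cohomological ones because $\deg$ is injective on $\Q\cdot h^4$.

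\emph{A canonical $0$-cycle, and descent to $Y$.} Let $\iota$ be the covering involution of $f$. As $\iota$ is anti-symplectic, $\iota^{*}$ fixes all Chern classes and fixes $h=f^{*}\O_Y(1)$, so the four classes above lie in $f^{*}CH_0(Y)_{\Q}$, using $f^{*}f_{*}=1+\iota^{*}$ and $f_{*}f^{*}=2\cdot\mathrm{id}$ rationally; this turns the problem into a proportionality in $CH_0(Y)_{\Q}$ among $\O_Y(1)^4$ and the descents $\tfrac12 f_{*}$ of the other three classes. I would single out a canonical $0$-cycle $o_X$ of degree one, proportional to $h^4$ by construction — equivalently the $f^{*}$ of the class of a point on a line contained in $Y_A$ — and show it is well defined by a Bloch--Srinivas--type argument on the universal line over the Fano scheme of lines of $Y_A$, using O'Grady's description of that family and the fact that these lines sweep out $Y_A$. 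This settles the $h^4$ term.

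\emph{The Chern classes.} It then remains to place $h^2c_2(X)$, $c_2(X)^2$ and $c_4(X)$ in $\Q\,o_X$. Here I would bring in the special subvarieties attached to the Lagrangian $A$: the ramification surface $W\subset X$, isomorphic to the stratum $Y_A[2]$, and the surfaces swept out by the residual rational curves of lines of $Y_A$ tangent to $Y_A[2]$. Since $\iota$ is anti-symplectic, $W$ is Lagrangian and $\norm{W}{X}\cong\Omega^1_W$, whence $c(T_X)\res{W}=c(T_W)\bigl(1-c_1(W)+c_2(W)\bigr)$ and in particular $c_2(X)\res{W}=2c_2(W)-c_1(W)^2$; combined with the self-intersection formula $[W]\cdot[W]=c_2(\norm{W}{X})\cap[W]$, this lets one express the three remaining $0$-cycles through cycles carried by $W$ and by those rational surfaces. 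The proof is finished by showing that $W$ and these surfaces are \emph{constant cycle surfaces}, i.e.\ that $CH_0$ of each maps into $\Q\,o_X$ inside $CH_0(X)_{\Q}$, again via the sweeping families of lines on $Y_A$ and on its strata.

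\emph{Main obstacle.} The difficulty is exactly this last reduction: a priori $c_2(X)^2$ and $c_4(X)$ are unconstrained elements of the infinite-dimensional group $CH_0(X)_{\Q}$, and collapsing them onto the line $\Q\,o_X$ requires the full weight of the EPW geometry — the abundance of rational curves on $Y_A$ and on its strata, their behaviour under the double cover, and the normal-bundle computation along $W$ — playing the role that the $K3$-surface and $S^{[n]}$ constructions play in the known cases of the conjecture. One might instead try to degenerate a general double EPW sextic to a Hilbert square $S^{[2]}$ and invoke Voisin's theorem, but matching the polarization and controlling rational equivalence under such a degeneration is delicate, so the direct route is preferable. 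Finally, when $X$ is very general its Picard group is generated by $h$, so $DCH(X)$ is contained in $R$ and Conjecture \ref{injectivity conjecture extended} follows.
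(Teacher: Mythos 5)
There are genuine gaps, and the most immediate one is in your reduction step: you assert that the only cohomological relations occur in degree $8$, but in fact $H^6(X,\Q)$ carries the relation $c_2(X)\cdot h = 5h^3$ (by polarized Fujiki, $h^3\cdot\alpha$ and $q^{\dual}\cdot h\cdot\alpha$ are \emph{both} linear in $q(h,\alpha)$, so "the same reason" as in degree $4$ does not apply after multiplying by $h$). Consequently the theorem also requires lifting this degree-$6$ identity to $CH^3(X)_{\Q}$, which in the paper is a substantial computation: one realizes the ramification surface $Z$ as a degeneracy locus of $df\colon T_X\to f^{*}T_{\P^5}$, obtains $3Z=15h^2-c_2(X)$ by Thom--Porteous, and then extracts $c_2(X)\cdot h=5h^3$ (and the expression of $c_4(X)$ in terms of the other classes) from Grothendieck--Riemann--Roch applied to the cokernel sheaf of $\tras{df}$ supported on $2Z$. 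Your proposal has no counterpart for this step.

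The second gap is the distinguished $0$-cycle. You define it via a point on a line in $Y_A$ and claim the lines sweep out $Y_A$; they do not. The Fano scheme of lines on a sextic fourfold in $\P^5$ has expected (and here actual) dimension $1$, so the lines sweep out only a surface, and since the parametrizing curve need not be rational, two points on different lines need not be rationally equivalent --- so your $o_X$ is not well defined by this construction. Relatedly, your final step requires $W\cong Y_A[2]$ to be a constant cycle surface, but $Y_A[2]$ is of general type ($2K_{Y_A[2]}=\O(6)$), and no argument is offered; "sweeping families of lines on the strata" cannot supply one. The paper's actual mechanism is quite different and is where most of its length goes: one degenerates double EPW sextics to $S^{[2]}$ for quartics $S$, uses Reye congruences to produce a component $\Sigma_{10}'$ of $\Sigma_{10}$ whose generic member $B$ has $Y_B[2]$ birationally Enriques (hence $CH_0=\Z$ by Bloch--Kas--Lieberman), proves via an incidence-variety/tangent-space argument that every $A$ admits such a $B$ with $\dim(A\cap B)\geq 9$ so that $Y_B[2]\subset Y_A$, and shows that everywhere-tangent EPW sextics give $[Y_A[2]]=[Y_B[2]]$ in $CH(Y_A)$. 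It is this rational equivalence --- not any constant-cycle property of $Y_A[2]$ itself --- that lets one push $h^2\cdot Z$ and $Z^2$ onto the Enriques surface and hence onto the line $\Q\theta$. Your outline correctly identifies that this is the main obstacle, but the tools you propose to overcome it would not succeed.
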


We recall that double EPW sextics are a class of irreducible symplectic varieties which were introduced by O'Grady in \cite{Kieran2}; we shall begin by reviewing this construction.

Theorem \ref{main} is the main result of my Ph.D. thesis \cite{Ferretti}. Some facts that are only cited in the present paper are described there in more detail.

\subsection{EPW sextics}

In this section we recall some known facts about EPW sextics and we fix the notation that we shall use. The results here are due to O'Grady, and are available in \cite{Kieran2} and \cite{Kieran6}; see also \cite{Ferretti} for a detailed introduction.

We start with a $6$-dimensional vector space $V$ over the field $\C$. The space $\bigwedge^6 V$ is $1$-dimensional, so we choose once and for all an isomorphism
\begin{equation*}
\vol \colon \bigwedge^6 V \to \C.
\end{equation*}
This endows $\bigwedge^3 V$ with a symplectic form, given by
\begin{equation*}
\left( \alpha, \beta \right) = \vol (\alpha \wedge \beta),
\end{equation*}
for $\alpha, \beta \in \bigwedge^3 V$, so $\bigwedge^3 V$ becomes a symplectic vector space of dimension $20$.

For each non-zero $v \in V$ we can consider the Lagrangian subspace
\begin{equation*}
F_v = \{ v \wedge \alpha \mid \alpha \in \bigwedge^2 V \}.
\end{equation*}
This is clearly isotropic, and the isomorphism
\begin{equation} \label{F isom potenza est}
\Diag
\phi_v \colon \mr & F_v & \rTo^{\cong} & \bigwedge^2 (V /\langle v \rangle)
\\
& v \wedge \alpha & \rMapsto & [\alpha]
\\
\endDiag
\end{equation}
shows that $\dim F_v = \binom{5}{2} = 10$.

Since the subspace $F_v$ only depends on the class $ [ v ] \in \P(V)$, the subspaces $F_v$ fit together, giving rise to a \emph{Lagrangian subbundle} $F$ of the trivial symplectic bundle $\P(V) \times V$. The maps in \eqref{F isom potenza est} then yield an isomorphism
\begin{equation*}
F \cong \sh{S} \otimes \bigwedge^2 \sh{Q},
\end{equation*}
where $\sh{Q}$ is the tautological quotient bundle on $\P(V)$ and $\sh{S}$ the tautological subbundle. From this a standard computation gives
\begin{equation} \label{c_1(F)}
c_1(F) = c_1(\bigwedge^2 \sh{Q}) + \rk(F) c_1(\sh{S}) = -6 H,
\end{equation}
where $H = c_1(\O(1))$ is the hyperplane class on $\P(V)$.

We are now ready to define the EPW sextics, as follows. Fix a Lagrangian subspace $A \subset \bigwedge^3 V$. Note that the symplectic form gives a canonical identification
\begin{equation*}
\bigwedge^3 V/A \cong A^{\dual}.
\end{equation*}
Let
\begin{equation} \label{lambda}
\lambda_A \colon F \to \O_{\P(V)} \otimes A^{\dual}
\end{equation}
be the inclusion $F \into \O_{\P(V)} \otimes \bigwedge^3 V$ followed by the projection modulo $A$. The map $\lambda_A$ is a map of vector bundles of equal rank $10$.
\begin{defin}
We set
\begin{equation*}
Y_A = Z( \det \lambda_A ),
\end{equation*}
the zero locus of the determinant of $\lambda_A$. This is a subscheme of $\P(V)$; when it is not the whole $\P(V)$, $Y_A$ is called a $EPW$ sextic (it is indeed a sextic by Equation \eqref{c_1(F)}).
\end{defin}

The support of the scheme $Y_A$ is by definition the locus
\begin{equation*}
\big\{ [v] \in \P(V) \mid \dim (F_v \cap A) \geq 1 \big\}.
\end{equation*}
We then set
\begin{equation*}
Y_A[k] = \big\{ [v] \in \P(V) \mid \dim (F_v \cap A) \geq k \big\},
\end{equation*}
so that $Y_A = Y_A[1]$, at least set-theoretically. The loci $Y_A[k]$ also have a natural scheme structure, given by the vanishing of the determinants of the $(k + 1) \times (k + 1)$ minors of $\lambda_A$.

The natural parameter space for EPW sextic the Lagrangian Grassmannian $\lag(\bigwedge^3 V)$, or more precisely the Zariski open set parametrizing those $A$ for which $Y_A \subsetneq \P(V)$. We recall that $\lag(\bigwedge^3 V) \subset \Gr(10, \bigwedge^3 V)$ is the subvariety of Lagrangian subspaces; it is a smooth variety of dimension $55$ (see Section \ref{YB2 in YA}).

Following \cite{Kieran4} we give the following definitions.
\begin{defin}
We let
\begin{equation*}
\Sigma = \big\{ A \in \lag(\bigwedge^3 V) \mid \bigwedge^3(W) \subset A \text{ for some } W \subset V, \, \dim W = 3 \big\}.
\end{equation*}
In other words $\Sigma$ is the set of Lagrangian subspaces of $\bigwedge^3 V$ containing a decomposable form.

More generally for each $k \in \N$ we define $\Sigma_{k}$ as the Zariski closure of the locus of Lagrangian subspaces $A \in \lag(\bigwedge^3 V)$ that contain exactly $k$ \emph{linearly independent} decomposable forms. In this way we have $\Sigma = \Sigma_1$, and of course $\Sigma_k = \emptyset$ when $k > 10$.

We also let
\begin{equation*}
\Delta = \big\{ A \in \lag(\bigwedge^3 V) \mid Y_A[3] \neq \emptyset \big\} \subset \lag(\bigwedge^3 V).
\end{equation*}

Finally we define
\begin{equation*}
\lag(\bigwedge^3 V)^{0} = \lag(\bigwedge^3 V) \setminus (\Sigma \cup \Delta).
\end{equation*}
\end{defin}

Note that if $W \subset V$ is a subspace of dimension $3$ such that $\bigwedge^3(W) \subset A$, then $Y_A$ contains the plane $\P(W)$. For some purposes $\Sigma$ is analogous to the locus which parametrizes the Fano varieties of cubic fourfolds containing a plane.

These loci admit the following description
\begin{prop}[O'Grady] \label{tangent to Sigma}
\begin{enumerate}[i)]
\mbox{}
\item
The set $\Sigma$ is closed in $\lag(\bigwedge^3 V)$.
\item
The set $\Sigma_k$ has codimension $k$ in $\lag(\bigwedge^3 V)$ (when it is not empty).
\item
$\Sigma_k$ is smooth away from $\Sigma_{k + 1}$.
\item
Let
\begin{equation*}
A \in \Sigma_k \setminus \Sigma_{k + 1},
\end{equation*}
so that $A$ contains exactly $k$ decomposable forms $\alpha_1, \dots, \alpha_k$, up to multiples. Then the tangent space to $\Sigma_k$ in $A$ is
\begin{equation*}
T_A \Sigma_k = \{ q \in \Sym^2(A^{\dual}) \mid q(\alpha_1) = \dots = q(\alpha_k) = 0 \}.
\end{equation*}
\item
$\Delta$ is an irreducible divisor of $\lag(\bigwedge^3 V)$.
\end{enumerate}
\end{prop}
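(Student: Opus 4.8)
The plan is to deduce all five parts from a single deformation-theoretic computation, itself resting on two linear-algebra facts. First, choosing a Lagrangian $B \subset \bigwedge^3 V$ complementary to $A$ identifies $B$ with $A^{\dual}$ via the symplectic form, and identifies the Lagrangians transverse to $B$ with the \emph{symmetric} homomorphisms $A \to B$; hence $T_A \lag(\bigwedge^3 V) = \Sym^2(A^{\dual})$ and $\dim \lag(\bigwedge^3 V) = \binom{11}{2} = 55$. Second, if $\alpha = \bigwedge^3 W$ is a decomposable form with $W = \langle w_1, w_2, w_3 \rangle \subset V$, then differentiating $w_1(t) \wedge w_2(t) \wedge w_3(t)$ shows that the tangent space at $\alpha$ to the affine cone $\widehat{\Gr}$ over $\Gr(3, V) \subset \P(\bigwedge^3 V)$ equals $\bigwedge^2 W \wedge V$; this subspace has dimension $10$ and is isotropic (since $\bigwedge^4 W = 0$), so it is itself a \emph{Lagrangian} subspace of $\bigwedge^3 V$ containing $\alpha$. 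That $T_\alpha \widehat{\Gr}$ is Lagrangian is the geometric point that makes the formula in (iv) come out cleanly.

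Now fix $A$ containing decomposable forms $\alpha_1, \dots, \alpha_k$ (up to scalars), with associated $3$-planes $W_i$, and write a first-order deformation as $q \in \Sym^2(A^{\dual})$, realized as the graph of $\phi \colon A \to B$ with $q(a, a') = (\phi(a), a')$. A decomposable form deforming $\alpha_i$ persists, to first order, in the deformed Lagrangian exactly when $\phi(\alpha_i) \in \bigwedge^2 W_i \wedge V + A$, that is, when the functional $q(\alpha_i, -) \in A^{\dual} \cong \bigwedge^3 V / A$ lies in the image of $\bigwedge^2 W_i \wedge V$. As $\bigwedge^2 W_i \wedge V$ is Lagrangian, the annihilator in $A$ of that image is $A \cap (\bigwedge^2 W_i \wedge V)$; when this is merely the line $\langle \alpha_i \rangle$ — which I would verify for $A$ carrying only finitely many decomposable forms by a local analysis of $\widehat{\Gr} \cap \P(A)$ near $[\alpha_i]$ — the persistence condition collapses to the single equation $q(\alpha_i, \alpha_i) = 0$. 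This yields the inclusion
\begin{equation*}
T_A \Sigma_k \subseteq \{ q \in \Sym^2(A^{\dual}) \mid q(\alpha_1) = \dots = q(\alpha_k) = 0 \},
\end{equation*}
and since $\alpha_1, \dots, \alpha_k$ are linearly independent the tensors $\alpha_i \otimes \alpha_i$ are linearly independent in $\Sym^2 A$, so the right-hand side has codimension exactly $k$.

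For the matching dimension bound I would use the incidence variety $I_k = \{ (A, [\alpha_1], \dots, [\alpha_k]) \mid \alpha_i \in \widehat{\Gr} \cap A \text{ linearly independent} \}$, fibered over the variety of $k$-tuples of points of $\Gr(3, V)$ spanning an isotropic subspace $P$ — a base of dimension $9k - \binom{k}{2}$, the pairwise isotropy conditions being independent — with fibre the Lagrangian Grassmannian of $P^{\perp}/P$, of dimension $\binom{11-k}{2}$; this gives $\dim I_k = 9k - \binom{k}{2} + \binom{11-k}{2} = 55 - k$. Granting that the general member of $\Sigma_k$ carries only finitely many decomposable forms, so that $I_k$ dominates $\Sigma_k$ with finite general fibre, one gets $\dim \Sigma_k = 55 - k$, which is (ii); combined with the displayed inclusion, at any $A \in \Sigma_k \setminus \Sigma_{k+1}$ we have $\dim T_A \Sigma_k \geq \dim_A \Sigma_k = 55 - k = \dim (\text{right-hand side})$, so equality holds throughout, proving (iv) and, since then $\dim T_A \Sigma_k = \dim \Sigma_k$, also (iii). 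Part (i) is immediate and separate: $\Sigma = \Sigma_1$ is the image in $\lag(\bigwedge^3 V)$ of the closed subvariety $\{ (A, [W]) \mid \bigwedge^3 W \subseteq A \}$ of $\lag(\bigwedge^3 V) \times \Gr(3, V)$, and $\Gr(3, V)$ is projective.

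For (v) I would run the analogous incidence argument with $J = \{ ([v], A) \mid \dim(F_v \cap A) \geq 3 \} \subseteq \P(V) \times \lag(\bigwedge^3 V)$: for fixed $[v]$ the fibre is the Lagrangian Schubert variety $\{ A \mid \dim(A \cap F_v) \geq 3 \}$ — recall $F_v$ is Lagrangian — which is irreducible of codimension $\binom{4}{2} = 6$, and since $GL(V)$ acts transitively on $\P(V)$ this makes $J$ a fibre bundle over $\P(V)$ with irreducible fibre, hence $J$ is irreducible of dimension $5 + 49 = 54$; $J$ is projective, so $\Delta$ is its image and is irreducible, and it is genuinely $54$-dimensional, hence a divisor, because a general $A \in \Delta$ has $Y_A[3]$ a single reduced point. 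I expect the main obstacle to be precisely these finiteness inputs — that $A \in \Sigma_k \setminus \Sigma_{k+1}$ carries only finitely many decomposable forms (needed both for $A \cap (\bigwedge^2 W_i \wedge V) = \langle \alpha_i \rangle$ and for $I_k$ to dominate $\Sigma_k$ with finite fibres) and that a general $A \in \Delta$ has finite $Y_A[3]$; granting these, only the two dimension counts and the elementary linear algebra above remain.
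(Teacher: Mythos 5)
A preliminary remark: the paper states this proposition with attribution to O'Grady and gives no proof at all --- it is imported from \cite{Kieran4} and \cite{Kieran6} --- so there is no in-paper argument to measure yours against, and what follows assesses your proposal on its own terms. Your linear-algebra core is sound and is the right mechanism: $T_{\alpha}\widehat{\Gr} = \bigwedge^2 W \wedge V$ is Lagrangian, and first-order persistence of $\alpha_i$ amounts to $q(\alpha_i, -)$ annihilating $A \cap (\bigwedge^2 W_i \wedge V)$, which collapses to the single equation $q(\alpha_i) = 0$ exactly when that intersection is the line $\langle \alpha_i \rangle$. But there are genuine gaps beyond the finiteness statements you flag.

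First, your dimension count for the base of $I_k$ is wrong as stated: the $\binom{k}{2}$ pairwise isotropy conditions are \emph{not} independent once $k \geq 6$. The locus of $k$-tuples of pairwise-incident $3$-planes contains the component of tuples all containing a fixed line (and, dually, all contained in a fixed hyperplane), which has dimension $5 + 6k$; for $k = 10$ --- precisely the case the paper needs for $\Sigma_{10}'$ --- this is $65$, against your claimed $9k - \binom{k}{2} = 45$. So $I_k$ is reducible, with components of dimension exceeding $55 - k$, and to conclude $\codim \Sigma_k = k$ you must separately bound the images in $\lag(\bigwedge^3 V)$ of all the degenerate components (for ten independent decomposable forms through a common line $\langle v \rangle$ the span is forced to be $F_v$, so that image is only $5$-dimensional, but this and the intermediate and mixed configurations all have to be checked), and also exhibit a point of the ``general'' component. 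Second, the transversality $A \cap (\bigwedge^2 W_i \wedge V) = \langle \alpha_i \rangle$ does not follow from finiteness of $\widehat{\Gr} \cap \P(A)$: an isolated intersection point can be non-reduced, which is exactly the failure of this equality. Relatedly, your inclusion $T_A \Sigma_k \subseteq \{ q \mid q(\alpha_i) = 0 \}$ is obtained by differentiating arcs in $\Sigma_k$; at a point where $\Sigma_k$ is a priori singular this only controls the span of arc derivatives, not the full Zariski tangent space, so it cannot by itself deliver the smoothness claim (iii). One needs genuine local equations for $\Sigma_k$ near $A$ --- e.g., by showing the projection from the smooth locus of $I_k$ is a local immersion there --- whose differentials cut out the claimed subspace. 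Part (i) and the architecture of (v) are fine, modulo the finiteness of $Y_A[3]$ for general $A \in \Delta$ that you already single out.
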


The relevance of these loci is that they can be used to describe the singularities of the EPW sextics.
\begin{prop}[O'Grady] \label{smoothness of Y}
Let $A \in \lag(\bigwedge^3 V)$, and assume that $Y_A$ is not the whole $\P(V)$. Let $[v] \in Y_A$. Then $Y_A$ is smooth at $[v]$ if and only if $[v] \notin Y_A[2]$ and $A$ does not contain any decomposable form multiple of $[v]$.

In other words the singular locus of $Y_A$ is the union of $Y_A[2]$ and the planes $\P(W)$, where $W$ varies through all $3$-planes of $V$ such that $\bigwedge^3 W \subset A$.
\end{prop}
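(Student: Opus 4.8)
The plan is to reduce the statement to a first–order computation for the determinantal hypersurface $Y_A = Z(\det \lambda_A)$. Since $Y_A \neq \P(V)$ the sextic form $\det \lambda_A$ is nonzero, so $Y_A$ is a genuine hypersurface and, for $[v] \in Y_A$, it is singular at $[v]$ exactly when the differential of $\det \lambda_A$ vanishes there. Two elementary facts about $\lambda_{A,v} \colon F_v \to \bigwedge^3 V / A$ will be used repeatedly: its kernel is $F_v \cap A$, and — since $A$ and $F_v$ are Lagrangian — the symplectic pairing identifies its cokernel $\bigwedge^3 V / (F_v + A)$ with $(F_v \cap A)^{\dual}$, because $(F_v + A)^{\perp} = F_v^{\perp} \cap A^{\perp} = F_v \cap A$. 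In particular $[v] \in Y_A[k]$ if and only if $\lambda_{A,v}$ has corank $\geq k$. The case $[v] \in Y_A[2]$ is then immediate: $\lambda_{A,v}$ has corank $\geq 2$, so in a local trivialization all $9 \times 9$ minors of its matrix vanish at $[v]$; hence $\operatorname{adj}(\lambda_{A,v}) = 0$ and the differential of $\det \lambda_A$, which equals $\operatorname{tr}(\operatorname{adj}(\lambda_{A,v}) \circ d\lambda_A)$, vanishes at $[v]$, so every point of $Y_A[2] \subset Y_A$ is singular on $Y_A$.

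Now assume $[v] \in Y_A \setminus Y_A[2]$, so $F_v \cap A = \langle \omega \rangle$ is a line; write $\omega = v \wedge \alpha$ with $\alpha \in \bigwedge^2 V$, well-defined modulo $v \wedge V$. Here $\operatorname{adj}(\lambda_{A,v})$ has rank one, and under the identifications above both its image and the annihilator of its kernel are the line $\langle \omega \rangle$, so the standard formula for the differential of a determinant specializes to
\begin{equation*}
d(\det \lambda_A)_{[v]}(\xi) \;=\; c \cdot \big\langle \omega, (d\lambda_A \cdot \xi)(\omega) \big\rangle, \qquad \xi \in T_{[v]}\P(V) \cong V/\langle v \rangle,
\end{equation*}
with $c \neq 0$ a constant and $\langle\,,\,\rangle$ the symplectic pairing between $A$ and $\bigwedge^3 V / A$. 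The differential $d\lambda_A$ is just the second fundamental form of the subbundle $F \subset \O \otimes \bigwedge^3 V$, composed with the (constant) projection modulo $A$: for $\xi$ represented by $u \in V$ one computes $(d\lambda_A \cdot \xi)(v \wedge \alpha) \equiv \overline{u \wedge \alpha}$ modulo $F_v + A$, and the indeterminacy — in the lift of $\xi$ and in the choice of $\alpha$ — lands in $F_v$, which is killed by the pairing with $\omega \in F_v$. Substituting, $d(\det \lambda_A)_{[v]}(\xi) = c \cdot \vol(v \wedge \alpha \wedge u \wedge \alpha)$, so $Y_A$ is smooth at $[v]$ if and only if $v \wedge \alpha \wedge \alpha \neq 0$ in $\bigwedge^5 V$.

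Finally I would interpret this condition. Since wedging with $v$ identifies $\bigwedge^4(V/\langle v \rangle)$ with $\bigwedge^5 V$, the class $v \wedge \alpha \wedge \alpha$ vanishes if and only if $[\alpha] \wedge [\alpha] = 0$ in $\bigwedge^4(V/\langle v \rangle)$, where $[\alpha] = \phi_v(\omega)$; as $\omega \neq 0$, this holds if and only if the $2$-form $[\alpha]$ has rank $\leq 2$, equivalently is decomposable, equivalently $\omega = v \wedge \alpha$ is decomposable. A nonzero decomposable element of $F_v$ necessarily has $v$ as a factor, hence spans $\bigwedge^3 W$ for some $3$-plane $W \supset \langle v \rangle$, and any decomposable element of $A \cap F_v$ is a multiple of $\omega$. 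Therefore $Y_A$ is singular at $[v] \in Y_A \setminus Y_A[2]$ exactly when $A$ contains a decomposable form which is a multiple of $v$, i.e. when $\bigwedge^3 W \subset A$ for some $3$-plane $W$ through $[v]$. Combined with the case $[v] \in Y_A[2]$ this is the stated criterion; letting $[v]$ vary, we obtain $\operatorname{Sing}(Y_A) = Y_A[2] \cup \bigcup_{\bigwedge^3 W \subset A} \P(W)$.

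The step I expect to require the most care is the second paragraph: setting up the second fundamental form of $F$ and the kernel/cokernel identifications so that the displayed pairing formula is manifestly independent of the trivializations and of the choice of $\alpha$, and verifying that the constant $c$ is genuinely nonzero. The remaining ingredient — that a $2$-form $\beta$ satisfies $\beta \wedge \beta = 0$ exactly when $\beta$ is decomposable — is classical.
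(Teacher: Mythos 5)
The paper does not prove this proposition; it is quoted from O'Grady (\cite{Kieran2}, \cite{Kieran6}), so there is no internal proof to compare against. Your argument is correct and is essentially the standard one for this statement: the adjugate/trace formula for $d(\det\lambda_A)$ handles the corank $\geq 2$ case, the second fundamental form of $F\subset \O\otimes\bigwedge^3 V$ gives $d(\det\lambda_A)_{[v]}(\xi)= c\cdot\vol(v\wedge\alpha\wedge u\wedge\alpha)$ at a corank-$1$ point (which is exactly consistent with Proposition \ref{tangent to Y} in the paper), and the identification of the cokernel of $\lambda_{A,v}$ with $(F_v\cap A)^{\dual}$ via $(F_v+A)^{\perp}=F_v\cap A$ is the same one the paper uses in the proof of Proposition \ref{canonical Y2}. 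One small imprecision: wedging with $v$ maps $\bigwedge^4(V/\langle v\rangle)$ (dimension $5$) injectively onto the subspace $v\wedge\bigwedge^4 V$ of $\bigwedge^5 V$ (dimension $6$), not isomorphically onto all of $\bigwedge^5 V$; injectivity is all you use, so the conclusion $v\wedge\alpha\wedge\alpha=0\iff[\alpha]\wedge[\alpha]=0\iff[\alpha]$ decomposable is unaffected.
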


Moreover:
\begin{prop}[O'Grady] \label{tangent to Y}
Assume $[v_0]$ is a smooth point of $Y_A$, so
\begin{equation*}
F_{v_0} \cap A = \langle v_0 \wedge \alpha \rangle,
\end{equation*}
with $\alpha$ indecomposable. Let
\begin{equation*}
H_{v_0} = \{ v \in V \mid \vol(v_0 \wedge v \wedge \alpha \wedge \alpha) = 0 \}.
\end{equation*}
Then the projective tangent space of $Y_A$ at $[v_0]$ is
\begin{equation*}
T_{[v_0]} Y_A = \P(H_{v_0}).
\end{equation*}
\end{prop}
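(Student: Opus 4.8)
The statement to prove is Proposition~\ref{tangent to Y}, describing the projective tangent space to a smooth EPW sextic $Y_A$ at a point $[v_0]$ as the hyperplane $\P(H_{v_0})$.

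The plan is to compute the differential of the defining equation $\det \lambda_A$ at $[v_0]$ and identify its kernel. Since $[v_0]$ is a smooth point of $Y_A$, by Proposition~\ref{smoothness of Y} the drop in rank of $\lambda_A$ at $[v_0]$ is exactly one, so $\det \lambda_A$ vanishes to order one there and $T_{[v_0]}Y_A$ really is a hyperplane. The standard tool is the formula for the differential of a determinant of a matrix of sections degenerating in corank one: if $\lambda$ is a morphism of vector bundles of the same rank which drops rank by $1$ at a point $p$, with $s_0$ spanning $\ker \lambda_p$ and $t_0^{\vee}$ spanning $\mathrm{coker}$ (i.e.\ $t_0^{\vee}$ a cotangent functional vanishing on $\mathrm{im}\,\lambda_p$), then up to a nonzero scalar the differential of $\det\lambda$ at $p$ applied to a tangent vector $\xi$ is $t_0^{\vee}\!\left( (\nabla_\xi \lambda)(s_0) \right)$. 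So first I would set up local trivializations of $F$ and of $\O_{\P(V)}\otimes A^{\dual}$ near $[v_0]$, write down $\lambda_A$ explicitly, and record that $\ker(\lambda_A)_{[v_0]}$ is spanned by $v_0\wedge\alpha$ (this is precisely the hypothesis $F_{v_0}\cap A=\langle v_0\wedge\alpha\rangle$), while $\mathrm{coker}$ is spanned by the functional on $\bigwedge^3 V/A\cong A^{\dual}$ dual to the line $\langle v_0 \wedge \alpha\rangle$; under the symplectic pairing this functional is $\beta \mapsto \vol(\beta \wedge v_0 \wedge \alpha)$.

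Next I would differentiate. A tangent vector to $\P(V)$ at $[v_0]$ is represented by a vector $v\in V$ modulo $v_0$. Moving $[v_0]$ in the direction $v$, the subbundle $F$ varies: the section $v_0\wedge\alpha$ of $F$ deforms, to first order, into $v_0\wedge\alpha + \varepsilon\, v\wedge\alpha$ (differentiating the family $F_{v_0+\varepsilon v}$, whose elements are $(v_0+\varepsilon v)\wedge\alpha$). Composing with the projection $\bigwedge^3 V\to\bigwedge^3 V/A\cong A^{\dual}$ kills the $v_0\wedge\alpha$ term (it lies in $A$) and leaves $\varepsilon$ times the class of $v\wedge\alpha$. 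Pairing this with the cokernel functional gives, up to scalar,
\begin{equation*}
d(\det\lambda_A)_{[v_0]}(v) \;=\; \vol\!\left( v\wedge\alpha\wedge v_0\wedge\alpha \right) \;=\; \vol\!\left( v_0\wedge v\wedge\alpha\wedge\alpha \right)
\end{equation*}
after rearranging the wedge factors. Hence the kernel of the differential is exactly $\{\,v\in V \mid \vol(v_0\wedge v\wedge\alpha\wedge\alpha)=0\,\} = H_{v_0}$, and since $v_0$ itself lies in $H_{v_0}$ (the expression contains $v_0$ twice once $v=v_0$, actually one checks $v_0 \wedge v_0 = 0$ makes it vanish), the form descends to $\P(V)$ and cuts out precisely $\P(H_{v_0})$, which is therefore the projective tangent space. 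One should also note $H_{v_0}$ is a genuine hyperplane: the functional $v\mapsto\vol(v_0\wedge v\wedge\alpha\wedge\alpha)$ is nonzero because $[v_0]$ is a smooth point (if it were zero, the differential would vanish, contradicting smoothness), which is where the hypothesis is used again.

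The main obstacle is making the ``differential of a corank-one determinant'' formula rigorous and intrinsic in this setting: one must check that the naive first-order computation with the deforming section $v_0\wedge\alpha$ genuinely computes $d(\det\lambda_A)$ up to a unit, i.e.\ that the contributions from the deformation of the ambient bundle $\O\otimes A^{\dual}$ (which is trivial here, so it contributes nothing) and from the choice of trivialization (which only rescales by a unit, not affecting the zero locus of the differential) are correctly accounted for. A clean way to handle this is to work with the induced section of $\bigwedge^{10} F^{\dual}\otimes\bigwedge^{10}(A^{\dual})$ and compute its $1$-jet along $\ker$ and $\mathrm{coker}$, or to cite O'Grady's local analysis in \cite{Kieran2}. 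Everything else --- the wedge rearrangement, the identification $\bigwedge^3V/A\cong A^{\dual}$, the fact that the relevant functional is nonzero --- is routine given Propositions~\ref{smoothness of Y} and the definitions already in place.
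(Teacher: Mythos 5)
The paper does not prove this proposition: it is stated as a result of O'Grady and cited from \cite{Kieran2} and \cite{Kieran6}, so there is no internal proof to compare against. Your argument is correct and supplies exactly the standard computation one would expect (and which underlies O'Grady's local analysis): the corank-one determinant formula $d(\det\lambda_A)(v)=c\,t_0^{\dual}\big((\nabla_v\lambda_A)(s_0)\big)$ is legitimate here because $\dim(F_{v_0}\cap A)=1$, the ambiguity in extending $s_0=v_0\wedge\alpha$ lands in the image of $(\lambda_A)_{v_0}$ and is killed by the cokernel functional, and the identification of that functional with $\beta\mapsto\vol(\beta\wedge v_0\wedge\alpha)$ via $(F_{v_0}+A)^{\perp}=F_{v_0}\cap A$ is right. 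Differentiating the section $(v_0+tv)\wedge\alpha$ gives $\vol(v\wedge\alpha\wedge v_0\wedge\alpha)=\pm\vol(v_0\wedge v\wedge\alpha\wedge\alpha)$, and nonvanishing of this linear form is equivalent to $\bar\alpha\wedge\bar\alpha\neq 0$ in $\bigwedge^4(V/\langle v_0\rangle)$, i.e.\ to the indecomposability of $\alpha$ guaranteed by Proposition \ref{smoothness of Y}; so $H_{v_0}$ is a genuine hyperplane containing $v_0$ and $\P(H_{v_0})$ is the projective tangent space. No gaps.
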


\subsection{Double EPW sextics}

Assume that $Y_A$ is not the whole $\P(V)$. The map of vector bundles $\lambda_A$ in \eqref{lambda} is an \emph{injective homomorphism of sheaves}, whose cokernel is supported on $Y_A$. If we denote
\begin{equation*}
i_A \colon Y_A \to \P(V)
\end{equation*}
the inclusion, then we have an exact sequence
\begin{equation} \label{exact sequence}
\short{F}{\O_{\P(V)} \otimes A^{\dual}}{i_{A*}(\xi_A)}
\end{equation}
for some sheaf $\xi_A$ on $Y_A$. For a generic Lagrangian subspace $A$ the locus
\begin{equation*}
Y_A[2] = \{ [v] \in \P(V) \mid \dim (F_v \cap A) \geq 2 \}
\end{equation*}
is properly contained in $Y_A$; it follows that $\xi_A$ is generically free of rank $1$.

Let
\begin{equation*}
\zeta_A = \xi_A^{\dual}(3);
\end{equation*}
then O'Grady proves in \cite{Kieran2} that there is a natural \emph{multiplication} map
\begin{equation*}
m_A \colon \zeta_A \otimes \zeta_A \to \O_{Y_A}.
\end{equation*}
More precisely one has the following
\begin{lemma}[O'Grady]
The map $m_A$ is symmetric and associative, and gives an isomorphism between $\zeta_A \otimes \zeta_A$ and $\O_{Y_A}$.
\end{lemma}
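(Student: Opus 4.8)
The statement bundles together three claims --- that $m_A$ is a well-defined morphism, that it is symmetric and associative, and that it is an isomorphism --- and I would establish them in that order, the last being where the real work lies. For the construction I would follow O'Grady's idea that \eqref{exact sequence} is self-dual up to a twist: dualizing \eqref{exact sequence} into $\O(3)$ and using that $A$ and each fibre $F_v$ are Lagrangian in $\bigwedge^3 V$, so that $\bigwedge^3 V/A\cong A^\dual$ and $\bigwedge^3 V/F\cong F^\dual$ canonically, together with the triviality of $\omega_{Y_A}$ (coming from $Y_A$ being a sextic), one obtains an exact sequence of the same shape --- the cokernel of an injective map of vector bundles of equal rank --- now presenting $i_{A*}\zeta_A$, the bundle map being given by the analogous inclusion-then-projection recipe for the same $A$, projecting modulo $F$ rather than modulo $A$. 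The exterior multiplication on $\bigwedge^{\bullet}V$ then induces, after matching up the twists, a morphism $m_A\colon\zeta_A\otimes\zeta_A\to\O_{Y_A}$, which over the open set $U\subseteq Y_A$ on which $\xi_A$ and $\zeta_A$ are invertible is given on local generators by an explicit formula read off from the isomorphisms \eqref{F isom potenza est}.

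Symmetry and associativity should follow from graded-commutativity and associativity of the exterior algebra; alternatively, and more cheaply, each is an equality of morphisms of coherent sheaves ($\zeta_A^{\otimes 2}\to\zeta_A^\dual$, respectively $\zeta_A^{\otimes 3}\to\zeta_A$) whose source is torsion-free, so it suffices to check it on the dense open $U$, where $\zeta_A$ is a line bundle: there any such pairing is symmetric, and the identity $m_A(s\otimes t)\cdot u = s\cdot m_A(t\otimes u)$ is immediate.

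The heart of the matter is that $m_A$ is an isomorphism. Under the standing hypothesis $Y_A[2]\subsetneq Y_A$, which holds for generic $A$, Propositions \ref{smoothness of Y} and \ref{tangent to Sigma} give that $Y_A\setminus U$ has codimension $\geq 2$ in $Y_A$, that $Y_A$ is normal, and that $\xi_A$ --- being the cokernel of an injective map of vector bundles of equal rank, hence a maximal Cohen--Macaulay $\O_{Y_A}$-module of rank one, in particular reflexive --- is invertible on $U$. I would then compute $m_A$ on fibres over $U$: at a smooth point $[v_0]$ the fibre of $\zeta_A$ is $(F_{v_0}\cap A)\otimes\O(3)_{[v_0]} = \langle v_0\wedge\alpha\rangle\otimes\O(3)_{[v_0]}$ with $\alpha$ indecomposable, and, using the description of $H_{v_0}$ in Proposition \ref{tangent to Y}, check that $m_A$ carries a generator to a generator, i.e. is non-degenerate over $U$. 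Hence $m_A$ restricts to an isomorphism on $U$; since $\zeta_A$ is reflexive of rank one on the normal variety $Y_A$, and both $\zeta_A\otimes\zeta_A$ (after passing to its reflexive hull) and $\O_{Y_A}$ are recovered from their restrictions to the codimension $\geq 2$ complement $U$ by the Hartogs extension property of sections of reflexive sheaves, the isomorphism over $U$ propagates to all of $Y_A$; equivalently, $m_A$ identifies $\zeta_A$ with $\zeta_A^\dual$. The hard part will be exactly this last circle of ideas: the non-degeneracy computation over $U$, where the geometry of EPW sextics --- the shape of $F_v\cap A$ and Proposition \ref{tangent to Y} --- genuinely enters, and the control of $\zeta_A$ and of $m_A$ along $Y_A[2]$, where $\zeta_A$ ceases to be locally free, so that the extension across $Y_A\setminus U$ goes through; for this one leans on the maximal Cohen--Macaulayness of the cokernel of $\lambda_A$ and on the normality of $Y_A$. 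Setting up $m_A$ with the correct twist, by contrast, is essentially bookkeeping, and symmetry and associativity are essentially formal.
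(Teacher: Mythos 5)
The first thing to say is that the paper does not prove this lemma: it is quoted from O'Grady (the citation is \cite{Kieran2}) and the text passes directly from the statement to its use in building the double cover. So there is no in-paper proof to compare against; I can only measure your sketch against O'Grady's construction and check it on its own terms. Your overall architecture is the right one and is essentially his: the resolution \eqref{exact sequence} is self-dual up to twist because both $A$ and the fibres $F_v$ are Lagrangian and $\omega_{Y_A}\cong\O_{Y_A}$ by adjunction; $i_{A*}\xi_A$ has a length-one locally free resolution, so $\xi_A$ is maximal Cohen--Macaulay of rank one; the essential computation happens on the open set $U=Y_A\setminus Y_A[2]$ where $\zeta_A$ is invertible; and one extends across the codimension-two complement using normality of $Y_A$ and reflexivity. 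Two steps, however, do not work as written.

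First, the pairing is not ``exterior multiplication on $\bigwedge^{\bullet}V$'': over $U$ the fibre of $\zeta_A$ is (a twist of) $F_v\cap A$, which sits inside the Lagrangian $F_v$, so the symplectic form $\vol(\alpha\wedge\beta)$ restricts to zero on it identically. The pairing that actually exists is the adjugate pairing: locally, after choosing a Lagrangian complement of $A$ transverse to $F_v$, $\lambda_A$ is a symmetric $10\times 10$ matrix $M$ of functions, and $m_A$ is induced by the cofactor matrix $M^c$; non-degeneracy at $[v]$ is equivalent to $M^c_{[v]}\neq 0$, i.e.\ to $\dim(F_v\cap A)=1$, and at smooth points this ties to Proposition \ref{tangent to Y} via $d(\det M)=\mathrm{tr}(M^c\,dM)$. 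This is a repairable vagueness. The serious gap is the final step: $m_A\colon\zeta_A\otimes\zeta_A\to\O_{Y_A}$ is \emph{not} an isomorphism along $Y_A[2]$, and cannot be. If $\zeta_A\otimes\zeta_A\to\O_{Y_A}$ were an isomorphism of sheaves, $\zeta_A$ would be invertible everywhere and $\Spec(\O_{Y_A}\oplus\zeta_A)\to Y_A$ would be \'etale; but the remark immediately following the lemma identifies the ramification locus as the non-empty surface $Y_A[2]$, where the rank of $\xi_A$ jumps. Concretely, at a point of $Y_A[2]$ the local model of $\lambda_A$ is the symmetric $2\times2$ block with entries $a,b,c$ (part of a coordinate system) plus an identity block, $Y_A=\{ac=b^2\}$, and the image of the adjugate pairing is the ideal $(a,b,c)=\idl{Y_A[2]}$, not the unit ideal. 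So the statement your Hartogs argument tries to propagate from $U$ to $Y_A$ is false on $Y_A\setminus U$. What does extend --- and what the lemma must be read as asserting; the paper's phrasing is loose here --- is that the adjoint map $\zeta_A\to\shhom(\zeta_A,\O_{Y_A})$ is an isomorphism, equivalently that the \emph{reflexive hull} of $\zeta_A^{\otimes 2}$ is $\O_{Y_A}$; that is exactly what the ``isomorphism of reflexive sheaves in codimension one'' principle gives. Your parenthetical ``after passing to its reflexive hull'' shows you sensed this, but the argument as written conflates $\zeta_A^{\otimes2}$ with its hull. One last small point: reducing symmetry and associativity to $U$ uses torsion-freeness of the \emph{target} ($\O_{Y_A}$, resp.\ $\zeta_A$), not of the source --- a subsheaf of a torsion-free sheaf supported on a proper closed subset vanishes --- though both happen to hold here.
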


Thanks to the lemma we see that the sheaf
\begin{equation*}
\O_{Y_A} \oplus \zeta_A
\end{equation*}
has the structure of $\O_{Y_A}$-algebra, so we have an associated double covering.
\begin{defin}
We denote by $X_A$ this double covering; the scheme $X_A$ is called a \emph{double EPW sextic}. We denote by
\begin{equation*}
f_A \colon X_A \to Y_A
\end{equation*}
the covering map.
\end{defin}

The scheme $X_A$ is endowed with a polarization $h_A = f_A^{*} \O_{Y_A}(1)$.

\begin{rem}
The ramification locus of the map $f_A$ is $Y_A[2]$. To see this we just need to observe that by construction the ramification locus is the locus where the sheaf $\zeta_A$, or equivalently the sheaf $\xi_A$, is not locally free. Since $i_{A*}(\xi_A)$ is the cokernel of the map
\begin{equation*}
\lambda_A \colon F \to \O_{\P(V)} \otimes A^{\dual},
\end{equation*}
we see that the rank of $\xi_A$ jumps exactly along $Y_A[2]$, hence our claim.
\end{rem}

As a corollary to Proposition \ref{smoothness of Y} one finds:
\begin{cor} \label{smoothness of X}
The double covering $X_A$ is smooth if and only if
\begin{equation*}
A \in \lag(\bigwedge^3 V)^{0}.
\end{equation*}
\end{cor}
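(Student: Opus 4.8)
The plan is to verify smoothness of $X_A$ at each point, distinguishing the points lying over $Y_A\setminus Y_A[2]$, where $f_A$ is étale, from those over $Y_A[2]$, the branch locus of $f_A$. Over the étale part $X_A$ is smooth at $x$ iff $Y_A$ is smooth at $f_A(x)$, and by Proposition~\ref{smoothness of Y} the singular locus of $Y_A\setminus Y_A[2]$ is $\bigl(\bigcup_W\P(W)\bigr)\setminus Y_A[2]$, where $W$ runs over the $3$-dimensional subspaces of $V$ with $\bigwedge^3 W\subset A$. For such a $W$ a general $[v]\in\P(W)$ satisfies $\dim(F_v\cap A)=1$, because $F_v\cap A\supseteq v\wedge\bigwedge^2 W=\langle\,\bigwedge^3 W\,\rangle$ is one-dimensional while the reverse inequality can fail only on a proper closed subset of the surface $\P(W)$; hence $\P(W)\not\subset Y_A[2]$. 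Therefore $Y_A\setminus Y_A[2]$ is smooth exactly when no such plane exists, i.e. exactly when $A\notin\Sigma$, and in particular $A\in\Sigma$ already forces $X_A$ to be singular.

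Over $Y_A[2]$ I would exploit the local structure of the symmetric degeneracy map $\lambda_A$, relying on O'Grady's basic facts from \cite{Kieran2} that $Y_A[2]$ has pure dimension $2$, is smooth off the finite set $Y_A[3]$, and that $Y_A[4]=\emptyset$. The theory of the relative position of the two Lagrangian subbundles $F$ and $\O\otimes A$ shows that near a point $[v_0]$ with $\dim(F_{v_0}\cap A)=k$ one can trivialise $F$ and $\O\otimes A^{\dual}$ so that $\lambda_A$ takes the form $\mathrm{id}_{10-k}\oplus M$, with $M$ a $k\times k$ symmetric matrix of function germs vanishing at $[v_0]$ — the local incarnation of the $\Sym^2(A^{\dual})$ of Proposition~\ref{tangent to Sigma}. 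Then, locally near $[v_0]$, $Y_A=\{\det M=0\}$, $Y_A[2]=\{\rk M\le k-2\}$, the sheaf $\xi_A$ is $\operatorname{coker}M$, and $X_A=\Spec_{Y_A}(\O_{Y_A}\oplus\zeta_A)$ with $\zeta_A\cong(\operatorname{coker}M)^{\dual}$ up to the locally trivial twist by $\O(3)$, the $\O_{Y_A}$-algebra structure being the one supplied by $m_A$.

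If $A\notin\Delta$ then $k=2$ throughout $Y_A[2]$; since $Y_A[2]$ is a smooth surface the differential $T_{[v_0]}\P(V)\to\Sym^2\bigl((F_{v_0}\cap A)^{\dual}\bigr)$ is surjective, so coordinates can be chosen with $M=\left(\begin{smallmatrix}x_1 & x_2\\ x_2 & x_3\end{smallmatrix}\right)$. Thus $Y_A$ is locally $C\times\C^2$, with $C=\{x_1x_3=x_2^2\}$ the quadric cone, $\zeta_A$ is the nontrivial element of $\operatorname{Cl}(C)\cong\Z/2$, and $\O_C\oplus\zeta_A$ is identified — compatibly with $m_A$ — with $\C[u,v]$ via $(x_1,x_2,x_3)\mapsto(u^2,uv,v^2)$; hence $X_A$ is locally $\cong\C^4$ and smooth, which with the first paragraph yields $A\notin\Sigma\cup\Delta\Rightarrow X_A$ smooth. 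Conversely let $A\in\Delta$, pick $[v_0]\in Y_A[3]$, and suppose $X_A$ were smooth at the (unique) point $x_0$ over it. As $Y_A$ is a hypersurface, hence normal, $f_A$ presents $Y_A$ near $[v_0]$ as the quotient of the smooth fourfold $X_A$ by the covering involution $\iota$, which fixes $x_0$; its fixed locus $R$ near $x_0$ is smooth, $f_A$ maps it injectively onto $Y_A[2]$, so $R$ has codimension $2$ in $X_A$ and $Y_A$ is locally $(\C^2/\{\pm 1\})\times\C^2$ with singular locus the smooth surface $Y_A[2]\cong R$. But in the local model $Y_A[2]=\{\rk M\le 1\}$ is cut out by the $2\times 2$ minors of the $3\times 3$ symmetric germ $M$ with $M(0)=0$, all of which vanish to order $\ge 2$ at $[v_0]$; since $Y_A[2]$ has dimension $2$ this makes $[v_0]$ a singular point of it — a contradiction. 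Hence $X_A$ is singular for every $A\in\Delta$, and the proof is complete.

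The only genuinely non-formal ingredient is the local study of $\lambda_A$ along $Y_A[2]$: the reduction to the symmetric normal form $\mathrm{id}\oplus M$, the explicit description of $\xi_A$, $\zeta_A$ and — most delicately — of the multiplication $m_A$ inside this model, and the input from O'Grady that $Y_A[2]$ is a smooth surface off $Y_A[3]$. I expect checking that $m_A$ really produces the algebra with $\O_C\oplus\zeta_A=\C[u,v]$, rather than some twisted variant of it, to be the main obstacle; once that is in hand, the étale locus is handled directly by Proposition~\ref{smoothness of Y} and the singularity over $Y_A[3]$ follows from the soft argument with the covering involution.
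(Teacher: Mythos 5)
The paper offers no argument for this corollary at all --- it is recorded as an immediate consequence of Proposition \ref{smoothness of Y} and is really O'Grady's theorem --- so you are in effect reconstructing the proof from \cite{Kieran2}. Your architecture is the right one (handle the locus where $f_A$ is \'etale via Proposition \ref{smoothness of Y}, and the branch locus via the symmetric normal form $\mathrm{id}\oplus M$ of $\lambda_A$), but two steps are genuinely missing. First, the claim that $\P(W)\not\subset Y_A[2]$ for every $A\in\Sigma$ is unproved and in fact false: write $V=W\oplus\langle v_4,v_5,v_6\rangle$ and let $A$ be any Lagrangian containing the isotropic subspace $\bigwedge^3 W\oplus(W\wedge v_4\wedge v_5)$; then for every $[v]\in\P(W)$ the space $F_v\cap A$ contains the two independent vectors $\bigwedge^3 W$ and $v\wedge v_4\wedge v_5$, so $\P(W)\subset Y_A[2]$ and your first paragraph finds no singular point of $Y_A$ away from the branch locus. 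Hence the implication $A\in\Sigma\Rightarrow X_A$ singular is not established as written; the real proof must feed the presence of a decomposable form $v\wedge\alpha\in F_v\cap A$ into the local model \emph{also at points of} $Y_A[2]$ and show that it degenerates the quadratic part of $M$ so that the double cover stays singular. (This is exactly the second condition in Proposition \ref{smoothness of Y}, and it has to enter the rank-$2$ analysis, not only the rank-$1$ one.)

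Second, the step you yourself flag as ``the main obstacle'' --- that in the rank-$2$ model the algebra $(\O_C\oplus\zeta_A,m_A)$ is $\C[u,v]$ over $C=\C^2/\{\pm 1\}$ --- is the entire content of the direction $A\in\lag(\bigwedge^3 V)^{0}\Rightarrow X_A$ smooth; without pinning down $m_A$ in these coordinates (which is what O'Grady actually does) the smoothness over the branch locus is simply not proved. A smaller but real issue is the contradiction at $Y_A[3]$: the vanishing to order $\geq 2$ of the $2\times 2$ minors shows that the determinantal \emph{scheme} $Y_A[2]$ has full Zariski tangent space at $[v_0]$, whereas your contradiction needs the \emph{reduced} surface underlying $Y_A[2]$ to be singular there; these are not the same statement (a scheme cut out by equations vanishing to order two can have smooth reduction), so you should either work with the reduced structure directly or argue via the multiplicity of the cone of rank-$\leq 1$ symmetric matrices. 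None of this makes the strategy wrong --- it is the standard one --- but as written the proposal completes neither implication.
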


The relevance of these double coverings stems from the following result.
\begin{prop}[O'Grady]
Let $A \in \lag(\bigwedge^3 V)^{0}$. Then $X_A$ is an irreducible symplectic variety. The polarized Hodge structure on $H^2(X, \Z)$ is the same as that of $S^{[2]}$, where $S$ is a $K3$ surface, and its Fujiki constant is $3$.
\end{prop}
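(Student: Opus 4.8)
The plan is to obtain all three assertions at once from a smooth proper family of double EPW sextics over a connected base, together with the identification of one distinguished member with the Hilbert square of a $K3$ surface.

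\emph{The family.} The passage from $A$ to $X_A$ is algebraic and relative: $\lambda_A$, the cokernel $i_{A*}\xi_A$, the sheaf $\zeta_A$ and the multiplication $m_A$ all vary algebraically with $A$, so the double coverings assemble into a projective morphism $\mathcal{X} \to \lag(\bigwedge^3 V)^0$, which is smooth and proper by Corollary \ref{smoothness of X}. Since $\lag(\bigwedge^3 V)$ is smooth, irreducible and $55$-dimensional, and $\lag(\bigwedge^3 V)^0$ is obtained from it by removing the two divisors $\Sigma = \Sigma_1$ and $\Delta$ (Proposition \ref{tangent to Sigma}), the base is connected. Hence, by Ehresmann's theorem, the $X_A$ with $A \in \lag(\bigwedge^3 V)^0$ are all diffeomorphic, and over the connected base they carry a polarised variation of Hodge structure. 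It follows that the diffeomorphism type, the fundamental group, the Betti and Hodge numbers, the Chern classes $c_i(X_A) \in H^{2i}(X_A,\Z)$, the isometry class of $(H^2(X_A,\Z), q)$ with $q$ the Beauville--Bogomolov form, the Fujiki constant, the value $q(h_A)$ (recall $h_A^4 = 2\deg Y_A = 12$ is constant), and --- by Huybrechts' theorem that small deformations of irreducible symplectic manifolds remain irreducible symplectic --- the property of being an irreducible symplectic variety, are all independent of $A$. It therefore suffices to verify the Proposition for one convenient $A_0 \in \lag(\bigwedge^3 V)^0$ and to transport the conclusions through the family.

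\emph{The distinguished member.} For this one invokes O'Grady's construction of a double EPW sextic out of a $K3$ surface (\cite{Kieran2}): from a sufficiently general polarised $K3$ surface $S$ of the appropriate degree one produces a Lagrangian $A_0 = A_0(S) \in \lag(\bigwedge^3 V)^0$ and a natural isomorphism $X_{A_0} \cong S^{[2]}$ under which $h_{A_0}$ corresponds to the polarisation of Beauville square $2$ on $S^{[2]}$. For the Hilbert square everything is classical: $S^{[2]}$ is an irreducible symplectic fourfold, simply connected, with trivial canonical bundle, $H^2(S^{[2]},\Z) \cong H^2(S,\Z) \oplus \Z\delta$ carrying the Beauville--Bogomolov form $\langle\cdot,\cdot\rangle_S \perp (-2)$, and Fujiki constant $3$. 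Feeding this back through the family yields, for every $A \in \lag(\bigwedge^3 V)^0$: $X_A$ is irreducible symplectic of dimension $4$; $c_1(X_A)=0$ together with $h^0(\omega_{X_A})=h^{4,0}(S^{[2]})=1$ forces $\omega_{X_A} \cong \O_{X_A}$; the polarised lattice $(H^2(X_A,\Z), q, h_A)$ is abstractly isomorphic to that of $S^{[2]}$ for a suitable $K3$ surface $S$ --- and for the fibre $X_{A_0}$ itself the polarised Hodge structures coincide outright --- while the Fujiki constant equals $3$. (An alternative, more self-contained way to see that a given $X_A$ carries a holomorphic symplectic form would be to construct it directly from the symplectic form $(\alpha,\beta)=\vol(\alpha\wedge\beta)$ on $\bigwedge^3 V$ and the Lagrangian conditions on $F_v$ and on $A$, in the spirit of Beauville and Donagi for the Fano variety of lines on a cubic fourfold; but the deformation type and the Hodge-theoretic statements still seem to need the comparison with $S^{[2]}$.)

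\emph{Main obstacle.} The serious work lies entirely in the construction of the distinguished member: producing $A_0(S)$ and establishing $X_{A_0} \cong S^{[2]}$ demands the explicit Mukai-style linear algebra linking $K3$ surfaces, Lagrangian subspaces of $\bigwedge^3 V$ and Hilbert squares of points, and --- crucially for the argument above --- one must check that $A_0$ may be chosen inside the open locus $\lag(\bigwedge^3 V)^0$, i.e. that $Y_{A_0}$ is singular exactly along $Y_{A_0}[2]$, with $Y_{A_0}[3] = \emptyset$ and containing no plane $\P(W)$ with $\bigwedge^3 W \subset A_0$ (Propositions \ref{smoothness of Y} and \ref{tangent to Sigma}), so that $X_{A_0}$ genuinely lies in the connected family $\mathcal{X} \to \lag(\bigwedge^3 V)^0$. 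Once this is in hand, everything else is formal, being deformation invariance applied to that family.
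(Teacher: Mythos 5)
Your first step is sound and coincides with how both O'Grady and this paper globalise the statement: the $X_A$ fit into a smooth projective family over $\lag(\bigwedge^3 V)^{0}$, which is a nonempty Zariski-open subset of the irreducible $55$-dimensional variety $\lag(\bigwedge^3 V)$ and hence connected, so the diffeomorphism type, the lattice $(H^2,q,h)$, the Fujiki constant and (by Huybrechts) the property of being irreducible symplectic are constant in the family. The problem is entirely in your second step, the ``distinguished member'', which is where all the content lives and where your argument does not match what the cited sources actually provide.

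You posit an $A_0\in\lag(\bigwedge^3 V)^{0}$ with $X_{A_0}\cong S^{[2]}$ as \emph{polarized} varieties, attributing the construction to \cite{Kieran2}. That is not what O'Grady's construction gives, and the candidate one would naturally reach for fails outright: for $S$ a quartic surface in $\P^3$, the square-$2$ class on $S^{[2]}$ is the one inducing the $6{:}1$ morphism $S^{[2]}\to G=\Gr(2,U)$ onto a quadric (Section \ref{degeneration}), not a $2{:}1$ cover of a sextic; the corresponding Lagrangian $A_{+}(U)$ is spanned by decomposable forms, so it sits deep inside $\Sigma$ and produces the non-reduced sextic $Y_{A_{+}(U)}=3G$ --- nowhere near $\lag(\bigwedge^3 V)^{0}$. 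For exactly this reason the bridge between $S^{[2]}$ and honest double EPW sextics is built not inside the family over $\lag(\bigwedge^3 V)^{0}$ but in the semiuniversal deformation space of $S^{[2]}$: Proposition \ref{deformation} (the paper's own proof of the essential content, yielding Corollary \ref{EPW are HyperKahler}) shows that the generic small deformation of $(S^{[2]},h)$ keeping $h$ algebraic is a smooth double EPW sextic, and the work there is precisely what your ``formal'' framework cannot absorb: O'Grady's classification of polarized numerical $K3^{[2]}$'s with $q(h)=2$ (\cite{Kieran1}, Prop.\ 3.2), the comparison $h^0(\mathcal{X}_t,H_t)=h^0(Y_t,H_t')$ via Kodaira vanishing to rule out the birational case, the openness of the ``$2{:}1$ onto a sextic'' condition (\cite{Kieran3}), and the identification of the resulting sextics as EPW (\cite{Kieran6}). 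A statement of the kind you want --- that some genuine $S^{[2]}$ (for a degree-$10$ $K3$, with a different square-$2$ class) is itself a smooth double EPW sextic --- is true, but it is a separate and harder theorem (Iliev--Manivel, and O'Grady's later work), not something extractable from \cite{Kieran2}, and verifying that such an $A_0$ lands in $\lag(\bigwedge^3 V)^{0}$ is at least as delicate as the degeneration argument it is meant to replace. Since your proof is, by your own account, formal once the distinguished member is in hand, it is incomplete exactly at its load-bearing step.
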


Let $Z_A = f_A^{-1} (Y_A[2])$; this is the branch locus for the $2:1$ covering, hence it is isomorphic to $Y_A[2]$ itself. Since the covering involution is antisymplectic, the symplectic form restricts to $0$ on $Z_A$, that is, $Z_A$ is isotropic. Under mild assumptions $Z_A$ is a surface, hence a Lagrangian surface inside $X_A$. More precisely we have the
\begin{prop}[O'Grady] \label{surface ZA}
Let $A \in \lag(\bigwedge^3 V)^{0}$. Then $Y_A[2]$ is a smooth connected surface of degree $40$, with $\chi_{top}(Y_A[2]) = 192$.
\end{prop}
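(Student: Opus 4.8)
The plan is to identify $Y_A[2]$ with a degeneracy locus of the map $\lambda_A$ and then apply Thom–Porteous together with a direct analysis of the infinitesimal structure. First I would set up the degeneracy-locus picture: $Y_A[k]$ is exactly the locus where $\rk \lambda_A \leq 10 - k$, so $Y_A[2]$ is the second degeneracy locus of a map $\lambda_A \colon F \to \O_{\P(V)} \otimes A^\dual$ between bundles of rank $10$ on $\P^5$. Since $A \in \lag(\bigwedge^3 V)^0$ we have $Y_A[3] = \emptyset$ and (by Proposition \ref{smoothness of Y} and the absence of decomposable forms, which is the condition $A \notin \Sigma$) the expected codimension $4$ is achieved; thus $Y_A[2]$ is a surface. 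The expected dimension count is the routine part: $Y_A[2]$ has codimension $(10-(10-2))\cdot(10-(10-2)) = 2\cdot 2 = 4$ in $\P^5$, hence it is a surface, and once we know it is smooth of the expected dimension its class is given by the Thom–Porteous formula applied to $c(\O\otimes A^\dual - F) = c(F)^{-1}$, using $c_1(F) = -6H$ from \eqref{c_1(F)} together with the higher Chern classes of $F \cong \sh S \otimes \bigwedge^2 \sh Q$.

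For \textbf{smoothness and the degree}: I would use the standard fact that over the open stratum $Y_A[2]\setminus Y_A[3] = Y_A[2]$ the degeneracy locus is smooth provided the map is sufficiently generic in the appropriate jet sense — and here genericity is guaranteed precisely by $A \notin \Sigma \cup \Delta$. Concretely, at a point $[v]$ with $\dim(F_v \cap A) = 2$ one looks at the induced symmetric map $\Sym^2$-type pairing (analogous to the quadratic form appearing in Proposition \ref{tangent to Y} and in part iv) of Proposition \ref{tangent to Sigma}), and the conditions defining $\lag(\bigwedge^3 V)^0$ translate into the nondegeneracy needed for the Zariski tangent space to $Y_A[2]$ at $[v]$ to have dimension $2$. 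The degree $\deg Y_A[2] = 40$ then comes out of evaluating the Thom–Porteous class against $H^3$: the relevant Schur polynomial in the Chern classes of $-F$ is a $2\times 2$ determinant $\det(c_{1+j-i}(-F))_{1\le i,j\le 2} = c_2(-F)^2 - c_1(-F)c_3(-F)$ (with a shift for the codimension-$4$ locus), and plugging in the Chern classes of $\sh S\otimes\bigwedge^2\sh Q$ on $\P^5$ yields $40\,H^4$.

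For the \textbf{topological Euler characteristic}, I would compute $\chi_{top}(Y_A[2]) = \int_{Y_A[2]} c_2(T_{Y_A[2]})$ via the excess/normal-bundle exact sequence
\begin{equation*}
0 \to T_{Y_A[2]} \to T_{\P^5}\res{Y_A[2]} \to N \to 0,
\end{equation*}
where the normal bundle $N$ of the degeneracy locus is identified, by the Kempf–Laksov or Jozefiak–Lascoux–Pragacz resolution, with $\shhom(K, C)$ for $K = \ker\lambda_A\res{Y_A[2]}$ and $C = \operatorname{coker}\lambda_A\res{Y_A[2]}$, both of rank $2$; so $N = K^\dual\otimes C$ has rank $4$. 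One computes the Chern classes of $K$ and $C$ from the two exact sequences relating them to $F$ and $\O\otimes A^\dual$ restricted to $Y_A[2]$, substitutes into $c(T_{Y_A[2]}) = c(T_{\P^5})\cdot c(N)^{-1}$, and integrates the degree-$2$ part over $Y_A[2]$ using the class found above. This should give $192$.

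\textbf{Main obstacle.} The genuinely delicate step is \textbf{connectedness} together with the \textbf{smoothness of $Y_A[2]$ along all of itself}: Thom–Porteous gives the class and dimension for granted, but to know $Y_A[2]$ is actually a smooth surface (not just that it has the right class) and is connected, I would need either a Bertini-type argument for degeneracy loci (a connectedness theorem of the Fulton–Lazarsfeld type, which applies since the relevant bundle $\shhom(F, \O\otimes A^\dual)$ is globally generated on $\P^5$ and the codimension is $4 < 5$, giving connectedness of the locus), or an explicit infinitesimal computation at a general point of each component of $Y_A[2]$ using the conditions in $\lag(\bigwedge^3 V)^0$. The smoothness hinges on checking that $A \notin \Delta$ excludes exactly the bad points (where $Y_A[3]\ne\emptyset$) and $A\notin\Sigma$ excludes the planes, so that the only possible singularities of the degeneracy locus have been removed; making this translation between the abstract genericity hypothesis on $A$ and the local jet-genericity of $\lambda_A$ is where the real work lies, and I would lean on O'Grady's local models from \cite{Kieran2} to carry it through.
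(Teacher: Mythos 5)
The paper does not actually prove this proposition: it is quoted from O'Grady (see \cite{Kieran2}, \cite{Kieran6}), so there is no internal proof to compare with. Your proposal, however, rests on the wrong degeneracy-locus model, and this is a genuine gap rather than a presentational issue. You treat $\lambda_A \colon F \to \O_{\P(V)} \otimes A^{\dual}$ as a \emph{generic} morphism of rank-$10$ bundles, for which the corank-$2$ locus has expected codimension $(10-8)(10-8)=4$; but codimension $4$ in $\P^5$ is a \emph{curve}, not a surface, so your own dimension count contradicts the statement you are trying to prove (your sentence ``codimension $4$ in $\P^5$, hence it is a surface'' is arithmetically inconsistent). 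The point is that $\lambda_A$ is not a generic morphism: $F_v$ and $A$ are both Lagrangian in $\bigwedge^3 V$, so $\lambda_A$ is a family of \emph{symmetric} maps --- equivalently, $Y_A[k]$ is a Lagrangian intersection locus --- and the expected codimension of the corank-$k$ stratum is $\binom{k+1}{2}$, not $k^2$. This gives codimension $1$ for the sextic $Y_A=Y_A[1]$, codimension $3$ for the surface $Y_A[2]$, and codimension $6$ for $Y_A[3]$ (hence $Y_A[3]=\emptyset$ when $A\notin\Delta$). The symmetry is visible elsewhere in the paper, in the proof of Proposition \ref{canonical Y2}: the kernel $\sh{K}_v = F_v\cap A$ and the cokernel $\zeta_v = \bigwedge^3 V/(F_v+A)$ are dual to one another via the symplectic form.

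Consequently every numerical ingredient of your argument is computed with the wrong formula. The class of $Y_A[2]$ is not the $2\times 2$ Giambelli determinant $c_2(-F)^2-c_1(-F)c_3(-F)$ but is given by the Harris--Tu/Pragacz formulas for symmetric (Lagrangian) degeneracy loci --- the same circle of ideas the paper invokes via \cite{Harris-Tu} to get $\deg D_2=10$ in the space of quadrics --- and it lives in codimension $3$, so the degree $40$ is read off against $H^2$, not $H^3$. Likewise the normal bundle of $Y_A[2]$ in $\P^5$ at a point of corank exactly $2$ is not $\shhom(K,C)\cong K^{\dual}\otimes C$ of rank $4$ but (a twist of) $\Sym^2(K^{\dual})$ of rank $3$, which is what must be fed into $c(T_{Y_A[2]})=c(T_{\P^5})\cdot c(N)^{-1}$ to obtain $\chi_{top}=192$. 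Your remarks on smoothness (via $A\notin\Sigma\cup\Delta$) and on connectedness (via a Fulton--Lazarsfeld/Tu-type theorem for degeneracy loci) are the right kind of idea, but they too must be run in the symmetric category. As written, the computation would not produce a surface, let alone one of degree $40$.
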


Let $A \in \lag(\bigwedge^3 V)^{0}$, $Z = Z_A$. We will need the following relation in the Chow group.
\begin{prop} \label{canonical Y2}
The canonical class of $Z$ satisfies
\begin{equation*}
2 K_Z = \O_Z(6)
\end{equation*}
in $CH^{*}(Z)$.
\end{prop}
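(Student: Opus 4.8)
The plan is to identify $Z$ with the surface $Y_A[2] \subset \P(V)$ and to compute its canonical class by adjunction in $\P(V)$. For $A \in \lag(\bigwedge^3 V)^{0}$ one has $Y_A[3] = \emptyset$, so by Proposition \ref{surface ZA} the scheme $Y_A[2]$ is a smooth connected surface, and $Z = Z_A = f_A^{-1}(Y_A[2])$ maps isomorphically onto it; under this isomorphism $\O_Z(1) = h_A \res{Z}$ corresponds to $\O_{\P(V)}(1) \res{Y_A[2]}$. Over $Y_A[2]$ the restricted bundle map
\begin{equation*}
\lambda_A \res{Y_A[2]} \colon F \res{Y_A[2]} \to \O_{Y_A[2]} \otimes A^{\dual}
\end{equation*}
has everywhere corank exactly $2$ (corank $\geq 2$ cuts out $Y_A[2]$, corank $\geq 3$ cuts out the empty set $Y_A[3]$), so its kernel $K$ and its cokernel $C$ are locally free of rank $2$; here $K$ is the bundle whose fibre over $[v]$ is $F_v \cap A$. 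Since $K_{\P(V)} = \O(-6)$, adjunction gives $\omega_{Y_A[2]} = \O_{Y_A[2]}(-6) \otimes \det N_{Y_A[2]/\P(V)}$, so it suffices to express $\det N_{Y_A[2]/\P(V)}$ in terms of $K$.

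First I would compute $\det K$. The symplectic form on $\bigwedge^3 V$ identifies $A^{\dual}$ with $\bigwedge^3 V / A$, and the resulting fibrewise pairing between $F_v \cap A$ and $\bigwedge^3 V/(F_v + A)$ given by $(x, \con{y}) \mapsto \vol(x \wedge y)$ is perfect, since $(F_v \cap A)^{\perp} = F_v + A$ for Lagrangian subspaces; hence it globalises to a bundle isomorphism $C \cong K^{\dual}$. On the other hand, from the four-term exact sequence
\begin{equation*}
0 \to K \to F \res{Y_A[2]} \to \O_{Y_A[2]} \otimes A^{\dual} \to C \to 0,
\end{equation*}
together with $c_1(F) = -6 H$ from \eqref{c_1(F)} and $\det(\O_{Y_A[2]} \otimes A^{\dual}) = \O_{Y_A[2]}$, one gets $\det K \otimes (\det C)^{-1} \cong \O_{Y_A[2]}(-6)$. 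Combining the two relations gives $(\det K)^{\otimes 2} \cong \O_{Y_A[2]}(-6)$.

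The heart of the matter is the shape of the normal bundle. Let $\mu \colon \P(V) \to \lag(\bigwedge^3 V)$, $[v] \mapsto F_v$, be the classifying map of the Lagrangian subbundle $F$, and set $\Sigma_k^{A} = \{ L \in \lag(\bigwedge^3 V) \mid \dim(L \cap A) \geq k \}$; then $Y_A[k] = \mu^{-1}(\Sigma_k^{A})$ as schemes, both sides being cut out by the $(k+1) \times (k+1)$ minors of $\lambda_A$, respectively of the universal map on $\lag(\bigwedge^3 V)$. The Lagrangian Schubert variety $\Sigma_2^{A}$ has codimension $3$, is smooth away from $\Sigma_3^{A}$, and along $\Sigma_2^{A} \setminus \Sigma_3^{A}$ its normal bundle is $\Sym^2(\sh{K}^{\dual})$, where $\sh{K}$ is the tautological rank-$2$ bundle with fibre $L \cap A$: indeed $T_L \lag(\bigwedge^3 V) = \Sym^2(L^{\dual})$ and $T_L \Sigma_2^{A}$ is the kernel of the restriction map $\Sym^2(L^{\dual}) \onto \Sym^2((L \cap A)^{\dual})$ — the same computation underlying Proposition \ref{tangent to Sigma}. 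Since $A \in \lag(\bigwedge^3 V)^{0}$ forces $Y_A[3] = \emptyset$, the surface $Y_A[2]$ maps into the smooth locus $\Sigma_2^{A} \setminus \Sigma_3^{A}$, and since $Y_A[2]$ is smooth of the expected dimension $5 - 3 = 2$ (Proposition \ref{surface ZA}) the morphism $\mu$ is transverse to $\Sigma_2^{A}$ along $Y_A[2]$. Therefore
\begin{equation*}
N_{Y_A[2]/\P(V)} \cong \big( \mu^{*} N_{\Sigma_2^{A}/\lag(\bigwedge^3 V)} \big) \res{Y_A[2]} \cong \Sym^2(K^{\dual}),
\end{equation*}
using $\mu^{*} \sh{K} = K$.

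It remains to put the pieces together. From $N_{Y_A[2]/\P(V)} \cong \Sym^2(K^{\dual})$ we get $\det N_{Y_A[2]/\P(V)} \cong (\det K^{\dual})^{\otimes 3} = (\det K)^{\otimes -3}$, so $\omega_{Y_A[2]} \cong \O_{Y_A[2]}(-6) \otimes (\det K)^{\otimes -3}$; squaring and using $(\det K)^{\otimes 2} \cong \O_{Y_A[2]}(-6)$ yields
\begin{equation*}
2 K_{Y_A[2]} = \O_{Y_A[2]}(-12) \otimes \big( (\det K)^{\otimes 2} \big)^{\otimes -3} = \O_{Y_A[2]}(-12) \otimes \O_{Y_A[2]}(18) = \O_{Y_A[2]}(6).
\end{equation*}
Transporting this through the isomorphism $Z \cong Y_A[2]$, under which $\O_Z(1)$ corresponds to $\O_{Y_A[2]}(1)$, gives $2 K_Z = \O_Z(6)$ in $\Pic(Z)$, hence in $CH^{*}(Z)$. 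The main obstacle is the third step: one must be certain that the normal bundle is $\Sym^2(K^{\dual})$ with no extra line-bundle twist. This is exactly the reflection of the genuinely Lagrangian (``symmetric'') nature of the degeneracy condition defining $Y_A[2]$ — it is why $Y_A[2]$ has codimension $3$ and not $4$ in $\P(V)$ — and the factor $2$ in the statement is precisely what absorbs the $2$-torsion ambiguity one would otherwise meet in passing from $(\det K)^{\otimes 2} \cong \O(-6)$ to $(\det K)^{\otimes -3}$.
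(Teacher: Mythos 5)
Your proof is correct, but it takes a genuinely different route from the paper's. You perform adjunction in $\P(V)$, computing the rank-$3$ normal bundle of the codimension-$3$ locus $Y_A[2]$ as $\Sym^2(K^{\dual})$ by exhibiting $Y_A[2]$ as the transverse preimage, under the classifying map $[v] \mapsto F_v$, of the Lagrangian degeneracy locus $\{ L \mid \dim(L \cap A) \geq 2 \}$, whose normal bundle along its smooth locus is $\Sym^2$ of the dual tautological bundle; the transversality is correctly extracted from the smoothness and expected dimension of $Y_A[2]$ (Proposition \ref{surface ZA}), since the Zariski tangent space of a scheme-theoretic preimage equals the preimage of the Zariski tangent space. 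The paper instead performs adjunction inside the symplectic fourfold $X$: since $K_X = 0$ one has $K_Z = \det \norm{Z}{X}$, and the two identifications $\norm{Z}{X} \cong \zeta\res{W}$ (read off from the $\pm 1$-eigenspace decomposition of $T_p X$ under the covering involution) and $\norm{Z}{X}^{\dual} \cong \sh{K}$ (the same symplectic duality you use in the form $C \cong K^{\dual}$) are fed into the four-term sequence to give $2 c_1(\norm{Z}{X}) = -c_1(F)\res{Z} = \O_Z(6)$. The two arguments share the four-term sequence and the kernel--cokernel duality, and differ in which ambient variety carries the adjunction: the paper's version is shorter and needs nothing about Lagrangian degeneracy loci beyond constancy of the rank, while yours never invokes the double cover or the involution but leans on the tangent-space description of $\{\dim(L \cap A) \geq 2\}$ --- which is the Lagrangian analogue of the Schubert computation rather than literally Proposition \ref{tangent to Sigma}, and deserves to be isolated as its own lemma with a proof or reference. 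A small bonus of your approach is the exact formula $K_{Y_A[2]} = \O_{Y_A[2]}(-6) \otimes (\det K)^{\otimes -3}$, which reduces the $2$-torsion ambiguity of the Remark following the Proposition to determining $\det K$ itself.
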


\begin{rem}
The above proposition determines $K_Z$ only up to $2$-torsion. Namely we can rewrite it as
\begin{equation*}
K_Z = \O_Z(3) + \kappa,
\end{equation*}
where $\kappa$ is a $2$-torsion class. One can use the deformation argument of Section \ref{deformation argument} and the results of \cite{Welters} to show that the class $\kappa$ is really non-zero.
\end{rem}

\begin{proof}
For simplicity let us denote $W = f(Z)$ the singular set of $Y$. We know that on $W$ the map $\lambda$ has constant rank $8$, so we get the following exact sequence of \emph{vector bundles} on $W$:
\begin{equation} \label{lambda on W}
\Diag
0 & \rTo & \sh{K} & \rTo & F & \rTo^{\lambda\res{W}} & \O_W \otimes (\bigwedge^3 V/A) & \rTo & \zeta\res{W} & \rTo & 0.
\\
\endDiag
\end{equation}
Here $\sh{K}$ is defined to be the kernel of $\lambda\res{W}$; it has rank $2$. Identifying $W$ with its preimage $Z \subset X$, we claim that the following isomorphisms hold:
\begin{align}
\zeta\res{W} &\cong \norm{Z}{X}. \label{normal bundle}
\\
\sh{K} &\cong \norm{Z}{X}^{\dual} \label{conormal bundle}
\end{align}

Assuming Equations \eqref{normal bundle} and \eqref{conormal bundle} for a moment, the exact sequence in \eqref{lambda on W} gives
\begin{equation*}
c_1 (\norm{Z}{X}^{\dual}) - c_1(F) - c_1 (\norm{Z}{X}) = 0,
\end{equation*}
hence
\begin{equation*}
2 c_1 (\norm{Z}{X}) = - c_1(F) = \O_{Z}(6).
\end{equation*}
Since $X$ has trivial canonical class, it follows that
\begin{equation*}
2 K_Z = 2 c_1 (\norm{Z}{X}) = \O_{Z}(6),
\end{equation*}
as desired.

So we now turn to the proof of \eqref{normal bundle} and \eqref{conormal bundle}. Let $p \in Z$; then the covering involution $\phi$ fixes $p$, so $\phi^{*}$ acts on $T_p X$. This gives a decomposition
\begin{equation*}
T_p X = (T_p X)_{+} \oplus (T_p X)_{-}
\end{equation*}
in eigenspaces for $\phi^{*}$, with eigenvalues $\pm 1$. Since $Z$ is the fixed locus of $\phi$,
\begin{equation*}
(T_p X)_{+} = T_p Z.
\end{equation*}
On the other hand, since
\begin{equation*}
X =\Spec(\O_Y \oplus \zeta),
\end{equation*}
we can identify
\begin{equation*}
(T_p X)_{-} \cong \zeta_{f(p)}.
\end{equation*}
It follows that
\begin{equation*}
(\norm{Z}{X})_p \cong \zeta_{f(p)};
\end{equation*}
this fiber-wise identification is easily seen to globalise, hence yielding the isomorphism in \eqref{normal bundle}.

For the other, we show that $\sh{K} \cong \zeta\res{W}^{\dual}$. Indeed observe that over $W$ we have
\begin{align*}
\sh{K}_v &= F_v \cap A \text{ and}
\\
\zeta_v &= \bigwedge^3 V /(F_v + A).
\end{align*}
The symplectic form identifies $\sh{K}_v^{\dual}$ with the quotient $\bigwedge^3 V/(F_v \cap A)^{\perp}$, and since both $A$ and $F_v$ are Lagrangian we have
\begin{equation*}
(F_v \cap A)^{\perp} = F_v^{\perp} + A^{\perp} = F_v + A,
\end{equation*}
thereby proving isomorphism \eqref{conormal bundle}.
\end{proof}

\begin{cor}
For $A \in \lag(\bigwedge^3 V)^{0}$ the surface $Z_A \cong Y_A[2]$ is of general type.
\end{cor}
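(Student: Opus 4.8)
The plan is to deduce the statement directly from Proposition~\ref{canonical Y2}, once we observe that $\O_Z(1)$ is ample. Recall that $Z = Z_A$ is identified by $f_A$ with $Y_A[2] \subset \P(V)$, which is a smooth (connected) surface by Proposition~\ref{surface ZA}; under this identification $\O_Z(1)$ is the restriction of $\O_{\P(V)}(1)$, hence very ample, and in particular $\O_Z(6)$ is ample.

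Next I would upgrade the relation $2K_Z = \O_Z(6)$ of Proposition~\ref{canonical Y2} from an identity of divisor classes to one of line bundles: since $Z$ is a smooth surface, $CH^1(Z) = \Pic(Z)$, so the relation says $\omega_Z^{\otimes 2} \cong \O_Z(6)$, which is ample. Because a line bundle on a projective variety is ample if and only if some (equivalently, every) positive tensor power is ample, $\omega_Z$ itself is ample; in particular $\omega_Z$ is big, which is precisely the statement that $Z$ is of general type. (Moreover $Z$ is then automatically minimal — it has no $(-1)$-curve $C$ since such a curve would satisfy $K_Z \cdot C = -1 < 0$, contradicting ampleness — so $Z$ is a minimal surface of general type, indeed its own canonical model.) If one prefers to argue without invoking the definition of "big", the same conclusion follows by computing the even plurigenera $P_{2m}(Z) = h^0(Z, \omega_Z^{\otimes 2m}) = h^0(Z, \O_Z(6m))$, which by Riemann--Roch and Serre vanishing grows like $\tfrac12 (\O_Z(1))^2 (6m)^2$ as $m \gg 0$, i.e.\ quadratically; quadratic growth of the plurigenera forces $\kappa(Z) = 2$.

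There is essentially no obstacle at this stage: all of the geometric content lies in Proposition~\ref{canonical Y2} and in O'Grady's Proposition~\ref{surface ZA}. The only point requiring a moment's care is the passage from the cycle-class identity $2K_Z = \O_Z(6)$ to the corresponding isomorphism of line bundles, which is legitimate because $Z$ is smooth; after that, the ampleness of $\O_Z(1)$ does all the work.
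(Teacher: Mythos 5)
Your proof is correct and matches the paper's (implicit) reasoning: the corollary is stated without proof as an immediate consequence of Proposition \ref{canonical Y2}, the point being exactly that $\omega_Z^{\otimes 2} \cong \O_Z(6)$ is ample (as the restriction of $\O_{\P(V)}(6)$ to the smooth connected surface $Y_A[2]$), hence $K_Z$ is ample and $Z$ is of general type. Your extra care in passing from the $CH^1$ identity to an isomorphism of line bundles is fine but not even needed, since ampleness is a numerical condition (Nakai--Moishezon) and so already follows from $2K_Z \equiv 6H$.
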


\subsection{EPW sextics containing a plane}

We analyse in more detail the sextic $Y_A$ for $A \in \Sigma$. By definition we have some $W \subset V$ of dimension $3$ such that
\begin{equation*}
\bigwedge^3 W \subset A.
\end{equation*}
This, by definition, implies that $Y_A$ contains the plane $\P(W)$. Moreover it is not difficult to see that $Y_A$ is singular along this plane (recall the more precise statement in Proposition \ref{smoothness of Y}).

\begin{prop}[O'Grady, \cite{Kieran4}]
Let $W \subset V$ be a subspace with $\dim W = 3$, $A \subset \bigwedge^3 V$ a generic Lagrangian subspace containing $\bigwedge^3 W$.
Then
\begin{equation*}
C_{A, W} = \P(W) \cap Y_A[2]
\end{equation*}
is a curve of degree $6$ inside $\P(W)$.
\end{prop}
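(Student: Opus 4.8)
The plan is to realise $C_{A,W}$ as the zero locus of the determinant of a \emph{square} morphism of vector bundles on $\P(W)\cong\P^2$ and then to read off its degree from $c_1(F)=-6H$, cf.\ \eqref{c_1(F)}. I begin with two elementary observations valid for every $[v]\in\P(W)$. Choosing a basis $w_1,w_2,w_3$ of $W$ and writing $v=\sum_i a_i w_i$ with, say, $a_1\neq 0$, one has $w_1\wedge w_2\wedge w_3 = a_1^{-1}\, v\wedge(w_2\wedge w_3)\in F_v$, so $\bigwedge^3 W\subset F_v$; combined with $\bigwedge^3 W\subset A$ this gives $\bigwedge^3 W\subset F_v\cap A$ and reproves $\P(W)\subset Y_A$. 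Moreover, since $F_v$ and $A$ are Lagrangian and both contain $\bigwedge^3 W$,
\begin{equation*}
F_v=F_v^{\perp}\subset(\bigwedge^3 W)^{\perp}=:N,\qquad A=A^{\perp}\subset N,
\end{equation*}
where $\dim N=19$ and $\dim(N/A)=9$.

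Next I would restrict $\lambda_A$ to $\P(W)$. By the first observation the trivial sub-line-bundle $\bigwedge^3 W$ lies in the kernel of $\lambda_A\res{\P(W)}$ at every point, so $\lambda_A\res{\P(W)}$ descends to a morphism on the rank-$9$ bundle $\overline F:=F\res{\P(W)}/\bigwedge^3 W$; by the second observation the image of $\lambda_A$ at $[v]$ is $(F_v+A)/A\subset N/A$, so this descended morphism factors through a map
\begin{equation*}
\overline\lambda_A\colon\overline F\longrightarrow\O_{\P(W)}\otimes(N/A)
\end{equation*}
of vector bundles of equal rank $9$. Since $[v]\in Y_A[2]$ exactly when $\lambda_A$ has rank $\le 8$ at $[v]$, i.e.\ when $\overline\lambda_A$ is not injective there, we obtain $C_{A,W}=Z(\det\overline\lambda_A)$.

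It remains to check that $\det\overline\lambda_A\not\equiv 0$ for generic $A$ and to compute the class of its zero locus. For the first point, the Lagrangian subspaces of $\bigwedge^3 V$ containing the isotropic line $\bigwedge^3 W$ correspond bijectively to the Lagrangian subspaces of the $18$-dimensional symplectic space $N/\bigwedge^3 W$; for a fixed $[v_0]\in\P(W)$ the set $\{A:(F_{v_0}\cap A)/\bigwedge^3 W\neq 0\}$ is a proper subvariety of this Lagrangian Grassmannian (a Schubert divisor: any Lagrangian complementary to $F_{v_0}/\bigwedge^3 W$ lies off it), so a generic $A$ satisfies $\det\overline\lambda_A(v_0)\neq 0$. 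Hence $C_{A,W}$ is a genuine effective divisor on $\P(W)$ and
\begin{equation*}
[C_{A,W}]=c_1\bigl(\O_{\P(W)}\otimes(N/A)\bigr)-c_1(\overline F)=-c_1(\overline F)=-c_1\bigl(F\res{\P(W)}\bigr)=6\,H\res{\P(W)},
\end{equation*}
using that $N/A$ is a trivial bundle, that $\bigwedge^3 W$ is a trivial subbundle of $F\res{\P(W)}$, and, at the last step, \eqref{c_1(F)}. Thus $C_{A,W}$ is a curve of degree $6$.

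The step I expect to be the crux is the factorisation through $\O_{\P(W)}\otimes(N/A)$: it is precisely the Lagrangian hypothesis on $A$ that confines the image of $\lambda_A\res{\P(W)}$ to this rank-$9$ subbundle. Without that, $\overline\lambda_A$ would merely be a map from a rank-$9$ to a rank-$10$ bundle, whose non-injectivity locus has expected codimension $2$ and would be a finite set rather than a curve; the genericity is needed only to guarantee that the resulting degree-$6$ divisor is not the whole of $\P(W)$.
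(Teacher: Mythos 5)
Your proof is correct and follows essentially the same route the paper indicates: it reduces to the argument that $Y_A$ is a sextic, carried out for the symplectic trivial bundle over $\P(W)$ with fibre $(\bigwedge^3 W)^{\perp}/(\bigwedge^3 W)$, using that $\bigwedge^3 W\subset F_v$ for every $[v]\in\P(W)$. The paper only sketches this reduction and refers elsewhere for details; your write-up supplies exactly those details (the factorisation through the rank-$9$ subbundle, the genericity check, and the Chern class computation) correctly.
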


The proof if this proposition is almost a word by word repetition of the fact that $Y_A$ is a sextic. One just notes that for every $w \in W$ the Lagrangian subspace $F_w$ contains $\bigwedge^3 W$, and works with the symplectic trivial vector bundle over $\P(W)$ with fiber $(\bigwedge^3 W)^{\perp} / (\bigwedge^3 W)$. We do not go into the details, which can be found in \cite{Kieran4} and in \cite[Section $2.5$]{Ferretti}.

We can get an irreducible symplectic variety out of $Y_A$ by the following

\begin{rem} \label{isv from singular EPW}
Let $A \in \Sigma$ be a Lagrangian subspace, such that $A \supset \bigwedge^3 W$ for exactly one subspace $W \subset V$ of dimension $3$. Then we can construct an irreducible symplectic variety in the following way.

Let $X_A$ be the double covering of $Y_A$ ramified over $Y_A[2]$; then $X_A$ is singular along the double covering $S$ of $\P(W)$. The surface $S$ is a double covering of $\P(W)$ ramified along the smooth sextic $C_{A, W}$, hence it is a $K3$. Let $\tl{X}_A$ be the blowup of $X_A$ along $S$. Then it is not difficult to see that $\tl{X}_A$ is an irreducible symplectic variety, deformation equivalent to a smooth double EPW sextic.
\end{rem}

\subsection{Plan of the paper}

Before turning to the proof of Theorem \ref{main}, we give some remarks on the organization of the paper.

Let $X = X_A$ be a smooth double EPW sextic. The symplectic form gives an isomorphism
\begin{equation*}
T_{X} \cong \Omega_X^1,
\end{equation*}
hence the odd Chern classes vanish. So we only need to consider $c_2(X)$ and $c_4(X)$. Moreover if $A$ is generic in $\lag(\bigwedge^3 V)$, the group $\Pic(X_A)$ is cyclic, generated by $h_A$, so the second conclusion of Theorem \ref{main} follows from the first.

The only relations in cohomology can be in degree $4$, $6$ or $8$. Lemma \ref{no rel in degree 4} excludes the existence of relations of degree $4$, hence we are left with relations in degree $6$ or $8$; these are listed in Propositions \ref{cohomology computations} and \ref{relation in degree 6}.

Since $h^2$, $c_2(X) \cdot h$, $c_2(X)^2$ and $c_4(X)$ are all proportional in cohomology, there must be some distinguished $0$-cycle on $X$, such that all these classes are multiples of it in $CH^4(X)$. We shall define this $0$-cycle as the class of any point on a suitable surface inside $X_A$; actually it will be easier to work with $Y_A$ and pull back everything to $X_A$ later.

Hence we look for a surface $S \subset X$ such that $CH^2(S)$ is trivial, so each point on $S$ is rationally equivalent to each other. For instance, in the proof of the conjecture in the case where $X$ is the Fano variety of a cubic fourfold in \cite{Voisin3}, Claire Voisin used a rational surface. In that case there is a family of Lagrangian surfaces on $X$, which are simply the Fano varieties of hyperplane sections of the cubic; if the section is singular enough, its Fano variety turns out to be rational.

In our case this construction is a delicate point: the analogous of $S$ is an Enriques surface, but exhibiting it is complicated. This is mostly because this Lagrangian surface is not a section of a global Lagrangian vector bundle. We have to turn to a degeneration argument instead.

We should remark that an Enriques surface will do: thanks to a theorem of Bloch, Kas and Lieberman (\cite[Thm. $11.10$]{Voisin2}) the Chow group of $0$-cycles on an Enriques surface is trivial.

The argument we use goes as follows. We shall see in Section \ref{deformation argument} that double $EPW$ sextics can degenerate to a Hilbert scheme $S^{[2]}$, where $S$ is a quartic surface in $\P^3$. Under this process the fixed locus of the covering involution degenerates to the surface $\Bit(S)$ of bitangents to $S$. This allows us to translate some questions about the geometry of $X_A$, which are invariant under deformation, to questions about quartic surfaces and their bitangents, which are somewhat more concrete.

Therefore we begin in Section \ref{enriques example} with a presentation of a classical example of a singular quartic surface $S$ such that $\Bit(S)$ is birational to an Enriques surface. In Sections \ref{degeneration} and \ref{deformation argument} we use this to conclude that for a sufficiently singular EPW sextic $Y_B$ the locus $Y_B[2]$ is again birationallly Enriques. Finally in Section \ref{YB2 in YA} we show that for $A \in \lag(\bigwedge^3 V)^{0}$ we can find some other Lagrangian subspace $B$ such that the preceding holds and $Y_B[2] \subset Y_A$, so finally we have our Enriques surface inside $Y_A$.

The second part of the paper is largely independent of the first. In Section \ref{cohomology} we carry out the cohomology computations on $X$. Section \ref{tangent sextics} studies the geometry of a pair $(Y_A, Y_B)$ of EPW sextics which are everywhere tangent. For such a pair we have $Y_B[2] \subset Y_A$ and we exhibit a rational equivalence between $Y_B[2]$ and $Y_A[2]$ which will be useful to derive relations in the Chow ring. In the remaining sections we define the distinguished $0$-cycle, and find enough relations in the Chow ring to finish the proof of the main theorem.

\subsection{Acknowledgements}

I'd like to thank K. O'Grady for his constant support, advise and encouragement during my Ph.D. I also thank I. Dolgachev for suggesting the construction of Reye congruences.

\section{An example of Enriques surface} \label{enriques example}

In this section we review the classical construction of Reye congruences, and add some facts which we shall need later for a degeneration argument. This construction can be found for instance in \cite{Cossec}; it was suggested to us by I. Dolgachev.

More precisely we want to get the following result.
\begin{prop} \label{enriques}
There exists a $9$-dimensional family of quartic surfaces with $10$ nodes $S$ such that the surface of bitangents $\Bit(S)$ is birational to an Enriques surface.
\end{prop}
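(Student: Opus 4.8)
The plan is to produce the nine-parameter family of nodal quartics directly from the classical geometry of \emph{Reye congruences}, which are the Enriques surfaces one gets from a generic web of quadrics in $\P^3$. First I would fix a general $3$-dimensional linear system (a web) $\mathcal{W} \cong \P^3$ of quadric surfaces in $\P^3 = \P(U)$, $\dim U = 4$. The \emph{Reye congruence} $R = R_{\mathcal{W}}$ is the set of lines $\ell \subset \P^3$ that are contained in a pencil of quadrics of $\mathcal{W}$, i.e. lines on which the restriction map $\mathcal{W} \to H^0(\ell, \O_\ell(2))$ has a $(\geq 1)$-dimensional kernel; it is a surface in the Grassmannian $\Gr(2,U)$. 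It is classical (see \cite{Cossec}) that for general $\mathcal{W}$ the surface $R$ is a smooth Enriques surface, the nonfree quotient of a $K3$ by a fixed-point-free involution. So the Enriques surface is handed to us at the outset; the real content is to realize $\Bit(S)$ as (birational to) this $R$ for a suitable quartic $S$ attached to $\mathcal{W}$.

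The natural candidate for $S$ is the \emph{symmetroid quartic}, i.e. the locus in the \emph{dual} $\P^3 = \P(U^\dual)$ (or in $\P^3$ itself, depending on the set-up) of singular members of a general \emph{net} of quadrics; but to match the $9$ in the statement I would instead take $S$ to be the quartic \emph{symmetroid} associated to $\mathcal{W}$ in the following guise: choose coordinates so that $\mathcal{W}$ is spanned by symmetric $4\times 4$ matrices $Q_0,\dots,Q_3$, and let $S = \{[x] \in \P^3 \mid \det(x_0 Q_0 + \dots + x_3 Q_3) = 0 \text{ on the associated net}\}$. More precisely the classical fact I would invoke is: to a general web of quadrics one associates, via the $(2:1)$ geometry of quadrics and their lines, a quartic surface $S$ with exactly $10$ nodes (the nodes correspond to the rank-$\leq 2$ quadrics in the relevant net/web), and the Reye congruence $R_{\mathcal{W}}$ is naturally identified with a component of the surface of \emph{bitangent lines} of $S$ — a line in a pencil of quadrics of $\mathcal{W}$ meets the base locus in points that force tangency conditions on $S$. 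The parameter count: webs of quadrics in $\P^3$ form $\Gr(4, 10) \cong \Gr(4, H^0(\P^3,\O(2)))$, which has dimension $4\cdot 6 = 24$; modding out by $\mathrm{PGL}_4$, $\dim = 15$, gives $24 - 15 = 9$. (Alternatively: $10$-nodal quartics with a given Enriques structure form a $9$-dimensional family by the moduli count $\dim \mathcal{M}_{\mathrm{Enr}} = 10$ minus the $1$ parameter lost in the polarization; either bookkeeping gives $9$.) I would present whichever of these counts is cleanest and check it matches $\dim |\O_{\P^3}(4)| - \dim \mathrm{PGL}_4 - (\text{codim of imposing } 10 \text{ nodes}) = 34 - 15 - 10 = 9$.

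Carrying this out, the steps in order are: (1) recall the definition of the Reye congruence $R_{\mathcal{W}}$ and cite from \cite{Cossec} that for general $\mathcal{W}$ it is a smooth Enriques surface; (2) associate to $\mathcal{W}$ its quartic symmetroid $S$, and show (again classically, with a reference) that a general such $S$ has exactly $10$ nodes and no other singularities; (3) set up the incidence correspondence between lines in pencils of quadrics of $\mathcal{W}$ and tangent lines to $S$, and identify $R_{\mathcal{W}}$ with (a component of, equivalently the desingularization of the relevant part of) $\Bit(S)$ — this is the geometric heart; (4) do the parameter count to get the $9$; (5) observe that "birational to an Enriques surface" is exactly what we get since $\Bit(S)$ may acquire some extra components or singularities from the $10$ nodes of $S$, but the Reye component is already smooth Enriques. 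The main obstacle I expect is Step (3): making the identification $R_{\mathcal{W}} \sim \Bit(S)$ precise rather than merely classical — one must pin down the correspondence line $\mapsto$ its two "contact points" on $S$, verify it is birational (injective on a dense open set, dominant onto the relevant component), and control how the $10$ nodes of $S$ perturb the naive count of bitangents so that no spurious extra components of $\Bit(S)$ dominate in a way that would prevent birationality with a single Enriques surface. A secondary nuisance is bookkeeping the difference between "bitangent line" in the naive sense and the scheme-theoretic $\Bit(S)$ used later in the degeneration argument of Section \ref{degeneration}; I would state exactly which notion is meant so that the later sections can use it without ambiguity.
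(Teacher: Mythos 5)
Your proposal follows essentially the same route as the paper: take a general web $\Lambda$ of quadrics, let $S$ be its quartic symmetroid (with the $10$ nodes coming from $D_2\cap\Lambda$), use the classical fact that bitangents of $S$ are exactly the pencils in $\Lambda$ whose base locus contains a line, and identify $\Bit(S)$ birationally with the Reye congruence, i.e.\ with the quotient of the $K3$ surface $S'=\bigcap_{Q\in\Lambda}\tl{Q}\subset\P^3\times\P^3$ by the fixed-point-free swap involution. The only cosmetic difference is that you cite Cossec for the Enriques property of the Reye congruence, whereas the paper reconstructs it directly via the $2{:}1$ map $S'\to\Bit(S)$; your parameter count $24-15=9$ is the intended one.
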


Let $V$ be a vector space of dimension $4$ and identify $\P(V) \cong \P^3$. Choose a generic $3$-dimensional linear system of quadrics
\begin{equation*}
\Lambda \subset |\O_{\P^3}(2)|, \quad \Lambda \cong \P^3.
\end{equation*}
Inside $|\O_{\P^3}(2)|$ we can consider the degeneracy loci
\begin{equation*}
D_k = \{ \text{ quadrics of rank} \leq k \}.
\end{equation*}
It is well known that $D_3$ has codimension $1$, $D_2$ has codimension $3$ and $D_3$ is singular precisely along $D_2$.

We define
\begin{equation*}
\begin{split}
S &= \{ \text{singular quadrics of } \Lambda \} = \Lambda \cap D_3 \text{ and}
\\
T &= \{ \text{quadrics of } \Lambda \text{ of rank} \leq 2 \} = \Lambda \cap D_2.
\end{split}
\end{equation*}
If $\Lambda$ is generic (transverse to all degeneracy loci), we see that $S$ will be a surface singular along $T$, which is a a finite set of points. Moreover we can assume that $S$ has only nodes at points of $T$.

Since $S$ is cut out by the single equation $\det Q = 0$ we immediately see that $S$ is a quartic. Moreover one can compute
\begin{equation*}
\deg T = \deg D_2 = 10,
\end{equation*}
hence $S$ is a surface with $10$ nodes, as claimed. The degree of a symmetric determinantal variety can be computed, for instance, using the results of Harris and Tu in \cite{Harris-Tu}.

Next we show how to associate an Enriques surface to $\Lambda$. For each quadratic form we can consider its associated symmetric bilinear form; this gives an embedding
\begin{equation*}
\Lambda \into |\O_{\P(V)}(1) \boxtimes \O_{\P(V)}(1)| \cong \P(V^{\dual}) \times \P(V^{\dual}).
\end{equation*}
Each member of $\Lambda$ here is seen as a divisor of type $(1, 1)$ on $\P(V) \times \P(V)$. We shall use the following notation: \emph{for each quadric $Q$ given by a quadratic form $q$, we consider the associated bilinear form $\tl{q}$, which gives a divisor $\tl{Q}$ on $\P(V) \times \P(V)$}.

Let $Q_1, \dots Q_4$ be four quadrics spanning $\Lambda$. Then
\begin{equation*}
S' = \bigcap_{Q \in \Lambda} \tl{Q} = \tl{Q_1} \cap \dots \cap \tl{Q_4}
\end{equation*}
is a $K3$ surface. Indeed by adjunction we see that $K_{S'}$ is trivial, and by Lefschetz theorem on hyperplane sections we see that $S'$ is simply connected.

By construction
\begin{equation*}
S' \subset \P(V) \times \P(V),
\end{equation*}
hence we have an involution $\iota \colon S' \to S'$ interchanging the factors. We claim that $\iota$ has no fixed points. This is equivalent to saying that $S'$ doesn't meet the diagonal. Each intersection between $\tl{Q_i}$ and the diagonal is a point of $Q_i$. For $\Lambda$ generic we have
\begin{equation*}
Q_1 \cap \dots \cap Q_4 = \emptyset,
\end{equation*}
hence the claim follows.

We can then define
\begin{equation*}
F = S'/\langle \iota \rangle;
\end{equation*}
by construction $F$ admits an unramified double covering which is a $K3$, so $F$ is an Enriques surface.

The last element that we need in order to prove Proposition \ref{enriques} is the following explicit description of bitangents to $S$ in terms of the web of quadrics $\Lambda$. It is a nice exercise in projective geometry; it is worked out fully in \cite{Ferretti}.

\begin{prop}
Let $\ell$ be a pencil of quadric on $\P^3$, and let $D_i$ be the degeneracy loci as above. Assume that $\ell$ does not meet $D_2$ (that is, every quadric in $\ell$ has rank at least $3$) and that $\ell$ contains smooth quadrics. Let $C$ be the base locus of $\ell$. Then the singularities of $C$ and the position of $\ell$ relative to $D_3$ are related as follows:
\begin{enumerate}[i)]
\item
If $C$ is smooth, $\ell$ cuts $D_3$ in $4$ distinct points;
\item
if $C$ is irreducible with a node, $\ell$ is a simple tangent to $D_3$;
\item
if $C$ is irreducible with a cusp, $\ell$ meets $D_3$ in a flex and a simple point;
\item
if $C$ is the union of a line and a twisted cubic meeting in $2$ distinct points, $\ell$ is a bitangent to $D_3$;
\item
if $C$ is the union of a line and a twisted cubic tangent in $1$ point, $\ell$ is a quadritangent to $D_3$.
\end{enumerate}
No other cases for $C$ can arise.
\end{prop}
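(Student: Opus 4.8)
The plan is to reduce everything to linear algebra on the pencil of quadrics, following the standard "rank stratification of a pencil" picture. Given a pencil $\ell \subset |\O_{\P^3}(2)|$ avoiding $D_2$ and containing smooth quadrics, write $\ell = \{ \lambda Q_0 + \mu Q_1 \}$ and let $C = Q_0 \cap Q_1$ be the base locus, a curve of degree $4$ and arithmetic genus $1$. The key invariant is the binary form $d(\lambda, \mu) = \det(\lambda M_0 + \mu M_1)$ of degree $4$ (where $M_i$ is the symmetric matrix of $Q_i$), whose vanishing cuts out $\ell \cap D_3$; since $\ell$ misses $D_2$, every root of $d$ is a point where the corank is exactly $1$, so the intersection multiplicity of $\ell$ with $D_3$ at such a point equals the multiplicity of that root of $d$ (this is the local computation expressing that $D_3$ is smooth of multiplicity one transverse to a generic line, and that higher tangency is detected by higher vanishing of the determinant). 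Thus the position of $\ell$ relative to $D_3$ is entirely encoded by the partition of $4$ given by the multiplicities of the roots of $d$: $1+1+1+1$, $2+1+1$, $3+1$, $2+2$, or $4$. I would record this as the first step.

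Next I would match each of these root patterns with the geometry of $C$. The classical fact (Segre; see also the normal-form analysis of pencils of quadrics) is that when $d$ has simple roots, $C$ is a smooth elliptic quartic, and conversely; so case (i) is immediate. For the remaining cases one uses the simultaneous normal form for a pencil of quadrics with one of them nonsingular: diagonalizing $Q_1 = \sum x_i^2$ (possible since $\ell$ contains smooth quadrics) and putting $Q_0$ in the corresponding form, a root of $d$ of multiplicity $k$ forces a Jordan-type block of size $k$ in the pencil, which in turn forces the corresponding local structure of $C$: a simple double root produces a node on $C$ (cases (ii), and two nodes in case (iv)); a triple root produces a cusp (case (iii)); a double root of the quartic that "splits off" a linear factor of the base locus produces the reducible configurations (line $+$ twisted cubic), the distinction between (iv) and (v) being whether the two double roots are distinct or have coalesced into a single root of multiplicity four. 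I would carry this out by listing, for each partition of $4$, the normal form of the pencil and reading off both $C$ and the tangency order; this is where the bulk of the (routine but somewhat lengthy) bookkeeping lives, so I would either cite \cite{Ferretti} for the explicit normal forms or sketch the two or three representative ones.

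Finally I would argue that these are the only possibilities: since $\ell$ avoids $D_2$, $C = Q_0 \cap Q_1$ is a $(2,2)$ complete intersection not contained in a plane and with $Q_1$ smooth, so $C$ is a curve of arithmetic genus $1$ that is either smooth, or has a single node or cusp, or degenerates to a line meeting a residual twisted cubic — a connected nodal/cuspidal curve of arithmetic genus $1$ cannot acquire two singular points unless it becomes reducible, and the only reducible options with a smooth quadric in the pencil are precisely line $+$ twisted cubic (transverse or tangent); $C$ cannot contain a conic or a plane component, as that would force a rank-$\le 2$ quadric in $\ell$, contradicting $\ell \cap D_2 = \emptyset$. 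Combined with the bijection between root patterns of $d$ and tangency types with $D_3$ established in the first step, this exhausts the list.

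\medskip

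\noindent\textbf{Main obstacle.} The delicate point is the second step: extracting, from a root of $d$ of given multiplicity, the \emph{precise} local structure of the base curve $C$ (node vs.\ cusp vs.\ a splitting-off of a line) and simultaneously the \emph{exact} order of tangency of $\ell$ with the sextic-like hypersurface $D_3$. The tangency order is governed not merely by the multiplicity of the root of $\det$ but by how the singularity of $D_3$ along $D_2$ is approached; making sure that $\ell \cap D_2 = \emptyset$ really does reduce the intersection multiplicity to the order of vanishing of $\det$, and that the coalescence $2+2 \leadsto 4$ on the curve side corresponds exactly to quadritangency rather than to some other configuration, is the part that requires care and is best handled by the explicit normal forms.
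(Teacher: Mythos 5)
The paper does not actually prove this proposition: it is dismissed as ``a nice exercise in projective geometry'' and deferred entirely to the thesis \cite{Ferretti}, so there is no in-paper argument to compare against. Your route --- restricting $\det$ to the pencil to get a binary quartic $d$, identifying intersection multiplicities of $\ell$ with $D_3$ with root multiplicities of $d$, and then matching root patterns to Segre normal forms of the pencil --- is the standard classical approach and is sound. Two points you flag as delicate are in fact easier than you suggest: since $D_3$ is precisely the hypersurface $\{\det = 0\}$ with its reduced structure, the intersection multiplicity of $\ell$ with $D_3$ at any point is by definition the order of vanishing of $d$ there (no analysis of how $\ell$ approaches $D_2$ is needed, only that the intersection points are smooth points of $D_3$ so that ``tangent'' makes sense); and the corank-$1$ hypothesis at a root of multiplicity $k$ forces a single Jordan block of size $k$ (the number of blocks at an eigenvalue of $M_1^{-1}M_0$ equals the corank), so the root pattern determines the Segre symbol outright, leaving exactly the five symbols $[1111]$, $[211]$, $[31]$, $[22]$, $[4]$. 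The one place your sketch is muddled is the dichotomy between (ii) and (iv): a double root always produces a node of $C$ locally, and the global distinction is that an irreducible quartic of arithmetic genus $1$ can carry at most one node, so two double roots (or the tacnode arising from $[4]$) force $C$ to become reducible; stating this cleanly, plus the normal-form verification you already propose to carry out or cite, completes the argument.
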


\begin{cor} \label{bitangents to S}
The bitangents of $S$ are exactly the pencils of quadrics containing a line.
\end{cor}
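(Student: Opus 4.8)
Set, for brevity, $T=\Lambda\cap D_2$. The plan is to deduce the corollary from the preceding Proposition by translating the contact of a pencil with the quartic $S$ into its contact with the discriminant hypersurface $D_3$. Recall that $S=\Lambda\cap D_3$ is cut out on $\Lambda\cong\P^3$ by the single equation $\det Q=0$, that $D_3$ is singular exactly along $D_2$, and that for $\Lambda$ generic the singular locus of $S$ is the finite set $T$. The elementary point underlying everything is that, for any line $\ell\subset\Lambda$, the divisor cut on $\ell$ by $S$ and the divisor cut on $\ell$ by $D_3$ coincide, both being $\{\det\res{\ell}=0\}$; hence $\ell$ has exactly the same contact type with $S$ as with $D_3$.

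First I would restrict attention to the open set of pencils $\ell\subset\Lambda$ with $\ell\cap T=\emptyset$ and $\ell$ not contained in $S$. Such an $\ell$ satisfies the hypotheses of the preceding Proposition: it misses $D_2$, since $\ell\cap D_2\subseteq\Lambda\cap D_2=T$, so every quadric of $\ell$ has rank at least $3$; and it is not contained in $D_3$, so it contains smooth quadrics. By the remark above, $\ell$ is a bitangent of $S$ exactly when it is a bitangent of $D_3$, and a quadritangent of one exactly when of the other. Applying the Proposition, the base curve $C_\ell$ of such an $\ell$ is of one of the five listed types, and from the list one reads off that $C_\ell$ contains a line precisely in cases iv) and v) --- which are precisely the cases in which $\ell$ is bitangent, respectively quadritangent, to $D_3$, hence to $S$. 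This establishes the corollary over the chosen open locus.

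It then remains to pass to closures. The locus $R$ of pencils in $\Lambda$ whose base curve contains a line is closed (the relative Fano scheme of lines in the base curve is proper over the space of pencils), and for $\Lambda$ generic it is irreducible, the assignment $\ell\mapsto(\text{the line contained in }C_\ell)$ identifying it birationally with the Reye congruence of $\Lambda$. Its intersection with the open locus of the previous paragraph is dense in $R$ and consists of genuine bitangents of $S$ (case iv)), so $R\subseteq\Bit(S)$; conversely, since the bitangents of $S$ that meet $T$ or are contained in $S$ form at most a curve, a general bitangent of $S$ lies in that open locus, so by the Proposition its base curve contains a line, whence $\Bit(S)\subseteq R$. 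Therefore $\Bit(S)=R$, which is the assertion. I expect the only genuinely delicate step to be this last paragraph: controlling the pencils through $T$ or contained in $S$, whose base curves may degenerate further, and invoking the irreducibility of $R$ so that the two surfaces agree on the nose rather than merely birationally. The substantive geometric input, by contrast, is entirely contained in the preceding Proposition, which we take for granted here.
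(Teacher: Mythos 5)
Your argument is correct and is exactly the deduction the paper intends: the corollary is stated there without proof as an immediate consequence of the preceding proposition, via the observation that a line in $\Lambda$ cuts the same divisor on $S$ as on $D_3$, so that cases iv) and v) of the proposition identify the (bi- and quadri-)tangent pencils as precisely those whose base curve contains a line. Your additional care with pencils meeting $T$ or contained in $S$, and with passing to the closure of the locus of honest bitangents, fills in details the paper leaves implicit.
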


We can proceed with the proof of Proposition \ref{enriques}.
\begin{proof}[\proofname{} of Proposition \ref{enriques}]
We can assume that $S$ is given by the above construction.

We explicitly show a map
\begin{equation*}
\pi \colon S' \to \Bit{S}
\end{equation*}
which is generically $2:1$, and whose associated involution is exactly $\iota$. This will give the birational map between $F$ and $\Bit(S)$.

Let
\begin{equation*}
(x, y) \in S' \subset \P(V) \times \P(V).
\end{equation*}
Then $x \neq y$, as we have remarked, so we consider the line $r =\overline{xy}$. We claim that for $(x, y) \in S'$ generic, there is a pencil $\ell$ of quadrics containing $r$. Granting this we define $\pi(x, y) = \ell$. Indeed, by Corollary \ref{bitangents to S}, we see that a pencil of quadrics whose base locus contains a line is in fact a bitangent to $S$.

To show the claim we observe that for each quadric $Q \in \Lambda$ we have $\tl{q}(x, y) = 0$, so if $Q$ contains $x$ and $y$ it contains the whole line $\overline{xy}$. So if $x$ and $y$ impose independent conditions on $\Lambda$, the locus of quadrics containing $r$ is a pencil. If this is not the case, then every quadric of $\Lambda$ containing $x$ contains $y$ too, so there is a net $\Lambda' \subset \Lambda$ of quadrics containing $r$. The generic $\Lambda$ does not contain such a net, by a dimension count.

Since by construction $\pi(x, y) = \pi(y, x)$, we obtain the desired map
\begin{equation*}
\pi' \colon F \to \Bit{S}.
\end{equation*}
It remains to show that $\pi'$ is birational.

Again, by the description of bitangents to $S$ given above, we have to prove the following: on the generic line $r$ contained in a pencil $\ell \subset \Lambda$ of quadrics there are exactly two points $x, y$ with the property that
\begin{equation} \label{polars}
\tl{q}(x, y) = 0 \text{ for all } Q \in \Lambda.
\end{equation}
This is a simple computation of linear algebra.
\end{proof}

For our argument we need some information on the finite set $T$. Let
\begin{equation*}
v \colon \Lambda \to |\O_{\Lambda^{\dual}}(2)|
\end{equation*}
be the second Veronese map. We aim to prove:
\begin{prop} \label{independent points}
For a generic choice of $\Lambda$, the $10$ points in $v(T)$ are projectively independent.
\end{prop}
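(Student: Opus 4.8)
The plan is to translate the statement into a question about imposing conditions on quadrics, to reduce it to a \emph{nonempty} Zariski-open condition on the space of webs, and then to exhibit one example.

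\textbf{Step 1: Reformulation.} Write $\Lambda = \P(U)$ for a four-dimensional subspace $U \subset \Sym^2 V^{\dual}$; then the second Veronese map $v$ sends $[q]\in\Lambda$ to $[q\otimes q]\in\P(\Sym^2 U)=\P^9$. Hence, if $T=\{[q_1],\dots,[q_{10}]\}$, the ten points $v(T)$ are projectively independent if and only if $q_1\otimes q_1,\dots,q_{10}\otimes q_{10}$ span the $10$-dimensional space $\Sym^2 U$, i.e. if and only if no nonzero quadratic form on $U$ vanishes at all ten points of $T$. In other words, $v(T)$ is projectively independent exactly when $T$, regarded as ten points of $\Lambda\cong\P^3$, imposes independent conditions on $|\O_\Lambda(2)|$ --- equivalently, no quadric surface of $\Lambda\cong\P^3$ contains all of $T$.

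\textbf{Step 2: It is an open condition.} Let $\mathcal{U}\subset\Gr(4,\Sym^2 V^{\dual})$ be the open locus of webs meeting $D_2$ transversally in ten reduced (hence distinct) points; it is nonempty by the discussion preceding Proposition \ref{enriques}. Over $\mathcal U$ the incidence scheme $\{(\Lambda,[q]) : [q]\in\Lambda\cap D_2\}$ is finite étale of degree $10$; passing to the tenfold fibre product over $\mathcal U$ with the diagonals removed, one obtains ten tautological sections of the projective bundle $\P(\Sym^2\mathcal{S})$, with $\mathcal{S}$ the tautological rank-$4$ subbundle, and the vanishing of the resulting $10\times 10$ determinant is a closed condition. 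Since the covers involved are finite, the image in $\mathcal U$ of this closed ``bad'' locus is closed; therefore it suffices to produce a single web $\Lambda\in\mathcal U$ for which $v(T)$ is projectively independent.

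\textbf{Step 3: An explicit example.} In coordinates $x_0,\dots,x_3$ on $V$ and $t_0,\dots,t_3$ on $\Lambda$, a web is a symmetric $4\times 4$ matrix $M(t)=\sum_i t_i Q_i$ of linear forms in $t$, with $S=\{\det M=0\}$ the quartic symmetroid and $T=\{\,\rk M(t)\le 2\,\}$ its ten nodes. I would choose the $Q_i$ so that $\Lambda$ is transverse to $D_2$ --- in particular avoiding the degenerate ``diagonal'' or block-diagonal shapes, for which the rank-$\le 2$ locus is positive-dimensional --- while keeping enough structure (for instance invariance under a finite group of coordinate permutations, which organizes $T$ into a few orbits) to compute the ten nodes and then the rank of the $10\times 10$ incidence matrix of the degree-$2$ monomials in $t$ against these ten points. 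Alternatively one may simply fix a sufficiently general numerical choice of the $Q_i$ and check that this single determinant is nonzero.

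The real difficulty is Step 3: one must balance enough genericity to keep the ten nodes distinct and reduced against enough special structure to make both the node computation and the rank of the $10\times 10$ matrix tractable by hand. A classical analysis of the node configuration of a quartic symmetroid, as in \cite{Cossec}, can be used to shortcut this computation.
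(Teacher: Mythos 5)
Your Step 1 is exactly the paper's own restatement (projective independence of $v(T)$ is equivalent to the ten points of $T$ imposing independent conditions on $|\O_\Lambda(2)|$, i.e.\ to $T$ not lying on a quadric of $\Lambda\cong\P^3$), and your Step 2 openness/irreducibility argument is sound: the locus of webs transverse to $D_2$ is a nonempty open subset of the irreducible Grassmannian, the bad locus is closed in it (semicontinuity of $h^0(\idl{T}(2))$ in the finite flat family of $T$'s already suffices, without the tenfold fibre product), so one example would finish the proof. The genuine gap is that Step 3 is never carried out: you reduce the proposition to exhibiting a single web whose ten rank-$\le 2$ members do not lie on a quadric, and then only describe how one \emph{might} find such a web. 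Since the entire content of the proposition is precisely that such configurations exist (a priori the ten nodes of every quartic symmetroid could lie on a quadric --- indeed they famously do all lie on a cubic surface, so some care is genuinely needed), the argument as written proves nothing beyond the reduction. As you yourself note, the real difficulty is Step 3, and it is left open.

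The paper avoids the need for any explicit example by a different mechanism: it proves the generic statement directly by descending induction on generic hyperplane sections of $D_2\subset\P^9$. The base of the induction is that $D_2$ itself is not contained in a quadric (its ideal is generated by the $3\times 3$ minors, which are cubics), together with linear normality and regularity of $D_2$, established via the finite double cover $\P^3\times\P^3\to D_2$. Two elementary diagram-chasing lemmas then show that both ``not contained in a quadric'' and ``linearly normal'' descend to a generic hyperplane section (regularity descending by Lefschetz applied to the smooth double cover of the section), and after six steps one reaches the finite set $T=\Lambda\cap D_2$ not contained in a quadric. If you want to complete your route instead, you must either produce and verify a concrete symmetroid (a nontrivial computation you have not done) or import the classical fact about nodes of general quartic symmetroids from the literature with a precise citation; as it stands the proof is incomplete.
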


Another way to restate it is saying that $T$ is not contained in any quadric. Recall that we have taken some $3$-dimensional subspace $\Lambda \subset |\O_{\P^3(2)}|$ and defined $T = D_2 \cap \Lambda$, where
\begin{equation*}
D_2 = \{ Q \mid \rk Q \leq 2 \}.
\end{equation*}
So our first remark is the

\begin{lemma} \label{D2 not in quadric}
$D_2$ is not contained in any quadric.
\end{lemma}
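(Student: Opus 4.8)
The plan is to realise $D_2$ inside $|\O_{\P^3}(2)| = \P(\Sym^2 V^{\dual}) \cong \P^9$ geometrically and to exploit the fact that $D_2$ contains every secant line of the (nondegenerate) Veronese variety $D_1$ of rank-$1$ forms. First I would record two elementary facts. \emph{(a)} The locus $D_1 \subset \P(\Sym^2 V^{\dual})$ of rank-$1$ quadratic forms is the image of the Veronese embedding $[\ell] \mapsto [\ell^2]$ of $\P(V^{\dual})$, and it spans all of $\P^9$; indeed, in characteristic zero, $\ell_1\ell_2 = \tfrac12\big((\ell_1+\ell_2)^2 - \ell_1^2 - \ell_2^2\big)$ and the products $\ell_1\ell_2$ span $\Sym^2 V^{\dual}$, so the squares $\ell^2$ span it as well. \emph{(b)} For any two points $[\ell^2],[m^2]\in D_1$ the entire line joining them lies in $D_2$, since every form $\lambda\ell^2 + \mu m^2$ has rank at most $2$. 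Thus $D_2$ contains all secant lines of the nondegenerate variety $D_1$.

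Then I would argue by contradiction. Suppose $Q = Z(q)$ is a quadric hypersurface of $\P^9$ with $D_2 \subseteq Q$ and $q \neq 0$, and write $\tilde q$ for the associated symmetric bilinear form. Take any $p \in D_1 \subseteq D_2 \subseteq Q$. If $p$ were a smooth point of $Q$, then every line contained in $Q$ and passing through $p$ would lie in the tangent hyperplane $T_pQ \cong \P^8$; but by \emph{(b)} the line $\con{pp'}$ lies in $D_2 \subseteq Q$ for every $p'\in D_1$, forcing $D_1 \subseteq T_pQ$ and contradicting \emph{(a)}. Hence every point of $D_1$ is a singular point of $Q$, so $D_1$ is contained in the linear space $\P(\ker\tilde q)$, which is a proper subspace of $\P^9$ because $q\neq 0$; this again contradicts \emph{(a)}. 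Therefore no quadric contains $D_2$.

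There is essentially no hard step in this argument: the only points to verify with any care are that the secant lines of $D_1$ genuinely lie in $D_2$ (immediate from the rank bound) and that $D_1$ spans $\P^9$ (standard). If one preferred a purely computational route, one could instead parametrise $D_2$ as the image of the multiplication map $\P(V^{\dual})\times\P(V^{\dual})\to\P(\Sym^2 V^{\dual})$, $([a],[b])\mapsto[ab]$, note that a quadric vanishing on $D_2$ pulls back to the zero section of $\O(2,2)$, and then polarise the identity $q(ab)=0$ first in $a$ and then in $b$; the resulting symmetry relations force the bilinear form $\tilde q$ to vanish on every pair of decomposable forms, hence identically. In that approach the only (mild) nuisance is keeping track of the symmetrisations. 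Alternatively, one may simply invoke the classical fact that the ideal of the symmetric determinantal variety of matrices of rank $\leq 2$ is generated by its $3\times 3$ minors, hence contains no quadrics.
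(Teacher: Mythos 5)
Your argument is correct, and it takes a genuinely different route from the paper. The paper disposes of the lemma in one line by invoking the classical theorem that the homogeneous ideal of the symmetric determinantal variety $D_2$ is generated by the $3\times 3$ minors, which are cubics; this is a strong statement about the full ideal and rests on a nontrivial result from the literature on determinantal ideals. You instead prove only what is needed --- that the degree-$2$ graded piece of the ideal vanishes --- by an elementary and self-contained secant-line argument: $D_2$ contains every secant line of the Veronese surface $D_1$ of rank-one forms, $D_1$ spans $\P^9$, and a quadric containing all those lines is forced to vanish. All the steps check out: a line contained in a quadric through a smooth point lies in the tangent hyperplane, and the singular locus of $Z(q)$ is the proper linear space $\P(\ker \tilde q)$, so either alternative contradicts the spanning of $D_1$. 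In fact your argument can be compressed further: for any $p, p' \in D_1$ the line $\con{pp'}$ lies in $D_2 \subseteq Z(q)$, which forces $\tilde q(p,p') = 0$ directly (expand $q(\lambda p + \mu p')$ and note all coefficients vanish); since the affine cone over $D_1$ spans $\Sym^2 V^{\dual}$, bilinearity gives $\tilde q = 0$, with no case distinction on smoothness. What your approach buys is independence from the ideal-generation theorem; what the paper's approach buys is brevity and a stronger conclusion (no quadric even cuts out $D_2$ set-theoretically near its scheme structure in low degree, since the whole ideal starts in degree $3$). You correctly note the paper's argument as one of your alternatives, so the two are consistent.
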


\begin{proof}
Indeed it is well known that the ideal of $D_2$ is generated by the determinants of the $3 \times 3$ minors of $Q$, which are cubic equations.
\end{proof}

We now try to argue by descending induction on linear sections of $D_2$. We shall use the following two lemmas.

\begin{lemma} \label{descending no quadrics}
Let $X \subset \P^n$ a variety. Assume that $X$ is not contained in any quadric and that $X$ is linearly normal, that is, $h^1(\P^n, \idl{X}(1)) = 0$. Then for the generic hyperplane $H$, the linear section $H \cap X$ is not contained in any quadric of $H$.
\end{lemma}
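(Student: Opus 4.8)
The plan is to compare the ideal sheaf of $X$ with that of its hyperplane section through the obvious restriction sequence, and to feed in the two hypotheses at the level of cohomology. Fix a hyperplane $H = \{\ell = 0\} \subset \P^n$. Since $\idl{X}$ is a subsheaf of $\O_{\P^n}$ it is torsion-free, so multiplication by the nonzero linear form $\ell$ is injective with cokernel $\idl{X} \otimes \O_H$; after twisting by $\O(2)$ this yields the exact sequence
\begin{equation*}
0 \to \idl{X}(1) \xrightarrow{\cdot \ell} \idl{X}(2) \to \idl{X}(2)\res{H} \to 0
\end{equation*}
of sheaves on $\P^n$. Passing to cohomology gives the exact fragment
\begin{equation*}
H^0(\P^n, \idl{X}(2)) \to H^0(H, \idl{X}(2)\res{H}) \to H^1(\P^n, \idl{X}(1)).
\end{equation*}
The left-hand term vanishes because $X$ lies on no quadric, and the right-hand term vanishes because $X$ is linearly normal; hence $H^0(H, \idl{X}(2)\res{H}) = 0$, and in fact this holds for \emph{every} hyperplane $H$, not only a generic one.

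The second, and only genuinely delicate, step is to identify $\idl{X}(2)\res{H}$ with the twisted ideal sheaf $\idl{X \cap H}(2)$ of the scheme-theoretic intersection, viewed on $H$. There is always a natural surjection $\idl{X}\res{H} \onto \idl{X \cap H}$ of sheaves on $H$, whose kernel equals $(\idl{X} \cap \ell\O_{\P^n})/\ell\idl{X}$; a local computation identifies this kernel with a subsheaf of the $\ell$-torsion of $\O_X$. Since $X$ is a variety it has only finitely many associated points, so a generic $H$ avoids all of them, the $\ell$-torsion vanishes, and the surjection becomes an isomorphism. For such $H$ we then obtain $H^0(H, \idl{X \cap H}(2)) = 0$, i.e.\ $X \cap H$ is contained in no quadric of $H$, as desired.

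The cohomological core of the argument is purely formal once both hypotheses are available, so I do not expect a serious obstacle; the step most in need of care is the last identification, making sure that the naive restriction $\idl{X}\res{H}$ really is the ideal sheaf of the intersection inside $H$, which is assured precisely when $H$ avoids the finitely many associated points of $X$. If one prefers, this genericity can also be arranged via Bertini, so that $X \cap H$ is reduced of the expected dimension, but avoiding the associated points of $X$ is all that is strictly needed.
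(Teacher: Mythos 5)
Your proof is correct and is essentially the paper's argument transposed to ideal sheaves: the paper runs the same restriction-to-$H$ cohomology sequence via a diagram chase between $0 \to \O_{\P^n}(1) \to \O_{\P^n}(2) \to \O_{H}(2) \to 0$ and its analogue on $X$, whose kernel sequence is exactly your $0 \to \idl{X}(1) \to \idl{X}(2) \to \idl{X}(2)\res{H} \to 0$, with the two hypotheses entering as the surjectivity of $H^0(\O_{\P^n}(1)) \to H^0(\O_X(1))$ and the injectivity of $H^0(\O_{\P^n}(2)) \to H^0(\O_X(2))$. Your extra care in identifying $\idl{X}(2)\res{H}$ with $\idl{X \cap H}(2)$ for $H$ missing the associated points of $X$ is the same nonzerodivisor condition the paper uses implicitly when it writes the exact sequence $0 \to \O_X(1) \to \O_X(2) \to \O_{X \cap H}(2) \to 0$, so there is no gap.
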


\begin{proof}
Consider the exact sequences
\begin{equation*}
\Diag
0 & \rTo & \O_{\P^n}(1) \dTo & \rTo & \O_{\P^n}(2) \dTo & \rTo & \O_{H}(2) \dTo & \rTo & 0
\\
0 & \rTo & \O_X(1) & \rTo & \O_X(2) & \rTo & \O_{X \cap H}(2) & \rTo & 0
\\
\endDiag
\end{equation*}
obtained by twisting the defining sequences for $H$ in $\P^n$ and for $X \cap H$ in $X$ by $\O(2)$. These induce a commutative diagram of long exact sequences
\begin{equation*}
\Diag
0 & \rTo & H^0(\O_{\P^n}(1)) \dTo^{\alpha} & \rTo & H^0(\O_{\P^n}(2)) \dTo^{\beta} & \rTo & H^0(\O_{H}(2)) \dTo^{\gamma} & \rTo & 0
\\
0 & \rTo & H^0(\O_X(1)) & \rTo & H^0(\O_X(2)) & \rTo & H^0(\O_{X \cap H}(2)) & \rTo & \cdots & \ml ,
\\
\endDiag
\end{equation*}
where we have used that $H^1(\O_{\P^n}(1)) = 0$ by Kodaira vanishing.

Our hypothesis tell that $\alpha$ is surjective and that $\beta$ is injective, while the thesis amounts to saying that $\gamma$ is injective, which is just a matter of diagram chasing.
\end{proof}

\begin{lemma} \label{descending linear normality}
Let $X \subset \P^n$ a variety. Assume that $X$ is linearly normal and regular, that is, $h^1(X, \O_{X}) = 0$. Then for the generic hyperplane $H$, the linear section $H \cap X$ is linearly normal.
\end{lemma}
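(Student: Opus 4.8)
The plan is to argue exactly as in Lemma \ref{descending no quadrics}, by comparing the defining sequences of $H$ in $\P^n$ and of $X \cap H$ in $X$, now twisted by $\O(1)$, and running a diagram chase. First I would fix a hyperplane $H$ not containing any component of $X$ (the generic one works) and write down the two exact sequences
\begin{equation*}
0 \to \O_{\P^n} \to \O_{\P^n}(1) \to \O_H(1) \to 0, \qquad 0 \to \O_X \to \O_X(1) \to \O_{X \cap H}(1) \to 0,
\end{equation*}
the second obtained by tensoring $0 \to \O_X(-1) \to \O_X \to \O_{X \cap H} \to 0$ by $\O(1)$, which is exact because $H$ is a non-zero-divisor on $X$. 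Restriction of sections gives a commutative ladder of long exact sequences in cohomology.

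Next I would reduce the conclusion to a surjectivity statement: since $X \cap H \subset H \cong \P^{n-1}$, linear normality of $X \cap H$ means $h^1(H, \idl{X \cap H}(1)) = 0$, and because $H^1(\P^{n-1}, \O(1)) = 0$ this is equivalent to the restriction map $H^0(\O_H(1)) \to H^0(\O_{X \cap H}(1))$ being surjective. Now take $s \in H^0(\O_{X \cap H}(1))$. The regularity hypothesis $H^1(X, \O_X) = 0$ makes the map $H^0(\O_X(1)) \to H^0(\O_{X \cap H}(1))$ surjective, so $s$ lifts to some $t \in H^0(\O_X(1))$; linear normality of $X$ makes $H^0(\O_{\P^n}(1)) \to H^0(\O_X(1))$ surjective, so $t$ lifts further to $\tilde t \in H^0(\O_{\P^n}(1))$. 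Let $\bar t \in H^0(\O_H(1))$ be the restriction of $\tilde t$ to $H$; chasing the ladder, the image of $\bar t$ in $H^0(\O_{X \cap H}(1))$ agrees with the image of $t$, which is $s$. Hence $s$ lies in the image of $H^0(\O_H(1))$, as wanted.

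The argument is really just diagram chasing, so there is no essential obstacle; the only points needing (minor) care are the exactness of the second sequence — which is exactly why one takes $H$ generic, so that it meets $X$ properly and avoids its components — and identifying which ambient vanishings are in play: $H^1(\P^{n-1}, \O(1)) = 0$ to reduce to a surjectivity, the regularity $H^1(X, \O_X) = 0$ to produce the first lift, and the linear normality of $X$ to produce the second. It is worth remarking that, unlike in Lemma \ref{descending no quadrics}, the "not contained in any quadric'' hypothesis plays no role here; its place is taken by regularity.
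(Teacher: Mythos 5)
Your argument is exactly the paper's proof: the same two sequences twisted by $\O(1)$, the same cohomology ladder (with exactness of the bottom row at $H^0(\O_{X\cap H}(1))$ coming from $h^1(X,\O_X)=0$), and the same diagram chase from surjectivity of $H^0(\O_{\P^n}(1))\to H^0(\O_X(1))$ to surjectivity of $H^0(\O_H(1))\to H^0(\O_{X\cap H}(1))$. Your added remarks (why generic $H$ gives exactness of the second sequence, and the role of $H^1(\P^{n-1},\O(1))=0$ in translating back to $h^1(H,\idl{X\cap H}(1))=0$) are correct and only make explicit what the paper leaves implicit.
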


\begin{proof}
We consider the same exact sequences of the previous lemma, this time twisted by $\O(1)$. Their associated long exact sequences yield the diagram
\begin{equation*}
\Diag
0 & \rTo & H^0(\O_{\P^n}) \dTo & \rTo & H^0(\O_{\P^n}(1)) \dTo^{\alpha} & \rTo & H^0(\O_{H}(1)) \dTo^{\beta} & \rTo & 0
\\
0 & \rTo & H^0(\O_X) & \rTo & H^0(\O_X(1)) & \rTo & H^0(\O_{X \cap H}(1)) & \rTo & 0,
\\
\endDiag
\end{equation*}
since both $\P^n$ and $X$ are regular.

This time our hypothesis is that $\alpha$ is surjective, and by diagram chasing we get that $\beta$ is surjective too.
\end{proof}

It is now clear how we want to use the previous lemmas to prove Proposition \ref{independent points} by descending induction. To get from $H^1(X, \O_X) = 0$ to $H^1(X \cap H, \O_{X \cap H}) = 0$ we would like to use Lefschetz's theorem on hyperplane sections. The only obstacle is that the latter works for smooth varieties, while we are starting from the singular variety $D_2$.

To overcome this difficulty we pass to a smooth double cover of $D_2$. Namely, since every quadric of rank at most $2$ is the union of two planes (maybe coincident) we can identify $D_2$ with the symmetric product $(\P^3)^{(2)}$.

In even more explicit terms consider the Segre embedding of $\P^3 \times \P^3$; this is the map
\begin{equation*}
s \colon \P^3 \times \P^3 \to \P^{15} = \P(H^0(\P^3, \O_{\P^3}(1))^2)
\end{equation*}
defined by sections of
\begin{equation*}
\mathcal{L} = \O_{\P^3}(1) \boxtimes \O_{\P^3}(1).
\end{equation*}
If one restricts to \emph{symmetric} sections, one obtains a map
\begin{equation*}
t \colon \P^3 \times \P^3 \to \P^{9} = \P\big(\Sym^2 H^0(\P^3, \O_{\P^3}(1))\big) = \P\big(H^0(\P^3, \O_{\P^3}(2)\big),
\end{equation*}
which is a $2:1$ covering of $D_2$, ramified over $D_1$.

We can use this to prove the induction basis, as in the following two lemmas.

\begin{lemma} \label{D2 linearly normal}
$D_2$ is linearly normal.
\end{lemma}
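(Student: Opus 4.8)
# Proof Proposal for Lemma \ref{D2 linearly normal}

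The plan is to verify directly that the restriction map $H^0(\P^9, \O_{\P^9}(1)) \to H^0(D_2, \O_{D_2}(1))$ is surjective, equivalently that $h^1(\P^9, \idl{D_2}(1)) = 0$. The natural strategy is to pass to the smooth double cover $t \colon \P^3 \times \P^3 \to D_2$ just constructed, which realizes $D_2$ as the image of $\P^3\times\P^3$ under the linear system of symmetric sections of $\mathcal{L} = \O_{\P^3}(1)\boxtimes\O_{\P^3}(1)$. Concretely, $\O_{D_2}(1)$ pulls back to $\mathcal{L}$, and $t^{*}$ identifies $H^0(D_2,\O_{D_2}(1))$ with the $\iota$-invariant part of $H^0(\P^3\times\P^3,\mathcal{L})$.

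The key computation is then simply the K\"unneth formula: $H^0(\P^3 \times \P^3, \mathcal{L}) = H^0(\P^3, \O(1)) \otimes H^0(\P^3, \O(1))$, which is $16$-dimensional, and the involution $\iota$ swapping the factors acts as the standard swap on a tensor square $U\otimes U$ with $U = H^0(\P^3,\O(1))$. Its invariant part is $\Sym^2 U$, of dimension $10$, and this is by definition the space of symmetric sections, i.e. $H^0(\P^9,\O_{\P^9}(1))$ restricted through $t$. Thus the map $H^0(\P^9,\O_{\P^9}(1)) \to H^0(D_2,\O_{D_2}(1))$ is identified with the identity on $\Sym^2 U$, hence an isomorphism; in particular it is surjective, which is exactly linear normality of $D_2$.

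The only point requiring a little care is the claim that $H^0(D_2, \O_{D_2}(1))$ coincides with the $\iota$-invariants of $H^0(\P^3\times\P^3,\mathcal{L})$ rather than being possibly larger: since $t$ is a finite surjective morphism and $D_2$ is normal (being the symmetric product of a smooth variety), one has $t_{*}\O_{\P^3\times\P^3} = \O_{D_2} \oplus (\text{anti-invariant part})$, so $H^0(D_2,\O_{D_2}(1)) = H^0(\P^3\times\P^3, \mathcal{L})^{\iota}$ exactly. Alternatively one can bypass the double cover entirely: the homogeneous coordinate ring of $D_2$ in its natural embedding is the ring generated by the $2\times 2$ (equivalently here, the entries themselves since $D_2$ sits inside the symmetric matrices of rank $\le 2$) — more simply, $D_2$ is projectively normal as the image of the second Veronese-type map $t$, which is a well-known fact about symmetric products of $\P^n$ under the Segre–symmetric embedding.

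I expect the main (minor) obstacle to be bookkeeping the identification $t^{*}\O_{D_2}(1) \cong \mathcal{L}$ together with the statement that the degree-one symmetric sections on $\P^3\times\P^3$ are precisely what cuts out $\P^9$, i.e. that the embedding $D_2 \subset \P^9$ is the one induced by $\Sym^2 U \subset U\otimes U$. Once that dictionary is in place the cohomological content is trivial.
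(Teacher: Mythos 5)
Your proof is correct and follows essentially the same route as the paper: pull a section of $\O_{D_2}(1)$ back along the double cover $t\colon \P^3\times\P^3 \to D_2$, observe that the pullback is a symmetric section of $\mathcal{L}$, and use the fact that $t$ is defined by the complete linear system $\Sym^2 H^0(\P^3,\O_{\P^3}(1))$ of symmetric sections to conclude it comes from a hyperplane of $\P^9$. Your extra care in justifying $H^0(D_2,\O_{D_2}(1)) = H^0(\P^3\times\P^3,\mathcal{L})^{\iota}$ via the splitting of $t_{*}\O_{\P^3\times\P^3}$ is a welcome refinement of a point the paper leaves implicit.
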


\begin{proof}
We must show that every section $\sigma \in H^0(D_2, \O_{D_2}(1))$ lifts to $\P^9$. The section
\begin{equation*}
t^{*}(\sigma) \in H^0(\P^3 \times \P^3, \mathcal{L})
\end{equation*}
is of course symmetric. Since the map $t$ is given by the linear series of \emph{all} symmetric sections of $\mathcal{L}$ we see that $\sigma$ comes from a hyperplane section of $\P^9$.
\end{proof}

\begin{lemma} \label{D2 regular}
$D_2$ is regular, that is, $H^1(D_2, \O_{D_2}) = 0$.
\end{lemma}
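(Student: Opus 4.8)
The plan is to exploit the degree-$2$ covering $t \colon \P^3 \times \P^3 \to D_2$ that was just constructed, which realizes $D_2 \cong (\P^3)^{(2)}$ as the quotient of $\P^3 \times \P^3$ by the involution swapping the two factors (and is ramified over $D_1$). The crucial point is that, since we are over $\C$ and $\deg t = 2$, the averaged trace map $\tfrac12 \operatorname{tr} \colon t_{*}\O_{\P^3 \times \P^3} \to \O_{D_2}$ splits the canonical inclusion $\O_{D_2} \hookrightarrow t_{*}\O_{\P^3 \times \P^3}$. Hence there is a decomposition of coherent sheaves $t_{*}\O_{\P^3 \times \P^3} \cong \O_{D_2} \oplus \sh{M}$, where $\sh{M}$ is the $(-1)$-eigensheaf of the covering involution. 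This is a purely $\O_{D_2}$-linear statement, so it does not require $D_2$ to be smooth — the one point that deserves a little care, since $D_2$ is singular along $D_1$.

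Granting the splitting, I would simply pass to cohomology. As $t$ is finite it is affine, so $R^{j}t_{*}\O_{\P^3 \times \P^3} = 0$ for $j > 0$ and the Leray spectral sequence degenerates, giving $H^{i}(D_2, t_{*}\O_{\P^3 \times \P^3}) \cong H^{i}(\P^3 \times \P^3, \O_{\P^3 \times \P^3})$ for all $i$. Combined with the direct sum decomposition, $H^{i}(D_2, \O_{D_2})$ is a direct summand of $H^{i}(\P^3 \times \P^3, \O_{\P^3 \times \P^3})$. For $i = 1$ the Künneth formula yields $H^{1}(\P^3 \times \P^3, \O) \cong \left(H^{1}(\P^3,\O) \otimes H^{0}(\P^3,\O)\right) \oplus \left(H^{0}(\P^3,\O) \otimes H^{1}(\P^3,\O)\right) = 0$, since $H^{1}(\P^3, \O_{\P^3}) = 0$. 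Therefore $H^{1}(D_2, \O_{D_2}) = 0$, which is the claim. Equivalently, one can phrase the entire argument as $H^{1}(D_2, \O_{D_2}) \cong H^{1}(\P^3 \times \P^3, \O)^{S_2} = 0$.

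I do not expect a genuine obstacle here: the argument is essentially formal. The only thing to keep straight is that the identification of $D_2$ with the symmetric square $(\P^3)^{(2)}$ — equivalently, that $t$ exhibits $D_2$ as the quotient by the factor-swap, so that $\O_{D_2}$ really is the invariant subsheaf of $t_{*}\O_{\P^3 \times \P^3}$ — was already established in the discussion preceding the lemma, and that invertibility of $2$ in $\C$ is what makes the trace splitting work. The same strategy (a finite dominant map from a smooth rational variety plus the trace splitting) will also deliver $H^{1}$-vanishing for the successive generic linear sections of $D_2$ if one prefers not to invoke Lefschetz on the singular $D_2$ directly, although that is not needed for the present statement.
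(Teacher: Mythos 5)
Your argument is correct and is essentially the paper's own proof: both decompose $t_{*}\O_{\P^3 \times \P^3}$ into the invariant part $\O_{D_2}$ plus the anti-invariant eigensheaf, use finiteness of $t$ to degenerate the Leray spectral sequence, and conclude that $H^1(D_2,\O_{D_2})$ is a summand of $H^1(\P^3\times\P^3,\O) = 0$. The only cosmetic differences are that you justify the splitting via the averaged trace map and compute $H^1(\P^3\times\P^3,\O)=0$ by K\"unneth, whereas the paper invokes the involution action directly and cites simple connectedness together with the Hodge decomposition.
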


\begin{proof}
We start from the fact that $\P^3 \times \P^3$ is regular: this follows by the Hodge decomposition, since $\P^3 \times \P^3$ is simply connected. We want to apply the Leray spectral sequence to the morphism
\begin{equation*}
t \colon \P^3 \times \P^3 \to D_2.
\end{equation*}
We first remark that
\begin{equation*}
R^i t_{*}(\O_{\P^3 \times \P^3}) = 0
\end{equation*}
for all $i \geq 1$ by \cite[Cor. $III.11.2$]{Hartshorne}, since $t$ is finite.

Let $i$ be the covering involution on $\P^3 \times \P^3$. We have an action of $i$ on $t_{*}(\O_{\P^3 \times \P^3})$, so we can decompose
\begin{equation*}
t_{*}(\O_{\P^3 \times \P^3}) = \O_{D_2} \oplus \xi,
\end{equation*}
where $\xi$ is the subsheaf of eigensections with eigenvalue $-1$.

By what we have said the Leray spectral sequence degenerates at $E_2$, and we have
\begin{equation*}
H^1(\P^3 \times \P^3, \O_{\P^3 \times \P^3}) = H^1(D_2, t_{*} \O_{\P^3 \times \P^3}) = H^1(D_2, \O_{D_2}) \oplus H^1(\O_{D_2}, \xi),
\end{equation*}
so we deduce that $H^1(D_2, \O_{D_2}) = 0$.
\end{proof}

\begin{proof}[\proofname{} of Proposition \ref{independent points}]
We know that $D_2$ is not contained in any quadric by Lemma \ref{D2 not in quadric}, and that it is linearly normal by Lemma \ref{D2 linearly normal}.

Take a generic hyperplane section of $D_2$, call it $X$. By Lemma \ref{descending no quadrics} we see that $X$ is not contained in any quadric.

Let $Y = t^{-1}(X)$; since $X$ is generic, $Y$ is smooth, and we can apply Lefschetz theorem on hyperplane sections to deduce that $Y$ is regular. We can then argue as in Lemma \ref{D2 regular} to prove that $X$ is regular too.

Finally we use Lemmas \ref{D2 regular} and \ref{descending linear normality} to prove that $X$ is linearly normal.

Then we pass to a hyperplane section of $X$ and so on, as long as we are in the dimension range where we can use Lefschetz theorem. After four steps we find a surface $S \subset D_2$ which is regular, linearly normal and not contained in any quadric. In the next step we find a curve $C$ which is only linearly normal and not contained in any quadric. Finally a last application of Lemma \ref{descending no quadrics} yields a finite set of points $T$ which is not contained in any quadric.
\end{proof}

\section{Degeneration of double EPW sextics} \label{degeneration}

\subsection{An involution over $S^{[2]}$} \label{involution}

We begin with a classical example of Beauville, from \cite[sec. $6$]{Beauville4}. Let $U$ be a vector space of dimension $4$ and let
\begin{equation*}
G = \Gr(2, U)
\end{equation*}
be the Grassmannian of lines in $\P(U) = \P^3$, which is a quadric in $\P^5$ under the Plücker embedding.

Let $S \subset \P^3 = \P(U)$ be a quartic. Each cycle $Z \in S^{[2]}$ determines a line $\ell_Z \subset \P^3$: either the line joining the two points in $Z$, if it is reduced, or the line passing through the unique point in $Z$ with the given tangent direction. This yields a $6 : 1$ morphism
\begin{equation*}
\map{S^{[2]}}{G,}{Z}{\ell_Z.}
\end{equation*}

Assume that $S$ does not contain any line. There is an involution
\begin{equation*}
i \colon S^{[2]} \to S^{[2]}
\end{equation*}
which makes the following diagram commute:
\begin{equation*}
\yscale=1.3
\xscale=0.7
\Diagram
S^{[2]} \aTo(1, -1)_{\phi} & \rTo^{i} & S^{[2]}. \aTo(-1, -1)_{\phi}
\\
& G &
\\
\endDiagram
\end{equation*}
The involution $i$ is defined as follows. Let subscheme $Z$ determine the line $\ell_Z$; then
\begin{equation*}
\ell_Z \cdot S = Z + Z'
\end{equation*}
for some subscheme $Z' \subset S$ of length $2$. We define $i(Z) = Z'$.

Now assume that $S$ contains a line $\ell$; then $S^{[2]}$ contains $P = \ell^{(2)}$, which is isomorphic to $\P^2$. In this case one can define the involution $i$ as above, but it becomes only birational, since it is not defined along $P$. One can easily check that in this case $i$ is in fact a \emph{biregular} involution, followed by the Mukai flop along $P$. The construction generalizes to the case where $S$ contains a finite number of lines; for details we refer to \cite{Beauville4}.

The case we are more interested in is when $S$ does not contain lines, but assumes some singularity. First suppose that $S_0$ is a quartic with an ordinary double point $p$, and let $S$ be the blowup of $S_0$ at $p$, so that $S$ is smooth $K3$ surface.

We want to exhibit a map analogous to $\phi$, with $S^{[2]}$ in place of $S_0^{[2]}$. Let $\ell \subset S$ be the exceptional divisor; then $\ell$ is a conic, hence a smooth rational curve. We let
\begin{equation*}
P = \ell^{(2)} \subset S^{[2]},
\end{equation*}
then $P$ is isomorphic to $\P^2$. We have a rational map
\begin{equation*}
\phi' \colon S^{[2]} \tto G
\end{equation*}
defined as above; since all points of $\ell$ are mapped to $p$, $\phi'$ is undefined exactly on $P$. Let $X$ be the Mukai flop of $S^{[2]}$ along $P$; we claim that we have a \emph{regular} map
\begin{equation*}
\phi \colon X \to G,
\end{equation*}
such that
\begin{equation*}
\yscale=1.3
\xscale=0.7
\Diagram
S^{[2]} \aTo(1, -1)_{\phi'} & \rBirat & X \aTo(-1, -1)_{\phi}
\\
& G &
\\
\endDiagram
\end{equation*}
commutes.

We only have to define $\phi$ at points of $P^{\dual}$. By definition of the Mukai flop, $X$ is obtained by $S^{[2]}$ by first blowing up along $P$ and then contracting the exceptional divisor $E$ along the other fibration. Let us call $\tl{X}$ the blowup of $S^{[2]}$ along $P$. Then we have
\begin{equation*}
E = \P\norm{P}{S^{[2]}},
\end{equation*}
so a point of $E$ is a couple $(Z, [v])$, where
\begin{equation*}
Z \in \ell^{(2)} \text{ and } v \in T_{Z} S^{[2]} / T_{Z} P.
\end{equation*}
Assume for simplicity that $Z = q_1 + q_2$ is reduced; then
\begin{equation*}
T_{Z} S^{[2]} / T_{Z} P \cong ( T_{q_1} S/T_{q_1} \ell ) \oplus ( T_{q_2} S/T_{q_2} \ell ).
\end{equation*}
The kernel of the differential
\begin{equation*}
d\epsilon_{q_i} \colon T_{q_i} S \to T_p \P^3
\end{equation*}
is exactly $T_{q_i} \ell$, so the differential identifies each factor $T_{q_1} S/T_{q_1} \ell$ with its image, which is exactly the (tangent of the) line trough $p$ corresponding to the direction $q_i \in \ell$.

The lines corresponding to $q_1$ and $q_2$ span a plane $\Pi \subset \P^3$ through $p$, and the direction $[v]$ identifies a line $\ell_Z \subset \Pi$. The construction carries over to the case where $Z$ is not reduced, so finally we get a regular map
\begin{equation*}
\tl{\phi} \colon \tl{X} \to G
\end{equation*}
sending $Z$ to $\ell_Z$.

Following the definitions, one can see that $\tl{\phi}$ is constant along the fibers of the other blowup $\tl{X} \to X$, so it descends to the desired regular morphism
\begin{equation*}
\phi \colon X \to G.
\end{equation*}

One can finally extend this construction to cover the case where $S_0$ has finitely many ordinary double points; in this case one has to introduce a Mukai flop for each singular point. We do not describe the details, as they are only notationally heavier than in the case of one point.

\subsection{Triple quadrics as EPW sextics}

We now discuss in which way the above examples may be seen as degenerations of double EPW sextics; this construction is present in \cite{Kieran4}. Recall that our quartic surface $S$ lives inside $\P(U)$, where $U$ is a vector space of dimension $4$. We take the vector space $V = \bigwedge^2 U$. Then inside $\P(\bigwedge^3 V)$ we have the Grassmannian $\Gr(3, V)$, by the Plücker embedding.

To each $[u] \in \P(U)$ we can associate the subspace
\begin{equation*}
u \wedge U \in \Gr(3, \bigwedge^2 U);
\end{equation*}
this gives an embedding
\begin{equation*}
\iota_{+} \colon \P(U) \into \Gr(3, V) \subset \P(\bigwedge^3 V).
\end{equation*}

\begin{rem}
Any two subspaces in the image of $\iota_{+}$ intersect along a line; more precisely $\iota_{+}([u_0])$ and $\iota_{+}([u_1])$ intersect along the line generated by $u_0 \wedge u_1$. If we see the Grassmannian $\Gr(3, V)$ as a parameter space for planes in $\P(V)$, this means that we have a $3$-dimensional family of planes, parametrized by $\P(U)$, such that any two planes in the family have non-empty intersection.
\end{rem}

\begin{lemma}[O'Grady]
$\iota_{+}(\P(U))$ spans a subspace of $\P(\bigwedge^3 V)$ which corresponds to a \emph{isotropic} subspace
\begin{equation*}
A_{+}(U) \subset \bigwedge^3 V.
\end{equation*}
\end{lemma}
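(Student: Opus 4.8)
The plan is to make $A_+(U)$ explicit and then check isotropy on an obvious spanning set. By definition, the projective span of a subset of $\P(\bigwedge^3 V)$ is the projectivisation of the linear span of the associated cone, so $A_+(U)$ is nothing but the linear subspace of $\bigwedge^3 V = \bigwedge^3(\bigwedge^2 U)$ generated by the Pl\"ucker points $p_u := \bigwedge^3(u \wedge U)$ as $[u]$ ranges over $\P(U)$. Concretely, for $0 \neq u \in U$ one extends $u$ to a basis $u = e_1, e_2, e_3, e_4$ of $U$; then $u \wedge U$ is the $3$-dimensional subspace $\langle e_1\wedge e_2,\, e_1\wedge e_3,\, e_1\wedge e_4\rangle \subset V$, so it is indeed a point of $\Gr(3,V)$, and $p_u$ is a nonzero scalar multiple of $(e_1\wedge e_2)\wedge(e_1\wedge e_3)\wedge(e_1\wedge e_4) \in \bigwedge^3 V$. (That this depends only on $[u]$ is clear, since $cu \wedge U = u \wedge U$ as a subspace of $V$.)

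Since the symplectic form $(\alpha,\beta) = \vol(\alpha\wedge\beta)$ is bilinear, $A_+(U)$ is isotropic as soon as $p_u \wedge p_{u'} = 0$ in $\bigwedge^6 V$ for all $u, u' \in U$. I distinguish two cases. If $u$ and $u'$ are proportional, then $p_{u'}$ is a scalar multiple of $p_u$, and $p_u \wedge p_u = 0$ because $p_u$ has odd exterior degree $3$. If $u$ and $u'$ are linearly independent, extend them to a basis $u = e_1, u' = e_2, e_3, e_4$ of $U$; then both $u \wedge U$ and $u' \wedge U$ contain the vector $e_1 \wedge e_2 \in V$, so up to scalars $p_u = (e_1\wedge e_2)\wedge(e_1\wedge e_3)\wedge(e_1\wedge e_4)$ and $p_{u'} = (e_1\wedge e_2)\wedge(e_2\wedge e_3)\wedge(e_2\wedge e_4)$. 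Hence $p_u \wedge p_{u'}$ has the factor $e_1 \wedge e_2$ repeated and so vanishes in $\bigwedge^6 V$. Applying $\vol$ gives $(p_u, p_{u'}) = 0$ in all cases, and by bilinearity $A_+(U)$ is isotropic.

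This is essentially a direct computation, so I do not anticipate a real obstacle; the only point demanding a little care is the bookkeeping inside the two nested exterior algebras — keeping in mind that the $p_u$ live in $\bigwedge^3(\bigwedge^2 U)$, identifying $u \wedge U$ correctly as a point of $\Gr(3,V)$, and treating the degenerate case by odd-degree vanishing rather than by the "repeated factor" argument, which is not available there. The complementary fact that $\dim A_+(U) = 10$, i.e.\ that $A_+(U)$ is actually Lagrangian, is a separate matter and is not needed for this lemma.
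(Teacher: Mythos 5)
Your proof is correct and follows essentially the same route as the paper: reduce to the spanning set of Pl\"ucker points by bilinearity, then observe that the two $3$-planes $u\wedge U$ and $u'\wedge U$ share the line $\langle u\wedge u'\rangle$, so the wedge of the corresponding decomposable $3$-vectors vanishes (the paper phrases this as $\alpha\wedge\beta\in\bigwedge^6 V'=0$ for the proper subspace $V'=\iota_+(u_0)+\iota_+(u_1)\subsetneq V$, which also absorbs your degenerate case). The explicit basis bookkeeping and the odd-degree argument for proportional $u,u'$ are harmless variants of the same computation.
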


\begin{proof}
Let
\begin{equation*}
\alpha, \beta \in A_{+}(U) \subset \bigwedge^3 V;
\end{equation*}
we have to check that $\alpha \wedge \beta = 0$. Of course it is enough to verify this on a set of generators; hence we can assume that
\begin{equation*}
[\alpha] = \iota_{+}(u_0), \qquad [\beta] = \iota_{+}(u_1)
\end{equation*}
for some $[u_0], [u_1] \in \P(U)$. By the remark above
\begin{equation*}
V' = \iota_{+}(u_0) + \iota_{+}(u_1) \subsetneq V,
\end{equation*}
so $\alpha \wedge \beta \in \bigwedge^6 V' = \zero$.
\end{proof}

We'd like to verify that $A_{+}(U)$ is actually Lagrangian. In order to do this we need to introduce the symmetric construction. This is easy: since $\dim V = 6$, we have a canonical isomorphism
\begin{equation*}
\Gr(3, V) \cong \Gr(3, V^{\dual}).
\end{equation*}
Now we can repeat the construction using $U^{\dual}$ in place of $U$, and then use the identification above. In the end we find an embedding
\begin{equation*}
\iota_{-} \colon \P(U^{\dual}) \into \Gr(3, V).
\end{equation*}

By the same argument, any two planes in the image of $\iota_{-}$ are concurrent, and so we get another isotropic subspace $A_{-}(U) \subset \bigwedge^3 V$. We wish to prove that
\begin{equation} \label{decomposition in irreducible}
\bigwedge^3 V = A_{+}(U) \oplus A_{-}(U);
\end{equation}
in particular this says that both $A_{+}(U)$ and $A_{-}(U)$ have dimension $10$, hence they are Lagrangian.

The above decomposition will be more apparent if one regards all involved vector space as $SL(U)$-modules. Let $\mathcal{L}$ be the line bundle on $\Gr(3, V)$ which induces the Plücker embedding. One checks directly that
\begin{equation*}
\iota_{+}^{*} (\mathcal{L}) \cong \O_{\P(U)}(2).
\end{equation*}
By duality it follows that
\begin{equation*}
\iota_{-}^{*} (\mathcal{L}) \cong \O_{\P(U^{\dual})}(2).
\end{equation*}

Now the global sections of the involved line bundles are
\begin{equation*}
\begin{split}
&H^0(\P(U), \O_{\P(U)}(2)) = \Sym^2 U^{\dual},
\\
&H^0(\P(U^{\dual}), \O_{\P(U^{\dual})}(2)) = \Sym^2 U,
\\
&H^0(\Gr(3, V), \mathcal{L}) = H^0(\P(\bigwedge^3 V), \O_{\P(\wedge^3 V)}(1)) = \bigwedge^3 V^{\dual} = \bigwedge^3 (\bigwedge^2 U^{\dual}),
\end{split}
\end{equation*}
and these are all $SL(U)$-modules. Moreover the embeddings $\iota_{+}$ and $\iota_{-}$ are equivariant under the action of $SL(U)$, hence the induced maps on sections
\begin{equation*}
\begin{split}
&\iota_{+}^{*} \colon H^0(\Gr(3, V), \mathcal{L}) \to H^0(\P(U), \O_{\P(U)}(2)) \text{ and}
\\
&\iota_{-}^{*} \colon H^0(\Gr(3, V), \mathcal{L}) \to H^0(\P(U^{\dual}), \O_{\P(U^{\dual})}(2))
\end{split}
\end{equation*}
are morphisms of $SL(U)$-modules. Since both $\Sym^2 U^{\dual}$ and $\Sym^2 U$ are irreducible, these maps must be surjective.

Comparing the dimensions, we obtain an isomorphism of $SL(U)$-modules
\begin{equation} \label{SL(U)-modules}
\bigwedge^3 (\bigwedge^2 U^{\dual}) \cong \Sym^2 U^{\dual} \oplus \Sym^2 U,
\end{equation}
which must then be the decomposition into irreducible factors of $\bigwedge^3 (\bigwedge^2 U^{\dual})$.

It follows that any section of $\mathcal{L}$ on $\Gr(3, V)$ which restricts to $0$ both on the image of $\iota_{+}$ and on the image of $\iota_{-}$ is itself $0$. In other words the image of $\iota_{+}$ and the image of $\iota_{-}$ span the whole $\P(\bigwedge^3 V)$. We deduce that the decomposition given by \eqref{decomposition in irreducible} holds, and in particular $A_{+}(U)$ and $A_{-}(U)$ are both Lagrangian.

Associated to a Lagrangian subspace we have an EPW sextic. This is given by the following
\begin{prop}[O'Grady]
Let the notation be as above. Then
\begin{equation*}
Y_{A_{+}(U)} = Y_{A_{-}(U)} = 3 G.
\end{equation*}
\end{prop}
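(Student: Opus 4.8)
I want to show $Y_{A_+(U)} = 3G$ as schemes in $\P(V)$, where $G = \Gr(2,U) \subset \P(V) = \P(\bigwedge^2 U)$ is the Plücker quadric; by the symmetric construction the same argument (replacing $U$ with $U^\dual$ and using $\Gr(3,V) \cong \Gr(3,V^\dual)$) handles $Y_{A_-(U)}$. Recall $Y_{A_+} = Z(\det\lambda_{A_+})$, where $\lambda_{A_+} \colon F \to \O_{\P(V)} \otimes A_+^\dual$ is the composite of the inclusion $F \hookrightarrow \O_{\P(V)} \otimes \bigwedge^3 V$ with the projection modulo $A_+$. Since $\det\lambda_{A_+} \in H^0(\P(V), \O(6))$ by Equation~\eqref{c_1(F)}, and $G$ is a quadric, the equality $Y_{A_+} = 3G$ is an equality of divisors of the same degree $6$; so it suffices to prove $Y_{A_+} \supseteq 3G$, i.e. that $\det\lambda_{A_+}$ vanishes to order at least $3$ along the smooth quadric $G$. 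Equivalently, I should produce, at each $[v] \in G$, a subspace of $F_v \cap A_+$ of dimension $\geq 3$, and then check that the vanishing order of the determinant of a corank-$k$ degeneracy is exactly $k$ here (a local computation).

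**The geometric heart.** The key point is to compute $\dim(F_v \cap A_+)$ for $[v] \in \P(V)$. By construction $A_+ = A_+(U)$ is spanned by the planes $\iota_+([u]) = u \wedge U \subset \bigwedge^2 U = V$ as $[u]$ ranges over $\P(U)$, so $A_+ \subset \bigwedge^3 V$ is spanned by the decomposable $3$-vectors $\omega_u := \bigwedge^3(u \wedge U)$. Now fix $[v] \in G$, so $v = u_1 \wedge u_2$ is a decomposable bivector, i.e. $v$ corresponds to the line $\langle u_1, u_2\rangle \subset \P(U)$. A $3$-vector $v \wedge \alpha \in F_v$ lies in $A_+$ precisely when it lies in the span of the $\omega_u$. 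The concrete claim to establish is:
\begin{equation*}
\dim(F_v \cap A_+) = 3 \quad \text{for every } [v] \in G,
\end{equation*}
while $\dim(F_v \cap A_+) \leq 1$ for generic $[v] \notin G$ (which we already know, since $A_+$ is Lagrangian and $Y_{A_+} \neq \P(V)$). The natural way to see the "$\geq 3$" is to exhibit three independent elements of $F_v \cap A_+$ explicitly: using the Remark that $\iota_+([u_0])$ and $\iota_+([u_1])$ meet along $\langle u_0 \wedge u_1\rangle$, for $v = u_1 \wedge u_2$ one finds that $\omega_{u_1}$ and $\omega_{u_2}$ each contain $v$ as a factor, giving $v \wedge \alpha_1 := \omega_{u_1}$ and $v \wedge \alpha_2 := \omega_{u_2}$ in $F_v \cap A_+$; a third independent element comes from $\omega_u$ for $[u]$ on the line $\langle u_1, u_2\rangle$, or more invariantly from the subspace $v \wedge (v \wedge U) \subset A_+$ together with a Plücker relation — I expect $F_v \cap A_+$ to be exactly $\{\, v \wedge \beta \mid \beta \in u_1 \wedge U + u_2 \wedge U \,\}$, which is $3$-dimensional. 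This is a direct exterior-algebra computation in $\bigwedge^\bullet U$ with $\dim U = 4$.

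**Closing the argument.** Once $\dim(F_v \cap A_+) \geq 3$ is known along $G$, i.e. $\lambda_{A_+}$ has corank $\geq 3$ there, the standard fact that a generic such map has $\det$ vanishing to order exactly equal to the corank along the smooth locus gives $\mathrm{mult}_G(\det\lambda_{A_+}) \geq 3$; since $3G$ has degree $18$... no: $3G$ is the divisor $3[G]$ which, as $G$ is a quadric, has degree $6$, matching $\deg Y_{A_+} = 6$. Hence $\det\lambda_{A_+}$ is (up to scalar) the cube of the Plücker quadratic form, and $Y_{A_+} = 3G$ as schemes. To pin down that the multiplicity is exactly $3$ and not more — equivalently that $A_+$ contains no decomposable form giving extra corank, and that $Y_{A_+} \neq \P(V)$ — I use the already-proven facts that $A_+$ is Lagrangian and that $G$ has the expected codimension; if $\det\lambda_{A_+}$ vanished on more than $3G$ it would vanish identically (degree reasons), contradicting $Y_{A_+} \subsetneq \P(V)$, which follows from the generic-corank-$1$ statement off $G$.

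**Expected main obstacle.** The genuinely delicate step is the local/scheme-theoretic multiplicity computation: passing from the pointwise bound $\dim(F_v \cap A_+) \geq 3$ on $G$ to the statement that $\det\lambda_{A_+}$ vanishes to order exactly $3$ (hence $Y_{A_+}$ equals $3G$ with its natural non-reduced structure, not merely has $G$ as support). This requires understanding the local structure of $\lambda_{A_+}$ transverse to $G$ — one writes $\lambda_{A_+}$ in local coordinates near a point of $G$, block-decomposes according to the constant corank-$3$ kernel, and checks that the $3\times 3$ "transverse" block vanishes simply, so that $\det$ picks up exactly the cube of a local equation of $G$. The exterior-algebra identification of $F_v \cap A_+$ is routine but must be done carefully to get the right dimension on the nose and to control the behavior transverse to $G$.
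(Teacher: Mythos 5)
Your proposal is correct in outline but takes a genuinely different route from the paper's. The paper's proof is a short equivariance argument: $Y_{A_{+}(U)}$ is invariant under $SL(U)$ acting on $\P(\bigwedge^2 U)$, every irreducible component of this sextic hypersurface is invariant (connectedness of $SL(U)$) and meets the closed orbit $G$, on which $SL(U)$ acts transitively; hence each component contains, and therefore equals, the quadric $G$, so $Y_{A_{+}(U)} = kG$ and comparing degrees gives $k = 3$. You instead compute the corank of $\lambda_{A_{+}(U)}$ along $G$ directly: for $v = u_1 \wedge u_2$ decomposable, the forms $\omega_u = \bigwedge^3(u \wedge U)$ for $[u]$ on the line $\langle u_1, u_2\rangle$ all lie in $F_v \cap A_{+}(U)$ and span a $3$-dimensional space (a Veronese $\P^1 \subset \P^2$ worth of decomposables), so $\det\lambda_{A_{+}(U)}$ vanishes to order at least $3$ along $G$ by the Leibniz expansion, and the degree count $6 = 3\cdot 2$ forces $\det\lambda_{A_{+}(U)}$ to be a scalar multiple of the cube of the Pl\"ucker quadric. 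Your route is longer but buys something the orbit argument does not: the exact corank $3$ along $G$, which is what underlies the fact, used later, that the induced map $S^{[2]} \to G$ is $6:1$. Three remarks. First, your justification that $Y_{A_{+}(U)} \subsetneq \P(V)$ is circular as written (you deduce it from generic corank $\leq 1$ off $G$, which you had justified by $Y_{A_{+}(U)} \neq \P(V)$); this nonvanishing of $\det\lambda_{A_{+}(U)}$ genuinely requires exhibiting one nondecomposable $v$, e.g.\ $v = u_1\wedge u_2 + u_3 \wedge u_4$, with $F_v \cap A_{+}(U) = 0$ --- but the paper explicitly omits the same check, so this is a shared debt rather than a defect of your approach. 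Second, your guessed description of $F_v \cap A_{+}(U)$ as $\{\,v\wedge\beta \mid \beta \in u_1\wedge U + u_2\wedge U\,\}$ is off (that expression lands in $\bigwedge^2 V$ and has the wrong dimension); the correct statement is that $F_v \cap A_{+}(U)$ is the span of the $\omega_u$ above. Third, the ``exactly order $3$'' worry you flag as the main obstacle dissolves: once the order is $\geq 3$ and the determinant is not identically zero, the degree count forces equality, so no local transverse computation is needed.
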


\begin{proof}
We omit the check that $Y = Y_{A_{+}(U)}$ is not the whole $\P(V)$. By construction $Y$ is invariant under the action of $SL(U)$ on $V = \bigwedge^2 U$. This group acts transitively on the Grassmannian $G$; since $Y$ meets $G$, it contains the whole $G$. Actually, since $SL(U)$ is connected, this holds true for every irreducible component of $Y$.

It follows that $Y = k G$ for some $k$, and comparing the degrees we find $k = 3$.
\end{proof}

Since any two smooth quadric in $\P(V)$ are projectively equivalent, we see that for every smooth quadric $Q \subset \P(V)$ the non-reduced sextic $3Q$ is EPW.

\section{The deformation argument} \label{deformation argument}

Now we want to connect the preceding examples. Namely, with the notation of the preceding section, we want to prove that as the generic Lagrangian subspace $A$ degenerates to $A_{+}(U)$, the corresponding double EPW sextic $X_A$ deforms to $S^{[2]}$, and the fixed locus of the involution $Z_A$ deforms to $\Bit(S)$.

\subsection{The smooth case}

The result is the following.
\begin{prop} \label{deformation}
 Let $S \subset \P^3$ be a smooth quartic. Then there exists a smooth complex variety $U$ of dimension $20$ with a marked point $0$ and a family
\begin{equation*}
\pi_X \colon \mathcal{X} \to U
\end{equation*}
such that
\begin{enumerate}[i)]
\item
$\mathcal{X}_0 \cong S^{[2]}$ and
\item
there exists a divisor $D \subset U$ such that $\mathcal{X}_t = X_{A(t)}$ is a smooth double EPW sextic for each $t \in U \setminus D$.
\end{enumerate}
\end{prop}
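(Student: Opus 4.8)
The plan is to realize $S^{[2]}$ as a flat limit of smooth double EPW sextics along a $20$-dimensional family of Lagrangian subspaces degenerating to $A_+(U_0)$, to identify that limit by comparing it with the degree-$6$ cover $S^{[2]}\to G$ of Section \ref{involution}, and then to deduce smoothness of the total space essentially for free. Here $U_0$ denotes the $4$-dimensional vector space of the preceding section, with $V=\bigwedge^2 U_0$, $G=\Gr(2,U_0)\subset\P(V)$ and $A_+=A_+(U_0)$; the letter $U$ is kept for the $20$-dimensional base of the statement. I treat first the case where $S$ contains no lines, the general case differing only by the extra Mukai flops of Section \ref{involution}.

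\emph{The base.} Since $\lag(\bigwedge^3 V)$ has dimension $55$ and both $\Sigma=\Sigma_1$ and $\Delta$ are divisors in it (Proposition \ref{tangent to Sigma}), I would take $U$ to be a general smooth $20$-dimensional subvariety of $\lag(\bigwedge^3 V)$ through the marked point $0:=A_+$; then $D:=U\cap(\Sigma\cup\Delta)$ is a divisor, and by Corollary \ref{smoothness of X} the scheme $X_{A(t)}$ is a smooth double EPW sextic for every $t\in U\setminus D$. The value $20$ is the natural one --- it is the dimension of the moduli space of polarized double EPW sextics, equivalently of $\lag(\bigwedge^3 V)^{0}/PGL(V)$, and it equals $h^{1,1}(S^{[2]})-1$, the dimension of the space of polarized deformations of $S^{[2]}$ --- and I would choose $U$ transverse to the $PGL(V)$-orbit directions, which makes the Kodaira--Spencer map of the family below injective (useful later).

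\emph{The family and its central fibre.} Restricting the universal double-EPW-sextic construction reviewed at the start of the paper over $U$ yields the universal EPW sextic $\mathcal{Y}\subset\P(V)\times U$, the universal cokernel $\xi$ of $\lambda$ with its multiplication map, and hence a family $\mathcal{X}^\circ=\Spec(\O\oplus\zeta)\to U\setminus D$ with fibres the smooth $X_{A(t)}$. The real task is to extend $\mathcal{X}^\circ$ to a \emph{flat} family over a neighbourhood of $0$ with central fibre $S^{[2]}$; the fibres over $D\setminus\{0\}$ are irrelevant. This is the crux. Over $0$ one has $Y_{A_+}=3G$, so the composite $\mathcal{X}_t\to Y_{A(t)}\subset\P(V)$ degenerates, over the quadric $G$, to a cover of degree $2\cdot 3=6$; on the other hand Section \ref{involution} furnishes the canonical degree-$6$ cover $\phi\colon S^{[2]}\to G$, $Z\mapsto\ell_Z$, together with the Beauville involution $i$, whose fixed locus is $\Bit(S)$, presenting $\phi$ as a double cover of $S^{[2]}/i$ followed by a triple cover of $G$ --- the same $2$-then-$3$ shape as the degenerate tower $X_{A_+}\to 3G\to G$. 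I would show that, after a simultaneous small resolution of the total space of the universal family near $X_{A_+}$ --- the family version of the blow-ups and Mukai flops of Section \ref{involution}, carried out along the locus lying over the planes in the image of $\iota_+$ and forced because $X_{A_+}$ is singular, being the double cover of the non-reduced $3G$ --- one obtains a flat family whose central fibre is exactly $S^{[2]}$. Concretely this amounts to matching the degenerating sheaf of algebras $\O_{Y_{A(t)}}\oplus\zeta_{A(t)}$ against $\phi_*\O_{S^{[2]}}$ through the two presentations of the degree-$6$ cover of $G$, as in \cite{Kieran4}; along the way the limit of the fixed locus $Z_{A(t)}$ is identified with $\Bit(S)$ as well.

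\emph{Smoothness of the total space, and the main obstacle.} Once $\mathcal{X}_0$ is known to be the smooth fourfold $S^{[2]}$, smoothness of $\mathcal{X}$ near $\mathcal{X}_0$ is automatic: $\mathcal{X}\to U$ is flat over the smooth base $U$ and has a smooth fibre at $0$, so by the fibre criterion for smoothness it is smooth along $\mathcal{X}_0$, whence $\mathcal{X}$ is smooth of dimension $24$ in a neighbourhood of $\mathcal{X}_0$; equivalently the Kodaira--Spencer map $T_0U\to H^1(S^{[2]},T_{S^{[2]}})$ is injective and $S^{[2]}$ is unobstructed. I expect the genuine difficulty to lie entirely in the central-fibre identification: pinning the flat limit down as \emph{precisely} $S^{[2]}$, rather than a birational model of it, requires controlling how the rank-one sheaf $\zeta_A$ and its algebra structure degenerate over the non-reduced sextic $3G$ and over the planes of $\iota_+$, and how the resulting sixfold cover of $G$ matches the Hilbert scheme via the line map $Z\mapsto\ell_Z$ --- the Mukai-flop phenomena of Section \ref{involution} being exactly the shadow of this, and their control in families the delicate point.
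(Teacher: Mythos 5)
Your proposal runs the argument in the opposite direction from the paper, and that reversal is where a genuine gap opens. You set up the universal EPW family over a general $20$-dimensional slice $U\subset\lag(\bigwedge^3 V)$ through $A_+$ and then try to manufacture $S^{[2]}$ as a flat limit at $A_+$. But the marked point $A_+=A_+(U_0)$ and a \emph{general} slice through it carry no information about the particular quartic $S$ you were handed: the fibre of the universal family over $A_+$ is the fixed scheme $X_{A_+}$ (the double cover of the non-reduced sextic $3G$), independent of $S$, and any flat modification of the family near that point has a central fibre determined by the choice of modification, not by $S$. Since smooth quartics up to projectivity already form a $19$-dimensional family, there is no mechanism in your construction forcing the limit to be $S^{[2]}$ for the \emph{prescribed} $S$ rather than for some other quartic, or indeed to be a Hilbert scheme at all. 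You correctly identify the central-fibre identification as ``the crux,'' but that identification --- controlling the degeneration of the algebra $\O_{Y_{A(t)}}\oplus\zeta_{A(t)}$ over $3G$ and matching it with $\phi_*\O_{S^{[2]}}$ --- is precisely the content of the proposition, and in your write-up it remains a sketch (``I would show that\dots''). The smoothness discussion at the end is fine but beside the point: the proposition only asserts smoothness of the base $U$.

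The paper avoids all of this by starting from $S^{[2]}$ rather than arriving at it. It takes the semiuniversal deformation space of $S^{[2]}$ (smooth, $21$-dimensional), cuts out by local Torelli the smooth hypersurface $U$ of deformations keeping the degree-two class $h$ of type $(1,1)$, extends the Beauville involution $\phi$ to every fibre $\mathcal{X}_t$ using O'Grady's result from \cite{Kieran3}, and compares $h^0(\mathcal{X}_t,H_t)$ with $h^0(\mathcal{X}_t/\phi_t,H_t')$ via Kodaira--Kawamata--Viehweg vanishing and flatness to see that all sections of $H_t$ come from the quotient. This forces the map $\phi_{H_t}\colon\mathcal{X}_t\tto|H_t|^{\dual}\cong\P^5$ to factor through the involution, which rules out the birational alternative in O'Grady's classification (\cite{Kieran1}, Proposition $3.2$) of polarized fourfolds numerically equivalent to $S^{[2]}$ with $q(h,h)=2$; hence for some $\overline{t}$, and then by openness (\cite{Kieran3}, Proposition $3.3$) for all $t$ off a divisor $D$, the map is $2:1$ onto a sextic, which is EPW by \cite{Kieran6}. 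No flat limit is ever computed, the given $S$ is built in from the start, and the dimension $20$ appears as $21-1$ rather than as $55-35$. If you want to salvage your degeneration picture, you would need the full analysis of \cite{Kieran4} relating quartics in $\P(U_0)$ to Lagrangians near $A_+$; as written, the key step is assumed rather than proved.
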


Letting $\mathcal{Z}$ be the fixed locus of the involution of $\mathcal{X}$, we get:

\begin{cor} \label{fixed locus involution}
There exists over $U$ a family
\begin{equation*}
\pi_Z \colon \mathcal{Z} \to U
\end{equation*}
such that $\mathcal{Z}_{0} \cong \Bit(S)$ and for $t \in U \setminus D$
\begin{equation*}
\mathcal{Z}_{t} \cong Z_{A(t)} \cong Y_{A(t)}[2].
\end{equation*}
\end{cor}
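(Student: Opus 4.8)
The plan is to deduce the corollary from Proposition \ref{deformation} by constructing $\mathcal{Z}$ as a closed subscheme of $\mathcal{X}$ and identifying its fibres. First I would observe that the family $\mathcal{X} \to U$ carries a relative involution $\iota_{\mathcal{X}} \colon \mathcal{X} \to \mathcal{X}$ over $U$: on the central fibre this is Beauville's involution $i$ on $S^{[2]}$ described in Section \ref{involution}, while over $U \setminus D$ it is the covering involution of the double EPW sextic $X_{A(t)}$. To see that these patch into a single involution of the total space one uses that $\mathcal{X}$ is (after the Mukai flop over the central fibre) the family of double covers of the EPW sextics $Y_{A(t)}$, degenerating to $3Q$ at $t=0$; the multiplication maps $m_{A(t)}$ vary algebraically, so the sheaf-of-algebras structure and hence the involution extends across $D$. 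I would then set $\mathcal{Z} = \mathrm{Fix}(\iota_{\mathcal{X}}) \subset \mathcal{X}$, the fixed locus, which is a closed subscheme, and let $\pi_Z$ be the restriction of $\pi_X$.

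Next I would identify the fibres. For $t \in U \setminus D$ the fixed locus of the covering involution of $X_{A(t)}$ is exactly the branch locus $Z_{A(t)}$, which by the Remark following the definition of double EPW sextics is isomorphic to $Y_{A(t)}[2]$; this gives the second assertion. For $t = 0$, I would invoke the classical description in Section \ref{involution}: the fixed locus of Beauville's involution $i$ on $S^{[2]}$ consists of those length-two subschemes $Z$ with $\ell_Z \cdot S = 2Z$, i.e. the schemes cut out on $S$ by a line meeting $S$ with even multiplicity at each of two points — precisely the bitangent lines (and their flex/hyperflex degenerations). Thus $\mathrm{Fix}(i) \cong \Bit(S)$, giving $\mathcal{Z}_0 \cong \Bit(S)$.

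The one point requiring genuine care — and I expect this to be the main obstacle — is the flatness of $\pi_Z$, or rather the behaviour of the fixed locus over the central fibre and over $D$, where $\mathcal{X}$ undergoes the Mukai flop along $P = \ell^{(2)}$. One must check that forming the fixed locus commutes with the flop and with specialisation, so that $\mathcal{Z}_0$ is really the limit of the $Z_{A(t)}$ and not some larger scheme picking up extra components supported on the flopped locus $P^{\dual}$. Here the key input is that $\Bit(S)$, $Y_{A(t)}[2]$ and $Z_{A(t)}$ are all connected surfaces of the same degree $40$ with the same Euler characteristic $192$ (Proposition \ref{surface ZA}) — so no component can appear or disappear in the limit — together with the fact that the Mukai flop is an isomorphism away from codimension $2$ and $P$ itself is not contained in $\mathrm{Fix}(i)$. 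Once flatness is established, upper semicontinuity of fibre dimension plus the numerical coincidence forces $\mathcal{Z} \to U$ to be a flat family of surfaces with the stated fibres, and we may shrink $U$ if necessary to ensure $\mathcal{Z}$ is a variety. The remaining verifications — that $\mathcal{Z}$ is reduced, and that the isomorphisms $\mathcal{Z}_t \cong Z_{A(t)} \cong Y_{A(t)}[2]$ are the obvious ones — are routine given the constructions already in place.
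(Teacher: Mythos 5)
Your construction --- take the fixed locus of the relative involution already built in the proof of Proposition \ref{deformation}, and identify its fibres as $Z_{A(t)} \cong Y_{A(t)}[2]$ away from $D$ and as the bitangent surface over $0$ --- is exactly what the paper does (the paper offers no argument beyond ``letting $\mathcal{Z}$ be the fixed locus of the involution of $\mathcal{X}$''). The one caveat is that your concerns about the Mukai flop are misplaced for this statement: Proposition \ref{deformation} concerns a \emph{smooth} quartic $S$ (with no lines), so the central fibre is $S^{[2]}$ itself carrying the biregular Beauville involution and no flop occurs; the flop, and the attendant care about extra components over $P^{\dual}$, only enter in the singular case of Proposition \ref{singular deformation}.
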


\begin{cor} \label{EPW are HyperKahler}
Every smooth double EPW sextic is an irreducible symplectic variety.
\end{cor}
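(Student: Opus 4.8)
The plan is to derive this from the deformation invariance of the irreducible symplectic property, bootstrapping from the single example $S^{[2]}$ provided by Proposition \ref{deformation}. First I would assemble a global family: replacing the fixed Lagrangian $A$ by the tautological Lagrangian subbundle of $\lag(\bigwedge^3 V)$, the bundle map $\lambda$, the quotient sheaf $\xi$, the line bundle $\zeta$ and O'Grady's multiplication map all go through relatively, producing a proper family $\pi \colon \mathcal{X} \to \lag(\bigwedge^3 V)^{0}$ whose fibre over $A$ is $X_A$. By Corollary \ref{smoothness of X} every fibre is smooth, and a local computation shows the total space is smooth as well, so $\pi$ is a smooth proper family of compact complex manifolds; since each $X_A$ is a finite double cover of the projective variety $Y_A$ and carries the class $h_A = f_A^{*}\O_{Y_A}(1)$, which is ample by finiteness of $f_A$, every fibre is projective, hence Kähler. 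The base $\lag(\bigwedge^3 V)^{0}$ is connected, because $\lag(\bigwedge^3 V)$ is smooth and irreducible of dimension $55$ while $\Sigma \cup \Delta$ is a proper closed subset (a union of two divisors, by Proposition \ref{tangent to Sigma}), whose removal from a connected complex manifold leaves it connected. Consequently all smooth double EPW sextics are deformation equivalent to one another through $\pi$.

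Next I would invoke the fact that being an irreducible symplectic variety is preserved under deformation in a family of compact Kähler manifolds over a connected base. Concretely, let $W \subset \lag(\bigwedge^3 V)^{0}$ be the set of those $A$ for which $X_A$ is irreducible symplectic. The set $W$ is open: small deformations of an irreducible symplectic manifold are again irreducible symplectic, since $\pi_1$ and the Hodge numbers are deformation invariant, so $h^{2,0}$ stays equal to $1$, the holomorphic $2$-form extends to the nearby fibres, and its nondegeneracy is an open condition which persists near the central fibre. The set $W$ is also closed: a limit fibre still has $\pi_1$ trivial and $h^{2,0} = 1$, and, using the deformation invariance of $c_1$ in integral cohomology together with the fact that an effective divisor numerically trivial on a compact Kähler manifold is empty, its canonical bundle is trivial, so its holomorphic $2$-form cannot acquire zeros; alternatively one may apply the Bogomolov decomposition and the value $b_2 = 23$ to identify the limit fibre directly. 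Thus $W$ is open and closed in the connected space $\lag(\bigwedge^3 V)^{0}$.

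Finally I would exhibit a point of $W$. For a smooth quartic $S \subset \P^3$, the Hilbert scheme $S^{[2]}$ is irreducible symplectic by Beauville's theorem, and Proposition \ref{deformation} produces a family $\pi_X \colon \mathcal{X} \to U$ over a smooth $20$-dimensional base with $\mathcal{X}_0 \cong S^{[2]}$ and $\mathcal{X}_t = X_{A(t)}$ a smooth double EPW sextic for every $t$ outside the divisor $D$. Since $D$ is a proper closed subset, $U \setminus D$ accumulates at $0$, so by the openness statement there is $t_{\ast} \in U \setminus D$ with $X_{A(t_{\ast})}$ irreducible symplectic; as this is a smooth double EPW sextic, $A(t_{\ast}) \in \lag(\bigwedge^3 V)^{0}$, hence $W \neq \emptyset$ and therefore $W = \lag(\bigwedge^3 V)^{0}$. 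By Corollary \ref{smoothness of X} this says precisely that every smooth double EPW sextic is an irreducible symplectic variety.

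The step I expect to be most delicate is the closedness of $W$, that is, ruling out degeneration of the holomorphic symplectic form on a limiting double EPW sextic: the canonical-bundle argument above settles it, but one must make sure that all the fibres involved are Kähler (which is why it is worth recording at the outset that double EPW sextics are projective) and that the Hodge-theoretic invariance statements are being applied to the possibly non-algebraic total space $\mathcal{X}$. The rest — the relative construction of the family and the classical openness argument for deformations of symplectic manifolds — is essentially bookkeeping.
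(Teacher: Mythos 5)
Your argument is correct and is exactly the deformation-equivalence argument the paper intends: the corollary is stated without proof as an immediate consequence of Proposition \ref{deformation}, the connectedness of $\lag(\bigwedge^3 V)^{0}$, and the deformation invariance of the irreducible symplectic property. One small caution on your closedness step: triviality of the canonical bundle of the limit fibre only shows that $\sigma^{\wedge 2}$ is either nowhere zero or identically zero, so the phrase ``cannot acquire zeros'' does not by itself exclude the everywhere-degenerate case, and you genuinely need the Bogomolov decomposition (or a continuity/Fujiki argument for $\int \sigma^2\bar\sigma^2$) that you offer as the alternative.
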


\begin{proof}[\proofname{} of Proposition \ref{deformation}]
Let $V$ be a local semiuniversal deformation space of $S^{[2]}$; it is smooth of dimension $21$. Let $h \in \Pic(S^{[2]})$ be the divisor class associated to the map $f$. By the local Torelli theorem the locus $U \subset V$ parametrizing those deformations such that $h$ remains of type $(1,1)$ (and so remains the class of a divisor) is a smooth hypersurface. After restricting $U$ we can assume that we have a family
\begin{equation*}
\pi_X \colon \mathcal{X} \to U
\end{equation*}
of polarized irreducible symplectic varieties $(\mathcal{X}_t, h_t)$ such that $(h_t, h_t) = 2$ for the Beauville-Bogomolov form.

Let $\phi$ be the Beauville involution on $S^{[2]}$. By the remark in section $4.1.3$ of \cite{Kieran3} this extends to an involution $\phi_t$ of $\mathcal{X}_t$. Consider for every $t \in U$ the quotient
\begin{equation*}
Y_t = \mathcal{X}_t /\phi_t.
\end{equation*}
There is a divisor $H_t'$ on $Y_t$ such that
\begin{equation*}
\phi_t^{*} (c_1(H_t')) = h_t.
\end{equation*}
This is because the involution fixes $h_t$; more precisely
\begin{equation*}
\phi_t^{*} \colon H^2(\mathcal{X}_{\overline{t}}, \Z) \to H^2(\mathcal{X}_{\overline{t}}, \Z)
\end{equation*}
is the reflection in the span of $h_t$, see \cite[Sec. $4.1.3$]{Kieran3}.

Since $Y'$ has terminal singularities and $K_{Y'} = 0$, we can apply a variant of the Kodaira vanishing theorem for singular varieties, for instance Theorem $1-2-5$ in \cite{KMM}, to conclude that
\begin{equation*}
h^i(Y_t, H_t') = 0
\end{equation*}
for all $i > 0$. It follows that $h^0(Y_t, H_t') = \chi(Y_t, H_t')$. Let $H_t$ be the pullback of $H_t'$; we claim that
\begin{equation*}
\chi(\mathcal{X}_t, H_t) = \chi(Y_t, H_t')
\end{equation*}
for all $t \in U$. Indeed by flatness we can prove it just when $t = 0$, and in this case it is clear. Applying Kodaira vanishing on $\mathcal{X}_t$ we conclude that
\begin{equation} \label{same sections}
h^0(\mathcal{X}_t, H_t) = h^0(Y_t, H_t').
\end{equation}

We claim that there is some $\overline{t} \in U$ such that $(\mathcal{X}_{\overline{t}}, h_{\overline{t}})$ satisfy the conclusions of Proposition $3.2$ of \cite{Kieran1}. Indeed we have $(1)$ by definition, and $(5)$ holds for every $t$ by Proposition $3.6$ of the same paper.

Moreover $(2)$ and $(4)$ are satisfied outside a countable union of proper subvarieties of $U$ by the local Torelli theorem. Finally $(3)$ and $(6)$ follow formally from the other points, as in the proof of Proposition $3.2$ of the same paper.

O'Grady then classifies polarized irreducible symplectic varieties numerically equivalent to $S^{[2]}$ (this means that their $H^2$, endowed with the Beauville-Bogomolov form, are isomorphic lattices, and that the Fujiki constants are the same) which satisfy the conclusion of Proposition $3.2$. Namely let $(X, H)$ be such a polarized variety, and consider the map
\begin{equation*}
f \colon X \tto |H|^{\dual}.
\end{equation*}
Then $|H|^{\dual} \cong \P^5$, and there are two cases for $f$. Either it is birational on the image $Y$, or it is everywhere defined and the map
\begin{equation*}
f \colon X \to Y \subset |H|^{\dual}
\end{equation*}
is the quotient by an anti-symplectic involution on $X$, and $Y \subset |H|^{\dual}$ is a sextic.

Now apply all this with $X = \mathcal{X}_{\overline{t}}$. We want to exclude the first case, and we proceed as follows. Let
\begin{equation*}
\pi \colon \mathcal{X}_{\overline{t}} \to Y_{\overline{t}} = \mathcal{X}_{\overline{t}}/\phi_{\overline{t}}
\end{equation*}
be the projection. We have an injective pull-back map
\begin{equation*}
\pi^{*} \colon H^0(Y_{\overline{t}}, H_{\overline{t}}') \to H^0(\mathcal{X}_{\overline{t}}, H_{\overline{t}}).
\end{equation*}
By \eqref{same sections} the dimensions on the two sides are the same, so $\pi^{*}$ is an isomorphism.

But then the map $f$ factors through the projection
\begin{equation*}
X \to Y',
\end{equation*}
so it cannot be birational.

The condition of having a $2:1$ map on a sextic of $\P^5$ is open by \cite[Prop $3.3$]{Kieran3}, so it follows that for $t$ outside a divisor $D$ the same conclusion holds. Finally O'Grady shows in \cite{Kieran6} that the sextics thus obtained are all EPW sextics, so we are done.
\end{proof}

\subsection{The singular case}

We now want to extend the result to the case where $S$ has finitely many singular points. Our aim is to invert the construction given in Remark \ref{isv from singular EPW}.

\begin{prop} \label{singular deformation}
Let $S_0 \subset \P^3$ be a quartic with $k$ nodes and no other singularities. Then there exists a smooth complex variety $U$ with a marked point $0$ and a family
\begin{equation*}
\pi_X \colon \mathcal{X} \to U
\end{equation*}
such that
\begin{enumerate}[i)]
\item
$\mathcal{X}_0$ is birational to $S_0^{[2]}$ and
\item
for $t \in U$ generic, $\mathcal{X}_t = X_{A(t)}$ is a singular double EPW sextic; more precisely $A(t)$ contains $\bigwedge^3 W_i$ for $k$ distinct choices of $W_i \subset V$ of dimension $3$.
\end{enumerate}

Moreover one has a family
\begin{equation*}
\pi_Z \colon \mathcal{Z} \to U
\end{equation*}
such that
\begin{enumerate}[i)]
\item
$\mathcal{Z}_0$ is birational to $\Bit(S_0)$ and
\item
$\mathcal{Z}_t$ is isomorphic to $Z_{A(t)} \cong Y_{A(t)}[2]$ for $t \in  U$ generic.
\end{enumerate}
\end{prop}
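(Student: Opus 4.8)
The plan is to reduce to the smooth case (Proposition \ref{deformation}) by resolving the nodes of $S_0$ and carefully tracking what happens under the various birational modifications. First I would let $S \to S_0$ be the blowup of the $k$ nodes, so that $S$ is a smooth $K3$ surface. The Beauville construction from Section \ref{involution} shows that $S^{[2]}$ carries the rational involution $i$ whose indeterminacy locus is the union of the planes $P_j = \ell_j^{(2)}$ attached to the exceptional conics $\ell_j \subset S$. Performing the Mukai flop along each $P_j$ produces a smooth irreducible symplectic variety $X_0$ on which the involution becomes biregular; by Remark \ref{isv from singular EPW} this $X_0$ is precisely the variety $\tl{X}_{A}$ obtained from a singular EPW sextic $Y_A$ with $A \supset \bigwedge^3 W_j$ for $k$ planes $W_j$. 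Thus $X_0$ is birational to $S_0^{[2]}$, and its fixed locus $Z_0$ is birational to $\Bit(S_0)$, using the description of $\Bit(S_0)$ in terms of the (finitely many) singularities of $S_0$.

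Next I would run the deformation-theoretic argument of Proposition \ref{deformation} with $X_0$ in place of $S^{[2]}$. Take a local semiuniversal deformation space $V$ of $X_0$; it is smooth, and the Beauville involution $\phi_0$ together with the class $h_0$ deforms over the hypersurface $U \subset V$ where $h_0$ stays of type $(1,1)$. Exactly as before one forms the quotient family $Y_t = \mathcal{X}_t / \phi_t$, checks $h^0(\mathcal{X}_t, H_t) = h^0(Y_t, H_t')$ via Kodaira vanishing and flatness, and invokes O'Grady's classification (Proposition $3.2$ of \cite{Kieran1} and the results of \cite{Kieran3}, \cite{Kieran6}) to conclude that for $t$ outside a proper closed subset the quotient map $\mathcal{X}_t \to Y_t$ realises $Y_t$ as a (possibly singular) EPW sextic and $\mathcal{X}_t$ as its associated double cover. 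The only new point is to verify that the $k$ nodes persist: the $k$ flopped planes $P_j^{\dual} \subset X_0$ are Lagrangian $\P^2$'s, and by the deformation invariance of the numerical type of such subvarieties (a countable union of conditions, so generically unobstructed in the appropriate direction) each one deforms to a family of Lagrangian planes $P_j^{\dual}(t) \subset \mathcal{X}_t$; contracting these recovers a singular double EPW sextic $X_{A(t)}$ with $A(t) \supset \bigwedge^3 W_j$ for $k$ distinct $W_j$, by Remark \ref{isv from singular EPW} read in reverse. The family $\mathcal{Z}$ is then the fixed locus of the involution on $\mathcal{X}$; its special fibre is $Z_0 \sim \Bit(S_0)$ and its generic fibre is $Y_{A(t)}[2]$ by Proposition \ref{surface ZA} and the remark identifying the ramification locus.

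The main obstacle I expect is not the construction of $X_0$ — that is a routine application of Mukai flops — but rather checking that the deformation space of $X_0$ still surjects onto (an open subset of) the appropriate stratum $\Sigma_k$ of $\lag(\bigwedge^3 V)$, i.e. that one genuinely lands on EPW sextics containing $k$ planes and not on some smaller or different locus. This requires knowing that the $k$ Lagrangian planes $P_j^{\dual}$ impose independent conditions and deform independently — which is where the transversality input is needed, and is the analogue in the flopped picture of Proposition \ref{tangent to Sigma}(ii), (iv) on the codimension and smoothness of $\Sigma_k$. A second, more technical point is that $X_0$ need not have exactly the same Beauville–Bogomolov lattice and Fujiki constant packaging that O'Grady's classification is phrased for; one checks this is unaffected by the flops since flops preserve $H^2$ with its form, so the classification applies verbatim. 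Granting these, everything else is a faithful repetition of the proof of Proposition \ref{deformation}, and I would not reproduce those computations in detail.
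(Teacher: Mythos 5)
Your overall strategy (blow up the nodes, pass to the Mukai flop $X_0$ of $S^{[2]}$, deform, and invoke O'Grady's classification) matches the paper's, but the step you yourself flag as ``the only new point'' --- the persistence of the $k$ planes --- is exactly where your argument breaks down, and the mechanism you propose for it does not work. You deform over the full hypersurface $U$ on which only $h_0$ stays of type $(1,1)$ and assert that the flopped Lagrangian planes $P_j^{\dual}$ deform along, being ``generically unobstructed''. This is false: the generic polarized deformation of $X_0$ with $q(h,h)=2$ is a \emph{smooth} double EPW sextic containing no Lagrangian planes, so the planes certainly do not survive over the generic point of $U$. Their survival is a closed Noether--Lefschetz-type condition, not a generic one. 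The paper imposes the extra conditions explicitly: it restricts to the smaller locus where, besides $H$, the $k$ divisor classes $D_j$ (with $q(D_j,D_j)=-2$, $q(D_j,H)=0$, coming from the loci of subschemes meeting the exceptional conics) remain of type $(1,1)$, and it is this restriction that forces the generic fibre into $\Sigma$.

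Second, the object that must deform and then be contracted is not a Lagrangian plane but a \emph{divisor}. In Remark \ref{isv from singular EPW} the resolution $\tl{X}_A \to X_A$ is the blowup along a K3 surface $S$ (the double cover of $\P(W)$ branched over $C_{A,W}$), whose exceptional divisor is a $\P^1$-bundle over $S$; reading this in reverse means contracting a divisor $D_t$ fibred in $\P^1$'s over a surface $S_t$, producing a fourfold singular along a surface --- not contracting a Lagrangian $\P^2$ to a point. Accordingly the paper proves, via a relative Hilbert scheme argument, that the fibration $f\colon D\to S$ deforms to $f_t\colon D_t\to S_t$; shows that the Stein factorization of $\phi_{H_t}$ contracts exactly the fibres of $f_t$; excludes the $3:1$ degeneration by a local Torelli argument on the ramification divisor; and finally deduces $\bigwedge^3 W_j\subset A(t)$ from Lemma \ref{converse} together with the fact that $\phi_t$ restricted to $S_t$ is the degree-$2$ map onto $\P(W_j)$. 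Your description of the central fibre as ``precisely $\tl{X}_A$ for a singular EPW sextic $Y_A$'' is also inaccurate: the central fibre maps $6:1$ onto the Grassmannian quadric and the associated sextic is the triple quadric $3G$, a far more degenerate object; genuine members of $\Sigma$ appear only for generic $t$.
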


Before turning to the proof, we give some reference diagrams, which summarize the diverse varieties and maps introduced in this section and in Subsection \ref{involution}. To minimize the clutter, there are three different diagrams.

\begin{equation}
\yscale=0.8
\Diagram
& & & \tl{X} \aTo^{p_1} (-1,-2) \aTo^{p_2} (1,-2)
\\
\ell^{(2)} = \mr & P & & & & P^{\dual}
\\
& & S^{[2]} \bInto (-1, 1) & \rBirat & X \bInto (1, 1) \tx{-3pt} & \rTo^{g_0} & \widehat{X}_0
\\
& & D'\uSubset & \rBirat & D \uSubset & \rTo^{f} & S \uSubset
\\
\endDiagram
\end{equation}

\begin{equation}
\yscale=1.3
\Diagram
& X \dTo^{g_0} & \rTo^{\phi_{H}} & \Gr & \ml (1, \P^3) \subset |H|
\\
S \subset \mr & \widehat{X}_0 \aTo^{\widehat{\phi}_0}_{6:1} \ro (2,1) & \rTo & \widehat{Y}_0 \uTo_{\psi_0} & & \ml = \widehat{X}_0/\widehat{i}_0
\\
\endDiagram
\end{equation}

\begin{equation}
\yscale=1.3
\Diagram
& X_t \dTo^{g_0} & \rTo^{\phi_{H_t}} & Y_t & \ml \subset |H_t|
\\
S_t \subset \mr & \widehat{X}_t \aTo^{\widehat{\phi}_t}_{2:1} \ro (2,1) & \rTo & \widehat{Y}_t \uTo_{\psi_t} & & \ml = \widehat{X}_t/\widehat{i}_t
\\
\endDiagram
\end{equation}

\begin{lemma}
Let $S$ be a smooth connected surface, $f \colon D \to S$ a fibration with fiber $\P^1$, and assume we have a local deformation $\pi_D \colon \mathcal{D} \to U$ of $D$ over the base $U$. Then, up to restricting $U$, each fiber $D_t$ has the structure of a fibration $f_t \colon D_t \to S_t$, where $S_t$ is a deformation of $S$.
\end{lemma}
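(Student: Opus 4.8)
The plan is to deform the fibres of $f$ one at a time and then reconstruct the fibration on each $D_t$ as an evaluation map. The key point is that the fibres of $f$ are unobstructed rational curves, both in $D$ and in the total space $\mathcal{D}$. Concretely, if $C \cong \P^1$ is a fibre of $f$, then since $f$ is smooth one has
\[
N_{C/D} \cong f^{*} T_S \res{C} \cong \O_{\P^1}^{\oplus 2},
\]
so $H^1(C, N_{C/D}) = 0$ and $h^0(C, N_{C/D}) = 2 = \dim S$. Moreover $D = D_0$ is a fibre of $\pi_D$ over a smooth point of $U$, hence $N_{D/\mathcal{D}} \res{C} \cong \O_C^{\oplus \dim U}$, and from the exact sequence
\[
0 \to N_{C/D} \to N_{C/\mathcal{D}} \to N_{D/\mathcal{D}} \res{C} \to 0
\]
we still get $H^1(C, N_{C/\mathcal{D}}) = 0$. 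Thus each fibre of $f$ is an unobstructed point both of $\Hilb(D)$ and of the relative Hilbert scheme $\Hilb(\mathcal{D}/U)$.

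Next I would realise $S$ inside the Hilbert scheme. The assignment $s \mapsto [f^{-1}(s)]$ gives a morphism $\sigma \colon S \to \Hilb(D)$; it is injective because distinct fibres of $f$ are disjoint, and its differential at $s$ is the canonical isomorphism $T_s S \to H^0(C_s, N_{C_s/D})$, so $\sigma$ is an open immersion and, $S$ being proper, $\sigma(S)$ is a union of connected components of $\Hilb(D)$. Using $H^1(C, N_{C/\mathcal{D}}) = 0$, the morphism $\Hilb(\mathcal{D}/U) \to U$ is smooth along $\sigma(S)$; since it is proper, a Stein factorization argument lets me, after shrinking $U$, split off the part $\mathcal{S}$ of $\Hilb(\mathcal{D}/U)$ lying over $\sigma(S)$, which is then smooth and proper over $U$ with $\mathcal{S}_0 = \sigma(S) \cong S$. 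Shrinking $U$ again — the condition is open on $\mathcal{S}$ and holds over $\mathcal{S}_0$ — I may assume every fibre of the universal curve $\mathcal{C} \to \mathcal{S}$ is isomorphic to $\P^1$, so that $\mathcal{C} \to \mathcal{S}$ is a $\P^1$-fibration and $\mathcal{S} \to U$ a deformation of $S$.

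Finally I would transport the fibration back to $\mathcal{D}$. The universal curve $\mathcal{C}$ sits in $\mathcal{D} \times_U \mathcal{S}$, and projection to the first factor gives an evaluation morphism $\varepsilon \colon \mathcal{C} \to \mathcal{D}$ over $U$. Over $t = 0$ it is the tautological identification of the family of fibres of $f$ with $D$, hence an isomorphism. Both $\mathcal{C}$ and $\mathcal{D}$ are flat and proper over $U$, and in this situation being an isomorphism is an open condition on $U$ (by upper semicontinuity of fibre dimension $\varepsilon$ is finite near $\mathcal{D}_0$; for a finite morphism of flat proper families that has degree $1$ on one fibre, the defining ideal of the image is killed by tensoring with the residue field at $0$, hence vanishes near $\mathcal{D}_0$ by flatness and Nakayama). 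So after one more restriction of $U$, $\varepsilon$ is an isomorphism, and
\[
f_U := (\mathcal{C} \to \mathcal{S}) \circ \varepsilon^{-1} \colon \mathcal{D} \to \mathcal{S}
\]
restricts, for each $t$, to a proper morphism $f_t \colon D_t \to S_t$ all of whose fibres are isomorphic to $\P^1$, with $S_t = \mathcal{S}_t$ a deformation of $S$.

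I expect the only genuinely delicate point to be the bookkeeping around properness: extracting a part $\mathcal{S}$ of the relative Hilbert scheme that is proper and smooth over $U$ with central fibre exactly $S$, and checking carefully that ``$\varepsilon_t$ is an isomorphism'' is an open condition on the base. Everything else is formal once the fibres of $f$ are known to be unobstructed; if $f$ were only generically a $\P^1$-fibration one would have to control the degenerate fibres as well, but for a smooth $\P^1$-fibration the argument is unobstructed throughout.
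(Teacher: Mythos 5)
Your argument is correct and follows essentially the same route as the paper: compute $N_{\ell/D}\cong\O_\ell^{\oplus 2}$ and deduce $H^1(\ell,N_{\ell/\mathcal{D}})=0$, so the relative Hilbert scheme is smooth and proper over $U$ along the component parametrizing fibres of $f$, identify $S$ with that component of the central fibre, and use properness to conclude that the evaluation map from the universal curve to $\mathcal{D}$ remains an isomorphism after shrinking $U$. Your write-up is in fact somewhat more careful than the paper's (correct rank of $N_{D/\mathcal{D}}$ for $\dim U>1$, and the explicit Nakayama argument for openness of the isomorphism locus).
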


\begin{proof}
Let $P$ be the Hilbert polynomial of a fiber of $f$ and consider the relative Hilbert scheme
\begin{equation*}
\pi_H \colon \mathcal{H} = \Hilb^P(\mathcal{D}/U) \to U
\end{equation*}
parametrizing subvarieties of the fibers of $\pi_D$ with Hilbert polynomial $P$. It is known that $\mathcal{H}$ is proper over $U$.

Let $\ell$ be a fiber of $f$, and regard $\ell$ as a point of $\pi_H^{-1}(0)$. The fibrations $f$ and $\pi_D$ respectively show that
\begin{equation*}
\norm{\ell}{D} \cong \O_{\ell}^2 \qquad \text{and} \qquad \norm{D}{\mathcal{D}}\res{\ell} \cong \O_{\ell}.
\end{equation*}
Since $\Ext^1(\O_{\ell}, \O_{\ell}^2) = 0$, the exact sequence
\begin{equation*}
\short{\norm{\ell}{D}}{\norm{\ell}{\mathcal{D}}}{\norm{D}{\mathcal{D}}\res{\ell}}
\end{equation*}
shows that $\norm{\ell}{\mathcal{D}} \cong \O_{\ell}^3$, in particular
\begin{equation*}
h^0(\ell, \norm{\ell}{\mathcal{D}}) = 3 \qquad \text{and} \qquad h^1(\ell, \norm{\ell}{\mathcal{D}}) = 0.
\end{equation*}
From deformation theory it follows that $\mathcal{H}$ is smooth of dimension $3$ at $\ell$.

Since this holds for all $\ell$ in the central fiber we see that $\mathcal{H}$ is smooth along the central fiber. By properness of $\mathcal{H}$, the singular locus of $\mathcal{H}$ projects to a closed subset of $U$ not containing $0$, so up to restricting $U$ we can assume that $\mathcal{H}$ is smooth.

The Hilbert scheme $\mathcal{H}$ is endowed with a universal family $\mathcal{C}$ with maps
\begin{equation*}
\xscale=0.7
\yscale=1.3
\Diagram
& \mathcal{C} \ldTo^{\alpha} \rdTo^{\beta}
\\
\mathcal{H} & & \mathcal{D}.
\\
\endDiagram
\end{equation*}
Here $\mathcal{C}$ comes with a proper map $\pi_C \colon \mathcal{C} \to U$, and the maps $\alpha$ and $\beta$ commute with the projections to $U$.

By hypothesis $S$ is isomorphic to a component of $\pi_H^{-1}(0)$; up to replacing $\mathcal{H}$ with one of its connected (hence irreducible) components we can assume that $\pi_{C}^{-1}(0) \cong D = \pi_{D}^{-1}(0)$. In other words $\beta$ is an isomorphism over $0$. As above we can use properness of $\mathcal{C}$ and $\mathcal{D}$ over $U$ to assume that $\beta$ is an isomorphism everywhere.

Then the map $\alpha \circ \beta^{-1} \colon \mathcal{D} \to \mathcal{H}$ is the required fibration; more precisely letting $S_t = \pi_H^{-1}(t)$ this restricts to a map $f_t \colon D_t \to S_t$ for every $t \in U$.
\end{proof}

Assume that $S_0$ has only one node $p$, and let $S$ be the blowup of $S_0$ at $p$, so that $S$ is a $K3$ surface. We let $\ell$ be the exceptional divisor of the blowup; since $p$ is a node it is a smooth conic, in particular isomorphic to $\P^1$.

The symplectic variety $S^{[2]}$ contains $P = \ell^{(2)} \cong \P^2$; let $X$ be the Mukai flop of $P$. We want to show that $X$ contains a divisor $D$ with a fibration $f \colon D \to S$ with fiber $\P^1$.

Let $D' \subset S^{[2]}$ be the divisor given by
\begin{equation*}
D' = \{ Z \in S^{[2]} \mid \Supp(Z) \cap \ell \neq \emptyset \}.
\end{equation*}
There is a rational fibration
\begin{equation*}
\psi \colon D' \tto S
\end{equation*}
which can be described as follows. The generic point $q + r \in D'$ has $q \in \ell$ and $r \notin \ell$; we set $\psi(q + r) = r$. The generic fiber of $\psi$ is $\ell \cong \P^1$.

We recall that there is a natural injective morphism of Hodge structures
\begin{equation*}
\tl{\mu} \colon H^2(S, \Z) \to H^2(S^{[2]}, \Z),
\end{equation*}
which is an isometry of lattices (see for instance \cite{}). In this notation we have
\begin{equation*}
[D'] = \tl{\mu}(\ell) \in H^2(S^{[2]}).
\end{equation*}
We also let
\begin{equation*}
H' = \tl{\mu}(\O_{S}(1)) \in H^2(S^{[2]}).
\end{equation*}
Then, since $\tl{\mu}$ is an isometry, we have
\begin{equation*}
q(D', D') = -2, \qquad q(D', H') = 0.
\end{equation*}

We let $D, H$ be the divisors on $X$ corresponding to $D', H'$ respectively.

\begin{lemma} \label{regular fibration}
The rational fibration $\psi$ induces a regular fibration
\begin{equation*}
f \colon D \to S.
\end{equation*}
\end{lemma}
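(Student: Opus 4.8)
The plan is to resolve the indeterminacy of $\psi$ by blowing up $P$ — which is the first half of the Mukai flop $S^{[2]}\dashrightarrow X$ — and then to recognize the resulting morphism as a $\P^1$-fibration.

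First I would locate the indeterminacy of $\psi$ on $D'$. The assignment $q+r\mapsto r$ is regular at every $W$ whose support meets $\ell$ in (essentially) a single point; a short computation in local coordinates on $\Hilb^2$ shows that even at a non-reduced subscheme $W=(z,v)$ with $z\in\ell$ but $v\notin T_z\ell$ the map extends regularly, with $\psi(W)=z$, and that $D'$ is smooth there. The only obstruction lies along $P=\ell^{(2)}$, where $W$ has two points on $\ell$: there $\psi$ is two-valued, and a transverse-slice computation — both at the reduced points $q_1+q_2$ and at the non-reduced points $2q$ of $P$ — shows that $D'$ is transversally nodal, the two branches being labelled by which of the two points of $W$ is made to lie on $\ell$. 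Hence the singular locus of $D'$ is exactly $P$, with transverse type $A_1$.

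Next I would blow up $P$. Let $\tl X=\Bl_P S^{[2]}$, with exceptional divisor $E=\P(N_{P/S^{[2]}})=\P(\Omega_{\P^2})$ and projections $p_1\colon\tl X\to S^{[2]}$ and $p_2\colon\tl X\to X$. Since $D'$ is transversally nodal along $P$, the strict transform $\tl D$ separates the two branches, so $\tl D$ is smooth and $p_1\colon\tl D\to D'$ is the normalization; concretely $\tl D\cap E$ is the tautological length-$2$ family $\{(Z,q):q\in Z\}$ over $P=\ell^{(2)}$. On each branch the rule defining $\psi$ is unambiguous, so $\psi\circ p_1$ extends to a morphism $\tilde\psi\colon\tl D\to S$; regularity at the points of $\tl D\cap E$ is checked in coordinates (or via the description of $E$ in Subsection~\ref{involution}), giving $\tilde\psi(Z,q)=q'$ where $Z=q+q'$. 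On $\tl D\cap E$ both $\tilde\psi$ and $p_2$ factor through the residual point $q'$, and the fibres of $p_2\colon\tl D\to D$ are precisely the level sets of $q'$ inside $\tl D\cap E$, collapsed onto the dual conic $C^{\dual}\subset P^{\dual}$; therefore $\tilde\psi$ descends to a morphism $f\colon D\to S$ agreeing with $\psi$ on $D'\setminus P\cong D\setminus P^{\dual}$, which is the regular fibration claimed.

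Finally I would determine the fibres: for $r\notin\ell$ one has $f^{-1}(r)=\{q+r:q\in\ell\}\cong\ell\cong\P^1$, while for $r\in\ell$ the fibre $f^{-1}(r)$ is the strict transform of the punctual $\P^1$ of length-$2$ subschemes supported at $r$, its single point on $C^{\dual}$ being $[L_r]$. Using the universal subscheme restricted over $\ell$ one sees that these assemble into a flat family, so $f$ is a $\P^1$-fibration; that it is in fact a $\P^1$-bundle then amounts to smoothness of $D$ along $C^{\dual}$, i.e. to $p_2|_{\tl D}$ being the blow-down of a $\P^1$-bundle with the expected normal bundle. This last point, together with the verification in the first two steps that the transverse type of $D'$ along $P$ does not degenerate at the non-reduced points of $P$ (the delicate $\Hilb^2$ geometry near the diagonal of $\ell^{(2)}$), is where the main work lies.
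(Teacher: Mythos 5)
Your proposal follows essentially the same route as the paper: blow up $P$, extend $\psi$ to the strict transform $\tl{D}$ by reading off the residual point from the normal direction (equivalently, from the line $l(x)$ through the node), and descend along $p_2$ by identifying the $p_2$-fibres in $\tl{D}\cap E$ with the level sets of the extended map. The only difference is cosmetic — you phrase the descent via the dual conic in $P^{\dual}$ where the paper invokes the two $\P^1$-fibrations of the incidence variety — and both treatments handle the non-reduced points of $P$ equally briefly, though you at least flag that this is where the remaining care is needed.
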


\begin{proof}
Let $\tl{X}$ be the blowup of $S^{[2]}$ along $P$, so we have a diagram
\begin{equation*}
\xscale=0.9
\yscale=1.3
\Diagram
& \tl{X} \ldTo^{p_1} \rdTo^{p_2} & & \ml = \Bl_{P} S^{[2]}
\\
S^{[2]} & & X.
\\
\endDiagram
\end{equation*}
Let $\tl{D} \subset \tl{X}$ be the strict transform of $D$.

Let $q + q' \in P = \ell^{(2)}$ with $q \neq q'$. Then we have the identification
\begin{equation*}
p_1^{-1}(q + q') = \P(\norm{P}{S^{[2]}})_{q+q'} \cong \P\big( (\norm{\ell}{S})_{q} \oplus (\norm{\ell}{S})_{q'} \big).
\end{equation*}
We have already remarked that, via the differential, $(\norm{\ell}{S})_{q}$ is identified with the line $r_q$ through $p$ corresponding to $q$ itself, and the same remark applies to $q'$. So a point $x \in p_1^{-1}(q + q')$ defines a line $l(x)$ in the plane spanned by $r_q$ and $r_q'$.

When $x \in \tl{D}$ the line $l(x)$ is in the tangent cone to $S_0$ in $p$, hence a point of $\ell$. We let $\tl{\psi}(x)$ be this points. If we let
\begin{equation*}
\tl{\psi}(x) = \psi(p_1(x))
\end{equation*}
when $p_1(x) \notin P$, we obtain a map
\begin{equation*}
\tl{\psi} \colon \tl{D} \to S
\end{equation*}
which resolves the indeterminacy of $\psi$. Actually we did not cover the case of a point $2q \in P$, but that is easy: we can just let $\tl{\psi}(x) = q$ for any $x \in \pi_1^{-1}(q)$; this fits well with our definition when $q \neq q'$.

It remains to check that $\tl{\psi}$ descends to a map from $D$, and in order to do this we have to identify the fibres of $p_2$. The dual plane $P^{\dual}$ can be identified with the $\P^2$ parametrizing lines through $p$; in this way the fibration over $P^{\dual}$ is just the map described above, sending $x \in p_{1}^{-1}(P)$ to the line $l(x)$.

Indeed let $E \subset \tl{X}$ be the exceptional divisor, so that $E$ can be identified with the incidence variety inside $P \times P^{\dual}$. The map
\begin{equation*}
l \colon E \to \{ \text{lines through } p \}
\end{equation*}
\emph{is} a $\P^1$ fibration over $\P^2$, and the only such fibrations are the projections on $P$ and $P^{\dual}$.

So we see that by construction $\tl{\psi}$ descends to $D$.
\end{proof}

Thanks to the two lemmas we conclude the following. Consider the locus $U$ inside the local semiuniversal deformation space of $X$ parametrizing deformations which keep $D$ and $H$ of type $(1,1)$. By the local Torelli theorem $U$ is smooth of dimension $18$. For $t \in U$ denote $X_t$ the corresponding deformation of $X$; we have deformations $D_t$ of $D$ and $H_t$ of $H$ inside $X_t$.

More precisely we have a family $\pi_X \colon \mathcal{X} \to U$ with two divisors $\mathcal{D}$ and $\mathcal{H}$ which restrict to $D_t$ and $H_t$ respectively on each fiber. Moreover we have a fibration $f \colon \mathcal{D} \to \mathcal{S}$ with fiber $\P^1$, which restricts to fibrations $f_t \colon D_t \to S_t$ on each fiber; for $t = 0$ this gives the fibration $D \to S$ of Lemma \ref{regular fibration}.

We now analyse in more detail the family $\mathcal{X}$. We will allow ourselves to restrict $U$ when necessary.

\begin{lemma}
The divisor $H$ is big and nef. In particular
\begin{equation*}
H^i(X, H) = 0
\end{equation*}
for $i > 0$.
\end{lemma}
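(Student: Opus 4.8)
The plan is to deduce the positivity of $H$ on $X$ from that of its counterpart $H'$ on $S^{[2]}$, exploiting that $X$ is obtained from $S^{[2]}$ by the Mukai flop along $P=\ell^{(2)}$, and then to invoke a vanishing theorem.

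First I would establish that $H'=\tl{\mu}(\O_S(1))$ is big and nef on $S^{[2]}$. It is nef --- indeed globally generated --- because $\O_S(1)=\pi^{*}\O_{S_0}(1)$ is globally generated on $S$, being the pull-back of the very ample $\O_{S_0}(1)$ along the blow-down $\pi\colon S\to S_0$, and the Hilbert--Chow morphism $\rho\colon S^{[2]}\to S^{(2)}$ identifies $\tl{\mu}(\O_S(1))$ with the $\rho$-pull-back of the globally generated line bundle on $S^{(2)}$ induced by $\O_S(1)\boxtimes\O_S(1)$. For bigness, since $\tl{\mu}$ is an isometry we have $q(H')=(\O_S(1))^2=4>0$, hence $\int_{S^{[2]}}(H')^4=3\,q(H')^2>0$ by the Fujiki relation, and a nef divisor with positive top self-intersection is big.

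Next I would transfer this across the flop. The Mukai flop $S^{[2]}\tto X$ along $P\cong\P^2$ is the flop of the small contraction $c\colon S^{[2]}\to\overline{X}$ collapsing $P$ to a point, so the curves contracted by $c$ are exactly those lying in $P$, and since $\Pic(P)\cong\Z$ it suffices to check that $H'$ meets a line $L_0\subset\ell^{(2)}$ in degree zero. Choosing $L_0=\{q_0+r\mid r\in\ell\}$ with $q_0\in\ell$ and a general $D\in|\O_S(1)|$ --- which is disjoint from $\ell$ because $\O_S(1)\cdot\ell=0$ and $|\O_S(1)|$ is base-point-free --- one sees $L_0\cap\tl{\mu}(D)=\emptyset$, so $H'\cdot L_0=\O_S(1)\cdot\ell=0$. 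Hence $H'$ descends to a nef divisor $\overline{H}$ on $\overline{X}$ whose pull-back along the other small contraction $c'\colon X\to\overline{X}$ is nef; under the canonical isomorphism $H^2(S^{[2]},\Z)\cong H^2(X,\Z)$ this pull-back is precisely $H$, so $H$ is nef. Moreover $H=(c')^{*}\overline{H}$ and $H'=c^{*}\overline{H}$ give $\int_X H^4=\int_{S^{[2]}}(H')^4>0$, so $H$ is big as well. Finally $X$ is projective (the morphism $\phi\colon X\to G$ of Subsection \ref{involution} is finite, having finite fibres), and since the Mukai flop is crepant $K_X=0$; Kawamata--Viehweg vanishing applied to the big and nef $H$ then gives $H^i(X,H)=H^i(X,K_X+H)=0$ for all $i>0$.

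The one point that needs real care is the behaviour of $H'$ across the flop: identifying the flopping locus with $P$ and verifying $H'\cdot L_0=\O_S(1)\cdot\ell=0$. Everything else --- that $\tl{\mu}$ sends nef classes to nef classes, the Fujiki computation, the invariance of $\int H^4$ under the flop, and the vanishing theorem --- is standard.
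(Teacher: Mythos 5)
Your route is genuinely different from the paper's. The paper's proof is short: the morphism $\phi\colon X\to G\subset\P^5$ constructed in Section \ref{degeneration} is the map given by the sections of $H$, so $H$ is base-point-free and hence nef; bigness follows from $q(H,H)>0$ via the Fujiki relation, and the vanishing from Kawamata--Viehweg since $K_X=0$. You instead try to prove nefness upstairs on $S^{[2]}$ and push it across the Mukai flop. The problem is that your transfer hinges on reading $H'=\tl{\mu}(\O_S(1))$ literally, which gives $q(H')=4$, hence $\chi(X,H)=10$ and $H^4=48$; but the lemma is used immediately afterwards with $h^0(X_t,H_t)=6$, $\phi_{H_t}\colon X_t\to\P^5$, and $H_0^4=12$ (six-to-one onto the quadric $G$), all of which force $q(H)=2$. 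The class that actually induces $Z\mapsto\ell_Z$ is $\tl{\mu}(\O_S(1))-\delta$, where $2\delta$ is the exceptional divisor of the Hilbert--Chow morphism: on a fibre of that exceptional divisor over a general point of the diagonal, $\phi'^{*}\O_G(1)$ has degree $1$ while $\tl{\mu}(\O_S(1))$ has degree $0$.

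For that class your first step fails outright: on a line $L_0\subset P=\ell^{(2)}$ one has $\tl{\mu}(\O_S(1))\cdot L_0=0$ as you computed, but $\delta\cdot L_0=1$ (the diagonal cuts $\ell^{(2)}\cong\P^2$ in a conic; locally $E$ is $(u_1-u_2)^2=0$ and this restricts to $u^2$ on $L_0$), so $H'\cdot L_0=-1<0$. Thus $H'$ is \emph{not} nef on $S^{[2]}$, nothing descends to $\overline{X}$, and the whole point of flopping is precisely to make $H$ base-point-free --- which is why the paper proves nefness by exhibiting the morphism $\phi$ directly on $X$ rather than by transport from $S^{[2]}$. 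The remaining ingredients of your write-up (nef plus positive top self-intersection implies big, invariance of $\int H^4$ under the flop, crepancy, Kawamata--Viehweg with $K_X=0$, and the need to check projectivity of $X$ before invoking vanishing) are all correct and consistent with the paper.
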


\begin{proof}
We have shown in Section \ref{degeneration} that sections of $H$ define a regular map
\begin{equation*}
\phi_{H} \colon X \to \P^5;
\end{equation*}
in particular $H$ is base-point-free, and so it is nef. Since $q(H, H) > 0$ it is also big.

The last claim follows from Kawamata-Viehweg vanishing and the fact that $K_X$ is trivial.
\end{proof}

\begin{cor} \label{constant h0}
For every $t \in U$ we have
\begin{equation*}
h^i(X_t, H_t) = 0 \text{ for } i > 0 \qquad \text{and} \qquad h^0(X_t, H_t) = 6.
\end{equation*}
\end{cor}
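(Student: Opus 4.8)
The plan is to read off everything from the information already available on the central fibre, together with two soft facts about flat proper families: deformation-invariance of the Euler characteristic, and upper semicontinuity of cohomology.

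First I would settle the central fibre $t=0$. The previous lemma shows that $H=H_0$ is big and nef on the irreducible symplectic fourfold $X=X_0$ and that $H^i(X_0,H_0)=0$ for $i>0$ by Kawamata--Viehweg vanishing; hence $h^0(X_0,H_0)=\chi(X_0,H_0)$. That this common value is $6$ is where the explicit nature of $H_0$ enters: the previous lemma realises $\phi_H$ as the morphism attached to the complete linear system $|H_0|$, whose ambient projective space is the $\P^5$ appearing in the diagrams of this section — equivalently, since the Beauville--Bogomolov square of $H_0$ is $2$, Riemann--Roch on an irreducible symplectic fourfold of $K3^{[2]}$-type forces $\chi(X_0,H_0)=6$.

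Next I would propagate this over $U$ (which we are free to take connected, and to shrink as needed). By flatness of $\mathcal{X}$ and of the divisor $\mathcal{H}$ over $U$ and properness of $\pi_X$, the function $t\mapsto\chi(X_t,H_t)$ is locally constant, hence equal to $\chi(X_0,H_0)=6$ on all of $U$. By upper semicontinuity of cohomology the locus
\begin{equation*}
\{\, t\in U \mid h^i(X_t,H_t)=0 \text{ for all } i>0 \,\}
\end{equation*}
is open in $U$ and, by the previous step, contains the marked point $0$; after shrinking $U$ this vanishing holds at every $t$. Combining, $h^0(X_t,H_t)=\chi(X_t,H_t)=6$ for every $t\in U$, which is the assertion.

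The genuinely delicate point is the equality $h^0(X_0,H_0)=6$ on the central fibre — i.e.\ pinning down the Beauville--Bogomolov square of $H_0$, equivalently knowing that $\phi_H$ is given by the \emph{complete} system and not a proper subsystem; this rests on the description of $H_0$ through $\tl{\mu}$ and the Beauville involution recorded earlier. The remainder is formal, and I would only add the remark that the repeated shrinking of $U$ in this section is harmless, since $0$ is retained at each step and every conclusion we need is about a neighbourhood of $0$.
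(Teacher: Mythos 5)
Your proposal is correct and follows essentially the same route as the paper: establish the statement on the central fibre via the preceding lemma (big and nef, hence Kawamata--Viehweg vanishing, with $\chi(X_0,H_0)=6$ from the Riemann--Roch computation for a class of Beauville--Bogomolov square $2$), then propagate by semicontinuity of $h^i$ for $i>0$ and constancy of $\chi$ under flat deformation. Your write-up is merely more explicit than the paper's three-line proof about where the value $6$ comes from and about the harmlessness of shrinking $U$.
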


\begin{proof}
We know that this holds for $t = 0$. By semicontinuity we have $h^i(X_t, H_t) = 0$ for all small $t$. Moreover by flatness we see that $\chi(X_t, H_t)$ is constant, and so $h^0(X_t, H_t)$ is constant too.
\end{proof}

Now we consider the (relative) linear system defined by $\mathcal{H}$. We have just shown that the sheaf $(\pi_X)_{*}(\mathcal{H})$ has constant rank $6$; hence it is a vector bundle on $U$. We have a map
\begin{equation*}
\phi_{\mathcal{H}} \colon \mathcal{X} \tto \P((\pi_X)_{*}(\mathcal{H})^{\dual}),
\end{equation*}
which restricts to evaluation of sections on each fiber. We know that on the central fiber
\begin{equation*}
\phi_{H} \colon X \to \P(H^{\dual})
\end{equation*}
does not have base points; since the base locus of $\phi_{\mathcal{H}}$ is closed and the projection $\pi_X$ is proper we see that
\begin{equation*}
\phi_{H_t} \colon X_t \to \P(H_t^{\dual})
\end{equation*}
does not have base points for all small $t$; we restrict $U$ accordingly, so that this holds for all $t \in U$.

Consider now the Stein factorization of $\phi_{H_t}$, given by
\begin{equation*}
\xscale=0.9
\yscale=1.3
\Diagram
X_t \rdTo_{g_t} & \rTo^{\phi_{H_t}} & |H_t| & \ml \cong \P^5.
\\
& \widehat{X}_t \ruTo_{\widehat{\phi}_t}
\\
\endDiagram
\end{equation*}

\begin{lemma}
The variety $\widehat{X}_t$ is obtained from $X_t$ by contraction of $D_t$ along the fibers of $f_t \colon D_t \to S_t$.
\end{lemma}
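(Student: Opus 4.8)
The plan is to recognise $g_t$ as the birational contraction attached to the face of the nef cone containing $H_t$, and then to identify its fibres with those of $f_t$.

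First I would record that $g_t\colon X_t\to\widehat X_t$ is a projective birational morphism onto a normal variety: $\widehat X_t$ is normal because it arises from a Stein factorisation, $g_t$ has connected fibres, and it is generically finite since $H_t$ is big (so $\phi_{H_t}$ is generically finite onto its image); hence $g_t$ is birational. Consequently $\widehat X_t$ is determined once one knows which curves $g_t$ collapses, and a curve $C\subset X_t$ is collapsed by $g_t$ if and only if $H_t\cdot C=0$, because $\phi_{H_t}=\widehat\phi_t\circ g_t$ with $\widehat\phi_t$ finite and $|H_t|$ base-point-free. So the statement reduces to showing that the $H_t$-trivial curves on $X_t$ are precisely the fibres of $f_t$ and that they sweep out $D_t$.

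Next I would compute $H_t\cdot\gamma$ for $\gamma$ a fibre of $f_t\colon D_t\to S_t$. The fibres of $f_t$ form a flat family over $U$, so this intersection number is constant and may be evaluated on the central fibre, where $\gamma$ is one of the curves $C_r=\{q+r\mid q\in\ell\}\subset D\subset X_0$, which maps isomorphically to the exceptional conic $\ell\subset S$ under the first projection and is disjoint, for generic $r$, from the flopped locus. Since $H$ corresponds to $H'=\tl\mu(\O_S(1))$ and $D$ to $\tl\mu(\ell)$, and $\tl\mu$ is an isometric embedding compatible with the pairing between $1$-cycles on $S$ and on $S^{[2]}$, one gets $H\cdot C_r=\O_S(1)\cdot\ell=0$ and $D\cdot C_r=\ell\cdot\ell=-2$, because $\ell$ is contracted by $S\to\P^3$. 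Thus $[C_r]$ spans a ray $R\subset\overline{NE}(X_t)$ with $H_t\cdot R=0$ and $D_t\cdot R<0$; in particular every curve with class in $R$ is contained in $D_t$ (an effective divisor meets a curve not contained in it nonnegatively), while conversely the $C_r$ sweep out all of $D_t$.

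It then remains to check that these are \emph{all} the $H_t$-trivial curves. For $t$ general this is immediate: $\Pic(X_t)$ has rank two, generated by $H_t$ and $D_t$ with $q(H_t,H_t)=2$, $q(H_t,D_t)=0$, $q(D_t,D_t)=-2$, so $H_t^{\perp}=\Z D_t$, and since $H_t$ is nef and big it is strictly positive on $\overline{NE}(X_t)$ away from the unique face it annihilates, which must therefore be $R$. Hence $g_t$ collapses exactly the fibres of $f_t$ and is an isomorphism elsewhere; that is, $g_t$ \emph{is} the contraction of $D_t$ along $f_t$, so $\widehat X_t$ is as claimed. For $t=0$, where $\Pic(X_0)$ is larger, I would instead read the conclusion off the explicit $6:1$ map $\phi=\phi_H\colon X_0\to G=\Gr(2,U)$ of Subsection \ref{involution} --- whose fibres are finite over lines missing the node $p$ and contain the curves $C_r$ over lines through $p$, while $\phi$ restricts to a linear embedding on $P^{\dual}$ --- or, more cheaply, deduce it from the general case by properness of the relative Stein factorisation and upper semicontinuity of fibre dimension. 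The main obstacle is precisely this last step: ensuring that $g_t$ contracts nothing beyond the fibres of $f_t$. On the generic fibre it reduces to the Picard-rank computation, but on the central fibre $S^{[2]}$ (after the Mukai flop) there are a priori competing $H$-trivial divisorial contractions to exclude, and one must invoke the concrete geometry of the line map $X_0\to G$ to see that only $D$ is collapsed.
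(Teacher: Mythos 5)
Your proposal follows the same skeleton as the paper's proof: by the Stein factorization $g_t$ has connected fibres and $\widehat{\phi}_t$ finite ones, so everything reduces to showing that the curves with $H_t\cdot C=0$ are exactly the fibres of $f_t$. Where you differ is in how that key identification is justified. The paper's proof simply asserts it, and the remark that follows supplies the intended argument: apply the Cone theorem to the klt pair $(X_t,\tfrac{1}{2}D_t)$, observe that every $D_t$-negative extremal rational curve lies in $D_t$ and is either a fibre of $f_t$ or dominates a curve of $S_t$, and rule out the second case because there $H_t\cdot C>0$. You instead compute $H\cdot C_r=0$ and $D\cdot C_r=-2$ on the central fibre via $\tl{\mu}$ (these numbers are correct; note the curves $C_r$ miss both the flopped locus and the exceptional divisor of $S^{[2]}\to S^{(2)}$, so the computation is insensitive to the precise normalisation of $H$) and then use that for very general $t$ the group $N^1(X_t)$ has rank two, so $H_t^{\perp}\cap\overline{NE}(X_t)$ is the single ray spanned by $[C_r]$. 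This is more elementary (no Cone theorem), but it buys less: it covers only very general $t$, whereas the lemma is stated for all $t\in U$ and the paper's route is uniform in $t$; and, like the paper's remark, it still leaves a small step implicit at the end --- a curve in $D_t$ with class proportional to $[C_r]$ could a priori dominate a curve of $S_t$, and one excludes this by noting that $H_t\res{D_t}$ descends to the class $h_t-\ell_t$ on $S_t$, which is ample (no $(-2)$-class in $\langle h,\ell\rangle$ is orthogonal to $h-\ell$), so such a curve would have $H_t\cdot C>0$. Your honest flagging of the $t=0$ case, and the suggestion to handle it via the explicit $6:1$ map to the Grassmannian, is reasonable; for the downstream applications (which only concern generic $t$) either version suffices.
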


\begin{proof}
By definition of the Stein factorization, $g_t$ has connected fibers and $\widehat{\phi}_t$ has finite fibers. So we just need to prove the fibers of $f_t$ are the only curves contracted by $\phi_{H_t}$. A curve $C \subset X_t$ is contracted by $\phi_{H_t}$ if and only if $H_t \cdot C = 0$, and this happens exactly for the fibers of $f_t$.
\end{proof}

\begin{rem}
There is another way to obtain this diagram, using the Cone theorem (\cite[Theorem $3.7$]{KM}). Since $K_{X_t}$ is trivial we work with the pair $(X_t, \frac{1}{2} D_t)$; this is Kawamata-log-terminal since $X_t$ and $D_t$ are smooth. By the theorem, the $D_t$-negative part of the Mori cone is generated by the classes of rational curves on $X_t$. Any such curve $C$ is contained in $D_t$, so it is either a fiber of $f_t$ or it projects to a rational curve on $S$. However in the second case the intersection $H_t \cdot C > 0$.

This shows that the hyperplane $H_t = 0$ cuts the Mori cone precisely on the ray containing the class of the fibers of $f_t$. We can then perform the corresponding extremal contraction to obtain a variety $\widehat{X}_t$. Since $H_t = 0$ on the contracted ray, the associated line bundle $\O_{X_t}(H_t)$ descends to $\widehat{X}_t$. Moreover every section of $\O_{X_t}(H_t)$ is constant along the fibers, since these are rational curves and $\O_{X_t}(H_t)$ has degree $0$ on them. We deduce that every section in $H^0(X_t, \O_{X_t}(H_t))$ comes from $\widehat{X}_t$, so $\phi_{H_t}$ factorizes through $\widehat{X}_t$.
\end{rem}

\begin{lemma}
For generic $t \in U$ the map
\begin{equation*}
\widehat{\phi}_t \colon \widehat{X}_t \to \P^5
\end{equation*}
is $2:1$ on a sextic $Y_t$ of $\P^5$.
\end{lemma}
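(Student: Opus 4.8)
The plan is to run the argument of Proposition~\ref{deformation} once more, with the varieties $\widehat{X}_t$ playing the role there played by $\mathcal{X}_t$. The essential new feature is that $\widehat{X}_t$ is singular --- for generic $t$ it is singular along $k$ copies of a $K3$ surface, which is exactly the inverse of the construction of Remark~\ref{isv from singular EPW} --- so O'Grady's structural results must be applied either to this singular model directly or, where that is not possible, to the smooth fourfold $X_t$ together with the big and nef divisor $H_t$, whose linear system factors through $\widehat{X}_t$.

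First I would check that for $t$ outside a countable union of proper closed subvarieties of $U$ the polarized fourfold $(X_t,H_t)$ --- equivalently the pair $(\widehat{X}_t,\widehat{H}_t)$, where $\widehat{H}_t$ is the line bundle on $\widehat{X}_t$ with $g_t^{*}\widehat{H}_t\cong\O_{X_t}(H_t)$ --- satisfies the conclusions of Proposition~$3.2$ of \cite{Kieran1}: condition $(1)$ holds by construction, $(5)$ for every $t$ by Proposition~$3.6$ of the same paper, $(2)$ and $(4)$ hold off a countable union of proper subvarieties of $U$ by the local Torelli theorem, and $(3)$, $(6)$ follow formally, exactly as in the proof of Proposition~\ref{deformation}. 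Here one uses that $X_t$ is numerically equivalent to $S^{[2]}$, being a deformation of the Mukai flop $X$ of $S^{[2]}$. O'Grady's classification then applies: the map $\widehat{\phi}_t\colon\widehat{X}_t\to|H_t|^{\dual}\cong\P^5$ is either birational onto its image, or it is the quotient by an anti-symplectic involution and its image is a sextic $Y_t$. The second alternative is exactly the statement of the lemma, so everything comes down to excluding the first for generic $t$.

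To do this I would argue as in Proposition~\ref{deformation}. The Beauville involution $\widehat{i}_0$ on $\widehat{X}_0$ deforms, by the remark in Section~$4.1.3$ of \cite{Kieran3}, to an anti-symplectic involution $\widehat{i}_t$ on $\widehat{X}_t$ fixing the class of $\widehat{H}_t$; write $\widehat{Y}_t=\widehat{X}_t/\widehat{i}_t$ and let $\widehat{H}_t'$ be the descended line bundle. There is an injective pullback $\pi^{*}\colon H^0(\widehat{Y}_t,\widehat{H}_t')\into H^0(\widehat{X}_t,\widehat{H}_t)=H^0(X_t,H_t)$, and it suffices to see that both sides have dimension $6$: then $\pi^{*}$ is an isomorphism, $\widehat{\phi}_t$ factors through $\widehat{Y}_t$, and hence cannot be birational onto its image. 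The right-hand side has dimension $6$ by Corollary~\ref{constant h0}. For the left-hand side one uses that $\widehat{Y}_t$ has terminal singularities and trivial canonical class, so that a Kodaira-type vanishing theorem for singular varieties (Theorem~$1-2-5$ of \cite{KMM}) gives $h^i(\widehat{Y}_t,\widehat{H}_t')=0$ for $i>0$, whence $h^0(\widehat{Y}_t,\widehat{H}_t')=\chi(\widehat{Y}_t,\widehat{H}_t')$; this Euler characteristic is constant in $t$ by flatness, so it can be computed at $t=0$, where $\widehat{\phi}_0$ is the classical $6:1$ Beauville map onto $G\subset\P^5$ and the equality $\chi(\widehat{Y}_0,\widehat{H}_0')=\chi(X_0,H_0)=6$ is immediate. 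Hence $\widehat{\phi}_t$ is $2:1$ onto a sextic $Y_t$ for $t$ outside a divisor, in particular for generic $t$; one could also invoke the openness of this condition (\cite[Prop.~$3.3$]{Kieran3}) to get the divisorial statement directly.

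The main obstacle is precisely the singularity of $\widehat{X}_t$: O'Grady's classification of polarized fourfolds numerically equivalent to $S^{[2]}$ is set up for smooth (or at worst mildly singular) varieties, so one has to be sure it applies to $\widehat{X}_t$, or else transfer the relevant statements back and forth between $\widehat{X}_t$ and the smooth model $X_t$ with its merely big and nef polarization. One must also check carefully that the factorization of $\widehat{\phi}_t$ through the quotient $\widehat{Y}_t$ genuinely excludes the birational alternative and is not circumvented by the contraction $g_t\colon X_t\to\widehat{X}_t$, and that the deforming involution $\widehat{i}_t$ is indeed anti-symplectic with the prescribed action on $H^2$.
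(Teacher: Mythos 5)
Your overall strategy --- import O'Grady's dichotomy from the proof of Proposition \ref{deformation} and then exclude the birational alternative by factoring through the involution quotient --- does not survive the passage to the singular case, and the paper's proof is structured quite differently precisely for this reason. The classification of \cite[Prop.\ $3.2$]{Kieran1} concerns \emph{polarized} (ample) smooth irreducible symplectic fourfolds, whereas here $H_t$ is only big and nef (it contracts the fibres of $f_t \colon D_t \to S_t$ for \emph{every} $t \in U$, since $U$ was chosen so as to keep $D$ of type $(1,1)$) and $\widehat{X}_t$ is singular along $S_t$. Your claim that conditions $(2)$ and $(4)$ hold off a countable union of proper subvarieties ``by local Torelli'' is copied from the smooth case but fails here: local Torelli applied to this family gives $\Pic(X_t) \supseteq \Z D_t \oplus \Z H_t$ for all $t$, so $H_t$ is never ample and the hypotheses of O'Grady's proposition are never met. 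You flag this obstacle yourself at the end, but flagging it is not the same as overcoming it; the dichotomy you rely on visibly fails at $t=0$, where $\widehat{\phi}_0$ is $6{:}1$ onto a quadric.

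The second, and more decisive, gap is that even granting the factorization of $\widehat{\phi}_t$ through the $2{:}1$ quotient $\widehat{Y}_t = \widehat{X}_t/\widehat{i}_t$, this only shows that $\widehat{\phi}_t$ is not birational. It is perfectly compatible with $\widehat{\phi}_t$ being $6{:}1$ onto a quadric: that is exactly what happens at $t=0$, where the map does factor through the quotient (with $\psi_0$ of degree $3$) and where $h^0(\widehat{X}_0,\widehat{H}_0)=h^0(\widehat{Y}_0,\widehat{H}_0')=6$, so every test you propose is passed by a fibre for which the conclusion of the lemma is false. The real content of the lemma is ruling out the persistence of the $6{:}1$ behaviour, and your argument contains nothing that does this. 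The paper's proof instead uses the constancy of $H_t^4=12$ to reduce to the alternatives ``$\psi_t$ birational onto a sextic'' versus ``$\psi_t$ of degree $3$ onto a quadric'', and kills the latter for generic $t$ by observing that a degree-$3$ map $\psi_t$ would carry a ramification divisor $E_t$ whose preimage $E_t'\subset X_t$ is a divisor class deforming $E_0'$; by local Torelli that class remains of type $(1,1)$ only over a proper divisor in $U$. Some version of this ramification/local Torelli step (or an equivalent degeneration argument) is indispensable and is missing from your proposal.
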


\begin{proof}
We have verified that for $t = 0$ the map is $6:1$ on a quadric,namely the Grassmannian $\Gr(1, \P^3)$ embedded by the Plücker map. In particular
\begin{equation*}
H_0^4 = 12,
\end{equation*}
and since this is constant with $t$ we get $H_t^4 = 12$ for all $t$. So it is enough to show that $\widehat{\phi}_t$ is $2:1$ for generic $t$.

Consider the rational involution $S^{[2]} \tto S^{[2]}$ defined in Section \ref{degeneration}. This induces a \emph{regular} involution
\begin{equation*}
i \colon X \to X.
\end{equation*}
By the remark in section $4.1.3$ of \cite{Kieran3} this extends to an involution $i_t$ of $X_t$. One verifies that $i_t$ sends each fiber of $f_t$ to itself, thereby defining a regular involution
\begin{equation*}
\widehat{i}_t \colon \widehat{X_t} \to \widehat{X_t}.
\end{equation*}
We let $\widehat{Y}_t$ be the quotient of $\widehat{X}_t$ by this involution. The same argument as in the proof of Proposition \ref{deformation} shows that we have a factorization
\begin{equation*}
\xscale=0.9
\yscale=1.3
\Diagram
\widehat{X}_t \rdTo & \rTo^{\widehat{\phi}_t} & |H_t| & \ml \cong \P^5.
\\
& \widehat{Y}_t \ruTo_{\psi_t}
\\
\endDiagram
\end{equation*}

Now the map
\begin{equation*}
\psi_0 \colon \widehat{Y}_0 \to \P^5
\end{equation*}
is $3:1$ on a quadric, so for every $t$ the map $\psi_t$ can either be $3:1$ or birational. We only need to show that the former only happens for $t$ in a Zariski closed subset of $U$.

If $\psi_t$ is $3:1$ there is a ramification divisor $E_t \subset \widehat{Y}_t$; indeed $\widehat{Y}_t$ is a normal variety with $K_{\widehat{Y}_t} = 0$. Let $E_t' \subset X_t$ be the preimage of $E_t$. This is a divisor which is a deformation of $E_0'$. But by the local Torelli theorem the subset of $U$ for which $E_0'$ remains of type $(1,1)$ is a divisor in $U$.
\end{proof}

\begin{cor}
For generic $t \in U$ the variety $\widehat{X}_t$ is a double covering of an EPW sextic $Y_t$.
\end{cor}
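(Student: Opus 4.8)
The plan is to reproduce, for the family $\widehat{\mathcal{X}} \to U$, the argument used at the end of the proof of Proposition \ref{deformation}. By the previous lemma, for $t$ outside a proper closed subset of $U$ the morphism $\widehat{\phi}_t \colon \widehat{X}_t \to \P^5$ is finite of degree $2$ onto a sextic hypersurface $Y_t$. Finiteness means $(\widehat{\phi}_t)_{*}\O_{\widehat{X}_t}$ is a sheaf of $\O_{Y_t}$-algebras, locally free of rank $2$, so $\widehat{X}_t$ is a double covering of $Y_t$ in the literal sense; and, as observed in the proof of the previous lemma, the covering involution is $\widehat{i}_t$, since $\widehat{\phi}_t$ factors through the quotient $\widehat{X}_t \to \widehat{X}_t/\widehat{i}_t$, which is itself generically $2:1$. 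Thus the whole content of the corollary is the assertion that $Y_t$ is an \emph{EPW} sextic.

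To prove this I would first fix the meaning of ``generic $t$'': besides the finitely many closed subsets cut out by the lemmas above, one has to remove the countably many proper subvarieties of $U$ coming from the applications of the local Torelli theorem below; the intersection of the complements is still dense, which is all we need. For such $t$ the polarized variety $(\widehat{X}_t, H_t)$ is numerically equivalent to $(S^{[2]}, h)$ --- the Beauville--Bogomolov form on $H^2$, the Fujiki constant, and the top self-intersection $H_t^4 = H_0^4 = 12$ are deformation invariant, and this is exactly the input of O'Grady's classification. Using the vanishing $h^i(\widehat{X}_t, H_t) = 0$ for $i > 0$ and $h^0(\widehat{X}_t, H_t) = 6$ from Corollary \ref{constant h0}, one checks, just as in Proposition \ref{deformation}, that $(\widehat{X}_t, H_t)$ satisfies the conclusions of O'Grady's Proposition $3.2$ of \cite{Kieran1} and that the morphism defined by $|H_t|$ is the $2:1$ one rather than a birational one (which is immediate here, since it is $2:1$ by the previous lemma). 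O'Grady's classification in \cite{Kieran6} of the sextics arising this way then shows $Y_t$ is an EPW sextic, and hence $\widehat{X}_t$ is the associated double EPW sextic --- this is the main ingredient in statement (ii) of Proposition \ref{singular deformation} in the case $k = 1$, the general case following by introducing one Mukai flop per node.

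The genuinely delicate point is that $\widehat{X}_t$ is not smooth: it is singular along the $K3$ surface $S_t$ (the deformation of the exceptional $K3$ of the blowup $S \to S_0$), so O'Grady's results, phrased for smooth irreducible symplectic varieties, cannot be quoted verbatim. The remedy is to carry the argument on the smooth model $X_t = \Bl_{S_t}\widehat{X}_t$: inverting Remark \ref{isv from singular EPW}, $X_t$ is a smooth irreducible symplectic variety deformation equivalent to a smooth double EPW sextic, and its contraction along the $\P^1$-fibration $f_t \colon D_t \to S_t$ reconstructs $(\widehat{X}_t, H_t)$; the numerical identities and cohomology vanishings above were in fact already established on $X_t$ and $\widehat{X}_t$. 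I expect the main obstacle to be exactly the bookkeeping needed to transfer O'Grady's dichotomy and his EPW-recognition through this contraction --- equivalently, to match $\widehat{X}_t$ with O'Grady's singular double EPW sextics over $Y_A$ with $A \in \Sigma$ --- but this is already implicit in his study of the locus $\Sigma$.
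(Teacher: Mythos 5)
Your proposal is correct and follows essentially the same route as the paper: the previous lemma already gives the $2:1$ map onto a sextic, and the only remaining content is that this sextic is EPW, which the paper (like you) delegates to an adaptation of O'Grady's arguments --- the paper cites \cite[Theorem $5.2$]{Kieran6} directly, while you route through Proposition $3.2$ of \cite{Kieran1} exactly as in the proof of Proposition \ref{deformation}. Your flagging of the singularity of $\widehat{X}_t$ along $S_t$ as the reason one must ``adapt'' rather than quote O'Grady verbatim is a fair and accurate gloss on what the paper leaves implicit.
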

\begin{proof}
We have constructed a map
\begin{equation*}
\phi_{t} \colon \widehat{X}_t \to \P^5
\end{equation*}
which is $2:1$ on a sextic. We need only to show that the sextic thus obtained is an EPW sextic.

For this we can adapt the arguments of \cite[Theorem $5.2$]{Kieran6}.
\end{proof}

So we see that from the smooth irreducible symplectic variety $X_t$ one obtains a singular EPW sextic by first contracting the divisor $D_t$ along the fibers of the fibration $f_t$ and then taking the quotient by the involution.

We need one more
\begin{lemma} \label{converse}
Assume that the EPW sextic $Y_A$ contains a plane $\Pi = \P(W)$. If
\begin{enumerate}[i)]
\item
$Y_A$ is singular along $\Pi$;
\item
$\Pi \nsubset Y_A[2]$;
\item
the singular locus of $Y_A$ has dimension at most $2$
\end{enumerate}
then
\begin{equation*}
A \supset \bigwedge^3 W.
\end{equation*}
\end{lemma}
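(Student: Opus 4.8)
The plan is to read the statement off O'Grady's description of the singular locus in Proposition \ref{smoothness of Y}. That proposition gives two facts I will use: the pointwise criterion that $[v] \in Y_A$ is a singular point precisely when $[v] \in Y_A[2]$ or $A$ contains a nonzero decomposable form which is a multiple of $v$ (equivalently, there is a $3$-dimensional subspace $W' \subset V$ with $v \in W'$ and $\bigwedge^3 W' \subset A$: if $\omega \in A$ is decomposable, nonzero, and $v \wedge \omega = 0$, then $v$ lies in the $3$-dimensional span $W'$ of $\omega$ and $\langle \omega \rangle = \bigwedge^3 W'$); and the global statement that $\mathrm{Sing}(Y_A)$ is the union of $Y_A[2]$ with all the planes $\P(W')$ for which $\bigwedge^3 W' \subset A$.

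First I would work at a general point of $\Pi$. Since $Y_A[2] \cap \Pi$ is Zariski closed in $\Pi$ and, by hypothesis (ii), is a proper subset, the complement $\Pi^{\circ} := \Pi \setminus Y_A[2]$ is dense and open in $\Pi$. By hypothesis (i) every $[v] \in \Pi^{\circ}$ is a singular point of $Y_A$ not lying on $Y_A[2]$, so the pointwise criterion yields a $3$-dimensional subspace $W_v \subset V$ with $v \in W_v$ and $\bigwedge^3 W_v \subset A$. In other words $\Pi^{\circ}$ is covered by the planes $\P(W_v)$.

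Next I would invoke hypothesis (iii) to bound the supply of such subspaces. Put $\mathcal{W} = \{ W' \in \Gr(3,V) \mid \bigwedge^3 W' \subset A \}$; this is the closed set $\Gr(3,V) \cap \P(A)$ under the Pl\"ucker embedding, and distinct members of $\mathcal{W}$ give distinct $2$-planes in $\P(V)$. For $W' \in \mathcal{W}$ the plane $\P(W')$ is a $2$-dimensional irreducible closed subvariety of $\mathrm{Sing}(Y_A)$, and since $\dim \mathrm{Sing}(Y_A) \le 2$ it must coincide with one of the finitely many $2$-dimensional irreducible components of $\mathrm{Sing}(Y_A)$. Hence $\mathcal{W}$ is finite, say $\mathcal{W} = \{ W^{(1)}, \dots, W^{(N)} \}$.

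Finally I would combine the two steps. The second paragraph gives $\Pi^{\circ} \subseteq \bigcup_{i=1}^{N} \P(W^{(i)})$, and taking Zariski closures $\Pi \subseteq \bigcup_{i=1}^{N} \P(W^{(i)})$; since $\Pi$ is irreducible it lies in a single $\P(W^{(i)})$, and as both are $2$-planes, $\Pi = \P(W^{(i)})$. Thus $W = W^{(i)}$ and $\bigwedge^3 W \subset A$, as wanted. I do not expect a serious obstacle here; the only thing to be careful about is which hypothesis does what -- (ii) is what guarantees that a general point of $\Pi$ falls into the ``decomposable form'' case of Proposition \ref{smoothness of Y} rather than into $Y_A[2]$, and (iii) is what makes $\mathcal{W}$ finite, so that the irreducibility of $\Pi$ can be used to pin $\Pi$ down as one of the planes $\P(W')$.
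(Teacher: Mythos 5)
Your proof is correct and follows essentially the same route as the paper's: apply Proposition \ref{smoothness of Y} at points of $\Pi \setminus Y_A[2]$ (nonempty by (ii)) to produce planes $\P(W')$ with $\bigwedge^3 W' \subset A$ covering a dense open subset of $\Pi$, then use (iii) to force one of them to equal $\Pi$. The only difference is in the last step, where the paper derives a contradiction from a one-parameter family of such $W'$ (whose planes would sweep out a three-dimensional singular locus), while you show directly that the set of such $W'$ is finite and conclude by irreducibility of $\Pi$ --- your version is, if anything, slightly more complete.
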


\begin{proof}
Let $[w] \in \P(W) \setminus Y_A[2]$. By the description of the singularities of $Y_A$ in Proposition \ref{smoothness of Y} we know that there exists some $W' \subset V$ of dimension $3$ such that $\bigwedge^3 W' \subset A$ and $[w] \in \P(W')$.

Assuming $W'$ never equals $W$ we find a $1$-dimensional locus of subspaces $W' \subset V$ such that $\bigwedge^3 W' \subset A$; but then the singular locus of $Y_t$ has dimension at least $3$.
\end{proof}

Now we can finish the proof of Proposition \ref{singular deformation}, showing that the EPW sextics $Y_t$ obtained above are actually in $\Sigma$. First we remark that $S$ is a degree $2$ $K3$, with natural $2:1$ map to $\P^2$, namely projection from $p$. This map is induced by the divisor $h - \ell$, where $h \in \O_S(1)$. By construction both $h$ and $\ell$ remain of type $(1, 1)$ in $S_t$, so each $S_t$ is a degree $2$ $K3$ surface.

More precisely we can observe that $S_t$, being the contraction of $D_t$, has a natural embedding in $\widehat{X}_t$.
\begin{lemma}
If one considers $S_t \subset \widehat{X}_t$, then the degree $2$ map above is just the restriction $\phi_{t} \res{S_t}$.
\end{lemma}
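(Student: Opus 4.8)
\medskip

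\noindent\textbf{Proof plan.} The plan is to deduce the statement for the generic fibre from its (explicit) validity on the central fibre, by flatness and semicontinuity. Since $\phi_t$ is the morphism attached to $|H_t|$, the restriction $\phi_t\res{S_t}$ is the map attached to the image of $H^0(\widehat X_t, H_t)$ in $H^0\big(S_t, \O_{\widehat X_t}(H_t)\res{S_t}\big)$; so everything reduces to understanding this restricted line bundle and this restriction map in the family $\pi_X\colon \mathcal X \to U$.

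\emph{The central fibre.} By the construction of Subsection~\ref{involution}, a point of $S \subset \widehat X_0$ is the image of a fibre of $f\colon D \to S$, i.e. --- for $r \in S$ with $r \notin \ell$ --- of the contracted rational curve $\{q + r \mid q \in \ell\}$. Under $\widehat\phi_0$ (equivalently, under $\phi_{H_0}$ on $X$) this curve goes to the line of $\P(U)$ joining $\epsilon(q)$ and $\epsilon(r)$, and since $\epsilon(q) = p$ for every $q \in \ell$ this is the line $\con{p\,\epsilon(r)}$. Hence $\widehat\phi_0\res{S}$ is the blow--down $\epsilon\colon S \to S_0 \subset \P(U)$ followed by the projection from $p$, that is, the morphism attached to $h - \ell$; its image is the net of lines through $p$, a plane $\P^2$ embedded linearly in $\P(V)$ (recall $V = \bigwedge^2 U$) and contained in $G$. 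Pulling back $\O_{\P(V)}(1)$ gives $\O_{\widehat X_0}(H_0)\res{S} \cong \O_S(h - \ell)$; moreover, since this $\P^2$ is linear in $\P(V)$ while $S \to \P^2$ is the map of the \emph{complete} system $|h - \ell|$, the restriction $H^0(\widehat X_0, H_0) \to H^0(S, \O_S(h - \ell))$ is surjective.

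\emph{Spreading out.} As $H$, $h$ and $\ell$ are all kept of type $(1,1)$ along $U$, the class $\O_{\widehat X_t}(H_t)\res{S_t}$ is a flat deformation of $\O_S(h - \ell)$, hence equals $\O_{S_t}(h_t - \ell_t)$, i.e. the degree $2$ polarisation of the K3 surface $S_t$ recalled above; in particular $h^0(S_t, \O_{S_t}(h_t - \ell_t)) = 3$, all higher cohomology vanishes, and $|h_t - \ell_t|$ realises $S_t$ as the degree $2$ double cover of $\P^2$ --- this is the map ``above''. Finally, the rank of the restriction $H^0(\widehat X_t, H_t) \to H^0\big(S_t, \O_{\widehat X_t}(H_t)\res{S_t}\big)$ is lower semicontinuous, equals $3$ at $t = 0$ by the previous step and is at most $3$ by the line before, so it equals $3$ for $t$ near $0$; after shrinking $U$ accordingly, $\phi_t\res{S_t}$ is the map of the \emph{complete} system $|h_t - \ell_t|$, i.e. the natural degree $2$ map $S_t \to \P^2$ followed by the linear embedding $\P^2 \hookrightarrow \P(V)$.

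I expect the only delicate point to be this last one --- ruling out, a priori, that $\phi_t\res{S_t}$ factors through a proper subsystem of $|h_t - \ell_t|$ --- and it is the semicontinuity of the rank of the restriction map, anchored at the concrete central fibre, that handles it; the rest is routine flatness/semicontinuity bookkeeping (together with the fact, established above, that $\widehat X_t$ is a double cover of the EPW sextic $Y_t$). An alternative route, valid for generic $t$: identify $\widehat X_t$ with the singular double EPW sextic $X_{A(t)}$ and $S_t$ with its singular locus --- the double cover of the plane $\P(W)$ branched along the sextic $C_{A(t),W}$ --- so that $\phi_t\res{S_t}$ is literally the restriction to that K3 surface of the covering map $f_{A(t)}\colon X_{A(t)} \to Y_{A(t)}$, which is the asserted degree $2$ map by O'Grady's description of the singularities.
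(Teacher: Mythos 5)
Your proof is correct and follows essentially the same route as the paper's: both reduce to the central fibre $t=0$ and identify $\widehat{\phi}_0\res{S}$ as the projection from the node $p$, i.e.\ the map induced by $h-\ell$, via the fact that $\phi_0$ contracts the fibres of $f\colon D \to S$. You additionally spell out the flatness and semicontinuity bookkeeping (constancy of $h^0$ and lower semicontinuity of the rank of the restriction map) that justifies the reduction, which the paper compresses into the single sentence ``It is enough to check this for $t=0$.''
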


\begin{proof}
It is enough to check this for $t = 0$, so we only need to show that the divisor $\widehat{H}_0$ on $\widehat{X}_0$ which induces $\phi_0$ restricts to $h - \ell$ on $S$. Recall that $\widehat{H}_0$ is induced by the divisor $H$ on $X$. The map
\begin{equation*}
\phi_0\res{D} \colon D \to \P^2
\end{equation*}
is just the map $l$ appearing in the proof of Lemma \ref{regular fibration}, so it contracts the fibers of the fibration
\begin{equation*}
f \colon D \to S;
\end{equation*}
this gives the desired map $S \to \P^2$. Keeping track of the various constructions one realizes that this is just projection from $p$.
\end{proof}

\begin{cor}
Let $Y_t$ be one of the EPW sextics described above, say $Y_t = Y_A$. Consider the plane $\Pi = \phi_t(S_t)$, say $\Pi = \P(W)$ for some $W \subset V$. Then
\begin{equation*}
\bigwedge^3 W \subset A.
\end{equation*}
\end{cor}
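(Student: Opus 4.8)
The plan is to deduce the statement from Lemma~\ref{converse}, applied to the plane $\Pi = \P(W)$, so the task reduces to verifying its three hypotheses for $Y_A = Y_t$. Throughout write $\widehat X_t = X_A$ for the corresponding double EPW sextic and $\phi_t \colon \widehat X_t \to Y_A$ for the covering, which is branched along $Y_A[2]$. By the preceding lemma, $\phi_t\res{S_t} \colon S_t \to \Pi$ is the degree $2$ map realising $S_t$ as a degree $2$ $K3$ surface over $\P^2 = \Pi$; in particular $\phi_t(S_t) = \Pi$, and since $S_t$ is irreducible of dimension $2$ and already realises the whole degree $2$ of $\phi_t$, the reduced preimage $\phi_t^{-1}(\Pi)$ equals $S_t$.

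Hypothesis~(ii), that $\Pi \nsubset Y_A[2]$, is the easiest: since $\phi_t^{-1}(\Pi) = S_t$, the intersection $Y_A[2] \cap \Pi$ — the locus over which $\phi_t$ ramifies along $\Pi$ — coincides with the branch locus of the degree $2$ map $S_t \to \P^2 = \Pi$, which is a plane sextic curve. In particular $Y_A[2] \cap \Pi$ is a proper closed subset $C$ of $\Pi$, so $\Pi \nsubset Y_A[2]$.

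The crux is to pin down $\operatorname{Sing}(\widehat X_t)$. Since $X_t$ is smooth and $\widehat X_t$ is obtained from it by contracting the divisor $D_t$ — a $\P^1$-bundle over $S_t$ — onto $S_t$, the contraction $c \colon X_t \to \widehat X_t$ is an isomorphism away from $S_t$, so $\operatorname{Sing}(\widehat X_t) \subseteq S_t$. Furthermore $\widehat X_t$ is singular along all of $S_t$: were it smooth, $c$ would be the blow-up of the smooth surface $S_t$ and the discrepancy formula would give $K_{X_t} = c^{*}K_{\widehat X_t} + D_t$; pushing forward and using $K_{X_t} = 0$ and $c_{*}D_t = 0$ forces $D_t = 0$, which is absurd. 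More precisely, since $X_t$ is holomorphic symplectic and the fibres of $D_t \to S_t$ are rational, $c$ is a symplectic contraction of a divisor onto a codimension $2$ locus, so by the classification of such contractions $\widehat X_t$ acquires a transverse $A_1$ singularity at every point of $S_t$; thus $\operatorname{Sing}(\widehat X_t) = S_t$. Now over the dense open $\Pi \setminus C$ the covering $\phi_t$ is étale with preimage a dense open of $S_t$, along which $\widehat X_t$ is singular; an étale map being a local isomorphism, $Y_A$ is singular along $\Pi \setminus C$, and hence — $\operatorname{Sing}(Y_A)$ being closed — along all of $\Pi$. This is hypothesis~(i).

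For hypothesis~(iii) recall from Proposition~\ref{smoothness of Y} that $\operatorname{Sing}(Y_A) = Y_A[2] \cup \bigcup_{W'} \P(W')$, the union over $3$-dimensional $W' \subset V$ with $\bigwedge^3 W' \subset A$. For generic $t$ we have $A(t) \notin \Delta$, so $Y_A[2]$ has dimension $2$; and were the family of bad planes $\P(W')$ of dimension $\geq 3$, the double cover $X_A$ would be singular along the $\geq 3$-dimensional preimage of their union (by O'Grady's description of the singularities of double EPW sextics, underlying Corollary~\ref{smoothness of X}), contradicting $\operatorname{Sing}(\widehat X_t) = S_t$. Hence $\dim \operatorname{Sing}(Y_A) \leq 2$, and Lemma~\ref{converse} applies, giving $\bigwedge^3 W \subset A$. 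The main obstacle is the claim that $\widehat X_t$ is singular along the \emph{whole} of $S_t$, not merely somewhere on it: the discrepancy argument only excludes smoothness, and to obtain the uniform statement one needs the local structure of the contraction of a $\P^1$-bundle divisor in a symplectic fourfold; the secondary points — $A(t) \notin \Delta$ for generic $t$, and the fact that bad planes force singularities of $X_A$ — rest on O'Grady's study of EPW sextics.
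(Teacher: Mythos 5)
Your overall strategy is the paper's: reduce to Lemma \ref{converse} and check its three hypotheses. Hypothesis (ii) is handled exactly as in the paper (degree $2$ on both $\widehat{X}_t$ and $S_t$), and for hypothesis (i) your route is acceptable --- the discrepancy computation correctly rules out smoothness of $\widehat{X}_t$, and the upgrade to ``singular along all of $S_t$'' via the transverse $A_1$ structure of the contraction of a $\P^1$-bundle divisor is precisely the local check the paper alludes to (every fiber $\ell$ of $f_t$ has $N_{\ell/X_t}\cong \O\oplus\O\oplus\O(-2)$, so the contraction produces an ordinary double point in the transverse direction at each point of $S_t$).

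The genuine gap is in hypothesis (iii), specifically in your treatment of the component $Y_A[2]$ of $\operatorname{Sing}(Y_A)$. You assert that for generic $t$ one has $A(t)\notin\Delta$ and that this forces $\dim Y_A[2]=2$. Neither step is justified: the family $A(t)$ is constrained to lie in $\Sigma$, and nothing you say rules out that it lies entirely in $\Delta$ as well; and even granting $A\notin\Delta$, the condition $Y_A[3]=\emptyset$ controls the corank-$3$ locus, not the dimension of the corank-$2$ locus $Y_A[2]$, whose expected dimension $2$ can a priori jump for special $A$. (Your bound on the union of the bad planes $\P(W')$ via $\operatorname{Sing}(\widehat{X}_t)=S_t$ is fine; it is only the branch locus that is left uncontrolled.) The paper closes this with a short argument you should adopt: $Y_A[2]$ is the branch locus of $\phi_t$, hence the image of the fixed locus $Z_t$ of the involution $i_t$ on the \emph{smooth} symplectic fourfold $X_t$; since $i_t$ is antisymplectic, $Z_t$ is isotropic, so $\dim Z_t\leq 2$ and therefore $\dim Y_A[2]\leq 2$ with no reference to $\Delta$ at all.
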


\begin{proof}
We want to apply Lemma \ref{converse}. First, we have to check that $\Pi \nsubset Y_A[2]$; this amounts to saying that $S_t$ is not contained in the ramification locus of the projection $\widehat{X}_t \to Y_t$. This holds because the map $\phi_t$ has degree $2$ both on $\widehat{X}_t$ and on $S_t$.

Second, we need to show that $Y_t$ is singular along $P$. Indeed $\widehat{X}_t$ is singular along $S_t$; this can be checked locally using the fact that $\widehat{X}_t$ is the contraction of the fibers of $f_t$. On the generic point $x \in S_t$ the covering $\phi_t$ is not ramified, hence the germ of $\widehat{X}_t$ along $x$ is the same as the germ of $Y_t$ along $\phi_t(x)$, showing that $Y_t$ is singular in $\phi_t(x)$. Since the singular locus is closed we deduce that $Y_t$ is singular along $\Pi$.

Finally, the same argument shows that $Y_t$ is singular along the branch locus of $\phi_t$ and the image of $S_t$. Since $S_t$ is a surface, if we show that the branch locus of $\phi_t$ has dimension at most $2$, we deduce that the singular locus of $Y_t$ has dimension $2$.

Consider the involution $i_t$ of $X_t$; this is an antisymplectic involution, hence the fixed locus $Z_t$ of $i_t$ is an isotropic subvariety of $X_t$. In particular $Z_t$ has dimension at most $2$, and the branch locus of $\phi_t$ is just the image $\phi_{H_t}(Z_t)$, so we are done.
\end{proof}

We have now shown that $Y_t$ is a member of $\Sigma$, thereby proving Proposition \ref{singular deformation}.\qed

We want to be more precise in the case $k > 1$. Given a point $p \in \P^3$ consider the set $H_p$ of lines through $p$. This can be regarded as a plane inside the Grassmannian
\begin{equation*}
\Gr(1, \P^3) \subset \P^{6},
\end{equation*}
so it yields a point $H_p \in \Gr(2, \P^6)$. This gives a map
\begin{equation*}
\Diag
\rho \colon \mr & \P^3 & \rTo & \Gr(2, \P^6).
\\
& p & \rMapsto & H_p
\\
\endDiag
\end{equation*}
By direct computation one sees that $\rho$ is just the composition of the second Veronese map
\begin{equation*}
v \colon \P^3 \to \P^{10}
\end{equation*}
and a linear embedding $\P^{10} \into \P^{19}$.

One can see this without computations in the following way. Let for a moment $\P^3 = \P(U)$. Then by the results of the previous Section, the map $\rho$ is just the composite of the second Veronese map with the inclusion
\begin{equation*}
\P(\Sym^2(U)) \into \P\big(\bigwedge^2( \bigwedge^3 (U))\big)
\end{equation*}
induced by the decomposition \eqref{SL(U)-modules}.

\begin{cor} \label{planes and veronese}
Assume $S_0 \subset \P^3$ is a quartic with $k$ nodes $p_1, \dots, p_k$. Assume that the images of $p_1, \dots, p_k$ under the second Veronese map $v$ are projectively independent.

Let $\widehat{X}_t$ be one of the singular double EPW sextics constructed above, say $\widehat{X}_t \cong X_{A(t)}$, and let $W_1, \dots, W_k \subset V$ be the subspaces of dimension $3$ such that
\begin{equation*}
\bigwedge^3 W_i \subset A(t).
\end{equation*}

Then $W_1, \dots, W_k$, regarded as points on
\begin{equation*}
\Gr(3, V) \subset \P(\bigwedge^3 V),
\end{equation*}
are projectively independent.
\end{cor}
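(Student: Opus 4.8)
The plan is to deform everything to the central fibre of the family of Proposition~\ref{singular deformation}, where the $k$ planes become completely explicit, and then to invoke the openness of projective independence. Recall that ``$k$ points of $\P(\bigwedge^3 V)$ are projectively independent'' means that the $k \times 20$ matrix formed by homogeneous coordinate vectors of any representatives has rank $k$; since the rank is lower semicontinuous, the locus of $t \in U$ for which $\P(W_1(t)), \dots, \P(W_k(t))$ are projectively independent is Zariski open. By construction the planes $\Pi_i(t) = \phi_t(S_{i,t}) = \P(W_i(t))$ vary in an algebraic family over $U$, the surfaces $S_{i,t} \subset \widehat{X}_t$ being the fibre-wise contractions of the relative divisors used in the proof of Proposition~\ref{singular deformation}. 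Hence it suffices to exhibit a single $t$ at which the $\P(W_i(t))$ are projectively independent, and I would take the special fibre $t=0$.

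For $t=0$ the map $\widehat{\phi}_0$ is the $6:1$ cover of $G = \Gr(1,\P^3)$, with $Y_{A_{+}(U)} = 3G$, while $\phi_0$ restricted to the copy of $S$ indexed by the node $p_i$ is, by the lemma identifying $\phi_t\res{S_t}$ with the projection from the node (applied to each $p_i$), the projection $\P^3 \tto \P^2$ from $p_i$; realised inside $G$ this sends $q$ to the line $\overline{p_i q}$, so its image is exactly the plane $H_{p_i}$ of lines through $p_i$. Therefore $\Pi_i(0) = H_{p_i}$, that is $[\bigwedge^3 W_i(0)] = \rho(p_i)$ in $\Gr(3,V) \subset \P(\bigwedge^3 V)$.

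The conclusion now follows from the description of $\rho$ preceding the statement: $\rho = \iota \circ v$, where $v \colon \P^3 \to \P(\Sym^2 U)$ is the second Veronese map and $\iota \colon \P(\Sym^2 U) \into \P(\bigwedge^3 V)$ is the linear embedding dual to the restriction map $\iota_{+}^{*}$, itself induced by the $SL(U)$-module decomposition \eqref{SL(U)-modules}. A linear embedding of projective spaces carries a linearly independent set of coordinate vectors to a linearly independent one, so it preserves projective independence; hence $\rho(p_1), \dots, \rho(p_k)$ are projectively independent because $v(p_1), \dots, v(p_k)$ are, which is the hypothesis. Together with the first paragraph, this gives projective independence of $\P(W_1(t)), \dots, \P(W_k(t))$ for generic $t \in U$.

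The step I expect to require care is the first one: one must revisit Section~\ref{deformation argument} and check that the distinguished planes $\P(W_i(t))$ --- equivalently, the surfaces $S_{i,t}$, or the sextics $Y_{A(t)}$ with their marked planes --- really organise into algebraic families over $U$ with the prescribed special fibre at $t = 0$, so that the semicontinuity argument applies verbatim. Everything else is soft: lower semicontinuity of rank, the identification of $\phi_0\res{S_{i,0}}$ with projection from $p_i$, and the behaviour of linear embeddings with respect to projective independence.
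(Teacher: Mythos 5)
Your proposal is correct and follows essentially the same route as the paper: deform to the central fibre, identify the plane attached to the $i$-th node with $H_{p_i}=\rho(p_i)$ via the projection-from-the-node description of $\phi_0\res{S_{i,0}}$, use that $\rho$ is the second Veronese followed by a linear embedding, and conclude by openness of projective independence in the family. The paper's proof is just a terser version of the same argument, with the family structure of the planes supplied by the preceding lemma identifying $\phi_t\res{S_t}$ with the degree-$2$ projection map.
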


\begin{proof}
Let $S$ be the blowup of $S_0$ at $p_1, \dots, p_k$ and let $H_i \cong \P^2$ be the set of lines through $p_i$. Consider the projection from $p_k$
\begin{equation*}
\pi_{k} \colon S \to H_i;
\end{equation*}
this is $2:1$ map, and we have shown that it deforms to a $2:1$ map $S_t \to \P(W_i)$; hence it is enough to verify that $H_1, \dots, H_k$ are projectively independent. But this is exactly our hypothesis.
\end{proof}

We now define a special component of $\Sigma_{10} \subset \lag(\bigwedge^3 V)$. Starting from any quartic surface $S$ with $10$ nodes, one can perform the above construction and obtain a singular EPW sextic $Y_A$. In particular $Y_A[2]$ is a deformation of the surface of bitangents $\Bit(S)$

If we now choose a quartic surface $S$ given by the construction in Section \ref{enriques example} we know that $\Bit(S)$ is birationally Enriques, hence the same holds for $Y_A[2]$. We know that there are subspaces $W_1, \dots, W_{10} \subset V$ of dimension $3$ such that
\begin{equation*}
\bigwedge^3 W_i \subset A,
\end{equation*}
and by Proposition \ref{independent points} and Corollary \ref{planes and veronese} we see that these $10$ subspaces are independent as points on $\Gr(3, V)$.

\begin{defin} \label{component of Sigma}
We let $\Sigma_{10}'$ be the irreducible component of $\Sigma_{10}$ containing $A$.
\end{defin}

Of course the component does not depend on the particular EPW sextic $Y_A$ that we have chosen. We now sum up what we need for later use:

\begin{cor} \label{enriques component}
There exists a component $\Sigma_{10}'$ of $\Sigma_{10}$, having codimension $10$ in $\lag(\bigwedge^3 V)$, such that $Y_A[2]$ is birational to an Enriques surface for the generic $A \in \Sigma_{10}'$.
\end{cor}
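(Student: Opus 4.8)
The proof assembles the results of this section and the previous ones. First I would choose a quartic surface $S_0 \subset \P^3$ with ten nodes as in the Reye construction of Proposition \ref{enriques}, generic enough that Proposition \ref{independent points} applies, so that $\Bit(S_0)$ is birational to an Enriques surface and the images of the ten nodes under the second Veronese map $v\colon \P^3 \to \P^9$ are projectively independent. Applying Proposition \ref{singular deformation} with $k = 10$ then yields a smooth base $U$, a family $\pi_X\colon \mathcal{X} \to U$ whose generic fibre is a singular double EPW sextic $X_{A(t)}$ with $A(t) \supset \bigwedge^3 W_i$ for ten distinct subspaces $W_i \subset V$ of dimension $3$, and a family $\pi_Z\colon \mathcal{Z} \to U$ with $\mathcal{Z}_0$ birational to $\Bit(S_0)$ and $\mathcal{Z}_t \cong Y_{A(t)}[2]$ for generic $t$. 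By Corollary \ref{planes and veronese} the points $W_1, \dots, W_{10}$ are projectively independent in $\Gr(3, V) \subset \P(\bigwedge^3 V)$, so $A(t)$ contains ten linearly independent decomposable forms and hence $A(t) \in \Sigma_{10}$; letting $\Sigma_{10}'$ be the irreducible component through $A(t)$ as in Definition \ref{component of Sigma}, Proposition \ref{tangent to Sigma}(ii) gives $\codim \Sigma_{10}' = 10$ in $\lag(\bigwedge^3 V)$.

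Next I would establish the Enriques property along the family $\mathcal{Z} \to U$. A resolution of $\mathcal{Z}_0$ is birational to $\Bit(S_0)$, hence to an Enriques surface, so it has $\kappa = 0$, $p_g = 0$, $q = 0$ and bounded plurigenera. After restricting $U$ and, if needed, passing to a simultaneous resolution of the singularities of the fibres of $\pi_Z$ over a neighbourhood of $0$ (these are at worst rational double points, and such families admit simultaneous resolutions after a finite base change), we may assume $\mathcal{Z} \to U$ is a smooth proper family of surfaces. Then $h^1(\mathcal{Z}_t, \O)$, $h^0(\mathcal{Z}_t, K)$ and all plurigenera $P_n(\mathcal{Z}_t)$ are independent of $t$ by deformation invariance of Hodge numbers and of plurigenera for surfaces; hence every $\mathcal{Z}_t$ has $\kappa = 0$, $p_g = q = 0$, and the Enriques--Kodaira classification forces its minimal model to be an Enriques surface. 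Thus $Y_{A(t)}[2] \cong \mathcal{Z}_t$ is birational to an Enriques surface for generic $t \in U$.

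Finally I would pass to the generic point of $\Sigma_{10}'$. Over a dense open $\Omega \subset \Sigma_{10}'$ the loci $Y_A[2]$ form a proper flat family of smooth surfaces (away from the planes $\P(W_i)$ the behaviour is that of the smooth EPW sextic case, by Proposition \ref{smoothness of Y}). Since $\Sigma_{10}'$ is irreducible and the subfamily $\{A(t) : t \in U\}$ has positive dimension, its generic member lies in $\Omega$, and by the previous paragraph the corresponding $Y_A[2]$ is birational to an Enriques surface. As the property of having an Enriques minimal model is both open and closed in a smooth proper family of surfaces, it holds for every $A \in \Omega$, in particular for the generic $A \in \Sigma_{10}'$, which is the assertion.

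The bookkeeping of the first paragraph is routine. I expect the main obstacle to lie in the second and third paragraphs: one must control the singularities of $\mathcal{Z}_0$ and of the generic $Y_A[2]$ well enough to produce genuinely smooth and proper families over (dense opens of) $U$ and $\Sigma_{10}'$, so that the deformation invariance of plurigenera applies, and one must rule out that the generic fibre degenerates to something other than a surface with the numerical invariants of an Enriques surface, so that the Enriques--Kodaira classification can indeed be invoked.
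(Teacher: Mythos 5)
Your proposal follows essentially the same route as the paper: the paper's entire justification for this corollary is the discussion preceding Definition \ref{component of Sigma} --- apply Proposition \ref{singular deformation} to a ten-nodal Reye quartic, use Proposition \ref{independent points} and Corollary \ref{planes and veronese} to place $A$ in a codimension-$10$ component of $\Sigma_{10}$, and assert that being birational to an Enriques surface passes from $\Bit(S_0)$ to its deformation $Y_{A(t)}[2]$. The deformation-invariance-of-plurigenera argument in your second paragraph and the passage from the constructed locus to the generic point of $\Sigma_{10}'$ in your third are both left implicit in the paper, so you are supplying detail (and honestly flagging the smoothness and properness issues) rather than diverging from the paper's argument.
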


In particular we note:
\begin{cor} \label{independent planes}
For the generic $A \in \Sigma_{10}'$ the $10$ decomposable forms in $A$ are linearly independent.
\end{cor}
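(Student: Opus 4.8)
The plan is to show that the locus in $\Sigma_{10}'$ where the ten decomposable forms of $A$ are linearly independent is Zariski open, and that it is non-empty because it contains the subspace $A$ of Definition \ref{component of Sigma}; since $\Sigma_{10}'$ is irreducible, this will give the statement for the generic member.

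First I would set things up over $\Sigma_{10}'$. By Proposition \ref{tangent to Sigma} together with $\Sigma_{11}=\emptyset$, every $A\in\Sigma_{10}$ contains exactly ten decomposable forms up to scalars, and $\Sigma_{10}$ (hence its component $\Sigma_{10}'$) is smooth. Consider the incidence variety
\begin{equation*}
\mathcal{I}=\big\{\,(A,W)\in\Sigma_{10}'\times\Gr(3,V)\ \big|\ \bigwedge^3 W\subset A\,\big\}
\end{equation*}
with its projection $\pi\colon\mathcal{I}\to\Sigma_{10}'$, a finite map whose fibre over $A$ is the set of ten decomposable forms of $A$. Pushing forward the restriction $\mathcal{L}$ to $\mathcal{I}$ of $\O_{\P(\bigwedge^3 V)}(-1)$ produces a rank-ten vector bundle $\mathcal{E}=\pi_{*}\mathcal{L}$ on $\Sigma_{10}'$ whose fibre at $A$ is the direct sum $\bigoplus_{i=1}^{10}\langle\alpha_i\rangle$ of the ten lines spanned by the decomposable forms $\alpha_1,\dots,\alpha_{10}$ of $A$. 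The inclusions $\bigwedge^3 W\subset A\subset\bigwedge^3 V$ globalise to a morphism of vector bundles $\mu\colon\mathcal{E}\to\bigwedge^3 V\otimes\O_{\Sigma_{10}'}$ which on the fibre at $A$ is the addition map $\bigoplus\langle\alpha_i\rangle\to\bigwedge^3 V$; thus $\mu$ is injective at $A$ precisely when $\alpha_1,\dots,\alpha_{10}$ are linearly independent. Passing to tenth exterior powers, $\bigwedge^{10}\mu$ is a morphism of line bundles, and its non-vanishing locus --- the locus where the ten decomposable forms span $A$ --- is open in $\Sigma_{10}'$.

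Finally I would check this open locus is non-empty using the $A$ of Definition \ref{component of Sigma}, coming from a quartic $S\subset\P^3$ with ten nodes of Reye type whose Veronese images are projectively independent (such $S$ exists by Proposition \ref{independent points}). For this $A$ it was already observed, via Corollary \ref{planes and veronese}, that the ten subspaces $W_1,\dots,W_{10}$ with $\bigwedge^3 W_i\subset A$ are projectively independent as points of $\P(\bigwedge^3 V)$; this is exactly the statement that the vectors $\bigwedge^3 W_1,\dots,\bigwedge^3 W_{10}$ are linearly independent in $\bigwedge^3 V$, and since $A$ contains exactly ten decomposable forms these are, up to scalars, all of them. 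Hence this $A$ lies in the open set above, and by irreducibility of $\Sigma_{10}'$ so does its generic point. I expect the main technical point to be the construction of $\mathcal{E}$ and $\mu$ --- in particular the flatness of the finite cover $\pi$ and the check that the chosen $A$ really lies over the locus where $\mathcal{E}$ is locally free; should $\pi$ degenerate over that specific $A$, one can instead move along a general curve in $\Sigma_{10}'$ through $A$ into the good locus of $\pi$ and use that linear independence of the limiting ten-tuple $\bigwedge^3 W_1,\dots,\bigwedge^3 W_{10}$ is itself an open condition.
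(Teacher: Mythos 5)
Your proposal is correct and follows essentially the same route as the paper: the paper deduces this corollary from the fact that the particular $A$ used in Definition \ref{component of Sigma} has its ten decomposable forms linearly independent (via Proposition \ref{independent points} and Corollary \ref{planes and veronese}), combined with the irreducibility of $\Sigma_{10}'$ and the openness of the independence condition, which the paper leaves implicit and you spell out with the bundle $\mathcal{E}$ and the map $\mu$. The flatness worry you flag can be settled directly: since $\Sigma_{11}=\emptyset$ every fibre of $\pi$ over the smooth (hence normal) variety $\Sigma_{10}'$ consists of exactly ten reduced points, so $\pi$ is \'etale and $\mathcal{E}$ is locally free everywhere.
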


\section{An Enriques surface inside $Y_A$} \label{YB2 in YA}

Recall that the locus $\Sigma_{10}' \subset \lag(\bigwedge^3 V)$ is given by Definition \ref{component of Sigma}. We shall prove the
\begin{prop} \label{B intersects A}
Let $A \in \lag(\bigwedge^3 V)$. There exists $B \in \Sigma_{10}'$ such that
\begin{equation} \label{big intersection}
\dim (A \cap B) \geq 9.
\end{equation}
\end{prop}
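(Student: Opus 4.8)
The plan is to reduce the statement to a dimension count on an incidence correspondence, the single genuinely geometric ingredient being a tangent computation at one point. Write $N=\lag(\bigwedge^3 V)$, irreducible of dimension $55$, and for a Lagrangian $B\subset\bigwedge^3 V$ set
\[
X(B)=\{A\in N\mid \dim(A\cap B)\geq 9\}.
\]
Choosing a Lagrangian complement $B'$ and using the symplectic form to identify $B'\cong B^\dual$, the affine chart of $N$ of Lagrangians transverse to $B'$ is identified with $\Sym^2(B^\dual)$ (a Lagrangian being the graph of a symmetric $q\colon B\to B^\dual$, with $B$ itself corresponding to $q=0$), and then $A\cap B=\ker q$. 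Hence in this chart $X(B)$ is the determinantal cone $\{q\mid \rk q\leq 1\}$, i.e.\ the affine cone over the Veronese $v_2(\P(B^\dual))\subset\P(\Sym^2 B^\dual)$; in particular $X(B)$ has dimension $10$ for every $B$, and being a Schubert variety it is irreducible. Consider then
\[
I=\{(A,B)\in N\times\Sigma_{10}'\mid \dim(A\cap B)\geq 9\},
\]
a closed subvariety of the product, with projections $p_1,p_2$. The fibre of $p_2$ over $B$ is $X(B)$, irreducible of dimension $10$; since $\Sigma_{10}'$ is irreducible of dimension $45$ by Corollary \ref{enriques component}, $I$ is irreducible of dimension $55$.

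Next I would examine the $p_1$-fibre through the point $(B,B)$ for $B\in\Sigma_{10}'$ \emph{generic}, that is, the scheme $\Sigma_{10}'\cap X(B)$ near $B$. Because $\Sigma_{11}=\emptyset$, the variety $\Sigma_{10}$ is smooth everywhere by Proposition \ref{tangent to Sigma}(iii), so $\Sigma_{10}'$ is smooth at $B$ and, by Proposition \ref{tangent to Sigma}(iv),
\[
T_B\Sigma_{10}'=\{q\in\Sym^2(B^\dual)\mid q(\alpha_1)=\dots=q(\alpha_{10})=0\},
\]
where $\alpha_1,\dots,\alpha_{10}$ are the decomposable forms contained in $B$. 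For $B$ generic these ten forms are linearly independent (Corollary \ref{independent planes}), hence a basis of $B$, so in the dual coordinates $x_1,\dots,x_{10}$ the conditions $q(\alpha_i)=0$ say exactly that $q$ has vanishing diagonal. On the other hand, the tangent cone of $X(B)$ at $B$ is the cone $\{\rk q\leq1\}$ itself, i.e.\ the forms $q=\ell^2$ with $\ell$ linear; if $\ell=\sum_i\ell_i x_i$ and $\ell^2$ has vanishing diagonal, then $\ell_i^2=0$ for all $i$, hence $\ell=0$. Thus the tangent cone of $X(B)$ at $B$ meets $T_B\Sigma_{10}'$ only in $0$; since $\Sigma_{10}'$ is smooth at $B$ (its tangent cone there being $T_B\Sigma_{10}'$) and the tangent cone of an intersection is contained in the intersection of tangent cones, it follows that $(B,B)$ is an isolated point of the $p_1$-fibre $\Sigma_{10}'\cap X(B)$.

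To conclude, by upper semicontinuity of fibre dimension there is an open neighbourhood $U$ of $(B,B)$ in $I$ on which $p_1$ has finite fibres; as $I$ is irreducible of dimension $55$, $U$ has dimension $55$, so $\overline{p_1(U)}$ has dimension $55=\dim N$. Since $\Sigma_{10}'$ is closed in the projective variety $N$, the morphism $p_1$ is proper, so $p_1(I)$ is closed; it contains $\overline{p_1(U)}$, hence $p_1(I)=N$. This is precisely the assertion that every $A\in N$ admits $B\in\Sigma_{10}'$ with $\dim(A\cap B)\geq 9$, which is Proposition \ref{B intersects A}.

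The main obstacle is the transversality computation of the second paragraph: one must recognize the Schubert locus $X(B)$ near $B$ as the Veronese cone and then check, using O'Grady's description of $T_B\Sigma_{10}$ together with the linear independence of the ten decomposable forms, that ``vanishing diagonal'' and ``rank $\leq 1$'' meet only in $0$ inside $\Sym^2(B^\dual)$. The remainder is dimension bookkeeping, together with the standard facts that a surjection onto an irreducible base with irreducible equidimensional fibres has irreducible total space and that fibre dimension is upper semicontinuous.
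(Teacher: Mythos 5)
Your proof is correct, and it takes a genuinely different route from the paper at the decisive step. The paper also sets up the incidence variety (its $\Gamma$ is your $I$) and concludes by the same dimension count, but it deliberately stays away from the diagonal: it computes the tangent space $T_{(A,B)}\Omega$ only at points with $A \neq B$ (where $\Omega$ is smooth, being a homogeneous orbit), proves in Lemma \ref{injective differential} that $d\pi_{(A,B)}$ is injective when the ten decomposable forms of $B$ are independent and none lies in $A$, and then must construct such a pair by hand --- a hyperplane $U \subset B$ avoiding the $\alpha_i$ and a Lagrangian $A \neq B$ in the pencil through $U$. You instead sit at the vertex $(B,B)$ of the cone $\Omega_B$, exactly where the incidence variety is singular, and replace tangent spaces by tangent cones: the tangent cone of $X(B)$ at its vertex is the rank-one locus $\{\ell^2\}$, and intersecting with the "vanishing diagonal" space $T_B\Sigma_{10}'$ gives $\ell_i^2=0$ for all $i$, hence $0$. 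The two transversality computations are cousins --- both rest on Corollary \ref{independent planes} and on O'Grady's description of $T_B\Sigma_{10}$ from Proposition \ref{tangent to Sigma} --- but yours uses only that the $\alpha_i$ span $B$, whereas the paper's uses independence together with $\alpha_i \notin U$. What your approach buys is the omission of the paper's longest computation (the identification of $T_{(A,B)}\tl{\Omega}$ via curves in the Grassmannian) and of the explicit construction of the auxiliary pair; the price is that you must invoke the standard but slightly less elementary facts that the tangent cone of an intersection lies in the intersection of tangent cones, that a zero-dimensional tangent cone forces an isolated point, and that the ideal of the symmetric determinantal cone is generated by the $2\times 2$ minors (only the containment $TC_B(X(B)) \subseteq \{\rk \leq 1\}$ is actually needed, which follows just from the minors being homogeneous elements of the ideal). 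Both arguments are complete and yield the same conclusion.
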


Note that the conclusion implies that
\begin{equation*}
Y_B[2] \subset Y_A,
\end{equation*}
so this exhibits an Enriques surface inside $Y_A$, at least if $B$ is generic in $\Sigma_{10}'$. We will prove this result in several steps.

We begin with the construction of a suitable incidence variety. For the present purposes it is irrelevant that the symplectic space is $\bigwedge^3 V$, so we just consider any symplectic vector space $E$ of dimension $2n$. We define the incidence variety
\begin{equation*}
\Omega = \big\{ (A, B) \mid \dim (A \cap B) \geq n - 1 \big\} \subset \lag(E) \times \lag(E).
\end{equation*}
This has two projections $\pi_1$ and $\pi_2$ over the Lagrangian Grassmannian $\lag(E)$. We can find the dimension of $\Omega$ by studying the fibers of these morphisms. Let
\begin{equation*}
\Omega_A = \pi_1^{-1}(A)
\end{equation*}
be a fiber of $\pi_1$. We consider the Plücker embedding, and let $v_A \in \bigwedge^n E$ be a vector such that $[v_A] = A$.
\begin{lemma}
Under the Plücker embedding, $\Omega_A$ is a cone of vertex $A$ over $\P(A^{\dual})$. The latter is embedded in
\begin{equation*}
\P(\bigwedge^n E / \langle v_A \rangle)
\end{equation*}
by the complete linear system $\O_{\P(A^{\dual})}(2)$.
\end{lemma}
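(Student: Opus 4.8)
The plan is to realise $\Omega_A$ as the union of a family of lines in the Pl\"ucker space $\P(\bigwedge^n E)$, all passing through the point $[v_A]$, one line for each hyperplane of $A$; then to identify the index set of this family, together with its induced embedding in $\P(\bigwedge^n E/\langle v_A\rangle)$, as the second Veronese image of $\P(A^\dual)$.

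First I would observe that a Lagrangian $B$ with $\dim(A\cap B)\ge n-1$ either equals $A$ or meets $A$ in a hyperplane $H$, and conversely that, for a fixed hyperplane $H\subset A$, the Lagrangians containing $H$ form a $\P^1$ through $A$. Indeed, since $A$ is Lagrangian we have $H\subset A=A^{\perp}\subset H^{\perp}$, so $H^{\perp}/H$ is a symplectic vector space of dimension $2$; the assignment $B\mapsto B/H$ identifies $\{B\in\lag(E)\mid H\subset B\}$ with the set of lines of $H^{\perp}/H$ (every line being isotropic in a $2$-dimensional symplectic space), hence with $\P(H^{\perp}/H)\cong\P^1$, the line $A/H$ corresponding to $B=A$. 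Choosing a basis $h_1,\dots,h_{n-1}$ of $H$, completing it by $a_0$ to a basis of $A$ (so that $v_A$ is, up to scalar, $h_1\wedge\dots\wedge h_{n-1}\wedge a_0$), and picking $f\in H^{\perp}\setminus A$, one checks that the Pl\"ucker point $[v_B]$ of each $B$ in this pencil lies on the line $\ell_H$ joining $[v_A]$ and $[w_H]$, where $w_H:=h_1\wedge\dots\wedge h_{n-1}\wedge f$ is nonzero modulo $\langle v_A\rangle$ and well defined there up to a scalar. Therefore $\Omega_A=\bigcup_{H}\ell_H$ is the cone with vertex $[v_A]$ over
\begin{equation*}
Z:=\{[w_H]\mid H\subset A\text{ a hyperplane}\}\subset\P\bigl(\bigwedge^{n}E/\langle v_A\rangle\bigr),
\end{equation*}
and $B\mapsto A\cap B$ identifies the base of this cone with the space $\P(A^\dual)$ of hyperplanes of $A$.

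It then remains to recognise the map $[H]\mapsto[w_H]$ as the second Veronese embedding. Wedging gives an isomorphism $\bigwedge^{n-1}A\cong A^\dual$ (once $\bigwedge^{n}A$ is trivialised by $v_A$), and the symplectic form gives $E/A\cong A^\dual$; together they produce an inclusion $A^\dual\otimes A^\dual\cong\bigwedge^{n-1}A\wedge(E/A)\hookrightarrow\bigwedge^{n}E/\langle v_A\rangle$. Under the first isomorphism, $h_1\wedge\dots\wedge h_{n-1}$ is sent to the functional $a\mapsto(h_1\wedge\dots\wedge h_{n-1}\wedge a)/v_A$ on $A$, which vanishes precisely on $H$, hence is a non-zero multiple of the functional $\phi_H$ defining $H$; under the second, $f$ is sent to $\langle f,-\rangle|_A$, which vanishes on $H$ because $f\in H^{\perp}$, hence is again a non-zero multiple of $\phi_H$. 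So $w_H$ corresponds to a non-zero multiple of the symmetric tensor $\phi_H\otimes\phi_H$, and $Z$ is the image of $\P(A^\dual)$ under $[\phi]\mapsto[\phi^2]$, which spans the linear subspace $\P(\Sym^2A^\dual)\subset\P(\bigwedge^{n}E/\langle v_A\rangle)$ and is exactly the embedding by the complete linear system $\O_{\P(A^\dual)}(2)$, as claimed.

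I expect the delicate part to be the bookkeeping in the last paragraph: making the two canonical identifications $\bigwedge^{n-1}A\cong A^\dual$ and $E/A\cong A^\dual$ precise with their scalars, and verifying that $w_H$ really corresponds to the rank-one symmetric tensor $\phi_H\otimes\phi_H$ rather than to some other element of $A^\dual\otimes A^\dual$. The rest is a dimension count together with the elementary structure theory of $\lag(E)$; alternatively, one can bypass the explicit Pl\"ucker computation by working in the affine chart of $\lag(E)$ of Lagrangians transverse to a fixed complement of $A$ — which is identified with $\Sym^2A^\dual$ so that $A\leftrightarrow0$ — where $\Omega_A$ is cut out by $\rk\le1$, i.e. is the affine cone over the Veronese $\{[\phi^2]\}$, and then letting the complement vary to cover $\Omega_A$ and globalise.
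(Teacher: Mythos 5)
Your proof is correct and follows essentially the same route as the paper: both realize $\Omega_A$ as the union of the Pl\"ucker lines through $[v_A]$ indexed by hyperplanes $H\subset A$, and both deduce the quadratic nature of the base from the bilinearity of $(H,[f])\mapsto h_1\wedge\dots\wedge h_{n-1}\wedge f$. The only difference is organizational: the paper first treats the non-Lagrangian incidence variety in $\Gr(n,E)$, exhibiting its fibre as a cone over the Segre embedding of $\P(A^{\dual})\times\P(E/A)$ and then restricting to the diagonal, whereas you identify $w_H$ with $\phi_H\otimes\phi_H$ directly; the mathematical content is the same.
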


\begin{proof}
It is easier to consider the non Lagrangian case first. So consider the bigger incidence variety
\begin{equation*}
\tl{\Omega} = \big\{ (A, B) \mid \dim (A \cap B) \geq n - 1 \big\} \subset \Gr(n, E) \times \Gr(n, E).
\end{equation*}
Accordingly we have the fiber
\begin{equation*}
\tl{\Omega}_A = \big\{ B \in \Gr(n, E) \mid \dim (A \cap B) \geq n - 1 \big\}.
\end{equation*}
We claim that this is a cone of vertex $A$ over
\begin{equation*}
\P(A^{\dual}) \times \P(E/A).
\end{equation*}

First, we give the embedding
\begin{equation*}
\phi \colon \P(A^{\dual}) \times \P(E/A) \into \P(\bigwedge^n E / \langle v_A \rangle).
\end{equation*}
This is done as follows. Let $(U, U') \in \P(A^{\dual}) \times \P(E/A)$, so
\begin{equation*}
U \subset A \subset U'
\end{equation*}
with
\begin{equation*}
\dim U = n - 1, \qquad \dim U' = n + 1
\end{equation*}
We choose a basis $\{ u_1, \dots, u_{n+1} \}$ of $U'$ such that $\{ u_1, \dots, u_{n} \}$ is a basis of $A$ and $\{ u_1, \dots, u_{n-1} \}$ of $U$. We the set
\begin{equation*}
\phi(U, U') = [ u_1 \wedge \dots \wedge u_{n-1} \wedge u_{n+1} ].
\end{equation*}
It is immediate to see that another choice of basis does not change the class of $\phi(U, U')$ modulo
\begin{equation*}
v_A = u_1 \wedge \dots \wedge u_{n},
\end{equation*}
so $\phi$ is well-defined.

Moreover, for fixed $U$, $\phi(U, \cdot)$ gives a linear embedding of $\P(E/A)$ and vice versa. Hence we get a bilinear embedding of the product.

Now we have the projection of centre $A$
\begin{equation*}
\pi_A \colon \P(\bigwedge^n E) \tto \P(\bigwedge^n E / \langle v_A \rangle),
\end{equation*}
and we can restrict this projection to $\tl{\Omega}_A \setminus \{ A \}$. One checks easily that this is just
\begin{equation*}
\yscale=0.7
\Diag
\pi_A \colon \mr & \tl{\Omega}_A \setminus \{ A \} & \rTo & \P(A^{\dual}) \times \P(E/A),
\\
& B & \rTo & (B \cap A, B + A)
\\
\endDiag
\end{equation*}
thereby proving the claim.

Now assume that $A$ is Lagrangian. The symplectic form on $E$ identifies $E/A$ with $A^{\dual}$. A given subspace $B \in \tl{\Omega}_A$ is Lagrangian if and only if, under this identification, $B \cap A$ is identified with $B + A$. We can consider the diagonal embedding
\begin{equation*}
\P(A^{\dual}) \to \P(A^{\dual}) \times \P(E/A) \to \P(\bigwedge^n E / \langle v_A \rangle);
\end{equation*}
this is given by sections of $\O_{\P(A^{\dual})}(2)$ because $\phi$ is bilinear.

Moreover $\Omega_A$ is exactly the cone above the image of this embedding, and this proves the lemma.
\end{proof}

The above lemma allows us to compute the dimension of $\Omega$. Indeed we see that the fibers of $\pi_1$ are irreducible of dimension $n$. Since
\begin{equation*}
\dim \lag(E) = \binom{n+1}{2},
\end{equation*}
it follows that $\Omega$ is irreducible of dimension
\begin{equation*}
\dim \Omega = n + \binom{n+1}{2}.
\end{equation*}

Next we study the tangent space to $\Omega$. Recall that the tangent space $T_A \lag(E)$ is canonically identified with $\Sym^2 (A^{\dual})$. We describe the tangent space to $\Omega$ inside the product
\begin{equation*}
T_A \lag(E) \times T_B \lag(E).
\end{equation*}
\begin{lemma}
Let $(A, B) \in \Omega$ with $A \neq B$, and let $U = A \cap B$. Then $\Omega$ is smooth at $(A, B)$, with tangent space
\begin{equation} \label{tangent incidence}
T_{(A, B)} \Omega = \Big\{ (q_A, q_B) \in \Sym^2 (A^{\dual}) \times \Sym^2 (B^{\dual}) \mid q_A \res{U} = q_B \res{U} \Big\}.
\end{equation}
\end{lemma}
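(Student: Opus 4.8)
The plan is to prove that the subspace $W$ on the right-hand side of \eqref{tangent incidence} has dimension equal to $\dim\Omega$ and contains $T_{(A,B)}\Omega$; since $\Omega$ is irreducible of dimension $n+\binom{n+1}{2}$, this will force $T_{(A,B)}\Omega=W$ and at the same time show that $(A,B)$ is a smooth point of $\Omega$. Observe first that, because $A\neq B$ while $\dim(A\cap B)\geq n-1$, the subspace $U=A\cap B$ has dimension exactly $n-1$.

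I would begin with the dimension count. Restriction of quadratic forms from $A$ (resp.\ $B$) to $U$ gives surjections $\Sym^2(A^{\dual})\onto\Sym^2(U^{\dual})$ and $\Sym^2(B^{\dual})\onto\Sym^2(U^{\dual})$, each with kernel of dimension $\binom{n+1}{2}-\binom{n}{2}=n$. As $W$ is the fibre product of these two maps, it surjects onto $\Sym^2(A^{\dual})$ with fibres of dimension $n$, so $\dim W=\binom{n+1}{2}+n=\dim\Omega$. Since $\Omega$ is an irreducible variety we have $\dim T_{(A,B)}\Omega\geq\dim\Omega=\dim W$ in any case, so it suffices to establish the inclusion $T_{(A,B)}\Omega\subseteq W$.

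For the inclusion I would use that on the open subset $\Omega^{\circ}\subseteq\Omega$ where $A'\neq B'$ the intersection has dimension exactly $n-1$, and that on $\Omega^{\circ}$ the assignment $(A',B')\mapsto A'\cap B'$ is a morphism to $\Gr(n-1,E)$ (namely the image in $\O\otimes E$ of the rank $n-1$ kernel subbundle of $\mathcal{A}\oplus\mathcal{B}\to\O\otimes E$, $(a,b)\mapsto a-b$). Given $(q_A,q_B)\in T_{(A,B)}\Omega$, realise it by a morphism $\Spec\C[\epsilon]/(\epsilon^2)\to\Omega$ through $(A,B)$; its image lies in $\Omega^{\circ}$, so composing with the intersection morphism yields a first-order deformation $U_{\epsilon}$ of $U$ which, by construction, is contained in both the first-order deformation $A_{\epsilon}$ of $A$ attached to $q_A$ and the first-order deformation $B_{\epsilon}$ of $B$ attached to $q_B$. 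Write $\psi_A\colon A\to E/A$ for the homomorphism corresponding to $q_A$ under $T_A\lag(E)\cong\Sym^2(A^{\dual})$, so that $q_A(a,a')=\omega(\psi_A(a),a')$ where $\omega$ is the symplectic form and $\omega(-,a')$ is read off on $E/A$ (legitimate because $a'\in A$ and $A$ is Lagrangian), and similarly $\psi_B$ for $q_B$; write $U_{\epsilon}$ as the deformation of $U$ given by some $\chi\colon U\to E/U$. Unwinding $U_{\epsilon}\subseteq A_{\epsilon}$ shows that $\psi_A\res{U}$ equals the composite $U\xrightarrow{\chi}E/U\to E/A$, and likewise $\psi_B\res{U}$ equals $U\xrightarrow{\chi}E/U\to E/B$. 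Since for $u'\in U$ the functional $\omega(-,u')$ vanishes on $A$, on $B$ and hence on $U$, it descends compatibly to $E/A$, $E/B$ and $E/U$; feeding the previous identities into the defining formulas then gives
\begin{equation*}
q_A(u,u')=\omega(\chi(u),u')=q_B(u,u')\qquad\text{for all }u,u'\in U,
\end{equation*}
where on the middle term $\omega(-,u')$ is viewed on $E/U$. Thus $q_A\res{U}=q_B\res{U}$, i.e.\ $(q_A,q_B)\in W$.

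The step I expect to be the main obstacle is the one that extracts the common first-order deformation $U_{\epsilon}$ of the intersection from an arbitrary arc in $\Omega$: one has to check carefully that $(A',B')\mapsto A'\cap B'$ really defines a morphism near $(A,B)$ and that the resulting $U_{\epsilon}$ lies simultaneously inside $A_{\epsilon}$ and $B_{\epsilon}$, so that both incidence identities involving $\chi$ are available. After that, the rest is the bookkeeping with the three quotients $E/U$, $E/A$, $E/B$ performed above. Alternatively one could attempt the reverse inclusion $W\subseteq T_{(A,B)}\Omega$ directly, building explicit deformations realising a given pair $(q_A,q_B)$ with $q_A\res{U}=q_B\res{U}$; but this needs a non-canonical choice of $\chi$ and of extensions of it to $\psi_A$ and $\psi_B$ compatible with the Lagrangian conditions, so the route through the dimension count seems cleaner.
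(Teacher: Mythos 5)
Your proof is correct, and while its skeleton (a dimension count plus one inclusion) matches the paper's, both halves are implemented by genuinely different means. The paper first proves smoothness by observing that the pairs $(A,B)$ with $A\neq B$ form a single open orbit under the symplectic group, and only afterwards identifies the tangent space; its inclusion $T_{(A,B)}\Omega\subseteq W$ is obtained by reducing to the analogous statement for $\tl{\Omega}\subset\Gr(n,E)\times\Gr(n,E)$ and differentiating an explicit curve $(A(t),B(t))$ written in a moving basis $u_1(t),\dots,u_{n-1}(t),a(t),b(t)$ relative to a chosen complement $C$. You instead extract smoothness from the chain $\dim W=\dim\Omega\leq\dim T_{(A,B)}\Omega$ together with the inclusion $T_{(A,B)}\Omega\subseteq W$, and you prove that inclusion with dual numbers and the intersection morphism $\Omega^{\circ}\to\Gr(n-1,E)$ (legitimate, since the map $\mathcal{A}\oplus\mathcal{B}\to\O\otimes E$ has constant rank $n+1$ on $\Omega^{\circ}$, so its kernel is a subbundle of rank $n-1$). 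This avoids both the reduction to the ordinary Grassmannian and the paper's implicit appeal to the fact that at a smooth point every tangent vector is the velocity of a curve; the orbit argument, on the other hand, buys the paper smoothness for free and in one line. Both arguments hinge on the same final computation: containment of the deformed intersection in both deformed Lagrangians forces the symmetric maps $\psi_A$ and $\psi_B$ to agree on $U$ modulo $A+B=U^{\perp}$, i.e.\ $q_A\res{U}=q_B\res{U}$.
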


\begin{proof}
The points of $\Omega$ outside the diagonal form an orbit under the action of the symplectic group. Since this orbit is open, every point $(A, B) \in \Omega$ with $A \neq B$ has to be smooth, and this proves the first assertion.

To describe explicitly the tangent space we start by remarking that the two sides of Equation \eqref{tangent incidence} have the same dimension $n + \binom{n+1}{2}$. We have verified that this is the dimension of $\Omega$, hence the dimension of its tangent space at $(A, B)$ by the first part of the proof. That this is also the dimension of the right hand side is an immediate computation.

So we just check that we have one inclusion. Again, it is easier to work out the non Lagrangian case first. Namely consider the incidence variety
\begin{equation*}
\tl{\Omega} \subset \Gr(n, E) \times \Gr(n, E).
\end{equation*}
The corresponding statement, that we shall now prove, is the following.

Let $(A, B) \in \tl{\Omega}$ with $A \neq B$, and let
\begin{equation*}
U = A \cap B, \qquad U' = A + B,
\end{equation*}
so that $\dim U = n-1$, $\dim U' = n+1$. Given any
\begin{equation*}
f \in T_A \Gr(n, E) \cong \Hom(A, E/A)
\end{equation*}
we can consider the composition $f_{A, B} \in \Hom(U, E/U')$ given by
\begin{equation*}
U \into A \to E/A \onto E/U'.
\end{equation*}
Similarly for $B$: given $g \in T_B \Gr(n, E)$ we consider $g_{A, B} \in \Hom(U, E/U')$. Then the claim is that
\begin{equation} \label{tangent incidence 2}
T_{A, B} \tl{\Omega} = \big\{ (f, g) \mid f_{A, B} = g_{A, B} \big\} \subset \Hom(A, E/A) \times \Hom(B, E/B).
\end{equation}

Let us see how the lemma follows from Equation \eqref{tangent incidence 2}. In case $E$ has a symplectic form and $A$ and $B$ are both Lagrangian, it is immediate to check that $U'= U^{\perp}$. In this case we can identify
\begin{equation*}
E/U' = E/U^{\perp} \cong U^{\dual}.
\end{equation*}
If $f \in T_A \lag(E)$, the homomorphism
\begin{equation*}
f \colon A \to E/A \cong A^{\dual}
\end{equation*}
is symmetric, so it restricts to a symmetric homomorphism $f_{A, B}$. The same remark holds for $B$, so Equation \eqref{tangent incidence 2} implies Equation \eqref{tangent incidence}.

Let us now prove Equation \eqref{tangent incidence 2}. By the same dimensional count, it is enough to prove one inclusion. Now it is just a matter of unwinding the identification of $T_A \Gr(n, E)$ with $\Hom(A, E/A)$.

Let $(A(t), B(t))$ be a curve on $\tl{\Omega}$ with
\begin{equation*}
A(0) = A, \qquad B(0) = B.
\end{equation*}
We let $U(t) = A(t) \cap B(t)$; this has dimension $n-1$ for all $t$ sufficiently small. So we can choose vectors
\begin{equation*}
u_1(t), \dots, u_{n-1}(t), a(t), b(t)
\end{equation*}
such that
\begin{equation*}
\begin{split}
U(t) &= \langle u_1(t), \dots, u_{n-1}(t) \rangle,
\\
A(t) &= \langle u_1(t), \dots, u_{n-1}(t), a(t) \rangle,
\\
B(t) &= \langle u_1(t), \dots, u_{n-1}(t), b(t) \rangle.
\end{split}
\end{equation*}
Choose a subspace $C \subset E$ complementary to both $A$ and $B$. Then the homomorphism associated to the tangent vector $\dot{A}(0)$ is constructed as follows.

Since
\begin{equation*}
E = A \oplus C,
\end{equation*}
the subspace $A(t)$, for $t$ small, is the graph of a map $f(t) \colon A \to C$. The vector
\begin{equation*}
\dot{A}(0) \in T_A \Gr(n, E)
\end{equation*}
corresponds to $f'(0) \colon A \to C$. Similarly $B(t)$ is seen as the graph of a map $g(t) \colon B \to C$, and we identify $\dot{B}(0)$ with $g'(0)$. The subspace $C$ is then identified, by projection, with $E/A$ in the first case and with $E/B$ in the second.

Now we take a vector $v \in U$. We can choose functions
\begin{equation*}
\lambda_1(t), \dots, \lambda_{n}(t), \mu_1(t), \dots, \mu_{n}(t)
\end{equation*}
such that
\begin{equation*}
\begin{split}
f(t)v + v = \lambda_1(t) u_1(t) + \dots + \lambda_{n-1}(t) u_{n-1}(t) + \lambda_{n}(t) a(t)
\\
g(t)v + v = \mu_1(t) u_1(t) + \dots + \mu_{n-1}(t) u_{n-1}(t) + \mu_{n}(t) b(t),
\end{split}
\end{equation*}
so that
\begin{equation*}
\begin{split}
f(t)v - g(t)v = \big( \lambda_1(t) - \mu_1(t) \big) u_1(t) + \dots +
\\
+ \big( \lambda_{n-1}(t) - \mu_{n-1}(t) \big) u_{n-1}(t) + \lambda_{n}(t) a(t) - \mu_{n}(t) b(t).
\end{split}
\end{equation*}
Taking derivatives and using the fact that $\lambda_i(0) = \mu_i(0) = 0$ for every $i$, we find
\begin{equation*}
\begin{split}
f'(0)v - g'(0)v = \big( \lambda_1'(0) - \mu_1'(0) \big) u_1(0) + \dots +
\\
+ \big( \lambda_{n-1}'(0) - \mu_{n-1}'(0) \big) u_{n-1}(0) + \lambda_{n}'(0) a(0) - \mu_{n}'(0) b(0).
\end{split}
\end{equation*}
So $f'(0)v \equiv g'(0)v$ modulo $U'$; in other words the two homomorphisms $f_{A, B}$ and $g_{A, B}$ agree.
\end{proof}

Now we are ready to prove the main lemma of this section. Of course we choose $E = \bigwedge^3 V$. We let $\Sigma_{10}'$ be any irreducible component of $\Sigma_{10}$ of codimension $10$ in $\lag(\bigwedge^3 V)$. We consider the restricted incidence variety
\begin{equation*}
\Gamma = \Omega \cap \Big( \lag(\bigwedge^3 V) \times \Sigma_{10}' \Big) = \big\{ (A, B) \mid B \in \Omega_A \big\}.
\end{equation*}
As before we have the two projections
\begin{equation*}
\xscale=0.8
\yscale=1.5
\Diagram
& \Gamma \aTo(-1, -1)^{\pi} \aTo(1, -1)^{\rho}
\\
\lag(\bigwedge^3 V) & & \Sigma_{10}'.
\\
\endDiagram
\end{equation*}

Since $\rho$ is a fibration over $\Sigma_{10}'$ with fiber $\Omega_B$, and since we have proved that
\begin{equation*}
\dim \Omega_B = 10 = \codim_{\lag(\bigwedge^3 V)} \Sigma_{10}',
\end{equation*}
we deduce that
\begin{equation*}
\dim \Gamma = \dim \lag(\bigwedge^3 V).
\end{equation*}
Our ultimate goal is to prove that $\pi$ is a generically finite map. The lemma that we shall use is the following.
\begin{lemma} \label{injective differential}
Let $(A,B) \in \Gamma$ and assume that
\begin{enumerate}[i)]
\item
$B$ contains exactly $10$ decomposable forms $\alpha_1, \dots, \alpha_{10}$, which are linearly independent;
\item
for $i = 1, \dots 10$ the form $\alpha_i \notin A$.
\end{enumerate}
Then the differential $d\pi_{(A, B)}$ is an isomorphism.
\end{lemma}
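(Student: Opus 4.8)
The plan is to identify $T_{(A,B)}\Gamma$ explicitly as a subspace of $\Sym^2(A^\dual)\times\Sym^2(B^\dual)$ and then show that the projection onto the first factor --- which is precisely $d\pi_{(A,B)}$ --- has trivial kernel. Since it has already been established that $\dim\Gamma=\dim\lag(\bigwedge^3 V)=55$, an injective differential at $(A,B)$ is automatically an isomorphism, and that is all we need.

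First I would assemble the tangent space. By hypothesis (ii) each $\alpha_i$ lies in $B\setminus A$; in particular $A\neq B$, so $U:=A\cap B$ is a hyperplane of $B$, and the lemma computing $T_{(A,B)}\Omega$ (Equation \eqref{tangent incidence}) applies: $\Omega$ is smooth at $(A,B)$ with tangent space $\{(q_A,q_B)\mid q_A\res{U}=q_B\res{U}\}$. By hypothesis (i) the subspace $B$ contains exactly $10$ decomposable forms up to scalars, so $B\in\Sigma_{10}$; since $\Sigma_{11}=\emptyset$, Proposition \ref{tangent to Sigma}~(iii)--(iv) tells us that $\Sigma_{10}$ is smooth at $B$ --- hence $B$ lies on the single component $\Sigma_{10}'$ and $T_B\Sigma_{10}'=\{q_B\mid q_B(\alpha_1)=\dots=q_B(\alpha_{10})=0\}$. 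Intersecting the two descriptions, $T_{(A,B)}\Gamma$ is contained in
\[
K:=\big\{(q_A,q_B)\in\Sym^2(A^\dual)\times\Sym^2(B^\dual)\ \big|\ q_A\res{U}=q_B\res{U},\ q_B(\alpha_i)=0\ \text{for all }i\big\}.
\]
Projecting $K$ to the $q_B$-factor (image of dimension $45$, fibres of dimension $10$) gives $\dim K=55=\dim\Gamma\le\dim T_{(A,B)}\Gamma$, so in fact $T_{(A,B)}\Gamma=K$ and $(A,B)$ is a smooth point of $\Gamma$.

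It then remains to check that $(q_A,q_B)\mapsto q_A$ is injective on $K$. Suppose $q_A=0$; then $q_B$ vanishes on the hyperplane $U\subset B$, so $q_B=\ell\cdot m$ for a linear form $\ell\in B^\dual$ with $U=\{\ell=0\}$ and some $m\in B^\dual$. Evaluating at $\alpha_i$ gives $0=q_B(\alpha_i)=\ell(\alpha_i)\,m(\alpha_i)$. By (ii), $\alpha_i\notin A$, hence $\alpha_i\notin U$ and $\ell(\alpha_i)\neq 0$; therefore $m(\alpha_i)=0$ for every $i$. By (i) the $\alpha_i$ are $10=\dim B$ linearly independent vectors, so they form a basis of $B$, forcing $m=0$ and $q_B=0$. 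Thus $d\pi_{(A,B)}\colon T_{(A,B)}\Gamma\to T_A\lag(\bigwedge^3 V)$ is an injective linear map between spaces of dimension $55$, hence an isomorphism.

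The argument is short once the two tangent-space formulas are in hand, and the only place where the hypotheses really bite is the last paragraph: (ii) is what guarantees $\ell(\alpha_i)\neq 0$, and (i) is what guarantees that a linear form on $B$ vanishing on all the $\alpha_i$ must vanish identically. The one point I would be careful to spell out is the passage from $T_{(A,B)}\Gamma\subseteq K$ to equality; this rests on the dimension identity $\dim\Gamma=\dim\lag(\bigwedge^3 V)$ recorded just above the lemma, together with the vanishing $\Sigma_{11}=\emptyset$, which is what makes $\Sigma_{10}$ --- and therefore its component $\Sigma_{10}'$ --- smooth at $B$ with tangent space given by the explicit formula of Proposition \ref{tangent to Sigma}.
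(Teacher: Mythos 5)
Your proof is correct and follows essentially the same route as the paper: both rest on the tangent-space descriptions of $\Omega$ and of $\Sigma_{10}$ at $B$, and on the observation that a quadratic form on $B$ vanishing on the hyperplane $U=A\cap B$ factors as $\ell\cdot m$ with $U=\{\ell=0\}$, so that hypothesis (ii) forces $m(\alpha_i)=0$ for all $i$ and hypothesis (i) then forces $m=0$. The only difference is organizational --- the paper proves injectivity of the restriction map $T_B\Sigma_{10}'\to\Sym^2(U^{\dual})$ and chases a commutative diagram, whereas you first identify $T_{(A,B)}\Gamma$ exactly by a dimension count --- which is a harmless, slightly more explicit variant.
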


\begin{proof}
By our hypothesis and Proposition \ref{tangent to Sigma}, we see that the tangent to $\Sigma_{10}'$ at $B$ is the subspace $T$ of $\Sym^2 (B^{\dual})$ consisting of those quadratic forms $q$ such that
\begin{equation*}
q(\alpha_i) = 0 \text{ for } i  = 1, \dots, 10.
\end{equation*}
Let $U = A \cap B$; we claim that the composition
\begin{equation*}
T \into \Sym^2(B^{\dual}) \to \Sym^2(U^{\dual})
\end{equation*}
is injective. Here the second map is the restriction on quadratic forms.

Indeed assume that a quadratic form $q \in T$ vanishes identically on $U$; then its zero locus is the union of two hyperplanes
\begin{equation*}
U \cup U' \subset B.
\end{equation*}
We have assumed that $\alpha_i \notin U$ for every $i$; it follows that $U'$ has to contain all $\alpha_i$. But this is impossible, since we have assumed that they are linearly independent, and the contradiction proves the claim.

We then consider the following diagram
\begin{equation*}
\yscale=1.5
\Diag
T_{(A, B)} \Gamma \dTo^{d\pi_{(A, B)}} & \rTo^{d\rho_{(A, B)}} & & T \aInto(2, -1) \tx{-5px} \mr & \subset \mr & \Sym^2(B^{\dual}) \dTo
\\
\Sym^2(A^{\dual}) \rTo & & & & & \Sym^2(U^{\dual})
\\
\endDiag
\end{equation*}
This is commutative by Equation \eqref{tangent incidence}, since $\Gamma \subset \Omega$.

Assume that
\begin{equation*}
d\pi_{(A, B)} v = 0
\end{equation*}
for some $v \in T_{(A, B)} \Gamma$. Then the diagram shows that we have also
\begin{equation*}
d\rho_{(A, B)} v = 0.
\end{equation*}
Since
\begin{equation*}
\Gamma \subset \lag(\bigwedge^3 V) \times \Sigma_{10}',
\end{equation*}
we find that $v = 0$.
\end{proof}

\begin{cor}
Under the same hypothesis, the map $\pi$ is generically finite, in particular it is surjective.
\end{cor}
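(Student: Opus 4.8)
The plan is to obtain the corollary from the infinitesimal statement of Lemma \ref{injective differential} together with a dimension count and the properness of $\pi$, so that essentially no further geometry is needed.

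First I would note that $\Gamma$ is irreducible of dimension $\dim \lag(\bigwedge^3 V)$. Indeed $\rho \colon \Gamma \to \Sigma_{10}'$ is surjective with every fibre equal to one of the cones $\Omega_B$, which are irreducible of the constant dimension $10$; since $\Sigma_{10}'$ is by construction an irreducible component of $\Sigma_{10}$, it is irreducible, hence so is $\Gamma$, and the dimension was already computed above to be $10 + \codim_{\lag(\bigwedge^3 V)} \Sigma_{10}' = \dim \lag(\bigwedge^3 V)$.

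Next I would feed the pair $(A, B) \in \Gamma$ provided by the hypothesis into Lemma \ref{injective differential}, which tells us that
\begin{equation*}
d\pi_{(A,B)} \colon T_{(A,B)} \Gamma \to T_A \lag(\bigwedge^3 V)
\end{equation*}
is an isomorphism. In particular $\dim T_{(A,B)} \Gamma = \dim \lag(\bigwedge^3 V) = \dim \Gamma$, so $(A,B)$ is a smooth point of $\Gamma$; as $d\pi_{(A,B)}$ is moreover surjective, $\pi$ is dominant. A dominant morphism between irreducible varieties of equal dimension is generically finite, which is the first assertion. For the second, observe that $\Gamma$ is a closed subvariety of $\lag(\bigwedge^3 V) \times \lag(\bigwedge^3 V)$, hence projective, so $\pi$ is proper and $\pi(\Gamma)$ is closed in $\lag(\bigwedge^3 V)$; being also dense, it is all of $\lag(\bigwedge^3 V)$, i.e.\ $\pi$ is surjective. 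I do not anticipate a real obstacle; the only subtlety is the step from "$d\pi$ an isomorphism at one point" to "$\pi$ dominant", which needs that point to be a smooth point of $\Gamma$ — and that is forced by the dimension equality recorded above.
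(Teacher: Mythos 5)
Your proof is correct and takes essentially the same approach as the paper: both rest on Lemma \ref{injective differential} combined with the equality $\dim \Gamma = \dim \lag(\bigwedge^3 V)$. The paper finishes by contradiction via the theorem on the dimension of fibers (the fiber over $A$ would have all components positive-dimensional, yet contains the isolated point $(A,B)$), while you argue via smoothness of $(A,B)$, dominance, and properness of $\pi$; the two are interchangeable.
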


\begin{proof}
Since we already know that $\Gamma$ and $\lag(\bigwedge^3 V)$ have the same dimension, it is enough to show surjectivity. Assuming that $\pi$ is not surjective, the image has positive codimension in $\lag(\bigwedge^3 V)$.

By the theorem on the dimension of the fibers it follows that every component of every fiber of $\pi$ has dimension at least $1$. But Lemma \ref{injective differential} implies that the fiber of $\pi$ above $A$ has an isolate point, contradiction.
\end{proof}

Now we see that in order to prove Proposition \ref{B intersects A} it is enough to show a couple of Lagrangian subspaces $(A, B)$ which satisfy the hypothesis of Lemma \ref{injective differential}. For then the assertion that the fiber of $\pi$ over any $A$ is not empty is exactly the thesis of the proposition.

By Corollary \ref{independent planes} we know that the generic $B \in \Sigma_{10}'$ contains exactly $10$ independent decomposable forms, up to multiples. Let $U \subset B$ be any hyperplane which does not contain any of them. Then we can find a pencil of Lagrangian subspaces $A$ such that
\begin{equation*}
A \cap B = U;
\end{equation*}
then the pair $(A, B)$ satisfies the hypothesis of Lemma \ref{injective differential}, and we are done. \qed

\section{Cohomology computations} \label{cohomology}

Let $X = X_A$ be a smooth double $EPW$ sextic. In this section we compute the cohomological invariants of $X$, partly following \cite{Kieran1}. We shall find all relations in cohomology between $h$ and the Chern classes of $X$. In next sections we shall show that these relations hold in the Chow ring.

Let $\sigma$ be the symplectic form on $X$. Since the canonical of $X$ is trivial
\begin{equation*}
H^{4,0}(X) = H^0(X, \Omega_X^4)
\end{equation*}
is generated by $\sigma^2$. Moreover it is known that $H^3(X) = 0$, so we can compute the Euler characteristic
\begin{equation*}
\chi(X, \O_X) = h^{0,0}(X) + h^{2,0}(X) + h^{4,0}(X) = 3.
\end{equation*}

The symplectic form on $X$ gives an isomorphism
\begin{equation*}
T_{X} \cong \Omega_X^1,
\end{equation*}
hence the odd Chern classes vanish. The Hirzebruch-Riemann-Roch theorem for $X$ simplifies to
\begin{equation} \label{Hirzebruch}
3 = \chi(X, \O_X) = \frac{1}{240}\left( c_2(X)^2 - \frac{1}{3} c_4(X) \right).
\end{equation}

We introduce some more notation. Let us call
\begin{equation*}
q \in \Sym^2(H^2(X, \Q)^{\dual})
\end{equation*}
the Beauville-Bogomolov form of $X$. Since it is non-degenerate, it allows us to give an identification
\begin{equation*}
H^2(X, \Q) \cong H^2(X, \Q)^{\dual}
\end{equation*}
hence we obtain a dual quadratic form
\begin{equation*}
q^{\dual} \in \Sym^2(H^2(X, \Q)).
\end{equation*}
Recall that the cup product yields an isomorphism between $\Sym^2(H^2(X, \Q))$ and $H^4(X, \Q)$, so we can regard $q^{\dual}$ as an element of $H^4(X, \Q)$.

O'Grady proves in \cite{Kieran1} that we have the relation
\begin{equation} \label{q dual and c2}
q^{\dual} = \frac{5}{6} c_2(X),
\end{equation}
and that for any $\alpha, \beta \in H^2(X, \Q)$ we have
\begin{equation} \label{q dual}
q^{\dual} \cdot \alpha \cdot \beta = 25 q(\alpha, \beta).
\end{equation}

We now work out the relations in the cohomology of $X$. Let
\begin{equation*}
h = c_1(f^{*} \O_Y(1)) \in H^2(X).
\end{equation*}
\begin{prop} \label{cohomology computations}
In the cohomology ring $H^{*}(X, \Q)$ we have
\begin{equation*}
\begin{split}
h^4 = 12, \qquad h^2 \cdot c_2(X) = 60,
\\
c_2(X)^2 = 828, \qquad c_4(X) = 324.
\end{split}
\end{equation*}
\end{prop}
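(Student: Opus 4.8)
The plan is to read off all four numbers from facts already established: Fujiki's relation together with the value $q(h,h) = 2$, O'Grady's identities \eqref{q dual and c2} and \eqref{q dual}, the simplified Hirzebruch--Riemann--Roch relation \eqref{Hirzebruch}, and deformation invariance of Chern numbers. Only the value of $c_4(X)$ will require input beyond formal manipulation.

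First I would compute the two numbers involving $h$. Recall from the proof of Proposition~\ref{deformation} that $q(h,h) = 2$; since the Fujiki constant of $X$ is $3$, Fujiki's relation for an irreducible symplectic fourfold gives $h^4 = 3\,q(h,h)^2 = 12$. For the mixed number, rewrite \eqref{q dual and c2} as $c_2(X) = \tfrac{6}{5}\,q^{\dual}$ and apply \eqref{q dual} with $\alpha = \beta = h$ to get $q^{\dual}\cdot h^2 = 25\,q(h,h) = 50$; hence $h^2\cdot c_2(X) = \tfrac{6}{5}\cdot 50 = 60$.

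Next I would pin down $c_4(X) = \chi_{top}(X)$. By Proposition~\ref{deformation} the Hilbert square $S^{[2]}$ of a smooth quartic (hence $K3$) surface deforms to a smooth double EPW sextic; since the parameter space $\lag(\bigwedge^3 V)^{0}$ is connected, every smooth double EPW sextic is deformation equivalent to $S^{[2]}$, and therefore $c_4(X) = \chi_{top}(S^{[2]})$ because the topological Euler characteristic is a deformation invariant. Finally $\chi_{top}(S^{[2]}) = 324$ by G\"ottsche's formula $\sum_{n}\chi_{top}(S^{[n]})\,q^{n} = \prod_{m\ge 1}(1-q^{m})^{-24}$, whose coefficient of $q^{2}$ is $\binom{25}{2} + 24 = 324$. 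Substituting $c_4(X) = 324$ into \eqref{Hirzebruch} — equivalently $c_2(X)^2 - \tfrac13 c_4(X) = 720$ — yields $c_2(X)^2 = 720 + 108 = 828$.

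The only step that is not bookkeeping is $c_4(X) = 324$, and the plan discharges it through the degeneration of Section~\ref{deformation argument}; the point that requires care is that \emph{every} smooth double EPW sextic, not merely the distinguished ones constructed there, is deformation equivalent to $S^{[2]}$, which rests on the connectedness of $\lag(\bigwedge^3 V)^{0}$. As a self-contained alternative one could compute $\chi_{top}(X)$ directly: $f\colon X \to Y_A$ is topologically a degree-$2$ cover, \'etale over $Y_A \setminus Y_A[2]$ and restricting to an isomorphism $f^{-1}(Y_A[2]) \cong Y_A[2]$, so $\chi_{top}(X) = 2\chi_{top}(Y_A) - \chi_{top}(Y_A[2]) = 2\chi_{top}(Y_A) - 192$ by Proposition~\ref{surface ZA}; this reduces everything to the Euler characteristic of the singular sextic hypersurface $Y_A$, which is determined by its degree and the geometry of its singular surface but is appreciably more work.
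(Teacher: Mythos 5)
Your proposal is correct and follows essentially the same route as the paper: $h^2\cdot c_2(X)$ from O'Grady's identities \eqref{q dual and c2} and \eqref{q dual}, $c_4(X)=\chi_{top}(S^{[2]})=324$ by deformation invariance, and $c_2(X)^2$ from \eqref{Hirzebruch}. The only (immaterial) deviation is that you obtain $h^4=12$ from Fujiki's relation with $q(h,h)=2$, whereas the paper reads it off directly as $h^4=2\deg(Y)=12$.
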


\begin{proof}
The first and the last relations are easily handled. Indeed
\begin{equation*}
h^4 = 2 \deg(Y) = 12.
\end{equation*}
As for the last one we have
\begin{equation*}
c_4(X) = \chi(X),
\end{equation*}
and since $X$ is a deformation of $S^{[2]}$, where $S$ is a $K3$, we have
\begin{equation*}
\chi(X) = \chi(S^{[2]}) = 324.
\end{equation*}

By O'Grady's computations \eqref{q dual} and \eqref{q dual and c2} we also have
\begin{equation*}
c_2(X) \cdot h^2 = \frac{6}{5} q^{\dual} \cdot h^2 = \frac{25 \cdot 6}{5} q(h, h) = 60.
\end{equation*}

Finally we can use Equation \eqref{Hirzebruch} to obtain $c_2(X)^2 = 828$.
\end{proof}

In degree $6$ the only possible relation is a linear dependency between $h^3$ and $c_2(X) \cdot h$, and indeed we have:
\begin{prop} \label{relation in degree 6}
There is a relation
\begin{equation*}
c_2(X) \cdot h = 5 h^3
\end{equation*}
$H^6(X, \Q)$.
\end{prop}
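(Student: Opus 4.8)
The plan is to reduce the claimed equality in $H^6(X,\Q)$ to a numerical identity by Poincar\'e duality, and then to evaluate both sides using O'Grady's relations \eqref{q dual and c2} and \eqref{q dual} together with the Fujiki relation on $X$. Since $X$ is a smooth projective fourfold, the cup-product pairing
\[
H^6(X,\Q) \times H^2(X,\Q) \longrightarrow H^8(X,\Q) = \Q
\]
is perfect, so it suffices to prove that $c_2(X)\cdot h\cdot\alpha = 5\,h^3\cdot\alpha$ for every $\alpha \in H^2(X,\Q)$.

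For the left-hand side, \eqref{q dual and c2} and \eqref{q dual} give at once
\[
c_2(X)\cdot h\cdot\alpha = \tfrac{6}{5}\,q^{\dual}\cdot h\cdot\alpha = \tfrac{6}{5}\cdot 25\,q(h,\alpha) = 30\,q(h,\alpha).
\]
For the right-hand side I would use that $X$ has Fujiki constant $3$ and that $q(h,h) = 2$, both of which hold because $H^2(X,\Q)$ with its Beauville--Bogomolov form and the polarization class $h$ are deformation-equivalent to the corresponding data on $S^{[2]}$ (this is also implicit in the proof of Proposition \ref{cohomology computations}, where $c_2(X)\cdot h^2 = 60$ forces $q(h,h)=2$). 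Polarizing the Fujiki relation $\int_X \gamma^4 = 3\,q(\gamma,\gamma)^2$ yields, for all $\alpha_1,\dots,\alpha_4 \in H^2(X,\Q)$,
\[
\int_X \alpha_1\alpha_2\alpha_3\alpha_4 = q(\alpha_1,\alpha_2)q(\alpha_3,\alpha_4)+q(\alpha_1,\alpha_3)q(\alpha_2,\alpha_4)+q(\alpha_1,\alpha_4)q(\alpha_2,\alpha_3),
\]
and taking three of the classes equal to $h$ gives $h^3\cdot\alpha = 3\,q(h,h)\,q(h,\alpha) = 6\,q(h,\alpha)$. Comparing the two computations, $c_2(X)\cdot h\cdot\alpha = 30\,q(h,\alpha) = 5\cdot 6\,q(h,\alpha) = 5\,h^3\cdot\alpha$ for all $\alpha$, and perfectness of the pairing gives $c_2(X)\cdot h = 5\,h^3$ in $H^6(X,\Q)$.

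I do not expect a genuine obstacle: once \eqref{q dual and c2}, \eqref{q dual} and the value $3$ of the Fujiki constant are in hand, the relation is forced, and it is the only one in degree $6$ because $c_1(X)=c_3(X)=0$ leaves only the monomials $h^3$ and $c_2(X)\cdot h$ (both nonzero, as they pair with $h$ to $12$ and $60$). The only point requiring care is the bookkeeping with the normalizations of the Beauville--Bogomolov form and of the polarized Fujiki relation; as a consistency check one may pair the asserted identity with $h$ and recover $c_2(X)\cdot h^2 = 60 = 5\cdot 12 = 5\,h^4$, in agreement with Proposition \ref{cohomology computations}.
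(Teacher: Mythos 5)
Your proof is correct and follows essentially the same route as the paper: both arguments combine O'Grady's relations \eqref{q dual and c2} and \eqref{q dual} with the polarized Fujiki relation (using $q(h,h)=2$ and Fujiki constant $3$) and conclude by Poincar\'e duality; the only difference is trivial bookkeeping in when the factor $\tfrac{6}{5}$ is substituted. The numerical check $30\,q(h,\alpha)=5\cdot 6\,q(h,\alpha)$ matches the paper's $25\,h^3 = 6\,q^{\dual}\cdot h$ exactly.
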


\begin{proof}
From O'Grady's relation \eqref{q dual} we get
\begin{equation*}
6 q^{\dual} \cdot h \cdot \alpha = 6 \cdot 25 q(h, \alpha)
\end{equation*}
for all $\alpha \in H^2(X)$. On the other hand, by polarization of Fujiki's relation we obtain
\begin{equation*}
25 h^3 \cdot \alpha = 25 \cdot 3 \cdot q(h, h) q(h, \alpha) = 6 \cdot 25 q(h, \alpha).
\end{equation*}
So Poincaré duality implies that
\begin{equation*}
25 h^3 = 6 q^{\dual} \cdot h
\end{equation*}
modulo torsion, and using \eqref{q dual and c2} we get the thesis.
\end{proof}

We can instead exclude relations in degree $4$:
\begin{lemma} \label{no rel in degree 4}
The classes $h^2$ and $c_2(X)$ are linearly independent inside $H^2(X)$.
\end{lemma}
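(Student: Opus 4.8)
The plan is to read the lemma straight off the numerical data of Proposition~\ref{cohomology computations}, together with Poincar\'e duality. Observe first that $h^2 \neq 0$ in $H^4(X,\Q)$ because $h^4 = 12 \neq 0$, and likewise $c_2(X) \neq 0$ because $c_2(X)^2 = 828 \neq 0$; so the classes being linearly \emph{dependent} would mean precisely that $c_2(X) = \lambda\, h^2$ for some $\lambda \in \Q$.

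First I would assume, for contradiction, a nontrivial relation $a\, h^2 + b\, c_2(X) = 0$ in $H^4(X,\Q)$, then cup it in turn with $h^2$ and with $c_2(X)$ and integrate over $X$. Using $h^4 = 12$, $h^2 \cdot c_2(X) = 60$ and $c_2(X)^2 = 828$ this yields the homogeneous linear system
\[
12\,a + 60\,b = 0, \qquad 60\,a + 828\,b = 0 .
\]
The coefficient matrix has determinant $12\cdot 828 - 60^2 = 6336 \neq 0$, which forces $a = b = 0$, the desired contradiction; equivalently, the cup-product Gram matrix of $\{h^2, c_2(X)\}$ is nondegenerate. Alternatively one can shortcut via Proposition~\ref{relation in degree 6}: the relation $c_2(X)\cdot h = 5h^3$ shows that the only candidate proportionality is $c_2(X) = 5h^2$, and that would give $c_2(X)^2 = 25\,h^4 = 300 \neq 828$.

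I do not expect a genuine obstacle here: the whole content of the lemma is already encoded in the intersection numbers computed earlier, and all that remains is the one-line verification that a manifestly nonzero $2\times 2$ determinant does not vanish. The only mild point to keep in mind — that "linearly dependent" really does mean "proportional" in this situation — is handled by the nonvanishing of $h^4$ and of $c_2(X)^2$ noted at the outset.
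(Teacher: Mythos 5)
Your proof is correct, but it takes a genuinely different route from the paper. You read the lemma off the Gram matrix of the intersection numbers already recorded in Proposition~\ref{cohomology computations}: pairing a putative relation $a\,h^2 + b\,c_2(X) = 0$ against $h^2$ and $c_2(X)$ gives the system with determinant $12\cdot 828 - 60^2 = 6336 \neq 0$, forcing $a = b = 0$; your shortcut via Proposition~\ref{relation in degree 6} ($\lambda = 5$ would force $c_2(X)^2 = 300 \neq 828$) is equally valid. The paper instead argues intrinsically: it replaces $c_2(X)$ by its multiple $q^{\dual}$, assumes $h^2 + \lambda q^{\dual} = 0$, and combines O'Grady's relation $q^{\dual}\cdot\alpha\cdot\beta = 25\,q(\alpha,\beta)$ with the polarized Fujiki formula $h^2\alpha^2 = q(\alpha,\alpha)q(h,h) + 2q(h,\alpha)^2$ to conclude that every $q$-isotropic class $\alpha$ satisfies $q(h,\alpha)=0$; the quadric $\{q=0\}$ would then lie in a hyperplane of $\P H^2(X)$, contradicting the nondegeneracy of the Beauville--Bogomolov form. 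What your approach buys is brevity and elementarity, at the cost of depending on the specific value $c_2(X)^2 = 828$ (itself extracted from Hirzebruch--Riemann--Roch); what the paper's approach buys is independence from that computation — it shows the linear independence of $h^2$ and $q^{\dual}$ for structural reasons, using only the nondegeneracy of $q$ and the Fujiki relation. Since Proposition~\ref{cohomology computations} precedes the lemma and is proved independently of it, there is no circularity in your argument.
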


\begin{proof}
We can substitute $c_2(X)$ with its multiple $q^{\dual}$. Assume that we have a relation
\begin{equation*}
h^2 + \lambda q^{\dual} = 0
\end{equation*}
for some $\lambda \in \C$. Then we get
\begin{equation*}
h^2 \alpha^2 = -25 \lambda q(\alpha, \alpha)
\end{equation*}
for all $\alpha \in H^2(X)$. By polarization of the Fujiki formula we also obtain
\begin{equation*}
h^2 \alpha^2 = q(\alpha, \alpha)q(h, h) + 2 q(h, \alpha)^2.
\end{equation*}
So if $q(\alpha, \alpha) = 0$ we obtain $q(h, \alpha) = 0$. This means that $q$ is degenerate (the quadric defined by $q$ would be contained in a hyperplane of $\P H^2(X)$), contradiction.
\end{proof}

Finally, it will be useful to write out the explicit form of Hirzebruch-Riemann-Roch, using the above computations for the characteristic classes of $X$. We let
\begin{equation*}
\O_X(1) = f^{*} \O_Y(1).
\end{equation*}
Then $\O_X(n)$ is ample on $X$, and since $K_X$ is trivial, Kodaira vanishing yields
\begin{equation*}
\chi(X, \O_X(n)) = h^{0}(X, \O_X(n)).
\end{equation*}
The formula of Hirzebruch-Riemann-Roch then reads
\begin{equation} \label{explicit Riemann-Roch}
h^{0}(X, \O_X(n)) = \frac{h^4}{24} n^4 + \frac{c_2(X) \cdot h^2}{24} n^2 + \chi(\O_X) = \frac{1}{2} n^4 + \frac{5}{2} n^2 + 3.
\end{equation}

We have also used a similar computation in Section \ref{degeneration}:
\begin{lemma} \label{characteristic of e}
Let $X$ be numerically equivalent to $S^{[2]}$, where $S$ isa $K3$, and let $e \in H^2(X)$ be a class with $q(e, e) = -2$. Let $L$ be a line bundle on $X$ with $c_1(L) = e$. Then
\begin{equation*}
\chi(X, L) = 1.
\end{equation*}
\end{lemma}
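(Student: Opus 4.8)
The statement is a routine consequence of the Hirzebruch--Riemann--Roch theorem, so the plan is simply to assemble the ingredients. First I would record that $X$, being an irreducible symplectic fourfold, has $c_1(X) = 0$, and that the odd Chern classes vanish because the symplectic form identifies $T_X$ with $\Omega^1_X$; in particular $c_3(X) = 0$. Hence the Todd genus collapses to $\mathrm{Td}(X) = 1 + \tfrac{1}{12}c_2(X) + \tfrac{1}{720}\bigl(3c_2(X)^2 - c_4(X)\bigr)$, whose top-degree component integrates to $\chi(X,\O_X) = 3$ by \eqref{Hirzebruch}. Writing $e = c_1(L)$ and $\mathrm{ch}(L) = 1 + e + \tfrac{e^2}{2} + \tfrac{e^3}{6} + \tfrac{e^4}{24}$, the degree-eight component of $\mathrm{ch}(L)\cdot\mathrm{Td}(X)$ is $\tfrac{1}{24}e^4 + \tfrac{1}{24}e^2\cdot c_2(X) + \tfrac{1}{720}(3c_2(X)^2 - c_4(X))$, so that $\chi(X,L) = \tfrac{1}{24}\int_X e^4 + \tfrac{1}{24}\int_X e^2\cdot c_2(X) + 3$ (the cross terms involving the vanishing degree-two and degree-six parts of the Todd class drop out).

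It then remains to evaluate the two integrals, for which I would invoke the numerical identities available for every $X$ numerically equivalent to $S^{[2]}$. Fujiki's relation, with Fujiki constant $3$, gives $\int_X e^4 = 3\,q(e,e)^2 = 12$. For the mixed term, O'Grady's relations \eqref{q dual and c2} and \eqref{q dual} give $c_2(X) = \tfrac{6}{5}q^{\dual}$ and $q^{\dual}\cdot\alpha\cdot\beta = 25\,q(\alpha,\beta)$, whence $\int_X e^2\cdot c_2(X) = \tfrac{6}{5}\cdot 25\cdot q(e,e) = -60$. Substituting, $\chi(X,L) = \tfrac{12}{24} - \tfrac{60}{24} + 3 = \tfrac{1}{2} - \tfrac{5}{2} + 3 = 1$, as claimed.

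There is no real obstacle here beyond the bookkeeping; the only point deserving comment is that \eqref{q dual}, \eqref{q dual and c2} and the value $3$ of the Fujiki constant are invariants of the numerical type, hence legitimately available under the hypothesis of the lemma and not merely for genuine double EPW sextics. One could also bypass these identities: $\chi(X,L)$ is a polynomial of degree two in $q(e,e)$ depending only on the numerical type, so its value at $q(e,e) = -2$ may be read off on $S^{[2]}$ from the Ellingsrud--G\"{o}ttsche--Lehn formula, where a line bundle whose first Chern class has square $-2$ has Euler characteristic $\binom{2}{2} = 1$.
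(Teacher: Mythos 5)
Your proof is correct and follows the same route as the paper: expand Hirzebruch--Riemann--Roch using $c_1(X)=0$ and the vanishing of odd Chern classes, then evaluate $e^4 = 3\,q(e,e)^2 = 12$ by Fujiki and $e^2 \cdot c_2(X) = 30\,q(e,e) = -60$ by O'Grady's relations \eqref{q dual and c2} and \eqref{q dual}, giving $\tfrac{1}{2} - \tfrac{5}{2} + 3 = 1$. The extra remark on deducing the value from $S^{[2]}$ directly is a valid alternative but not needed.
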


\begin{proof}
By Fujiki relation we obtain
\begin{equation*}
e^4 = 3 \cdot q(e, e)^2 = 12.
\end{equation*}
Moreover Equations \eqref{q dual and c2} and \eqref{q dual} yield
\begin{equation*}
c_2(X) \cdot e^2 = \frac{6}{5} q^{\dual} \cdot e^2 = 30 q(e, e) = -60.
\end{equation*}
So Hirzebruch-Riemann-Roch reads
\begin{equation*}
\chi(X, L) = \frac{e^4}{24} + \frac{c_2(X) \cdot e^2}{24} + \chi(\O_X) = \frac{1}{2} - \frac{5}{2} + 3 = 1. \qedhere
\end{equation*}
\end{proof}

\section{Everywhere tangent EPW sextics} \label{tangent sextics}

Let $X = X_A$ be a double covering of an EPW sextic, endowed with ample line bundle
\begin{equation*}
\O_X(1) = f^{*}\O_{Y}(1),
\end{equation*}
where as usual
\begin{equation*}
f \colon X \to Y
\end{equation*}
is the double covering.

Consider the decomposition
\begin{equation*}
H^0(X, \O_X(n)) = H^0(X, \O_X(n))_{+} \oplus H^0(X, \O_X(n))_{-},
\end{equation*}
where $H^0(X, \O_X(n))_{\pm}$ are the eigenspaces relative to the eigenvalue $\pm 1$ for the action of the covering involution $\phi$. We call the sections in the eigenspaces \emph{even} or \emph{odd} respectively. In this section we wish to understand from a geometric point of view the odd sections of $\O_X(3)$.

\begin{lemma}
The number of odd sections is given by
\begin{equation*}
h^0(X, \O_X(3))_{-} = 10.
\end{equation*}
\end{lemma}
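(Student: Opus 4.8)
The plan is to compute $h^0(X,\O_X(3))$ outright and then subtract its even part. For the total dimension, note that $\O_X(3)$ is ample and $K_X$ is trivial, so Kodaira vanishing gives $h^i(X,\O_X(3))=0$ for $i>0$ and hence $h^0(X,\O_X(3))=\chi(X,\O_X(3))$. Setting $n=3$ in the explicit Riemann--Roch formula \eqref{explicit Riemann-Roch} then yields $h^0(X,\O_X(3))=\tfrac{81}{2}+\tfrac{45}{2}+3=66$.

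Next I would identify the even sections. Since $X=\Spec(\O_Y\oplus\zeta_A)$, the covering involution $\phi$ acts as $+1$ on the summand $\O_Y$ and as $-1$ on $\zeta_A$: this is visibly an $\O_Y$-algebra automorphism (compatible with $m_A$ because $(-1)(-1)=1$), hence the nontrivial deck transformation. By the projection formula $f_*\O_X(n)\cong\O_Y(n)\oplus\zeta_A(n)$ as $\phi$-equivariant sheaves, so taking global sections splits $H^0(X,\O_X(n))$ into the even piece $H^0(Y,\O_Y(n))$ and the odd piece $H^0(Y,\zeta_A(n))$. In particular $h^0(X,\O_X(3))_+ = h^0(Y,\O_Y(3))$ (and one even gets a geometric description of the odd sections as $H^0(Y,\zeta_A(3))$, which is what the section goes on to exploit).

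It remains to compute $h^0(Y,\O_Y(3))$. Here $Y\subset\P(V)=\P^5$ is a sextic hypersurface with $\O_Y(1)=\O_{\P^5}(1)\res{Y}$, so the twisted ideal-sheaf sequence $0\to\O_{\P^5}(-3)\to\O_{\P^5}(3)\to\O_Y(3)\to 0$, together with $h^0(\O_{\P^5}(-3))=h^1(\O_{\P^5}(-3))=0$, gives $h^0(Y,\O_Y(3))=h^0(\P^5,\O_{\P^5}(3))=\binom{8}{5}=56$. Hence $h^0(X,\O_X(3))_- = 66-56 = 10$. I do not expect a genuine obstacle here; the only point deserving care is the eigensheaf decomposition $f_*\O_X(n)\cong\O_Y(n)\oplus\zeta_A(n)$ for $\phi$, which is immediate from the description of $X$ as a relative spectrum, and the rest is routine bookkeeping with standard vanishing.
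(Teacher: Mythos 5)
Your proposal is correct and follows essentially the same route as the paper: compute $h^0(X,\O_X(3))=66$ from the explicit Riemann--Roch formula, identify the even part with $H^0(Y,\O_Y(3))=56$ via the ideal-sheaf sequence for the sextic $Y\subset\P^5$ (this is the paper's Lemma on restriction to $Y$), and subtract. Your extra remark that the eigensheaf decomposition $f_*\O_X(n)\cong\O_Y(n)\oplus\zeta_A(n)$ identifies the odd part with $H^0(Y,\zeta_A(n))$ is a welcome elaboration of the paper's terser ``even sections descend to $Y$,'' but it is not a different argument.
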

\begin{proof}
This is actually a simple computation using the theorem of Riemann-Roch-Hirzebruch. First we remark that even sections of $\O_X(3)$ descend to sections of $\O_Y(3)$, so
\begin{equation*}
h^0(X, \O_X(3))_{+} = h^0(Y, \O_Y(3)).
\end{equation*}
By Lemma \ref{restriction to Y} below we see that
\begin{equation*}
h^0(Y, \O_Y(3)) = h^0(\P^5, \O_{\P^5}(3)) = \binom{5 + 3}{3} = 56.
\end{equation*}

On the other hand we have computed in Equation \eqref{explicit Riemann-Roch} that
\begin{equation*}
h^0(X, \O_X(3)) = 66,
\end{equation*}
hence the thesis.
\end{proof}

\begin{lemma} \label{restriction to Y}
The restriction
\begin{equation*}
H^0(\P^5, \O_{\P^5}(3)) \to H^0(Y, \O_Y(3))
\end{equation*}
is an isomorphism.
\end{lemma}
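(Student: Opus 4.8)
The plan is to use the fact, recorded above, that $Y = Y_A$ is a sextic \emph{hypersurface} in $\P^5$: its defining scheme is $Z(\det \lambda_A)$, the zero locus of a single section of $\O_{\P^5}(6)$, so $Y$ is an effective Cartier divisor of degree $6$ and its ideal sheaf is
\begin{equation*}
\idl{Y} \cong \O_{\P^5}(-6).
\end{equation*}
(No reducedness or irreducibility of $Y$ is needed for this.)

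First I would write down the structure sequence of $Y$ in $\P^5$, twisted by $\O(3)$. Using the identification above this becomes
\begin{equation*}
\short{\O_{\P^5}(-3)}{\O_{\P^5}(3)}{\O_Y(3)}
\end{equation*}
Passing to the long exact sequence in cohomology, the map in the statement is precisely $H^0(\P^5,\O_{\P^5}(3)) \to H^0(Y, \O_Y(3))$, its kernel is $H^0(\P^5, \O_{\P^5}(-3))$, and its cokernel injects into $H^1(\P^5, \O_{\P^5}(-3))$. So it suffices to show that both of these groups vanish.

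This is the standard computation of the cohomology of line bundles on projective space: $H^0(\P^n, \O(d)) = 0$ for $d < 0$, and $H^i(\P^n, \O(d)) = 0$ for all $0 < i < n$ and all $d$ (see e.g. \cite[Thm. III.5.1]{Hartshorne}). With $n = 5$, $i = 1$, $d = -3$ both hypotheses are met, so $H^0(\P^5, \O_{\P^5}(-3)) = H^1(\P^5, \O_{\P^5}(-3)) = 0$ and the restriction map is an isomorphism. There is essentially no obstacle here; the only point to be careful about is that one really may treat $Y$ as a degree-$6$ Cartier divisor, which is immediate from its definition. I would remark in passing that the same argument shows $H^0(\P^5, \O_{\P^5}(k)) \to H^0(Y, \O_Y(k))$ is an isomorphism for every $k \leq 5$, which is why the sextic $Y$ is projectively normal in low degrees.
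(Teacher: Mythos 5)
Your proof is correct and follows essentially the same route as the paper: both use the ideal sheaf sequence of the sextic hypersurface $Y$ twisted by $\O(3)$, the identification $\idl{Y} \cong \O_{\P^5}(-6)$, and the vanishing of $H^0(\P^5,\O_{\P^5}(-3))$ and $H^1(\P^5,\O_{\P^5}(-3))$. The only (immaterial) difference is that the paper justifies the $H^1$ vanishing via Kodaira vanishing applied to $K_{\P^5}\otimes\O(3)$, whereas you quote the standard computation of line bundle cohomology on projective space.
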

\begin{proof}
We just need to show that $H^0(\P^5, \idl{Y}(3))$ and $H^1(\P^5, \idl{Y}(3))$ vanish. Since $Y$ is a sextic, $\idl{Y} \cong \O_{\P^5}(-6)$, so
\begin{equation*}
H^0(\P^5, \idl{Y}(3)) = H^0(\P^5, \O_{\P^5}(-3)) = 0.
\end{equation*}
On the other hand $K_{\P^5} = \O_{\P^5}(-6)$, so
\begin{equation*}
H^1(\P^5, \idl{Y}(n)) = 0
\end{equation*}
for every $n > 0$ by Kodaira vanishing.
\end{proof}

Given $\eta \in H^0(X, \O_X(3))_{-}$ we obtain the even section
\begin{equation*}
\eta \otimes \eta \in H^0(X, \O_X(6))_{+} \cong H^0(Y, \O_Y(6)),
\end{equation*}
since even sections descend to $Y$. The proof of Lemma \ref{restriction to Y} shows that
\begin{equation*}
H^1(\P^5, \idl{Y}(6)) = 0,
\end{equation*}
hence this section lifts to a sextic $Y'$ of $\P^5$. Where $Y$ and $Y'$ meet the intersection is at least double: this is easily seen locally.

Indeed let $y \in Y$ be a point where $\eta \otimes \eta$ vanishes. Then for every point $x \in X$ such that $f(x) = y$ we must have
\begin{equation*}
\eta(x) = 0,
\end{equation*}
so $\eta \otimes \eta$ has a double zero in $x$ (hence in $y$).

This construction yields a sextic $Y'$ everywhere tangent to $Y$. We now want to describe explicitly such special sextics; in particular we will show that they are again EPW sextics.

\begin{prop}
Let $A, A' \in \lag(\bigwedge^3 V)$ be two Lagrangian subspaces such that
\begin{equation*}
\dim (A \cap A') = 9.
\end{equation*}
Then $Y_A$ and $Y_{A'}$ are everywhere tangent.
\end{prop}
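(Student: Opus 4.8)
The plan is to prove the stronger statement that $Y_A$ and $Y_{A'}$ have the same projective tangent space at every point which is a smooth point of both sextics; since at a point singular on one of the two the scheme $Y_A \cap Y_{A'}$ is automatically non-reduced (its Zariski tangent space has dimension at least $4$, too big for a transverse complete intersection of two hypersurfaces in $\P(V)$), this is exactly what "everywhere tangent" requires. Throughout we may of course assume that both $Y_A$ and $Y_{A'}$ are honest sextics, i.e. proper in $\P(V)$, since otherwise one contains the other and there is nothing to prove.

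The setup is purely symplectic-linear-algebraic. Put $A_0 = A \cap A'$, an \emph{isotropic} subspace of $\bigwedge^3 V$ of dimension $9$. Because $A$ and $A'$ are Lagrangian and contain $A_0$, we have $A \subseteq A_0^{\perp}$ and $A' \subseteq A_0^{\perp}$, hence $A + A' \subseteq A_0^{\perp}$; comparing dimensions ($\dim (A+A') = 11 = 20-9 = \dim A_0^{\perp}$) shows $A_0^{\perp} = A + A'$, so the nondegenerate symplectic space $A_0^{\perp}/A_0$ has dimension $2$.

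Now take $[v_0] \in Y_A \cap Y_{A'}$ smooth on both. By Proposition \ref{smoothness of Y}, $F_{v_0} \cap A = \langle v_0 \wedge \alpha \rangle$ is one-dimensional with $v_0 \wedge \alpha$ indecomposable, and likewise $F_{v_0} \cap A' = \langle v_0 \wedge \alpha' \rangle$ with $v_0 \wedge \alpha'$ indecomposable; both generators lie in $F_{v_0} \cap A_0^{\perp}$. The heart of the argument is that these two lines coincide. Indeed, if $F_{v_0} \cap A_0 \neq 0$ then, being contained in the one-dimensional space $F_{v_0} \cap A$, it \emph{equals} $\langle v_0 \wedge \alpha \rangle$, so $v_0 \wedge \alpha \in A_0 \subseteq A'$ and hence $v_0 \wedge \alpha \in F_{v_0} \cap A' = \langle v_0 \wedge \alpha' \rangle$. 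Otherwise $F_{v_0} \cap A_0 = 0$; since $F_{v_0}$ is Lagrangian the image of $F_{v_0} \cap A_0^{\perp}$ in $A_0^{\perp}/A_0$ is an isotropic subspace of a $2$-dimensional symplectic space, hence at most a line, which together with $F_{v_0} \cap A_0 = 0$ forces $\dim (F_{v_0} \cap A_0^{\perp}) \leq 1$; as this space contains the nonzero vectors $v_0 \wedge \alpha$ and $v_0 \wedge \alpha'$, they are proportional. Either way $\langle v_0 \wedge \alpha \rangle = \langle v_0 \wedge \alpha' \rangle$, so $\alpha \equiv c\, \alpha' \pmod{v_0 \wedge V}$ for some $c \in \C^{*}$. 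Finally, the hyperplane $H_{v_0}$ of Proposition \ref{tangent to Y} is unchanged when $\alpha$ is rescaled (the defining form $\vol(v_0 \wedge v \wedge \alpha \wedge \alpha)$ merely picks up a factor $c^2$) or altered by an element of $v_0 \wedge V$ (the extra terms contain two copies of $v_0$ and vanish under $\vol$), so Proposition \ref{tangent to Y} gives $T_{[v_0]} Y_A = \P(H_{v_0}) = T_{[v_0]} Y_{A'}$.

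The main point to get right is the dichotomy in the previous paragraph and, within it, the dimension bound $\dim(F_{v_0} \cap A_0^{\perp}) \leq 1$ obtained by passing to the symplectic quotient $A_0^{\perp}/A_0$; everything else is bookkeeping with Propositions \ref{smoothness of Y} and \ref{tangent to Y}. A secondary, essentially cosmetic, point is to fix the precise meaning of "everywhere tangent" (failure of transversality / non-reducedness of $Y_A \cap Y_{A'}$ at every common point) so that the reduction to common smooth points is unambiguous.
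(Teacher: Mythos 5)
Your proof is correct, and it follows the same overall architecture as the paper's: reduce to points that are smooth on both sextics, show that there $F_{v_0} \cap A = F_{v_0} \cap A'$, and conclude via the tangent-space formula of Proposition \ref{tangent to Y}. The one place where you genuinely diverge is in how the coincidence of the two lines is established. The paper argues by contradiction: if $F_{v_0} \cap A' \not\subset A$ then $A' = (F_{v_0} \cap A') \oplus (A \cap A')$, and the generator of $F_{v_0} \cap A$ is orthogonal to both summands (because $F_{v_0}$ and $A$ are isotropic), hence lies in $(A')^{\perp} = A'$, which is absurd. You instead pass to the $2$-dimensional symplectic quotient $A_0^{\perp}/A_0$ and run a dichotomy on whether $F_{v_0}$ meets $A_0$, concluding in either case that $F_{v_0} \cap A_0^{\perp}$ is a single line containing both generators. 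The two arguments use exactly the same input (all three subspaces Lagrangian, $\dim A_0 = 9$) and are of comparable length; yours makes the role of the rank-$2$ reduction $A_0^{\perp}/A_0$ explicit, which resurfaces later in the paper (e.g.\ in the pencil $\ell_D$ of Lagrangians through $U = A \cap A'$), while the paper's is a one-line orthogonality trick. Two details you supply that the paper leaves implicit are welcome: the verification that $H_{v_0}$ is unchanged when $\alpha$ is rescaled or modified by an element of $v_0 \wedge V$ (needed to legitimately apply Proposition \ref{tangent to Y} to both sextics with a common representative), and the remark that at a common point singular on one of the two sextics the intersection is automatically non-transverse, which pins down what ``everywhere tangent'' should mean there.
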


\begin{proof}
Let
\begin{equation*}
[v] \in Y_A^{sm} \cap Y_{A'}^{sm}
\end{equation*}
be a smooth point of both $Y_A$ and $Y_{A'}$. Then we claim that
\begin{equation} \label{common intersection}
F_v \cap A = F_v \cap A'.
\end{equation}
Indeed both $F_v \cap A$ and $F_v \cap A'$ are $1$-dimensional, because $Y_{A}$ and $Y_{A'}$ are smooth in $[v]$. By symmetry it is enough to show that
\begin{equation*}
F_v \cap A' \subset A.
\end{equation*}

If this does not happen, then
\begin{equation*}
A' = (F_v \cap A') \oplus (A \cap A').
\end{equation*}
Let $\alpha$ be a generator for $F_v \cap A$. Then, since $F_v$ and $A$ are isotropic, $\alpha$ is orthogonal to both $F_v \cap A'$ and $A \cap A'$. It follows that
\begin{equation*}
\alpha \in (A')^{\perp} = A'.
\end{equation*}
This is a contradiction, so \eqref{common intersection} is proved.

By Proposition \ref{tangent to Y} this implies that
\begin{equation*}
T_{[v]} Y_A = T_{[v]} Y_{A'}.
\end{equation*}
Since this is true for any smooth point of intersection, the thesis is proved.
\end{proof}

\begin{rem}
If $A''$ is any other Lagrangian subspace with
\begin{equation*}
A \cap A' = A \cap A'',
\end{equation*}
the intersection being of dimension $9$, it is easy to see that
\begin{equation*}
Y_A \cap Y_{A'} = Y_A \cap Y_{A''},
\end{equation*}
where the latter is an equality \emph{of schemes}. So this intersection only depends on
\begin{equation*}
U = A \cap A'.
\end{equation*}
In other words we can associate to every $U \in \P(A^{\dual})$ a section
\begin{equation*}
\tau \in H^0(Y_A, \O_{Y_A}(6)) \cong H^0(X_A, \O_{X_A}(6)_{+}).
\end{equation*}
\end{rem}

\begin{rem}
In the last remark we have implicitly used the fact that every $U \in \P(A^{\dual})$ is contained in some other Lagrangian subspace $A'$. This is easy: if $U$ is as above, then
\begin{equation*}
U^{\perp} \supset A^{\perp} = A,
\end{equation*}
and every hyperplane of $U^{\perp}$ containing $U$ is such a Lagrangian subspace. Indeed let $U \subsetneq A' \subsetneq U^{\perp}$, so that
\begin{equation*}
A' = U \oplus \langle v \rangle
\end{equation*}
for some $v$. Then $v$ is orthogonal both to $U$ and to itself, so $A'$ is isotropic.

In particular we see that there is a pencil of Lagrangian subspaces containing $U$.
\end{rem}

One can easily check that the above construction yields an isomorphism
\begin{equation} \label{divisor D}
g \colon \P(A^{\dual}) \to \P H^0 (X_A, \O_{X_A}(3)_{-}).
\end{equation}

The divisors
\begin{equation*}
D' \in |H^0 (X_A, \O_{X_A}(3)_{-})|,
\end{equation*}
or better their images in $Y_A$, are endowed with a natural rational function.

Let $U \in \P(A^{\dual})$ such that $g(U) = D'$, and let $D = f(D')$. We also let $\ell_D$ be the pencil of Lagrangian subspaces containing $U$. Then there is a rational function
\begin{equation*}
r_D \colon D \tto \ell_D
\end{equation*}
defined as follows.

Let $A, A'$ be generators of $\ell_U$, and $x$ a generic point of $D \subset X_A$. Then
\begin{equation*}
[v] = f_A(x) \in Y_A^{sm} \cap Y_{A'}^{sm},
\end{equation*}
and by Equation \eqref{common intersection} we have
\begin{equation*}
F_v \cap A = F_v \cap A',
\end{equation*}
both of dimension $1$. We claim that
\begin{equation} \label{intersection sum}
\dim (F_v \cap (A + A')) = 2.
\end{equation}

Indeed we start by the simple remark that
\begin{equation*}
(F_v + A)^{\perp} = (F_v)^{\perp} \cap A^{\perp} = F_v \cap A \subset A' = (A')^{\perp}.
\end{equation*}
We can dualize it to obtain
\begin{equation*}
A' \subset F_v + A,
\end{equation*}
so we find that
\begin{equation*}
\dim (F_v + A + A') = \dim F_v + A = 19
\end{equation*}
by Grassmann. Since
\begin{equation*}
\dim (A + A') = 11, \qquad \dim F_v = 10,
\end{equation*}
Grassmann's formula applied to $F_v$ and $A+A'$ yields Equation \eqref{intersection sum}.

By Equation \eqref{intersection sum} we see that there is exactly one member $A_v \in \ell_D$ such that
\begin{equation*}
F_v \cap (A + A') \subset A_v.
\end{equation*}
Indeed all members of the pencil contain $F_v \cap A$, so containing $F_v \cap (A + A')$ is just one more linear condition. We can explicitly see that
\begin{equation*}
A_v = (A \cap A') + \big( F_v \cap (A + A') \big).
\end{equation*}
We then define
\begin{equation*}
\yscale=0.5
\Diag
r_D \colon \mr & D & \rDashto & \ell_D.
\\
& [v] & \rMapsto & A_v
\\
\endDiag
\end{equation*}

It is easy to describe the divisors in the linear system on $D$ whose associated rational map is $r_D$. Indeed by construction we see that, given $B \in \ell_D$, we have $r_D([v]) = B$ if and only if
\begin{equation*}
\dim (F_v \cap B) = 2,
\end{equation*}
hence the map $r_D$ is defined by the pencil of divisors
\begin{equation*}
\big\{ Y_B[2] \mid B \in \ell_D \big\}.
\end{equation*}
In particular all surfaces $Y_B[2]$ for $B \in \ell_D$ are rationally equivalent on $Y_A$.

\begin{rem}
We should note that indeed if $B \in \ell_D$, then
\begin{equation*}
\dim (B \cap A) = 9,
\end{equation*}
and this implies that $Y_A$ contains $Y_B[2]$. In fact if $\dim (F_v \cap B) = 2$, then $\dim (F_v \cap A) \geq 1$.
\end{rem}

We sum up what we need for the proof of Theorem \ref{main}.

\begin{prop} \label{YA2 equiv YB2}
Let $A \in \lag(\bigwedge^3 V)^{0}$ and let $B$ be a Lagrangian subspace such that $\dim A \cap B = 9$. Then $Y_B[2] \subset Y_A$ and
\begin{equation*}
\big[ Y_A[2] \big] = \big[ Y_B[2] \big]
\end{equation*}
in $CH^4(Y_A)$.
\end{prop}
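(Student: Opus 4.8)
The plan is to read the statement off the pencil construction carried out above, applied to the common hyperplane $U = A \cap B$ of $A$ and of $B$.

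I would first record the two inclusions that make everything fit together. The containment $Y_B[2] \subset Y_A$ is the remark preceding the statement: for $[v] \in Y_B[2]$ one has $\dim(F_v \cap B) \geq 2$, and since $A \cap B$ is a hyperplane of $B$ this forces $\dim(F_v \cap A) \geq \dim(F_v \cap A \cap B) \geq 1$, i.e. $[v] \in Y_A$. Running the same computation with $A$ and $B$ interchanged gives $Y_A[2] \subset Y_B$ as well, so both surfaces $Y_A[2]$ and $Y_B[2]$ lie on $Y_A \cap Y_B$. (Here $A \in \lag(\bigwedge^3 V)^{0}$, so $Y_A[2]$ is the smooth surface of Proposition \ref{surface ZA}.)

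Now set $U = A \cap B$, of dimension $9$. Since $U \subset A$ and $A$ is Lagrangian, $A = A^{\perp} \subset U^{\perp}$, and likewise $B \subset U^{\perp}$; hence $A$ and $B$ are two of the Lagrangian subspaces sitting between $U$ and $U^{\perp}$, i.e. two members of the pencil $\ell_U$. By the everywhere‑tangency proposition $Y_A$ and $Y_B$ meet everywhere with multiplicity two, so the section of $\O_{Y_A}(6)$ cutting out $Y_A \cap Y_B$ is the square of the odd section of $\O_{X_A}(3)$ attached to $U$ by the isomorphism \eqref{divisor D}; thus $Y_A \cap Y_B = 2D$ on $Y_A$, where $D$ is the divisor associated to $U$. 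In particular $Y_A[2] \subset D$ and $Y_B[2] \subset D$. Finally, by the description of $r_D$ given just before the statement, the rational map $r_D \colon D \dashrightarrow \ell_U$ is defined by the pencil of divisors $\{Y_C[2] \mid C \in \ell_U\}$ on $D$; concretely these are the zero loci of the sections spanning a two‑dimensional subspace of $H^0(D, \mathcal{M})$ for one fixed line bundle $\mathcal{M}$ on $D$, hence they are all linearly equivalent on $D$. Specializing to $C = A$ and $C = B$ gives $Y_A[2] \sim Y_B[2]$ on $D$, and pushing this equality forward along the closed immersion $D \hookrightarrow Y_A$ (which preserves rational equivalence) yields $[Y_A[2]] = [Y_B[2]]$ in $CH^{*}(Y_A)$, as asserted.

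The one point that deserves care — and it is really the linchpin of the argument — is the verification that the member of the pencil $\{Y_C[2]\}$ over the parameter $A$ (resp. $B$) is exactly the divisor $Y_A[2]$ (resp. $Y_B[2]$) on $D$, and not some strictly larger subscheme of it. This rests precisely on the facts established above: that $Y_A[2]$ and $Y_B[2]$ are contained in $D$, and that by construction the relevant section of $\mathcal{M}$ vanishes exactly along $\{[v] \in D : \dim(F_v \cap C) \geq 2\}$. Granting this, the remainder is only a matter of assembling the everywhere‑tangency proposition, the isomorphism \eqref{divisor D}, and the explicit description of $r_D$, all already proved earlier in this section, so I do not expect any further obstacle.
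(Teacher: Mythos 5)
Your proposal is correct and follows essentially the same route as the paper, which proves this proposition by exactly the pencil argument you describe: $A$ and $B$ are two members of the pencil $\ell_U$ of Lagrangians containing $U = A\cap B$, and the surfaces $Y_C[2]$ for $C \in \ell_U$ are the members of the pencil of divisors on $D$ defining the rational map $r_D$, hence are rationally equivalent on $D$ and therefore on $Y_A$. Your extra care about identifying the fiber of $r_D$ over $C$ with $Y_C[2]$ is a reasonable elaboration of a point the paper treats briefly, not a departure from its argument.
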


\section{Definition of the class $\theta$}

Let $X = X_A$ as usual. Our first task is to define a class
\begin{equation*}
\theta \in CH^4(X)
\end{equation*}
of degree $1$. Then we will show that the relations
\begin{equation*}
h^4 = 12 \theta, \qquad h^2 c_2(X) = 60 \theta, \qquad c_2(X)^2 = 828 \theta, \qquad c_4(X) = 324 \theta
\end{equation*}
hold.

It will actually be easier to work on $Y$, so we'd better find out the relationship between $CH(X)$ and $CH(Y)$.
\begin{rem}
The map $f \colon X \to Y$ induces a push-forward morphism
\begin{equation*}
f_{*} \colon CH(X) \to CH(Y),
\end{equation*}
because $f$ is proper (for the construction of Chow rings and morphisms between them see \cite[Chap. 1]{Fulton}). On the other hand $f^{*}$ is usually defined for flat maps with fibers of constant dimension, or when the target is smooth, and neither is the case.

Following Example $1.7.6$ on \cite{Fulton} we can define $f^{*}$ in our situation. Indeed Fulton shows that if
\begin{equation*}
Y = X/G
\end{equation*}
is the quotient of $X$ by the action of a finite group $G$, we have a canonical isomorphism
\begin{equation*}
CH(Y)_{\Q} \cong CH(X)_{\Q}^{G},
\end{equation*}
where as usual $CH(Y)_{\Q} = CH(Y) \otimes \Q$. So if $f$ is the quotient map we can define $f^{*}$ by the composition
\begin{equation*}
CH(Y)_{\Q} \xrightarrow{\cong} CH(X)_{\Q}^{G} \into CH(X)_{\Q}.
\end{equation*}
Fulton also shows that the composition
\begin{equation*}
CH(Y)_{\Q} \xrightarrow{f^{*}} CH(X)_{\Q} \xrightarrow{f_{*}} CH(Y)_{\Q}
\end{equation*}
is the multiplication map by $\card G$.

In our situation $G = \langle \phi \rangle$, where $\phi$ is the covering involution, and the composition above is multiplication by $2$.
\end{rem}

Recall that we have defined
\begin{equation*}
\Sigma_{10} \subset \lag(\bigwedge^3 V)
\end{equation*}
as the (Zariski closure of the) set of Lagrangian subspaces such that there exist $10$ independent subspaces
\begin{equation*}
W_1, \dots, W_{10} \subset V
\end{equation*}
of dimension $3$ with $\bigwedge^3 W_i \subset A$, and $\Sigma_{10}'$ is a particular component given by Definition \ref{component of Sigma}. By Corollary \ref{enriques component} we know that for $B \in \Sigma_{10}'$ generic $Y_B[2]$ is birational to an Enriques surface.

We now recall a result about Chow groups of surfaces (\cite[Thm. $11.10$]{Voisin2})
\begin{thm*}[Bloch, Kas, Lieberman]
Let $S$ be a smooth projective surface with $H^{2,0}(S) = 0$, and assume that $S$ is not of general type. Then the Albanese map
\begin{equation*}
alb_S \colon CH_{hom}^2(S) \to \Alb(S)
\end{equation*}
is an isomorphism. In particular if moreover $H^{1,0}(S) = 0$, then $CH_{hom}^2(S) = 0$.
\end{thm*}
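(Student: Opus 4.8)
Since this is the theorem of Bloch, Kas and Lieberman, the plan is to reproduce its proof by going through the Enriques--Kodaira classification. First I would record the standard reductions. Surjectivity of $alb_S$ on $CH^2_{hom}(S)$ is automatic: for a general complete-intersection curve $C\subset S$ the induced map $J(C)\to\Alb(S)$ is surjective and factors through $CH^2_{hom}(S)$. By Roitman's theorem $alb_S$ is injective on torsion, so its kernel $T(S)$ is torsion-free and hence embeds in $T(S)\otimes\Q$; it therefore suffices to prove that $alb_S$ is injective after $\otimes\,\Q$. Finally $CH_0$ and $\Alb$ of a smooth projective surface are birational invariants, so I am free to replace $S$ by any smooth projective model. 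By classification, a surface with $H^{2,0}(S)=0$ that is not of general type is, up to birational modification, either rational, ruled over a smooth curve $C$ of genus $\ge 1$, an Enriques surface, a bielliptic surface, or a minimal properly elliptic surface with $p_g=0$.

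The first two cases are immediate: if $S$ is rational then $CH_0(S)\cong\Z$ and $\Alb(S)=0$; if $S=\P(\mathcal E)$ is ruled over $C$, then the projective bundle formula (or localization on the $\P^1$-bundle) gives $CH_0(S)\cong CH_0(C)$, under which $alb_S$ becomes $CH_0(C)_{hom}\xrightarrow{\sim}J(C)=\Alb(S)$. In each of the remaining three cases $S$ carries a surjective morphism $f\colon S\to B$ onto a smooth curve whose general fibre is a smooth curve of genus $\le 1$ (an elliptic pencil or half-pencil in the Enriques and elliptic cases, either of the two projections for a bielliptic surface). Here the plan is to show that $CH_0(S)$ is \emph{supported on a curve} $\Gamma\subset S$, i.e.\ that $CH_0(\Gamma)_{hom}\to CH_0(S)_{hom}$ is surjective over $\Q$; granting this, the Bloch--Srinivas decomposition of the diagonal applies and gives $N\Delta_S=Z_1+Z_2$ in $CH^2(S\times S)_\Q$ with $Z_1$ supported on $\Gamma\times S$ and $Z_2$ on $D\times S$ for a divisor $D$. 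Letting this correspondence act on $0$-cycles (using the moving lemma to make any given $0$-cycle avoid $D$) shows that $CH^2_{hom}(S)_\Q$ is a quotient of $J(\tl\Gamma)_\Q$ through which $alb_S$ factors surjectively onto $\Alb(S)_\Q$; the Bloch--Srinivas/Roitman analysis of such quotients then forces $alb_S$ to be an isomorphism over $\Q$, which together with torsion-freeness of $T(S)$ upgrades to the integral isomorphism.

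To produce $\Gamma$ I would take a multisection of $f$. A general point $x\in S$ lies on a fibre $S_b$ and, modulo the class in $\Pic^0(S_b)$ of $x$ minus a point of $\Gamma\cap S_b$, is rationally equivalent in $S$ to a cycle on $\Gamma$; so the only obstruction to $CH_0(S)$ being supported on $\Gamma$ is the ``relative Jacobian'' contribution, namely the subgroup of $CH_0(S)_{hom}$ generated by the classes coming from $\Pic^0(S_b)$ as $b$ runs over $B$. I expect the vanishing of this contribution to be the main obstacle, and it is here that the classification enters essentially. In the Enriques case $H^1(S,\O_S)=0$ and the base of the pencil is $\P^1$, which prevents the fibrewise $\Pic^0$'s from assembling into any nontrivial family of $0$-cycles; in the properly elliptic case the hypothesis $p_g=0$ plays the analogous role, constraining $R^1f_*\O_S$ and hence the variation of the fibrewise Jacobians. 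The bielliptic case I would instead handle directly: writing $S=(E\times F)/G$ one has $CH_0(S)_\Q=CH_0(E\times F)_\Q^{\,G}$, and the Beauville decomposition of $CH_0$ of the abelian surface $E\times F$, together with the fact that $G$ acts nontrivially on its weight-two part (because it does so on $H^{2,0}(E\times F)$, one of the quotients $E/G$, $F/G$ being $\P^1$), shows that only the $\Q$- and Albanese-summands survive.

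Putting the cases together, the scheme above yields $alb_S\colon CH^2_{hom}(S)\xrightarrow{\sim}\Alb(S)$ in all of them. The final assertion is then immediate: if moreover $H^{1,0}(S)=0$ then $\Alb(S)=0$, hence $CH^2_{hom}(S)=0$; this is the form in which the result is applied here, to $S=Y_B[2]$, which is birational to an Enriques surface.
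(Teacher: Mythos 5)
The paper does not actually prove this statement: it is the classical theorem of Bloch, Kas and Lieberman, quoted as a black box with a pointer to \cite[Thm.~11.10]{Voisin2}, and the only thing used downstream is the corollary that a surface birational to an Enriques surface has $CH^2_{hom}=0$. So there is no internal proof to compare yours against; I can only judge your reconstruction on its own terms.

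Your architecture is the standard one --- reduce by birational invariance and the Enriques--Kodaira classification, dispose of the rational and ruled cases directly, and in the remaining cases show that $CH_0(S)$ is supported on a curve and invoke the Bloch--Srinivas decomposition of the diagonal --- and the rational, ruled and (modulo routine checks on the Beauville decomposition) bielliptic cases are in order. But the central step, namely that the fibrewise $\Pic^0$ contribution of the elliptic pencil dies in the Enriques and properly elliptic cases, is precisely the content of the theorem, and you do not supply it; you say yourself that you ``expect'' it to be the main obstacle. Moreover the mechanism you propose for the Enriques case ($q=0$ and a pencil over $\P^1$ ``prevents the fibrewise $\Pic^0$'s from assembling'') cannot be the right one: an elliptic $K3$ surface also has $q=0$ and an elliptic pencil over $\P^1$, yet by Mumford's theorem its $CH_0$ is infinite-dimensional and not supported on any curve. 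The invariant doing the work is $p_g=0$, and it must enter through a genuine argument --- in Bloch--Kas--Lieberman it is a Roitman-style finite-dimensionality statement for the cycles swept out by the relative Jacobian of the pencil; alternatively one can use Lang--N\'eron finiteness of the Mordell--Weil group of the generic fibre together with divisibility of the Albanese kernel. None of this appears in your sketch, so as written the proposal establishes the theorem only in the cases where it is easy and leaves unproved exactly the case (Enriques) that the paper needs.
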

By this result we see that if $S$ is an Enriques surface,
\begin{equation*}
CH^2(S) \cong \Z.
\end{equation*}
In particular this conclusion is true for $Y_B[2]$, when $B \in \Sigma_{10}'$ is generic.

To handle the case where $B$ is not generic we use the following result (the proof is the same of \cite[Lemma 10.7]{Voisin2}):
\begin{thm*}
Consider an algebraic family of cycles $(Z_t)_{t\in U}$ on a variety $X$ parametrized by a basis $U$. Then the set
\begin{equation*}
\{ u \in U \mid Z_t \text{ is rationally equivalent to zero} \}
\end{equation*}
is a countable union of Zariski closed subsets of $U$.
\end{thm*}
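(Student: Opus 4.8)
We may assume the cycles $Z_t$ have pure dimension $k$ (decompose by dimension otherwise; a finite intersection of countable unions of closed subsets is again of that form). The plan is to stratify the locus in question by a degree bound $d$ on a cycle in $X\times\P^1$ witnessing the rational triviality of $Z_t$, to realize each stratum as the image of an incidence variety built from Chow varieties, and to deduce --- using the projectivity of those Chow varieties --- that the image is Zariski closed; the locus is then the countable union over $d$. This is precisely the strategy of \cite[Lemma~10.7]{Voisin2}; we indicate the steps.

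Write $\mathrm{pr}_X\colon X\times\P^1\to X$ and $\mathrm{pr}_{\P^1}\colon X\times\P^1\to\P^1$ for the projections. Recall that a $k$-cycle $Z$ on $X$ vanishes in $CH_k(X)$ if and only if there is an effective $(k+1)$-cycle $W$ on $X\times\P^1$, no component of which lies in a fibre of $\mathrm{pr}_{\P^1}$, such that, writing $W|_t:=(\mathrm{pr}_X)_{*}\big(W\cdot(X\times\{t\})\big)$ (a well-defined $k$-cycle, since $W$ has no fibre component),
\begin{equation*}
W|_0-W|_\infty=Z\quad\text{in }Z_k(X).
\end{equation*}
Indeed one takes $W$ to be the sum of the closures of the graphs of the rational functions realizing the equivalence (see \cite[Ch.~1]{Fulton}). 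Decomposing $Z=Z^{+}-Z^{-}$ with $Z^{\pm}$ effective of disjoint support, the condition becomes the equality of \emph{effective} $k$-cycles $W|_0+Z^{-}=W|_\infty+Z^{+}$.

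Fix ample classes on $X$ and on $X\times\P^1$. For each $d\geq1$, the effective $(k+1)$-cycles of degree $\leq d$ on $X\times\P^1$ form a projective Chow variety $\mathcal{C}_d$, inside which those with no component in a fibre of $\mathrm{pr}_{\P^1}$ form an open subset $\mathcal{C}_d^{0}$; on $\mathcal{C}_d^{0}$ the assignments $W\mapsto W|_0$ and $W\mapsto W|_\infty$ are morphisms into a Chow variety of $k$-cycles on $X$ of bounded degree. After replacing $U$ by the members of a finite stratification --- harmless, by the remark above --- the maps $t\mapsto Z_t^{\pm}$ become morphisms $U\to\mathrm{Chow}(X)$. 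The incidence relation $W|_0+Z_t^{-}=W|_\infty+Z_t^{+}$ then cuts out a locally closed subset $I_d\subset\mathcal{C}_d\times U$, namely the preimage of the diagonal under a morphism $\mathcal{C}_d^{0}\times U\to\mathrm{Chow}(X)\times\mathrm{Chow}(X)$, and $Z_u=0$ in $CH_k(X)$ holds exactly when $u$ lies in $\mathrm{pr}_U(I_d)$ for some $d$. Thus the locus we want equals $\bigcup_{d\geq1}\mathrm{pr}_U(I_d)$.

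The only delicate point --- the heart of the matter --- is that each $\mathrm{pr}_U(I_d)$ is Zariski closed. Since $\mathcal{C}_d$ is projective, a one-parameter degeneration $u_n\to u$ with $Z_{u_n}=0$ produces witnessing cycles $W_n\in\mathcal{C}_d^{0}$ which, after passing to a subsequence, have a limit $W\in\mathcal{C}_d$; analysing the behaviour of the fibre cycles $W_n|_0$ and $W_n|_\infty$ in this limit --- the components of $W$ that have degenerated into a fibre over $0$ or $\infty$ enter in a controlled way and do not obstruct the incidence relation for the limit cycle $Z_u$, thanks to continuity of $t\mapsto Z_t^{\pm}$ --- shows that $Z_u=0$ as well, so $\mathrm{pr}_U(I_d)$ is closed. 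Hence $\bigcup_d\mathrm{pr}_U(I_d)$ exhibits the locus as a countable union of Zariski closed subsets of $U$. This degeneration analysis, together with the algebraicity in families of the operation of restricting an effective cycle to a divisor invoked above, is exactly what is carried out in the proof of \cite[Lemma~10.7]{Voisin2}.
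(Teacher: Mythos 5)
The paper offers no proof of this statement at all---it simply cites \cite[Lemma 10.7]{Voisin2}---and your sketch reproduces precisely the argument of that reference (rational triviality witnessed by bounded-degree effective cycles on $X\times\P^1$, incidence correspondences in projective Chow varieties, properness, union over the degree bound), so it is fully consistent with the paper's treatment and in fact supplies more detail than the paper does. The one point you leave compressed, namely that $\mathrm{pr}_U(I_d)$ is genuinely closed rather than merely constructible, is indeed the crux; it is most cleanly settled not by tracking how components of the limit cycle fall into the fibres over $0$ and $\infty$ but by invoking the specialization homomorphism for Chow groups along a curve in $U$ (rational equivalence to zero is preserved under specialization), which is what the degeneration analysis in \cite[Lemma 10.7]{Voisin2} amounts to.
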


By the above result, the fact that $CH^2(Y_B[2]) = \Z$ for $B$ generic extends to the case where $B$ is not generic. In conclusion we have the
\begin{prop}
Let $B \in \Sigma_{10}'$; then
\begin{equation*}
CH^2(Y_B[2]) \cong \Z.
\end{equation*}
\end{prop}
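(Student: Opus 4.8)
The plan is to reduce the statement to the vanishing of $CH^{2}_{hom}(Y_B[2])$ and then treat separately the generic $B\in\Sigma_{10}'$ and the arbitrary one, obtaining the second from the first by a spreading‑out argument in the spirit of the second theorem quoted above.

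First I would observe that, $Y_B[2]$ being a connected projective surface, the degree map $CH^{2}(Y_B[2])\to\Z$ is surjective and its kernel $CH^{2}_{hom}(Y_B[2])$ is generated by the cycles $[p]-[q]$ with $p,q$ closed points. Hence it is enough to prove that $[p]-[q]=0$ in $CH^{2}(Y_B[2])$ for every $B\in\Sigma_{10}'$ and every such pair. For generic $B$ this follows from what is already in place: by Corollary \ref{enriques component} the surface $Y_B[2]$ is birational to an Enriques surface, so if $\nu\colon S\to Y_B[2]$ is a resolution of singularities then $S$ is a smooth projective surface birational to an Enriques surface, and therefore $CH_0(S)\cong\Z$ by the birational invariance of $CH_0$ of smooth projective varieties together with the Bloch--Kas--Lieberman theorem. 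Since we work over $\C$, every closed point of $Y_B[2]$ lifts isomorphically to a closed point of $S$, so $\nu_{*}\colon CH_0(S)\to CH_0(Y_B[2])$ is surjective; as the degree map remains surjective on the quotient, this forces $CH^{2}(Y_B[2])\cong\Z$.

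For an arbitrary $B$ I would spread this out over $T=\Sigma_{10}'$. Let $\pi\colon\mathcal{S}\to T$ be the family with $\mathcal{S}_B=Y_B[2]$ --- concretely the closed subscheme of $\P(V)\times T$ cut out by the $3\times3$ minors of the tautological map $\lambda$ --- and form the relative self‑product $\mathcal{W}=\mathcal{S}\times_{T}\mathcal{S}$, carrying the tautological family of $0$‑cycles $(p,q)\mapsto[p]-[q]$ on the fibres of $\pi$. The relative form of the second theorem above, proved in exactly the same way (rational equivalences being parametrized by countably many relative Chow varieties over $T$), shows that
\begin{equation*}
\Xi=\bigl\{\,(p,q)\in\mathcal{W}\ \big|\ [p]-[q]=0 \text{ in } CH^{2}(Y_B[2]),\ B\text{ the image of }(p,q)\text{ in }T\,\bigr\}
\end{equation*}
is a countable union of closed subsets of $\mathcal{W}$. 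By the generic case, $\Xi$ contains the whole fibre of $\mathcal{W}\to T$ over every $B$ in a dense open of $T$. A standard argument using that $\C$ is uncountable --- a very general fibre $Y_B[2]\times Y_B[2]$ is an irreducible complex variety, hence not a countable union of proper closed subvarieties --- then forces one member of the decomposition of $\Xi$ to contain the component of $\mathcal{W}$ dominating $T$, and hence $\Xi$ to contain $Y_B[2]\times Y_B[2]$ for \emph{every} $B\in\Sigma_{10}'$. This gives $[p]-[q]=0$ in $CH^{2}(Y_B[2])$ for all $B$ and all $p,q$, i.e. $CH^{2}(Y_B[2])\cong\Z$.

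The delicate point is not the final uncountability argument, which is routine, but the geometric input: one needs to know that $Y_B[2]$ stays a connected (in fact irreducible) reduced surface for \emph{every} $B\in\Sigma_{10}'$, so that the reduction step is legitimate and the dominating component of $\mathcal{W}$ really meets each fibre in all of $Y_B[2]\times Y_B[2]$; and one needs the relative version of the deformation lemma over the possibly singular base $\Sigma_{10}'$ with possibly singular fibres. Granting these, the two cases above finish the proof.
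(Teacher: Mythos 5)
Your argument is correct and follows essentially the same route as the paper: the generic case via the birational Enriques model and the Bloch--Kas--Lieberman theorem, and the extension to arbitrary $B\in\Sigma_{10}'$ via the countability of the locus where a family of cycles is rationally equivalent to zero. You merely spell out details the paper leaves implicit (passing through a resolution and pushing forward $0$-cycles, and the uncountability argument over the irreducible base $\Sigma_{10}'$), including the same tacit assumption that $Y_B[2]$ remains connected for every $B$.
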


That said, we define a class
\begin{equation*}
\overline{\theta} \in CH^4(Y_A)
\end{equation*}
as follows. Let $A \in \lag(\bigwedge^3 V)^{0}$. By Proposition \ref{B intersects A} can find a Lagrangian subspace $B \in \Sigma_{10}'$ such that
\begin{equation*}
\dim A \cap B \geq 9.
\end{equation*}

Note that this implies
\begin{equation*}
Y_B[2] \subset Y_A,
\end{equation*}
so it makes sense to define $\overline{\theta}$ as the class of a point of $Y_B[2]$. We need to do some checks in order to show that this is actually well-defined. We also define
\begin{equation*}
\theta = \frac{1}{2} f^{*}(\overline{\theta}) \in CH^4(X)_{\Q}.
\end{equation*}

\begin{lemma}
Let $B, B' \in \lag(\bigwedge^3 V)$ such that \eqref{big intersection} holds. Then
\begin{equation} \label{nonempty}
Y_B[2] \cap Y_{B'}[2] \neq \emptyset
\end{equation}
\end{lemma}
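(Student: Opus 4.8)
The plan is to transfer the question to the smooth double cover $X := X_A$, where intersection products are well defined, and to contradict the assumption of disjointness by computing a nonzero intersection number. Recall that $A \in \lag(\bigwedge^3 V)^{0}$ is fixed, so $X$ is smooth, and that $Z := Z_A = f_A^{-1}(Y_A[2])$ is a smooth surface, isomorphic to $Y_A[2]$ via $f_A$, which is Lagrangian in $X$ because the covering involution $\phi$ is antisymplectic. Suppose, for contradiction, that $Y_B[2] \cap Y_{B'}[2] = \emptyset$, and set $W := (f_A^{-1}(Y_B[2]))_{\mathrm{red}}$ and $W' := (f_A^{-1}(Y_{B'}[2]))_{\mathrm{red}}$. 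These are $\phi$-stable surfaces in $X$ with $\Supp(W) \cap \Supp(W') = f_A^{-1}(Y_B[2] \cap Y_{B'}[2]) = \emptyset$.

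The first step is to identify the cycle classes $[W],[W'] \in CH^2(X)_{\Q}$. Suppose first that $Y_B[2] \neq Y_A[2]$ as subsets of $Y_A$; since $Y_A[2]$ is irreducible (Proposition \ref{surface ZA}), this forces $Y_B[2] \nsubset Y_A[2]$, so $f_A|_W \colon W \to Y_B[2]$ has degree $2$ and $f_{A*}[W] = 2[Y_B[2]]$. By Proposition \ref{YA2 equiv YB2}, $[Y_B[2]] = [Y_A[2]]$ in $CH^2(Y_A)_{\Q}$, while $f_A \colon Z \to Y_A[2]$ is an isomorphism, hence $f_{A*}[W] = 2[Y_A[2]] = f_{A*}(2[Z])$. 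Thus $[W] - 2[Z] \in \ker f_{A*}$; by the relations $f_{A*}f_A^{*} = 2$ and $f_A^{*}f_{A*} = 1 + \phi^{*}$ of \cite[Ex.~1.7.6]{Fulton} this kernel is the $(-1)$-eigenspace of $\phi^{*}$ on $CH^2(X)_{\Q}$, whereas $[W]$ and $[Z]$ are $\phi$-invariant because $W$ and $Z$ are $\phi$-stable. Being both invariant and anti-invariant, $[W] - 2[Z]$ vanishes, so $[W] = 2[Z]$ in $CH^2(X)_{\Q}$, and likewise $[W'] = 2[Z]$. (If instead $Y_B[2] = Y_A[2]$, then $W = Z$ and $[W] = [Z]$, and similarly for $B'$; in all cases $[W]$ and $[W']$ are positive integral multiples of $[Z]$.)

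It remains to check that $\deg([Z]^2) \neq 0$. Since $Z$ is Lagrangian in the symplectic fourfold $X$, the symplectic form induces an isomorphism $\norm{Z}{X} \cong \Omega_Z^{1}$, so by the self-intersection formula
\begin{equation*}
\deg([Z]^2) = \deg c_2(\norm{Z}{X}) = \deg c_2(\Omega_Z^{1}) = \deg c_2(Z) = \chi_{top}(Z) = \chi_{top}(Y_A[2]) = 192
\end{equation*}
by Proposition \ref{surface ZA}. Hence $\deg([W]\cdot[W'])$ is a nonzero multiple of $192$. On the other hand, the intersection product of two cycles is represented by a cycle supported on the intersection of their supports (\cite{Fulton}); since $\Supp(W)\cap\Supp(W') = \emptyset$, this forces $[W]\cdot[W'] = 0$ in $CH^4(X)_{\Q}$, a contradiction. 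Therefore $Y_B[2] \cap Y_{B'}[2] \neq \emptyset$, which is \eqref{nonempty}.

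The only genuinely delicate point is the passage to $X$: on the singular variety $Y_A$ one cannot form the product $[Y_B[2]]\cdot[Y_{B'}[2]]$ directly, so one works on the smooth model $X$ and must keep track of the ramification factor of $2$ that appears when pulling classes back along $f_A$; everything else is bookkeeping with $f_A$ together with the Lagrangian self-intersection computation.
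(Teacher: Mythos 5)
Your proof is correct and follows essentially the same route as the paper: both arguments reduce, via Proposition \ref{YA2 equiv YB2}, to showing that the self-intersection of $Z_A$ in $X_A$ is nonzero, and both compute $\deg(Z_A^2)=c_2(\norm{Z_A}{X_A})=\chi_{top}(Z_A)=192$ using that $Z_A$ is Lagrangian. The paper phrases the bookkeeping through the ring structure $CH(Y_A)_{\Q}\cong CH(X_A)_{\Q}^{G}$ rather than tracking the factor of $2$ explicitly on $X$, but the content is the same.
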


\begin{proof}
It is enough to show that
\begin{equation*}
Y_B[2] \cdot Y_{B'}[2] \neq 0
\end{equation*}
in $CH^{*}(Y_A)$. Thanks to Proposition \ref{YA2 equiv YB2} it will be enough to prove that
\begin{equation*}
Y_A[2]^2 \neq 0.
\end{equation*}
By the definition of the ring structure on $CH^{*}(Y_A)$ we need to prove that
\begin{equation*}
Z_A^2 \neq 0 \text{ in } CH^{*}(X_A).
\end{equation*}

But actually $Z_A^2 \neq 0$ already in cohomology. Indeed, using the fact that $Z_A$ is Lagrangian, we have
\begin{equation*}
Z_A^2 = c_2(\norm{Z_A}{X_A}) = c_2 (\Omega_{Z_A}^1) = c_2(Z_A) = \chi_{top}(Z_A) = 192
\end{equation*}
by Proposition \ref{surface ZA}.
\end{proof}

By the previous Lemma we see that the class of $\overline{\theta} \in CH^4(Y)$ is actually independent of the chosen $B \in \Sigma_{10}'$ such that \eqref{big intersection} holds.

\section{Some geometric constructions}

We now want to show that the expected relations hold in $CH(Y)_{\Q}$.
\begin{rem}
In the following we need to perform intersection products on the Chow ring of $Y$, and this may seem not well-defined, since $Y$ is singular. But recall that we have the isomorphism
\begin{equation*}
CH(Y)_{\Q} \cong CH(X)_{\Q}^{G},
\end{equation*}
and $CH(X)_{\Q}^{G}$ is a subring of $CH(X)_{\Q}$, so we can multiply cycle classes on $Y$.
\end{rem}

Let $\overline{h} = c_1(\O_Y(1))$ be the hyperplane class on $Y$. We start to prove relations in $CH(Y)$ analogous to those found in Proposition \ref{cohomology computations}. In order to do this, we need another geometric lemma.

\begin{lemma} \label{YB2 meets a line}
There exists a line $L_0 \subset Y$ which meets $Y_B[2]$.
\end{lemma}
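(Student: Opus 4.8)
The plan is to produce a line $L_0 \subset Y = Y_A$ meeting the surface $Y_B[2]$ by a parameter count inside the flag variety of (point, line, point-on-$Y$) configurations. Recall that for $A \in \lag(\bigwedge^3 V)^0$ the sextic $Y = Y_A$ has dimension $4$, so a general line in $\P^5$ does not meet $Y$ at all, but lines lying on $Y$ form a positive-dimensional family: $Y$ is a sextic fourfold, and the variety $\mathcal{F}(Y)$ of lines contained in $Y$ has expected dimension $2\cdot 4 - 6 + 1 = 3$ (sections of $\Sym^6$ of the rank-$2$ tautological subbundle restricted to the Grassmannian of lines). In fact, since $Y$ is singular exactly along the surface $Y[2]$, I would first argue that $\mathcal{F}(Y)$ is nonempty of dimension at least $3$: lines on a hypersurface always exist when the expected dimension is nonnegative, and here one can also use that every plane $\P(W)$ with $\bigwedge^3 W \subset A$ (if any) would supply a $\P^2$ of lines — but for general $A$ we want an argument independent of $\Sigma$, so the honest route is the standard incidence-variety bound for lines on a hypersurface of degree $d \le n$ in $\P^n$.

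\textbf{The incidence argument.} First I would form the incidence variety
\begin{equation*}
I = \{(L, p) \mid L \subset Y,\ p \in L \cap Y_B[2]\} \subset \mathcal{F}(Y) \times Y_B[2],
\end{equation*}
and project to $Y_B[2]$. The key point is to show the fiber over a general point $p \in Y_B[2]$ is nonempty, i.e.\ that through a general point of the singular surface $Y[2]$ there passes a line contained in $Y$. Here I would use that $p$ is a singular point of $Y$: the projectivized tangent cone $C_p Y \subset \P(T_p\P^5) = \P^4$ is a (quadric or worse) hypersurface, and since $p \in Y[2]$ the multiplicity of $Y$ at $p$ is $\ge 2$, so $Y$ near $p$ looks like a cone up to order $2$; a line through $p$ lies on $Y$ as soon as it lies on a suitable auxiliary hypersurface of $\P^4$, which is a nonempty condition by dimension. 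More cleanly: the lines through a fixed point $p$ contained in a degree-$6$ hypersurface of $\P^5$ singular at $p$ of multiplicity $m\ge 2$ form a subvariety of $\P^4$ cut out by $6 - m \le 4$ equations, hence nonempty. That gives $I \neq \emptyset$, and projecting to $\mathcal{F}(Y)$ produces the desired $L_0$.

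\textbf{Alternative via a known plane.} If one prefers to avoid the tangent-cone bookkeeping, there is a cleaner path using the material already in the paper: by Proposition~\ref{B intersects A} we may pick $B \in \Sigma_{10}'$ with $\dim(A\cap B) \ge 9$, and $B$ itself lies in $\Sigma_{10}$, so $Y_B$ contains at least one plane $\P(W)$; moreover $Y_B[2] \subset Y_A = Y$. One then wants a line $L_0 \subset Y$ meeting $Y_B[2]$ — and a plane $\P(W) \subset Y_B$ with $\P(W) \cap Y_B[2] = C_{B,W}$ a sextic curve (the proposition on $C_{A,W}$) would do, \emph{provided} that plane also sits inside $Y_A$, which is not automatic. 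So I expect the self-contained incidence argument above to be the one actually used.

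\textbf{Main obstacle.} The delicate point is nonemptiness of $\mathcal{F}(Y)$ together with the claim that a line on $Y$ can be chosen to meet the \emph{specific} surface $Y_B[2]$ rather than just the singular locus in the abstract; the surjectivity (or at least dominance) of $I \to Y_B[2]$ is where care is needed, since $Y_B[2]$ is a proper subvariety of the singular locus of $Y$ only when the singular locus is larger, and in general $\operatorname{Sing} Y = Y[2] = Y_A[2]$, which need not contain $Y_B[2]$ — wait, it does not need to, so the right statement is: lines on $Y$ through points of $Y_B[2] \subset Y$ exist because every point of $Y$ lies on some line of $Y$ once $\dim \mathcal{F}(Y) \ge \dim Y - 3 = 1$ and the lines sweep out $Y$. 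Establishing that the lines on $Y$ dominate $Y$ (equivalently that $\mathcal{F}(Y)$ has a component of dimension $\ge 3$ whose lines cover $Y$) is the real content, and for a general sextic fourfold this is classical; for our $Y$ one invokes that $Y$ is singular, which only makes lines more abundant. I would therefore organize the proof as: (1) $\mathcal{F}(Y) \neq \emptyset$ with lines covering $Y$; (2) hence some line meets $Y_B[2]$; (3) set $L_0$ to be such a line.
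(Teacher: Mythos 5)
There is a genuine gap, and it sits exactly at the point you flag as the ``main obstacle.'' The expected dimension of the Fano variety of lines on a sextic fourfold in $\P^5$ is $\dim \Gr(2,6) - \rk \Sym^6(\sh{S}^{\dual}) = 8 - 7 = 1$, not $3$ (your count $2\cdot 4 - 6 + 1$ is off); consequently the union $V$ of lines contained in $Y$ is only a \emph{surface}, and lines certainly do not sweep out the fourfold $Y$. So the statement ``every point of $Y$ lies on a line of $Y$'' is false, the projection $I \to Y_B[2]$ has no reason to be dominant, and the problem becomes: why should the two surfaces $V$ and $Y_B[2]$ meet inside the fourfold $Y$? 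Two surfaces in a fourfold generically do not meet, so this is the real content of the lemma and your proposal does not address it. The tangent-cone variant fails as well: points of $Y_B[2]$ are in general \emph{not} singular points of $Y_A$ (as you yourself note, $\Supp \Sing Y_A = Y_A[2] \neq Y_B[2]$), and even at a point $p$ of multiplicity $m=2$ the lines through $p$ lying on $Y$ are cut out by $7-m=5$ conditions (the vanishing of the degree $2,3,\dots,6$ parts of $g$ restricted to the line), not $6-m$, hence by $5$ equations in $\P^4$ --- expected empty.

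The paper's proof supplies exactly the missing mechanism. It first shows $\dim V \geq 2$ by computing $[R] = c_7(\Sym^6 \sh{S}^{\dual}) \neq 0$ on $\Gr(2,V)$. It then exploits the structure developed in Section \ref{tangent sextics}: the subspace $U = A \cap B$ determines a \emph{threefold} $D_U \subset Y_A$ which is swept out by the pencil of surfaces $\{Y_{B'}[2] \mid B' \supset U \text{ Lagrangian}\}$. A threefold and a surface in $\P^5$ always meet, so $D_U \cap V \neq \emptyset$, whence \emph{some} member $Y_{B'}[2]$ of the pencil meets $V$. Finally, since $f^{-1}(Y_B[2])$ and $f^{-1}(Y_{B'}[2])$ have the same homology class on the smooth fourfold $X_A$ and the intersection number $f^{-1}(V)\cdot f^{-1}(Y_{B'}[2])$ is nonzero, the specific surface $f^{-1}(Y_B[2])$ must also meet $f^{-1}(V)$, which gives the line $L_0$. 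Without both the pencil/threefold step and the homological transfer to the given $B$, the argument does not close.
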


\begin{proof}
Let $V$ be the union of lines contained in $Y$.
\begin{steps}
\item[$\dim V \geq 2$]
Let $R \subset \Gr(2, V)$ be the locus of lines $\ell \subset Y_A$. We can obtain $R$ as follows. Let
\begin{equation*}
Y_A = V(g),
\end{equation*}
where $g$ is a degree $6$ polynomial, and let $\sh{S}$ be the tautological subbundle on $\Gr(2, V)$, so that $\Sym^6 (\sh{S}^{\dual})$ is the fiber bundle whose fiber at $\ell$ is the vector space of homogeneous polynomials of degree $6$ on $\ell$.

Then we can define a section
\begin{equation*}
s \in H^0 \big(\Gr(2, V), \Sym^6 (\sh{S}^{\dual}) \big)
\end{equation*}
by the condition
\begin{equation*}
s(\ell) = g\res{\ell}.
\end{equation*}

By definition $R$ is the zero locus of $s$. It follows that
\begin{equation*}
\dim R \geq \dim \Gr(2, V) - \rk \Sym^6 (\sh{S}^{\dual}) = 8 - 7 = 1,
\end{equation*}
provided $R$ is not empty. But we can show that $R \neq \emptyset$ by computing the fundamental class
\begin{equation*}
[R] = c_7 \big( \Sym^6 (\sh{S}^{\dual}) \big) = 432 \cdot 134 \sigma_{4, 3}.
\end{equation*}
Here the notation is that of Schubert calculus, see for instance \cite[Sec. $1.5$]{GH}.

Since
\begin{equation*}
V = \bigcup_{\ell \in R} \ell
\end{equation*}
is birational to a $\P^1$-bundle over $R$, it follows that $\dim V \geq 2$.

\item[There exists $B'$ such that $A \cap B = A \cap B'$ and $Y_{B'}{[2]}$ meets $V$]
Let
\begin{equation*}
U = A \cap B
\end{equation*}
and let $D_U$ be its associated divisor on $Y_A$, under the isomorphism \eqref{divisor D}. Then $D_U$ has dimension $3$; since two varieties of dimension $2$ and $3$ in $\P^5$ always meet, it follows that
\begin{equation*}
D_U \cap V \neq \emptyset.
\end{equation*}
So there exists a Lagrangian subspace $B'$ such that $B' \cap A = U$ and
\begin{equation*}
Y_{B'}{[2]} \cap V \neq \emptyset.
\end{equation*}

\item[$B$ meets $V$]
We lift everything to $X_A$, which is smooth, so intersection theory applies. Let
\begin{equation*}
\tl{V_1} = f^{-1}(V) \text{ and } \tl{V_2} = f^{-1}(Y_{B'}[2]).
\end{equation*}
One easily sees that on $X$
\begin{equation*}
\tl{V_1} \cdot \tl{V_2} \neq 0.
\end{equation*}
Since $f^{-1}(Y_B[2])$ and $\tl{V_2}$ have the same homology class, it follows that
\begin{equation*}
\tl{V_1} \cdot f^{-1}(Y_B[2]) \neq 0,
\end{equation*}
in particular $\tl{V_1}$ must meet $f^{-1}(Y_B[2])$, and so
\begin{equation*}
V \cap Y_B[2] \neq \emptyset.
\end{equation*}
\end{steps}
\end{proof}

We omit for clarity $A$ from the notation. The other relations come from the following
\begin{lemma} \label{thom-porteous}
The following relation holds in $CH(X)$:
\begin{equation*}
3 Z = 15 h^2 - c_2(X).
\end{equation*}
\end{lemma}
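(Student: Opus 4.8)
The plan is to obtain the relation by computing the second Chern character of the relative cotangent sheaf $\Omega^1_{X/Y}$ in two ways: a $K$-theory identity on $X$ will give $15h^2-c_2(X)$, and the local structure of this sheaf along the ramification locus $Z$ will give $3[Z]$. First I would set up the geometry. Let $p=i_A\circ f\colon X\to\P(V)$; since $p$ factors as $X\xrightarrow{f}Y\hookrightarrow\P(V)$ with $Y=Y_A$ a hypersurface, the conormal sequence of $Y$ in $\P(V)$ (pulled back by $f$) is right exact, so $\Omega^1_{X/Y}$ is the cokernel of $dp$ and we get an exact sequence on $X$
\[
0 \longrightarrow \mathcal K \longrightarrow p^{*}\Omega^1_{\P(V)} \xrightarrow{\,dp\,} \Omega^1_X \longrightarrow \Omega^1_{X/Y} \longrightarrow 0 ,
\qquad \mathcal K:=\ker(dp).
\]
Away from $Z$ the map $f$ is \'etale onto $Y\setminus Y_A[2]$, which is exactly the smooth locus of the sextic $Y$ (for $A\in\lag(\bigwedge^3 V)^{0}$ there are no planes, so $\mathrm{Sing}(Y)=Y_A[2]$); hence $dp$ is generically injective, $\mathcal K$ has rank $1$, and, being the kernel of a map of locally free sheaves on the smooth $X$, $\mathcal K$ is reflexive, so a line bundle. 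Over $X\setminus Z$ it equals $f^{*}$ of the conormal bundle $\O_Y(-6)$ of the sextic, i.e. $\O_X(-6h)$; since two line bundles agreeing off a codimension-$2$ locus coincide, $\mathcal K\cong\O_X(-6h)$.

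Now for the two computations of $\mathrm{ch}(\Omega^1_{X/Y})$. From the exact sequence, in $K(X)$,
\[
[\Omega^1_{X/Y}] = [\Omega^1_X] - [p^{*}\Omega^1_{\P(V)}] + [\O_X(-6h)].
\]
Using $\Omega^1_X\cong T_X$ (the symplectic form), so $c_1(T_X)=0$ and $\mathrm{ch}_2(\Omega^1_X)=-c_2(X)$; the Euler sequence, which gives $\mathrm{ch}(\Omega^1_{\P(V)})=6e^{-H}-1$ and hence $\mathrm{ch}_2(p^{*}\Omega^1_{\P(V)})=3h^2$; and $\mathrm{ch}_2(\O_X(-6h))=18h^2$ — the codimension-$2$ part gives $\mathrm{ch}_2(\Omega^1_{X/Y}) = -c_2(X)-3h^2+18h^2 = 15h^2-c_2(X)$, while the degree-$0$ and degree-$1$ parts cancel, as they must since $\Omega^1_{X/Y}$ is supported in codimension $2$. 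On the other hand $\Omega^1_{X/Y}$ is supported precisely on the ramification locus $Z=Z_A$, a smooth irreducible surface, so $\mathrm{ch}_2(\Omega^1_{X/Y})=\ell\cdot[Z]$ in $CH^2(X)$, where $\ell$ is the length of $\Omega^1_{X/Y}$ at the generic point of $Z$. Comparing the two expressions yields $\ell\,[Z]=15h^2-c_2(X)$.

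The remaining point — and the only genuinely local one, which I expect to be the main obstacle — is to show $\ell=3$. Since the covering involution acts by $-1$ on $N_{Z/X}$, transversally to $Z$ the map $f$ is the quotient $\mathbb A^2\to\mathbb A^2/\{\pm1\}$, so locally $\Omega^1_{X/Y}$ is the $k[u,v]$-module $k[u,v]\langle du,dv\rangle/\langle 2u\,du,\ u\,dv+v\,du,\ 2v\,dv\rangle$; a direct check shows it is annihilated by $(u,v)^2$ and has $k$-basis $\{du,dv,v\,du\}$, hence length $3$. This gives $\ell=3$ unconditionally and therefore $3Z=15h^2-c_2(X)$. (As a consistency check, intersecting with $h^2$ and using $h^4=12$, $h^2c_2(X)=60$ and $\deg Y_A[2]=40$ gives $3\cdot40=180-60$.) The things I would be most careful about are the identification $\mathrm{Sing}(Y)=Y_A[2]$ (so that $\Omega^1_{X/Y}$ has no further codimension-$2$ support), the description of the local model of $f$ near $Z$ via $N_{Z/X}$ and the eigenvalue $-1$ of the involution on normal directions, and the otherwise routine but error-prone Chern-character bookkeeping in the $K$-theory identity.
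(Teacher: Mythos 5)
Your proof is correct, and it is very close in substance to the paper's, though packaged differently. The paper applies the Thom--Porteous formula to the degeneracy locus of $df\colon T_X\to f^{*}T_{\P^5}$: the formula directly produces a class supported on $Z$ whose image in $CH^2(X)$ is $c_2(f^{*}T_{\P^5}-T_X)=15h^2-c_2(X)$, and the coefficient $k=3$ is then read off either from the scheme structure $V(x^2,xy,y^2)$ of the degeneracy locus in the local normal form $f\underset{loc}{=}(x^2,xy,y^2,z,t)$, or more cheaply by intersecting with $h^2$ and comparing with the cohomological degrees. You instead take $\mathrm{ch}_2$ of the cokernel sheaf $\Omega^1_{X/Y}$ (which is exactly the sheaf the paper calls $\sh{Q}$ in its sequence \eqref{cotangent sequence}) and equate the $K$-theoretic computation $15h^2-c_2(X)$ with the support cycle $\ell\,[Z]$, $\ell=3$ being the length at the generic point. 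The decisive local input --- the normal form of $f$ along $Z$ coming from the involution acting by $-1$ on $N_{Z/X}$, giving multiplicity/length $3$ --- is literally the same in both arguments. The main difference is that your route requires the extra identification $\ker(dp)^{\mathrm{t}}\cong\O_X(-6h)$ of the kernel line bundle (which you handle correctly via reflexivity and agreement off the codimension-$2$ locus $Z$; the paper establishes the same fact separately when it dualizes the tangent sequence), whereas Thom--Porteous avoids this at the cost of invoking the precise statement $\mathbb{D}_3(df)=[D_3(df)]$ from \cite[Thm.~14.4]{Fulton}. Your version is arguably more self-contained, since it replaces that citation by the standard fact that $\mathrm{ch}_2$ of a sheaf supported in codimension $2$ is the length-weighted support cycle; both arguments are sound.
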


\begin{proof}
We consider $f$ as a map $X \to \P^5$, so that it induces a morphism of vector bundles over $X$
\begin{equation*}
df \colon T_X \to f^{*} T_{\P^5}.
\end{equation*}
We notice that $df$ in injective outside $Z$, so we can see $Z$ as a degeneracy locus for this morphism. We then apply Thom-Porteous formula in the form stated in \cite[sec. $14.4$]{Fulton}. In their notation we have $e = 4$, $f = 5$ and $k = 3$.

This yields a cycle class
\begin{equation*}
\mathbb{D}_3(df) \in CH^{2}(Z)
\end{equation*}
whose support is $Z$, and such that the image of $\mathbb{D}_3(df)$ in $CH^{2}(X)$ is
\begin{equation*}
\Delta_{2}^{(1)}(c(f^{*} T_{\P^5} - T_X)) = c_2(f^{*} T_{\P^5} - T_X).
\end{equation*}
Here the total Chern class
\begin{equation*}
c (f^{*} T_{\P^5} - T_X)
\end{equation*}
is defined formally in such a way that Whitney's formula holds, i. e.
\begin{equation*}
c(T_X) \cdot c (f^{*} T_{\P^5} - T_X) = c (f^{*} T_{\P^5}).
\end{equation*}

From the last equation and the fact that $c_1(T_X) = 0$ (since $X$ is symplectic) we can obtain
\begin{equation*}
c_2 (f^{*} T_{\P^5} - T_X) =  f^{*} c_2 (T_{\P^5}) - c_2(T_X) = 15 h^2 - c_2(X).
\end{equation*}

Since $\mathbb{D}_3(df)$ has support on $Z$, which is irreducible, we find that
\begin{equation} \label{should determine k}
k Z = 15 h^2 - c_2(X)
\end{equation}
for some $k \in \Z$. To find the right $k$, we observe that again by \cite[Theorem $14.4(c)$]{Fulton} we have
\begin{equation*}
\mathbb{D}_3(df) = [D_3(df)],
\end{equation*}
where $D_3(df)$ is the degeneracy locus of $df$. In other words $D_3(df)$ is just $Z$, with the scheme structure given by the vanishing of all $4 \times 4$ minors of $df$.

The map
\begin{equation*}
f \colon X \to Y \subset \P^5
\end{equation*}
has, in suitable analytic coordinates around a point of $Z$, the local form
\begin{equation*}
f(x, y, z, t) \underset{loc}{=} (x^2, x y, y^2, z, t).
\end{equation*}
The differential of $f$ is then
\begin{equation*}
df \underset{loc}{=}
\begin{pmatrix}
2 x & 0 & 0 & 0
\\
y & x & 0 & 0
\\
0 & 2 y & 0 & 0
\\
0 & 0 & 1 & 0
\\
0 & 0 & 0 & 1
\end{pmatrix};
\end{equation*}
equating to $0$ the determinants of its $3 \times 3$ minors yields
\begin{equation*}
D_3(df) \underset{loc}{=} V(x^2, x y, y^2).
\end{equation*}
So we see that $D_3(df)$ has multiplicity $3$ at each point of $Z$, hence $k = 3$.

Alternatively we could multiply Equation \eqref{should determine k} by $h^2$ to find
\begin{equation*}
k Z \cdot h^2 = 15 h^4 - c_2(X) \cdot h^2.
\end{equation*}
If we look at this relation in cohomology it becomes, thanks to Proposition \ref{cohomology computations},
\begin{equation*}
40 k = 15 \cdot 12 - 60,
\end{equation*}
so $k = 3$.
\end{proof}

We have a closer look at the differential of
\begin{equation*}
f \colon X \to \P^5.
\end{equation*}
As a map of vector bundles, this is not injective exactly on $Z$. Hence it is always injective on stalks; in other words
\begin{equation*}
df \colon T_X \to f^{*} T_{\P^5}
\end{equation*}
is an injective map \emph{of sheaves}. Let $\sh{R}$ denote its cokernel; this is locally free of rank $1$ outside $Z$. So we have the exact sequence
\begin{equation} \label{tangent sequence}
\short{T_X}{f^{*} T_{\P^5}}{\sh{R}}.
\end{equation}
We now dualize it applying $\shhom(\cdot, \O_X)$. We remark that
\begin{equation*}
\shhom(\sh{R}, \O_X)
\end{equation*}
is torsion-free, of rank one, and one can check in local coordinates that it is a line bundle. By \eqref{tangent sequence} we get $c_1(\sh{R}) = 6 h$, hence
\begin{equation*}
\shhom(\sh{R}, \O_X) \cong \O_X(-6).
\end{equation*}
Then we note that
\begin{equation*}
\shext^1(f^{*}(T_{\P^5}), \O_X) = 0,
\end{equation*}
because both sheaves are locally free. So if we let
\begin{equation*}
\sh{Q} = \shext^1(\sh{R}, \O_X),
\end{equation*}
the dual of \eqref{tangent sequence} becomes
\begin{equation} \label{cotangent sequence}
\Diag
0 \aTo(1,0) & \O_X(-6) \aTo(1,0) & f^{*}(\Omega_{\P^5}^1) \aTo(1,0)^{\tras{df}} & \Omega_X^1  \aTo(1,0) & \sh{Q} \aTo(1,0) & 0.
\\
\endDiag
\end{equation}
We remark that $\sh{Q}$ is set-theoretically supported on $Z$, because both $\sh{R}$ and $\O_X$ are locally free outside $Z$. Actually the schematic support of $\sh{Q}$ is $2 Z$, that is the subscheme of $X$ defined by the ideal $\idl{Z}^2$. This follows from the

\begin{lemma} \label{annihilator of Q}
Let $\sh{Q}$ be as above; then $\Ann(\sh{Q}) = \idl{Z}^2$.
\end{lemma}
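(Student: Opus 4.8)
The statement is local on $X$, so the plan is to verify it in the analytic coordinates $(x,y,z,t)$ about a point of $Z$ in which $f$ has the normal form $f(x,y,z,t) = (x^{2}, xy, y^{2}, z, t)$ used in the proof of Lemma~\ref{thom-porteous}; there $\idl{Z}$ is the ideal $(x,y)$. Since $\sh{Q}$ is supported on $Z$, both $\Ann(\sh{Q})$ and $\idl{Z}^{2}$ are the unit ideal away from $Z$, so it suffices to work at points of $Z$. Write $R = \C\{x,y,z,t\}$, a regular local ring in which $x,y$ is part of a regular system of parameters.

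In these coordinates $df\colon T_{X} \to f^{*}T_{\P^{5}}$ is given by the Jacobian matrix displayed in the proof of Lemma~\ref{thom-porteous}, whose last two columns are the standard basis vectors $e_{4}, e_{5}$. Eliminating those, $\sh{R}$ is locally the cokernel of the map $M \colon R^{2} \to R^{3}$ with matrix
\[
M = \begin{pmatrix} 2x & 0 \\ y & x \\ 0 & 2y \end{pmatrix},
\]
which is injective because $R$ is a domain. Hence $0 \to R^{2} \xrightarrow{M} R^{3} \to \sh{R} \to 0$ is a local free resolution, so $\shext^{i}(\sh{R},\O_{X}) = 0$ for $i \geq 2$ and, applying $\shhom(-,\O_{X})$ and taking cohomology, $\sh{Q} = \shext^{1}(\sh{R},\O_{X})$ is locally $\mathrm{coker}(\tras{M})$, i.e. $R^{2}$ modulo the submodule generated by $(2x,0)$, $(y,x)$, $(0,2y)$. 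Since $2$ is a unit, $\sh{Q}$ is generated by the images $e_{1}, e_{2}$ of the standard basis subject to the relations $x e_{1} = 0$, $y e_{1} + x e_{2} = 0$, $y e_{2} = 0$.

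It then remains to compute the annihilator. As $\sh{Q}$ is generated by $e_{1}$ and $e_{2}$, one has $\Ann(\sh{Q}) = \Ann(e_{1}) \cap \Ann(e_{2})$. Using that $x,y$ is a regular sequence, so that the module of syzygies of $(x,y)$ is generated by $(y,-x)$, a direct computation with the above presentation gives $\Ann(e_{1}) = (x, y^{2})$ and $\Ann(e_{2}) = (x^{2}, y)$. Finally, $(x, y^{2}) \cap (x^{2}, y) = (x,y)^{2}$: the inclusion $\supseteq$ is immediate, and for $\subseteq$ one passes to $R/(x,y)^{2}$, which is free over $\C\{z,t\}$ on the basis $1, \bar{x}, \bar{y}$; the images of $(x,y^{2})$ and of $(x^{2},y)$ are the $\C\{z,t\}$-lines through $\bar{x}$ and $\bar{y}$ respectively, and these meet only in $0$. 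Therefore $\Ann(\sh{Q}) = (x,y)^{2} = \idl{Z}^{2}$ in each such chart, hence as ideal sheaves on $X$.

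The argument is elementary commutative algebra once the local picture is in place; the one point that carries genuine content is that the normal form $f(x,y,z,t) = (x^{2},xy,y^{2},z,t)$ is available in a neighbourhood of \emph{every} point of $Z$, which is exactly the local description of the double cover along its ramification that already underlies Lemma~\ref{thom-porteous}. I expect this (already-used) input to be the only real subtlety.
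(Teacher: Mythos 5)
Your proposal is correct and follows essentially the same route as the paper: the same local normal form $f(x,y,z,t)=(x^2,xy,y^2,z,t)$, the same presentation of $\sh{Q}$ as the cokernel of the transposed Jacobian with relations $x\,e_1 = y\,e_1 + x\,e_2 = y\,e_2 = 0$, and the same computation $\Ann(e_1)=(x,y^2)$, $\Ann(e_2)=(x^2,y)$ with intersection $(x,y)^2=\idl{Z}^2$. Your packaging via the free resolution of $\sh{R}$ and the syzygy $(y,-x)$, and your verification of the intersection in $R/(x,y)^2$ over $\C\{z,t\}$, are slightly tidier than the paper's but mathematically identical.
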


\begin{proof}
We only need to prove this locally. As in the proof of Lemma \ref{thom-porteous} we can choose local coordinates on $X$ such that
\begin{equation*}
f(x, y, z, t) \underset{loc}{=} (x^2, x y, y^2, z, t);
\end{equation*}
then $\tras{df}$ has the matrix
\begin{equation*}
\tras{df} \underset{loc}{=}
\begin{pmatrix}
2 x & y & 0 & 0 & 0
\\
0 & x & 2 y & 0 & 0
\\
0 & 0 & 0 & 1 & 0
\\
0 & 0 & 0 & 0 & 1
\end{pmatrix};
\end{equation*}
hence we have the presentation
\begin{equation*}
\sh{Q} \underset{loc}{=} \frac{\langle d x, d y \rangle}{\langle x d x, x d y + y d x, y d y \rangle}.
\end{equation*}

A given $h(x, y) \in \C[x, y]$ then annihilates $\sh{Q}$ if and only if both $h dx$ and $h dy$ belong to the $k[x, y]$-module generated by $x d x$, $x d y + y d x$ and $y d y$.

Let us make this more explicit. Assume that
\begin{equation*}
h(x, y) dx = a(x, y) x dx + b(x, y) \cdot (x d y + y d x) + c(x, y) y dy.
\end{equation*}
This yields
\begin{equation*}
\begin{split}
h(x, y) &= x a(x, y) + y b(x, y)
\\
0 &= x b(x, y) + y c(x, y)
\end{split}
\end{equation*}
The second equation implies $b(x, y) = y b'(x, y)$, so the first becomes
\begin{equation*}
\begin{split}
h(x, y) &= x a(x, y) + y^2 b'(x, y).
\end{split}
\end{equation*}
If $h$ can be written this way, then we can choose $c$ so that the second condition is satisfied. In short
\begin{equation*}
h(x, y) dx \in \langle x d x, x d y + y d x, y d y \rangle_{k[x, y]}
\end{equation*}
if and only if $h \in (x, y^2)$.

We have the symmetric condition for $h(x, y) dy$, so we conclude that $h \in \Ann(\sh{Q})$ if and only if
\begin{equation*}
h \in (x, y^2) \cap (x^2, y) = (x^2, x y, y^2).
\end{equation*}
The last equality between ideals can be proved for instance by the remark that both $(x, y^2) \cap (x^2, y)$ and $(x^2, x y, y^2)$ consist of the polynomials $h$ such that
\begin{equation*}
h(0, 0) = \frac{dh}{dx}(0, 0) = \frac{dh}{dy}(0, 0) = 0.
\end{equation*}

Finally $(x^2, x y, y^2)$ is exactly the square of the ideal $(x, y)$ which locally defines $Z$.
\end{proof}

We now produce another exact sequence involving $\sh{Q}$. Let
\begin{equation*}
i \colon Z \into X
\end{equation*}
denote the inclusion. Recall that we have a canonical identification
\begin{equation} \label{normal and I mod I squared}
\idl{Z}/\idl{Z}^2 \cong i_{*} \norm{Z}{X}^{\dual}:
\end{equation}
locally the function $g$ vanishing on $Z$ corresponds to the normal covector $dg$. Consider the natural projection
\begin{equation*}
\pi \colon \Omega_X^1\res{Z} \to \norm{Z}{X}^{\dual};
\end{equation*}
we see this as a map on $X$
\begin{equation*}
\pi \colon \Omega_X^1 \to \idl{Z}/\idl{Z}^2.
\end{equation*}

\begin{lemma}
We have $\pi \circ \tras{df} = 0$.
\end{lemma}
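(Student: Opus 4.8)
The plan is to check the identity locally on $X$. Since the target $i_{*}\norm{Z}{X}^{\dual} \cong \idl{Z}/\idl{Z}^2$ is supported on $Z$, the composite $\pi \circ \tras{df}$ is automatically zero on $X \setminus Z$, so I only need to work in a neighbourhood of an arbitrary point of $Z$. There I would use the local normal form for $f$ established in the proof of Lemma \ref{annihilator of Q}: coordinates $x,y,z,t$ on $X$ with $Z = V(x,y)$ and
\[
f(x,y,z,t) \underset{loc}{=} (x^2,\, xy,\, y^2,\, z,\, t),
\]
so that $\tras{df}$ has the matrix written there and its image is the submodule
\[
\text{im}(\tras{df}) \underset{loc}{=} \langle\, x\,dx,\ x\,dy + y\,dx,\ y\,dy,\ dz,\ dt \,\rangle \subset \Omega_X^1.
\]

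First I would dispose of the three generators $x\,dx,\ x\,dy + y\,dx,\ y\,dy$: they lie in $\idl{Z}\cdot\Omega_X^1$, and $\pi$ annihilates $\idl{Z}\cdot\Omega_X^1$ for the trivial reason that its target is an $\O_Z$-module, $\pi(\idl{Z}\cdot\Omega_X^1) = \idl{Z}\cdot\pi(\Omega_X^1) \subseteq \idl{Z}\cdot(\idl{Z}/\idl{Z}^2) = 0$. So only $dz$ and $dt$ remain. Their images in $\Omega_X^1\res{Z}$ span precisely the image of $\tras{df}\res{Z}$; under the surjection $\Omega_X^1\res{Z}\to\Omega_Z^1$ they go to $dz\res{Z},\,dt\res{Z}$, a basis of $\Omega_Z^1$ on this chart, and the projection $\pi\colon \Omega_X^1\res{Z}\to \norm{Z}{X}^{\dual}$ kills exactly this complement of the conormal bundle. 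Hence $\pi$ kills $dz$ and $dt$ too, so $\pi\circ\tras{df}=0$ on the chart, and therefore on all of $X$.

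The only step that is not purely mechanical is the identification of $\ker\pi$ with the image of $\tras{df}\res{Z}$ inside $\Omega_X^1\res{Z}$ — equivalently, the assertion that along $Z$ the differential of $f$ sees precisely the cotangent directions of $Z$ and nothing of the conormal directions. This is where the geometry enters: $f$ restricted to $Z$ is the isomorphism onto the singular locus $W = Y_A[2]\subset\P^5$ of $Y_A$, so $f\circ i\colon Z\to\P^5$ is an immersion and $\tras{d(f\circ i)}\colon (f\circ i)^{*}\Omega_{\P^5}^1 \onto \Omega_Z^1$ is surjective; reading this off in the coordinates above gives the claim. I expect this identification to be the main (indeed essentially the only) obstacle, after which the lemma is immediate.
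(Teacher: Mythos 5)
Your proof is correct and follows essentially the same route as the paper's: reduce to the local normal form $f(x,y,z,t)\underset{loc}{=}(x^2,xy,y^2,z,t)$, note that the generators $x\,dx$, $x\,dy+y\,dx$, $y\,dy$ of the image of $\tras{df}$ lie in $\idl{Z}\cdot\Omega_X^1$ and are therefore annihilated by $\pi$, and that $dz$, $dt$ lie in $\ker\pi$. The paper's proof is exactly this short computation; your closing appeal to the immersion $Z\to\P^5$ is superfluous, since the inclusion $\operatorname{im}(\tras{df}\res{Z})\subseteq\ker\pi$ is read off directly from the matrix of $\tras{df}$.
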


\begin{proof}
We keep the notation of the proof of Lemma \ref{annihilator of Q}. We need only to verify the thesis on $Z$. The image of $\tras{df}$ is generated by
\begin{equation*}
x dx, \, x dy + y dx,\, y dy,\, dz,\, dt.
\end{equation*}
The first three elements vanish on $Z$, while the latter two are in the kernel of $\pi$.
\end{proof}

The above lemma and the exact sequence in \eqref{cotangent sequence} provide us a surjective map
\begin{equation*}
\alpha \colon \sh{Q} \to i_{*}(\norm{Z}{X}^{\dual}).
\end{equation*}

\begin{lemma}
The kernel of $\alpha$ is $i_{*}(\det T_Z)$.
\end{lemma}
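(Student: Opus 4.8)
The plan is to identify $\ker\alpha$ canonically with $i_*\det\norm{Z}{X}^{\dual}$, and then to rewrite the latter using adjunction together with the triviality of $K_X$. First I would record that $\alpha$ is, in effect, the identity modulo $\idl{Z}$: restricting the sequence \eqref{cotangent sequence} to $Z$ and using that $\operatorname{im}(\tras{df}\res{Z})$ is the rank-two subbundle of $\Omega_X^1\res{Z}$ spanned, in the local coordinates of Lemma \ref{annihilator of Q}, by $dz$ and $dt$, one gets isomorphisms $\sh{Q}\otimes\O_Z\cong\Omega_X^1\res{Z}/\operatorname{im}(\tras{df}\res{Z})\cong\norm{Z}{X}^{\dual}$, the last one being induced by the projection $\pi$, and under these $\alpha\otimes\O_Z$ is the identity. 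Equivalently $\ker\alpha=\idl{Z}\cdot\sh{Q}$ as a subsheaf of $\sh{Q}$; since $\idl{Z}^2$ annihilates $\sh{Q}$ by Lemma \ref{annihilator of Q}, this subsheaf is killed by $\idl{Z}$, hence is $i_*\sh{G}$ for some sheaf $\sh{G}$ on $Z$.

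Next, because $\idl{Z}^2\sh{Q}=0$, the multiplication map $\idl{Z}\times\sh{Q}\to\idl{Z}\sh{Q}$ factors through $(\idl{Z}/\idl{Z}^2)\otimes_{\O_Z}(\sh{Q}\otimes\O_Z)$. Identifying the first factor with $\norm{Z}{X}^{\dual}$ via \eqref{normal and I mod I squared} and the second via $\pi$, we obtain a canonical surjection $\beta\colon\norm{Z}{X}^{\dual}\otimes_{\O_Z}\norm{Z}{X}^{\dual}\to\sh{G}$, and it remains to compute $\ker\beta$. In the local coordinates of Lemma \ref{annihilator of Q}, where $f=(x^2,xy,y^2,z,t)$, the submodule $\operatorname{im}(\tras{df})$ is generated by $dz$, $dt$ and by $d(x^2)=2x\,dx$, $d(xy)=x\,dy+y\,dx$, $d(y^2)=2y\,dy$; since $\pi$ carries $dx\res{Z}$ and $dy\res{Z}$ to the classes of $x$ and $y$, which are the standard basis of $\norm{Z}{X}^{\dual}$, these three relations become precisely the generators of the symmetric square $\Sym^2\norm{Z}{X}^{\dual}\subset\norm{Z}{X}^{\dual}\otimes\norm{Z}{X}^{\dual}$. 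Hence $\ker\beta=\Sym^2\norm{Z}{X}^{\dual}$, so that
\[
\sh{G}\ \cong\ (\norm{Z}{X}^{\dual}\otimes\norm{Z}{X}^{\dual})/\Sym^2\norm{Z}{X}^{\dual}\ =\ \bigwedge^2\norm{Z}{X}^{\dual}\ =\ \det\norm{Z}{X}^{\dual}.
\]

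Finally, adjunction for the smooth surface $Z$ inside the smooth fourfold $X$ reads $\omega_Z=\omega_X\res{Z}\otimes\det\norm{Z}{X}$; as $K_X$ is trivial this gives $\det\norm{Z}{X}\cong\omega_Z$, whence $\det\norm{Z}{X}^{\dual}\cong\omega_Z^{\dual}=\det T_Z$ and $\ker\alpha\cong i_*(\det T_Z)$, as claimed. The delicate point, which will need the most care, is the bookkeeping of canonicity: one must check that the two copies of $\norm{Z}{X}^{\dual}$ appearing as the source of $\beta$ are matched in a way compatible with the local computation, so that $\ker\beta$ really is the symmetric square (the $(+1)$-eigenspace of the transposition) and not just an abstractly isomorphic rank-three subbundle; once this is fixed, the quotient is unambiguously $\det\norm{Z}{X}^{\dual}$, and everything else is of the same local nature as the proof of Lemma \ref{annihilator of Q}.
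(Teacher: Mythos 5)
Your proof is correct, and its core is the same local computation as the paper's: both arguments reduce to the normal form $f=(x^2,xy,y^2,z,t)$, observe that $\sh{Q}$ is generated over $Z$ by $dx$, $dy$ and the single extra element $x\,dy=-y\,dx$, and identify that element with $dx\wedge dy$. Where you differ is in the packaging. The paper simply asserts that the local generator $x\,dy$ of $\ker\alpha$ ``corresponds to'' $dx\wedge dy\in\bigwedge^2\norm{Z}{X}^{\dual}$ under \eqref{normal and I mod I squared}; you instead observe that $\ker\alpha=\idl{Z}\cdot\sh{Q}$ and build the canonical surjection $\beta\colon\norm{Z}{X}^{\dual}\otimes\norm{Z}{X}^{\dual}\to\idl{Z}\sh{Q}$ from the multiplication map, so that the local computation only has to identify $\ker\beta$ with the symmetric square. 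This is a genuine improvement: it makes the isomorphism $\ker\alpha\cong i_*\det\norm{Z}{X}^{\dual}$ manifestly coordinate-independent, hence global, which the paper's two-line argument leaves implicit. For the last step you invoke adjunction together with $K_X=0$ to get $\det\norm{Z}{X}^{\dual}\cong\det T_Z$, whereas the paper uses the stronger Lagrangian isomorphism $\norm{Z}{X}\cong\Omega_Z^1$; at the level of determinants these are equivalent, and your version uses less (only triviality of $\omega_X$, not the symplectic form).
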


\begin{proof}
We can see this explicitly in local coordinates. Keeping the notation of the above proofs, $\sh{Q}$ is locally generated, on $Z$, by $dx$, $dy$ and $x dy = -y dx$. The conormal bundle $\norm{Z}{X}^{\dual}$ is generated by $dx$ and $dy$, and $\alpha$ is the obvious projection.

The kernel of $\alpha$ is then generated by $x dy$. Under the identification in \eqref{normal and I mod I squared} this corresponds to the generator $dx \wedge dy$ of $\bigwedge^2 \norm{Z}{X}^{\dual}$.

So
\begin{equation*}
\ker \alpha = i_{*} (\det \norm{Z}{X}^{\dual}) \cong i_{*} (\det T_Z),
\end{equation*}
since $Z$ is Lagrangian.
\end{proof}

Thanks to the lemma we get the exact sequence we are looking for:
\begin{equation} \label{Q as extension}
\short{i_{*}(\det T_Z)}{\sh{Q}}{i_{*} T_Z}.
\end{equation}

We can now find new relations in the Chow ring of $X$.
\begin{prop} \label{more relations}
In $CH(X)_{\Q}$ we have
\begin{equation*}
c_2(X) \cdot h = 5 h^3
\end{equation*}
and $c_4(X)$ is a linear combination of $h^4$, $c_2(X) \cdot h^2$ and $c_2(X)^2$.
\end{prop}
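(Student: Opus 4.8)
The plan is to read off both relations from a single Chern‑class identity, obtained by combining the two exact sequences \eqref{cotangent sequence} and \eqref{Q as extension} with Lemma \ref{thom-porteous} and Proposition \ref{canonical Y2}. Since total Chern classes are multiplicative on exact sequences of coherent sheaves on the smooth variety $X$, sequence \eqref{cotangent sequence} together with the Euler sequence on $\P^5$ gives
\begin{equation*}
c(\Omega_X^1) \;=\; \frac{c(f^{*}\Omega_{\P^5}^1)}{c(\O_X(-6))}\cdot c(\sh{Q}) \;=\; \frac{(1-h)^6}{1-6h}\cdot c(\sh{Q}),
\end{equation*}
which we expand through degree $4$, while \eqref{Q as extension} gives $c(\sh{Q}) = c(i_{*}\det T_Z)\cdot c(i_{*}T_Z)$, where $i\colon Z\hookrightarrow X$ is the inclusion of the smooth Lagrangian surface $Z$ (Proposition \ref{surface ZA}).

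The next step is to compute the Chern classes of $i_{*}\det T_Z$ and $i_{*}T_Z$ via Grothendieck--Riemann--Roch for the closed embedding $i$, i.e. $\mathrm{ch}(i_{*}\sh{F}) = i_{*}\big(\mathrm{ch}(\sh{F})\cdot\mathrm{td}(\norm{Z}{X})^{-1}\big)$, and then convert back to Chern classes. Because $Z$ is Lagrangian one has $\norm{Z}{X}\cong\Omega_Z^1$, and by Proposition \ref{canonical Y2} (over $\Q$) the class $c_1(T_Z) = -K_Z = -3\,h|_Z$; moreover the self‑intersection formula identifies $i_{*}\big(c_2(T_Z)\big) = i_{*}\big(c_2(\norm{Z}{X})\big) = [Z]^2$, and the projection formula turns $i_{*}(h|_Z)$ and $i_{*}(h|_Z^2)$ into $h\cdot[Z]$ and $h^2\cdot[Z]$. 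Feeding all this in, each $c_j(\sh{Q})$ becomes a $\Q$‑linear combination of $[Z]$, $h\cdot[Z]$, $h^2\cdot[Z]$ and $[Z]^2$; in particular $c_1(\sh{Q})=0$ and $c_2(\sh{Q})=-3[Z]$, which already recovers Lemma \ref{thom-porteous} from the degree‑$2$ part of the identity and so serves as a consistency check.

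Finally I would extract the relations degree by degree. The symplectic form gives $T_X\cong\Omega_X^1$, so the odd Chern classes of $X$ vanish; comparing the degree‑$3$ parts of the identity yields a relation $\alpha\,h^3+\beta\,(h\cdot[Z])=0$ with $\beta\neq 0$, so $h\cdot Z$ is a rational multiple of $h^3$. Substituting this into the relation $3Z = 15h^2-c_2(X)$ of Lemma \ref{thom-porteous} multiplied by $h$ produces $c_2(X)\cdot h = 5h^3$ (the coefficients can be cross‑checked against Proposition \ref{relation in degree 6}). For the second statement, the degree‑$4$ part of the identity writes $c_4(X)$ as a $\Q$‑linear combination of $h^4$, $h^2\cdot Z$ and $Z^2$; but $3\,h^2\cdot Z = 15h^4 - h^2 c_2(X)$ and $9\,Z^2 = (15h^2-c_2(X))^2$ by Lemma \ref{thom-porteous}, so $c_4(X)$ lies in the span of $h^4$, $h^2c_2(X)$ and $c_2(X)^2$, as claimed.

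The main obstacle is the bookkeeping in the Grothendieck--Riemann--Roch computation of $c(i_{*}\det T_Z)$ and $c(i_{*}T_Z)$: one must track the contributions of $\mathrm{td}(\Omega_Z^1)^{-1}$ and of $c_1(T_Z)$, $c_2(T_Z)$ carefully and verify that every occurrence of $c_2(T_Z)$ collapses to $[Z]^2$ through the self‑intersection formula, so that no class outside $\langle [Z],\,h\cdot[Z],\,h^2\cdot[Z],\,[Z]^2\rangle$ ever enters; everything else is then formal manipulation using Lemma \ref{thom-porteous}.
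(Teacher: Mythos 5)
Your proposal is correct and follows essentially the same route as the paper: the same two exact sequences \eqref{cotangent sequence} and \eqref{Q as extension}, Grothendieck--Riemann--Roch for the embedding $i\colon Z\hookrightarrow X$ with $\norm{Z}{X}\cong\Omega_Z^1$ and $c_1(T_Z)=-3\,h|_Z$ from Proposition \ref{canonical Y2}, and the degree-$3$ and degree-$4$ comparisons combined with Lemma \ref{thom-porteous}. The only cosmetic difference is that you phrase the second computation via multiplicativity of total Chern classes while the paper adds Chern characters and converts back, and you leave the degree-$3$ coefficient to be pinned down by cohomology rather than computing it outright ($-70h^3=-21\,h\cdot Z$ in the paper); both are fine.
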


\begin{proof}
This is just a matter of putting together the relations that come from the exact sequences \eqref{cotangent sequence} and \eqref{Q as extension}.

We start from \eqref{cotangent sequence}, which yields
\begin{equation*}
(1 -6h) \cdot (1 + c_2(X) + c_4(X)) = (1 - h)^6 \cdot (1 + c_1 (\sh{Q})+ c_2 (\sh{Q})+ c_3 (\sh{Q})+ c_4 (\sh{Q})).
\end{equation*}
Comparing the terms in degree up to $2$ we get:
\begin{equation} \label{c1 and c2}
\begin{split}
c_1(\sh{Q}) &= 0
\\
c_2(\sh{Q}) &= c_2(X) - 15 h^2 = -3 Z,
\end{split}
\end{equation}
where the last equality is Lemma \ref{thom-porteous}. Then in degree $3$ we have
\begin{equation} \label{c3}
\begin{split}
c_3(\sh{Q}) &= 6h (c_2(\sh{Q}) - c_2(X)) + 20 h^3 =
\\
&= 6h \cdot (-15 h^2) + 20 h^3 = -70 h^3,
\end{split}
\end{equation}
using the second of \eqref{c1 and c2}. Finally in degree $4$ we get, using \eqref{c1 and c2} and \eqref{c3},
\begin{equation*}
\begin{split}
c_4(X) &= 15 h^4 + 15 h^2 \cdot c_2(\sh{Q}) - 6h \cdot c_3(\sh{Q}) + c_4(\sh{Q})=
\\
&= 15 h^4 - 45 h^2 \cdot Z + 420 h^4 + c_4(\sh{Q}),
\end{split}
\end{equation*}
hence
\begin{equation} \label{c4}
c_4(\sh{Q}) = c_4(X) - 435 h^4 + 45 h^2 \cdot Z.
\end{equation}

Next we look at the relations coming from \eqref{Q as extension}. To do this we shall use Grothendieck-Riemann-Roch, which for the closed embedding
\begin{equation*}
i \colon Z \into X
\end{equation*}
takes the form
\begin{equation*}
ch(i_{*} \sh{F}) = i_{*}(ch(\sh{F}) \cdot td(\norm{Z}{X})^{-1}),
\end{equation*}
for any $\sh{F} \in \Coh(Z)$. This is because in our situation we have
\begin{equation*}
R^{k}i_{*}(\sh{F}) = 0
\end{equation*}
for all such $\sh{F}$, thanks to \cite[Cor. $III.11.2$]{Hartshorne}.

Using that $Z$ is Lagrangian we have $\norm{Z}{X} \cong \Omega_Z^1$, so we can compute
\begin{equation*}
\begin{split}
td(\norm{Z}{X}) &= 1 - \frac{1}{2}c_1(Z) + \frac{1}{12} (c_1(Z)^2 + c_2(Z));
\\
td(\norm{Z}{X})^{-1} &= 1 + \frac{1}{2}c_1(Z) + \frac{1}{6} c_1(Z)^2 - \frac{1}{12} c_2(Z).
\end{split}
\end{equation*}
Then we have
\begin{equation*}
\begin{split}
ch(\det T_Z) &= 1 + c_1(Z) + \frac{1}{2} c_1(Z)^2;
\\
ch(T_Z) &= 2 + c_1(Z) + \frac{1}{2}(c_1(Z)^2 - c_2(Z)).
\end{split}
\end{equation*}
So Grothendieck-Riemann-Roch for these sheaves becomes
\begin{equation*}
\begin{split}
ch(i_{*} \det T_Z) &= i_{*} \left( 1 + \frac{3}{2} c_1(Z) + \frac{7}{6} c_1(Z)^2 - \frac{1}{12} c_2(Z) \right);
\\
ch(i_{*} T_Z) &= i_{*} \left( 2 + 2 c_1(Z) + \frac{4}{3} c_1(Z)^2 - \frac{7}{6} c_2(Z) \right).
\end{split}
\end{equation*}

Next we use the fact that in $CH(Z)_{\Q}$ we have
\begin{equation*}
c_1(Z) = - K_Z = - 3 i^{*}(h),
\end{equation*}
thanks to Proposition \ref{canonical Y2}. So we obtain
\begin{equation*}
\begin{split}
ch(i_{*} \det T_Z) &= Z - \frac{9}{2} h \cdot Z + \frac{21}{2} h^2 \cdot Z - \frac{1}{12} Z^2;
\\
ch(i_{*} T_Z) &= 2 Z - 6 h \cdot Z + 12 h^2 \cdot Z - \frac{7}{6} Z^2.
\end{split}
\end{equation*}
We can use this to recover the Chern classes of $i_{*}(\det T_Z)$ and $i_{*}(T_Z)$. These are:
\begin{equation*}
\begin{split}
c_1(i_{*} \det T_Z) &= 0
\\
c_2(i_{*} \det T_Z) &= -Z
\\
c_3(i_{*} \det T_Z) &= -9 h \cdot Z
\\
c_4(i_{*} \det T_Z) &= Z^2 - 63 h^2 \cdot Z
\end{split}
\end{equation*}
and
\begin{equation*}
\begin{split}
c_1(i_{*} T_Z) &= 0
\\
c_2(i_{*} T_Z) &= -2Z
\\
c_3(i_{*} T_Z) &= -12 h \cdot Z
\\
c_4(i_{*} T_Z) &= 9 Z^2 - 72 h^2 \cdot Z.
\end{split}
\end{equation*}

Finally we use the exact sequence \eqref{Q as extension} to get the Chern classes of $\sh{Q}$. The first two are
\begin{equation*}
\begin{split}
c_1(\sh{Q}) &= 0
\\
c_2(\sh{Q}) &= -3Z,
\end{split}
\end{equation*}
in accordance with \eqref{c1 and c2}. Then we get
\begin{equation*}
c_3(\sh{Q}) = -21 h \cdot Z,
\end{equation*}
and comparing with \eqref{c3} we obtain
\begin{equation*}
-3 h \cdot Z = -10 h^3.
\end{equation*}
Using Lemma \ref{thom-porteous} this is exactly
\begin{equation*}
c_2(X) \cdot h = 5 h^3.
\end{equation*}

Finally we get
\begin{equation*}
c_4(\sh{Q}) = 12 Z^2 - 135 h^2 \cdot Z;
\end{equation*}
comparing with \eqref{c4} this becomes
\begin{equation*}
12 Z^2 - 135 h^2 \cdot Z = c_4(X) - 435 h^4 + 45 h^2 \cdot Z,
\end{equation*}
and using again Lemma \ref{thom-porteous} to write $Z$ as a rational combination of $c_2(X)$ and $h^2$, we get the second claim of the thesis.
\end{proof}

\section{Conclusion of the proof}

First we recall that we have defined the class
\begin{equation*}
\theta = \frac{1}{2} f^{*}(\overline{\theta}).
\end{equation*}
Here $\overline{\theta}$ is the class of any point on $Y_B[2] \subset Y_A$. By Proposition \ref{YA2 equiv YB2} we know that
\begin{equation*}
\big[ Y_A[2] \big] = \big[ Y_B[2] \big] \text{ in } CH^2(Y_A).
\end{equation*}
We also let $\overline{h} = \O_Y(1)$, so that $h = f^{*} (\overline{h})$.

Using Lemma \ref{YB2 meets a line} we can start proving that
\begin{equation} \label{first relation}
h^4 = 6 \theta
\end{equation}
in $CH(X)$. 

Indeed let $L_0$ be any line meeting $Y_B[2]$  and let $\Lambda$ be any plane containing $L_0$. Then $\overline{h}^3$ is represented by the intersection
\begin{equation*}
\Lambda \cdot Y = L_0 + C,
\end{equation*}
where $C$ is a quintic on $\Lambda$. Multiplying by $\overline{h}$ we obtain
\begin{equation*}
\overline{h}^4 = L_0 \cdot \overline{h} + C \cdot \overline{h}.
\end{equation*}
We claim that this is represented by a $0$-cycle supported on $L_0$. This is clear for the first addend; for the second we represent $\overline{h}$ by a hyperplane containing $L_0$ and transverse to $\Lambda$. It follows that $C \cdot \overline{h}$ is supported on $C \cap L_0$.

Since $L_0$ is rational, $CH^1(L_0) \cong \Z$, so $\overline{h}^4$ is rationally equivalent to a multiple of a point of $L_0$. Finally $L_0 \cap Y_B[2] \neq \emptyset$, so we get
\begin{equation*}
\overline{h}^4 = k \overline{\theta} \text{ in } CH^4(Y)_{\Q}
\end{equation*}
for some $k \in \Q$.

Pulling back this relation to $X$ and using $f^{*}(\overline{h}) = h$, $f^{*}(\overline{\theta}) = 2 \theta$ we obtain
\begin{equation*}
h^4 = 2 k \theta \text{ in } CH^4(X)_{\Q}.
\end{equation*}
Since in cohomology we have $h^4 = 12$ we must have $k = 6$, and so \eqref{first relation} is proved.

Next we show that
\begin{equation} \label{second relation}
h^2 \cdot c_2(X) = 60 \overline{\theta}.
\end{equation}
We start from Lemma \ref{thom-porteous}; pushing forward that relation we get
\begin{equation} \label{thom-porteous on Y}
3 \big[ Y_A[2] \big] = 15 \cdot 4 \overline{h}^2 - f_{*} c_2(X) \text{ in } CH^2(Y).
\end{equation}
Multiplying \eqref{thom-porteous on Y} by $\overline{h}^2$ we get
\begin{equation*}
\overline{h}^2 \cdot f_{*} c_2(X) = 60 \overline{h}^4 - 3 \overline{h}^2 \cdot \big[ Y_A[2] \big].
\end{equation*}

We already proved that $\overline{h}^4$ is a multiple of $\overline{\theta}$, and the cycle class
\begin{equation*}
\overline{h}^2 \cdot \big[ Y_A[2] \big] = \overline{h}^2 \cdot \big[ Y_B[2] \big]
\end{equation*}
is supported on $Y_B[2]$, hence it is a rational multiple of $\overline{\theta}$ too.

We conclude that the relation \eqref{second relation} holds up to a multiple, that is
\begin{equation*}
\overline{h}^2 \cdot f_{*} c_2(X) = k \overline{\theta}.
\end{equation*}
As before, we pull back this relation to $X$ in order to make computations in cohomology. We get
\begin{equation*}
h^2 \cdot 2 c_2(X) = 2 k \overline{\theta}.
\end{equation*}
Since in cohomology we have
\begin{equation*}
h^2 \cdot c_2(X) = 60,
\end{equation*}
we must have $k = 60$, and Equation \eqref{second relation} is proved.

In a similar way, we can rewrite Equation \eqref{thom-porteous on Y} as
\begin{equation*}
f_{*} c_2(X) = 15 \overline{h}^2 - 3 \big[ Y_A[2] \big]
\end{equation*}
and take squares to write $(f_{*} c_2(X))^2$ as a combination of $\overline{h}^4$ and a $0$-cycle supported on $Y_B[2]$. This shows that $(f_{*} c_2(X))^2$ is a rational multiple of $\overline{\theta}$.

As usual a cohomology computation yields the precise form
\begin{equation*}
c_2(X)^2 = 828 \theta.
\end{equation*}

Now we can use Proposition \ref{more relations} to conclude that
\begin{equation*}
c_4(X) = k \theta,
\end{equation*}
and finally we get $k = 324$ by comparison with the analogous computation in cohomology. This takes care of all relations in degree $8$.

The only relation in degree $6$ comes from Proposition \ref{relation in degree 6}, and is
\begin{equation*}
c_2(X) \cdot h = 5 h^3.
\end{equation*}
We already proved that the same holds in $CH^{*}(X)$ in Proposition \ref{more relations}, so this ends the proof of the main theorem. \qed

\end{document}